\let\oldtocsection=\tocsection
\let\oldtocsubsection=\tocsubsection
\let\oldtocsubsubsection=\tocsubsubsection
\renewcommand{\tocsection}[2]{\hspace{0em}\oldtocsection{#1}{#2}}
\renewcommand{\tocsubsection}[2]{\hspace{1em}\oldtocsubsection{#1}{#2}}
\renewcommand{\tocsubsubsection}[2]{\hspace{2em}\oldtocsubsubsection{#1}{#2}}
\newtheorem{mainresult}{Theorem}
\newtheorem{thm}{Theorem}[section]
\newtheorem{lem}[thm]{Lemma}
\newtheorem{prop}[thm]{Proposition}
\theoremstyle{definition}
\newtheorem{dfn}[thm]{Definition}
\newtheorem{op}[thm]{Question}
\newtheorem{rmk}[thm]{Remark}
\numberwithin{equation}{section}
\numberwithin{figure}{section}
\def\smfd{W^{\mathrm{s}}}
\def\umfd{W^{\mathrm{u}}}
\def\lsmfd{W^{\mathrm{s}}_{\mathrm{loc}}}
\def\lumfd{W^{\mathrm{u}}_{\mathrm{loc}}}
\def\setl{\setlength{\leftskip}{-18pt}}
\title[Pluripotency  of wandering dynamics]
{Pluripotency of wandering dynamics}
\author{\sc Shin Kiriki}
\address[Shin Kiriki]{Department of Mathematics, Tokai University, 4-1-1 Kitakaname, Hiratuka, Kanagawa, 259-1292, JAPAN}
\email{kiriki@tokai.ac.jp}
\author{\sc Yushi Nakano}
\address[Yushi Nakano]{Department of Mathematics, Tokai University, 4-1-1 Kitakaname, Hiratuka, Kanagawa, 259-1292, JAPAN}
\email{yushi.nakano@tsc.u-tokai.ac.jp}
\author{\sc Teruhiko Soma}
\address[Teruhiko Soma]{Department of Mathematical Sciences, Tokyo Metropolitan University, 1-1 Minami-Ohsawa, Hachioji, Tokyo, 192-0397, JAPAN}
\email{tsoma@tmu.ac.jp}
\subjclass[2020]{Primary: 37C20, 37C29,  37C70; Secondary: 37C25}
\keywords{pluripotency, wandering domain, blender-horseshoe, homoclinic tangency, historic behavior, Dirac physical measure}
\date{\today}
\begin{document}

\begin{abstract}
This paper isolates a perturbative mechanism, which we call \emph{pluripotency},
by which the symbolic and statistical behavior of prescribed orbits
in a uniformly hyperbolic set can be realized, after an arbitrarily small perturbation,
along the forward orbits of all points in a set of positive Lebesgue measure.

In this sense, pluripotency provides a way of reprogramming dynamics from both statistical and geometric viewpoints: the empirical measures of all points in a positive-measure set can be made to asymptotically follow those of a prescribed orbit in the hyperbolic set. We first give an abstract criterion, formulated in terms of symbolic itinerary descriptions, which is equivalent to a strong form of pluripotency. We then prove that this mechanism occurs robustly in higher-dimensional non-hyperbolic dynamics. More precisely, for every $2\le r<\infty$ and $\dim M\ge 3$, there exists a $C^r$-open set of diffeomorphisms with wild blender-horseshoes such that every diffeomorphism in this open set is strongly pluripotent for a dense invariant subset of the blender-horseshoe. 

As applications, this yields dense classes of diffeomorphisms with non-trivial Dirac physical measures and with historic wandering domains inside the same open set, providing a new mechanism related to Takens' last problem.

\end{abstract}
\maketitle

\tableofcontents

\section{Introduction}\label{S_Introduction}

\subsection{Motivations}\label{ss_motivation}
Birkhoff's ergodic theorem implies that if 
$\mu$ is 
an ergodic invariant probability measure
for a continuous map $f$ on a compact manifold $M$, 
then
$\mu$-almost every point $x$ of $M$ has the limit of 
time averages for any continuous function $\varphi:M\longrightarrow \mathbb{R}$, that is, 
\begin{equation}\label{eqn_BET}
\lim_{n\rightarrow \infty} \frac{1}{n}
\sum_{i=0}^{n-1}
\varphi\circ f^{i}(x)=\int_{M}\varphi\ d\mu.
\end{equation}
If $f$ is an element of $\mathrm{Diff}^{2}(M)$ with an Axiom A attractor, then there exists 
an ergodic invariant measure $\mu$ whose support is the attractor and such that
the set of initial points $x$ for which \eqref{eqn_BET} holds has positive Lebesgue measure; 
see \cite{Si72,R76,Bo08}. 
Under hyperbolicity assumptions weaker than Axiom~A, 
the existence of such measures is non-trivial, but the theory of SRB measures has substantially advanced the situation; 
see \cite{BDV05} for a comprehensive account.

From the point of view of observable statistics, two kinds of phenomena have played a central role in the study of non-hyperbolic dynamics. The first one concerns physical measures whose supports are not attractors. This leads to the notion of \emph{non-trivial Dirac physical measures}, studied for instance in \cite{CV01, SSV10, SV13, Sa18, GGS22}. The second one concerns the failure of convergence of empirical measures. Namely, for a point $x$ of $M$ and an element $f$ of $\mathrm{Diff}^{r}(M)$, let
\begin{equation}\label{af}
\delta^{n}_{x,f}=
\frac{1}{n}\sum_{i=0}^{n-1}\delta_{f^i(x)}
\end{equation}
be the empirical measure of the first $n$ iterates of $x$. It is natural to ask whether there are abundant dynamical systems for which the sequence $(\delta^n_{x,f})_{n\geq 0}$ fails to converge on a set of positive Lebesgue measure. Orbits with such initial points are said to have \emph{historic behavior}. Questions of this type were raised by Ruelle and later developed by Takens \cite{R01,T08}; they are now commonly referred to as Takens' last problem.

Let us recall the notions of persistence and robustness, following the terminology used in \cite[Section 11]{PR83} and \cite[Section 3]{T08}.

\begin{dfn}[persistent and robust properties]\label{PR}
Let $\mathcal{C}$ be a non-empty subset of
$\mathrm{Diff}^r(M)$, which is called a \emph{class}.
We say that a property $\mathscr{A}$ is $C^r$-\emph{persistent} relative to $\mathcal{C}$ if every $f\in \mathcal{C}$ has the property $\mathscr{A}$. Such a property is said to be $C^r$-\emph{robust} when $\mathcal{C}$ is open.
\end{dfn}

There are many examples in which the set of initial points with historic behavior is residual but has zero Lebesgue measure; see, for instance, \cite{T08, BLV14, KLS16, Ya20, BKNRS20, AP21, CCSV24, P}. There are also examples for which this set has positive Lebesgue measure \cite{CV01, CYZ20, Sb21, Tal22}, although the abundance of the corresponding systems is not addressed there. The first affirmative result in the direction of Takens' problem was obtained in \cite{KS17}, where historic behavior was shown to be $C^r$-persistent relative to a dense subset of the Newhouse domain for two-dimensional diffeomorphisms, $2\le r<\infty$. Further results were obtained by Barrientos \cite{B22}, in a higher-dimensional setting reducible to two-dimensional dynamics, and by Berger--Biebler \cite{BB23} for two-dimensional diffeomorphisms in the $C^\infty$ and real-analytic categories. However, whether historic behavior itself can be a robust property remains open.

The problems of non-trivial Dirac physical measures and historic behavior are closely related. They have been treated simultaneously in several works, beginning with the affine horseshoe model with homoclinic tangency of Colli--Vargas \cite{CV01}, and more recently in the setting of full branch maps including perturbed Lorenz-like maps by Coates--Luzzatto \cite{CL}. There are also related studies on statistical stability and instability of invariant measures; see \cite{DHL06, AGL26,Tal}.

The purpose of this paper is to isolate a mechanism underlying these phenomena. 
Rather than treating non-trivial Dirac physical measures and historic behavior as separate outcomes, we formalize a perturbative principle by which the statistical behavior of a hyperbolic symbolic subsystem can be transferred to a set of positive Lebesgue measure. 
We call this mechanism the \emph{pluripotent property}\footnote{
The terminology is inspired by induced pluripotent stem cells (iPSCs), 
which are somatic cells reprogrammed into a pluripotent state by introducing a small number of factors. 
Shinya Yamanaka and John B.\! Gurdon received the 2012 Nobel Prize in Physiology or Medicine 
for the discovery that mature cells can be reprogrammed to become pluripotent \cite{Yam12}. 
}. 
It is extracted from the geometric models of \cite{CV01,KNS23}, but it is formulated independently of those particular models. 
This viewpoint was further developed in \cite{KLNSV25}, where strong pluripotency was applied to Takens' last problem in a related setting. 
We first give a practical criterion for a strong form of pluripotency in terms of itinerary descriptions, and then prove that this strong pluripotency occurs $C^r$-robustly in a class of higher-dimensional non-hyperbolic dynamical systems for $2\le r<\infty$.

\subsection{Pluripotency}\label{ss_pluri_lem}

A uniformly hyperbolic horseshoe contains a rich collection of symbolic and statistical behaviors. However, such behavior is usually confined to a hyperbolic set of zero Lebesgue measure, and hence is not observable in the measure-theoretic sense. The role of pluripotency is to bridge this gap. It asks whether, after an arbitrarily small perturbation, the statistical behavior of a prescribed orbit in the horseshoe can be reproduced along the forward orbits of all points in a set of positive Lebesgue measure.

We formulate this property for a uniformly hyperbolic invariant set with a symbolic description. In the definition below, the horseshoe may be replaced by a uniformly hyperbolic set $\Lambda$ which is a maximal $f$-invariant set in the disjoint union of $n\,(\geq 2)$ open sets such that $f|_{\Lambda}$ is topologically conjugate to the full two-sided shift on $n$ symbols. To state the definition, we use the first Wasserstein metric.

For any two Borel probability measures $\mu$ and $\nu$ on a compact Riemannian manifold $M$, 
we consider the \emph{first Wasserstein metric} $d_W$ given by
\[
d_W(\mu ,\nu )= \sup_\varphi \left\vert \int _M \varphi \, d\mu - \int _M \varphi \, d\nu \right\vert,
\]
where the supremum is taken over all Lipschitz continuous functions $\varphi : M \longrightarrow [-1, 1]$ whose Lipschitz constants are bounded by $1$. 
See \cite{V09} for its basic properties. In particular, 
since $M$ is compact, $d_W$ metrizes the weak topology on the space of Borel probability measures on $M$.

\begin{dfn}[pluripotency]\label{dfn:0624}
Let $M$ be a compact Riemannian manifold with $\dim M\geq 2$.
Suppose that $f$ is a $C^{r}$ $(r\geq 1)$ diffeomorphism on $M$ with 
a uniformly hyperbolic invariant set $\Lambda$, and let $\Lambda^{\prime}$ be a non-empty subset of $\Lambda$. 
\begin{enumerate}[(1)]
\item
$f$ is \emph{pluripotent} for $\Lambda'$ if, for every $x\in \Lambda^{\prime}$, 
there exist $g\in \mathrm{Diff}^r(M)$ arbitrarily $C^r$-close to $f$ 
and a subset $D$ of $M$ with positive Lebesgue measure
such that, for every $y\in D$ and the continuation $x_g$ of $x$,
\begin{equation}\label{eq:0625}
\lim _{n\rightarrow \infty} d_W( \delta_{y,g}^n, \delta_{x_g,g}^n) =0,
\end{equation}
where $\delta_{y,g}^n$ and $\delta_{x_g,g}^n$ are the empirical measures given by \eqref{af}.
\item  
$f$ is \emph{strongly pluripotent} for $\Lambda'$ if the following stronger condition holds, for $g$ and $D$ as in (1), instead of \eqref{eq:0625}:  
\begin{equation}\label{eq:0625_s}
\lim _{n\rightarrow \infty} \frac1{n}\sum_{i=0}^{n-1}
\sup_{y\in D}\mathrm{dist}(g^i(y),g^i(x_g))=0.
\end{equation}
\end{enumerate}
\end{dfn}

Condition \eqref{eq:0625} means that, after an arbitrarily small perturbation, 
the empirical measures of all points in the positive-measure set $D$ asymptotically follow those of the prescribed orbit of $x_g$. 
Thus the statistical behavior originally carried by an orbit in the hyperbolic set $\Lambda_g$ becomes observable on a set of positive Lebesgue measure. 
The stronger condition \eqref{eq:0625_s} says more: the forward orbits of all points in $D$ spend a density-one set of times uniformly close to the prescribed forward orbit of $x_g$.

It is immediate that \eqref{eq:0625_s} implies \eqref{eq:0625}. Indeed, for any Lipschitz function $\varphi:M\longrightarrow [-1,1]$ with $\mathrm{Lip}(\varphi)\leq 1$, we have
\[
|\varphi(g^i(y))-\varphi(g^i(x_g))|
\leq \mathrm{dist}(g^i(y),g^i(x_g)).
\]
However, Theorem \ref{thm_new} below shows that the converse is not true in general.

Let us spell out two basic consequences of the definition. If $x_g$ is a saddle fixed point, then \eqref{eq:0625} implies that the corresponding Dirac measure is a non-trivial physical measure for $g$. On the other hand, \eqref{eq:0625} can also hold when the sequence $(\delta _{x_g, g}^n)_{n\in \mathbb N}$ does not converge. In this case, every point of $D$ has historic behavior. See Theorem \ref{DrcHst} for precise statements.

\medskip

In this paper, we are mainly concerned with the case where the set $D$ can be chosen to be a non-empty open set; see the third item of Remark \ref{mainrmk}. 
At first sight, the form of \eqref{eq:0625} or \eqref{eq:0625_s} might suggest that such a set $D$ should be a neighborhood of $x_g$. 
This is not the case. The following proposition shows that any open set $D$ satisfying Definition \ref{dfn:0624}\,(1) must be disjoint from the continuation $\Lambda_g$ of $\Lambda$. In particular, $x_g$ never belongs to $D$. 
This observation emphasizes that pluripotency is not a local shadowing property near the prescribed orbit, but an observable realization mechanism outside the hyperbolic set.

\begin{prop}\label{p_D_open}
Let $g$ be an element of $\mathrm{Diff}^r(M)$ and let $D$ be any open subset of $M$ satisfying the conditions in Definition \ref{dfn:0624}\,(1).
Then $D\cap \Lambda_g$ is empty.
\end{prop}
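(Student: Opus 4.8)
The plan is to argue by contradiction: suppose $D$ is open and there is a point $z \in D \cap \Lambda_g$. Since $\Lambda_g$ is a horseshoe (a transitive hyperbolic set conjugate to a full shift), its periodic points are dense in $\Lambda_g$, and $\Lambda_g$ carries periodic orbits of every combinatorial type. In particular $z$ can be approximated within $\Lambda_g$ by a saddle periodic point $p$. The key observation is that for such a $p$, the sequence $(\delta^n_{p,g})_{n\ge 0}$ converges to the uniform measure $\mu_p$ on the periodic orbit of $p$, which is a \emph{different} limit for periodic points of different periods. I would exploit this to derive a contradiction with \eqref{eq:0625}, which forces the empirical measures along every orbit from $D$ to have the \emph{same} asymptotic behavior as along the orbit of $x_g$.

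More carefully, the first step is to note that since $D$ is open and $z \in D$, the open set $D$ contains points of $\Lambda_g$ that lie on at least two distinct periodic orbits — say $p$ and $q$ with $\mu_p \ne \mu_q$ (one can pick periodic points of $\Lambda_g$ with very long and incompatible symbolic codes, whose invariant measures are far apart in $d_W$ since they are supported on disjoint orbits; this uses only that $\Lambda_g$ is a full shift on $\ge 2$ symbols, density of periodic points, and that $\dim \Lambda_g$ allows nearby periodic orbits to be genuinely distinct). Apply \eqref{eq:0625} with $y = p$ and with $y = q$: both give $\lim_n d_W(\delta^n_{p,g}, \delta^n_{x_g,g}) = 0$ and $\lim_n d_W(\delta^n_{q,g}, \delta^n_{x_g,g}) = 0$. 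By the triangle inequality for $d_W$, this yields $\lim_n d_W(\delta^n_{p,g}, \delta^n_{q,g}) = 0$. But $\delta^n_{p,g} \to \mu_p$ and $\delta^n_{q,g} \to \mu_q$ as $n \to \infty$ (exact convergence, since the orbits are periodic), so $d_W(\mu_p, \mu_q) = 0$, contradicting $\mu_p \ne \mu_q$.

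An alternative, perhaps cleaner, route avoids needing two periodic orbits: if $z \in D \cap \Lambda_g$, choose a single saddle periodic point $p \in D \cap \Lambda_g$ (possible since $D$ is open and periodic points are dense in $\Lambda_g$). Then \eqref{eq:0625} applied to $y = p$ gives $d_W(\delta^n_{p,g}, \delta^n_{x_g,g}) \to 0$, hence $\delta^n_{x_g,g} \to \mu_p$. Now also apply \eqref{eq:0625} to $y = p'$ for a periodic point $p' \in D \cap \Lambda_g$ of a different period (which exists by the same density argument, since $D$ open meets $\Lambda_g$ in a relatively open set containing a full-shift copy, hence contains periodic points of arbitrarily large period): this forces $\delta^n_{x_g,g} \to \mu_{p'}$ as well, and $\mu_p \ne \mu_{p'}$, a contradiction.

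The main obstacle is the bookkeeping needed to guarantee that a nonempty open $D$ meeting $\Lambda_g$ actually contains periodic points of \emph{two different} invariant measures — i.e.\ that the relatively open subset $D \cap \Lambda_g$ of the Cantor set $\Lambda_g$ is rich enough. This follows from the local product structure of the horseshoe: $D \cap \Lambda_g$ contains a set of the form $(\text{local stable piece}) \times (\text{local unstable piece})$ in symbolic coordinates, which corresponds to a full cylinder in the shift and therefore contains periodic points realizing infinitely many distinct periodic orbits. Once this is set up, the rest is the short triangle-inequality argument above; I expect the write-up to be only a few lines beyond establishing this combinatorial richness.
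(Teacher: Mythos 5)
Your proof is correct, but it takes a genuinely different route from the paper's. The paper argues pointwise with a single witness: it takes $y\in D\cap\Lambda_g$, uses the openness of $D$ and the coding map $\mathcal I_g$ to produce a nearby point $z\in D\cap\Lambda_g$ whose code eventually terminates in constant $1$'s, then chooses a subsequence $(n_m)$ along which the orbit of $x_g$ visits one fixed box (say $\mathbb V_0$) at least half the time, and finally constructs a Lipschitz test function $\varphi$ separating $\mathbb V_0$ from $\mathbb V_1$ to show $\limsup_m d_W(\delta^{n_m}_{z,g},\delta^{n_m}_{x_g,g})\ge L/2>0$, contradicting \eqref{eq:0625}. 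Your argument instead exploits that an open $D$ meeting $\Lambda_g$ must contain two distinct periodic orbits of $g$, hence points $p,q\in D$ with $\delta^n_{p,g}\to\mu_p$, $\delta^n_{q,g}\to\mu_q$ and $\mu_p\neq\mu_q$; two applications of \eqref{eq:0625} and the triangle inequality then force $d_W(\mu_p,\mu_q)=0$, a contradiction. Your route is in some ways tidier: it bypasses the case split on whether $x_g$ favors $\mathbb V_0$ or $\mathbb V_1$, and it does not need to build an explicit Lipschitz separating function, instead using only the density of periodic points in the full shift and mutual singularity of distinct periodic-orbit measures. One small imprecision worth flagging: the ``alternative, perhaps cleaner, route'' in your third paragraph is not actually a different argument --- it still invokes a second periodic point $p'$ on a distinct periodic orbit, so the claim that it ``avoids needing two periodic orbits'' is misleading; both versions of your argument rest on the same combinatorial fact (a nonempty relatively open subset of $\Lambda_g$ contains a cylinder and hence infinitely many distinct periodic orbits), which you do correctly justify.
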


See Section \ref{S_Pf_Pl} for the proof.

\subsection{Describability}

We next introduce a symbolic criterion which will be used to detect strong pluripotency. 
The point is that the observable realization required in Definition \ref{dfn:0624} can be checked by coding the forward orbit of the positive-measure set $D$ along a density-one set of times. 
This converts the statistical condition in \eqref{eq:0625_s} into a symbolic itinerary problem.

A pair $\{\mathbb{U}_0,\mathbb{U}_1\}$ of disjoint open sets in $M$ is
called a \emph{coding pair} of a horseshoe $\Lambda$ if 
\[
\Lambda =\bigcap_{i\in \mathbb{Z}} f^{i}(\mathbb U_0 \sqcup \mathbb U_1)
\]
and the restriction $f|_\Lambda$ is topologically conjugate 
to the shift map on $\{0,1\}^{\mathbb Z}$ by the coding map  
$\mathcal I: \Lambda  \longrightarrow  \{0,1\}^{\mathbb Z}$ satisfying
\[
\left(\mathcal I(x) \right)_j = v 
\quad \text{if } f^j(x) \in \mathbb U_{v},
\]
where $(\mathcal I(x) )_j$ denotes the $j$th entry of $\mathcal I(x)$. 

Unlike the elements of a usual Markov partition, the sets in a coding pair are open and are not assumed to be compact. 
This openness will be essential in the proof of Lemma \ref{l_pluri_1} and related arguments.

The existence of the coding pair above implies that, for each 
$\underline v=(v_j)_{j\in \mathbb Z}\in\{0,1\}^{\mathbb Z}$, 
\[
\{ \mathcal I^{-1}(\underline v)\}=
\bigcap _{j\in \mathbb Z} f^{-j}(\mathbb U_{v_j}).
\]
Moreover, if $g\in \mathrm{Diff}^r(M)$ is sufficiently $C^r$-close to $f$, then the coding pair also gives a natural coding map for the continuation $\Lambda_g$, which we denote by $\mathcal I_g$.

\begin{dfn}[describable property]\label{describable}
Let $\Sigma$ be a subset of $\{0,1\}^{\mathbb N_0}$ and 
let $f$ be an element of $\mathrm{Diff}^{r}(M)$ with a horseshoe $\Lambda$ associated with a coding pair $\{\mathbb U_0,\mathbb U_1\}$, 
where $\mathbb N_0=\{0,1,\ldots\}$.  
We say that $f$ is $\Sigma$-\emph{describable} over $\Lambda$  
if every element $\underline{v}=(v_{0}v_{1}v_{2}\ldots\,)$ of $\Sigma$
satisfies the following two conditions:
\begin{description}
\setl
\item[(DEI) \rm(Dominance of Encoded Intervals)]
There exists a sequence of integer intervals
$
\mathbb{I}_k=[\alpha_k, \alpha_k+\beta_k]\cap \mathbb Z
$,
where $(\alpha_k)_{k\in\mathbb N}$ is a strictly increasing sequence of non-negative integers and each $\beta_k$ $(k\in\mathbb N)$ is a non-negative integer satisfying
\[
\alpha_k+\beta_k+1\leq \alpha_{k+1},
\]
such that
\[
\lim_{N\rightarrow\infty}
\frac{
\#\left\{0\leq n\leq N-1\,;\,
n\in \bigcup_{k=1}^{\infty}\mathbb I_k
\right\}
}{N}
=1.
\]
\item[(OCD) \rm(Observable Coded Description)]
There exist an element $g$ of $\mathrm{Diff}^{r}(M)$ arbitrarily $C^r$-close to $f$ and a subset $D$ of $M$ with positive Lebesgue measure such that  
\[
g^n(D)\subset \mathbb U_{v_n}
\]
for every
$
n\in \bigcup_{k\in\mathbb N}\mathbb I_k.
$
\end{description}
\end{dfn}

Condition (DEI) says that the prescribed code is required only on a union of time intervals whose asymptotic density is one. 
Condition (OCD) says that, after an arbitrarily small perturbation, the forward images of the positive-measure set $D$ follow this prescribed code on those dominant time intervals. 
Thus $\Sigma$-describability is a symbolic form of observable orbit programming.

The following result is the first main step of the paper. 
It shows that strong pluripotency is equivalent to this symbolic describability condition. 
Thus the problem of proving strong pluripotency is reduced to constructing positive-measure sets whose forward images realize prescribed symbolic itineraries on a density-one set of times.

We set
\[
\mathbb{Z}_{(-)}=\mathbb{Z}\cap(-\infty,0)
\]
and, for a given $\Sigma\subset \{0,1\}^{\mathbb N_0}$, define
\[
\widehat \Sigma
=
\bigl\{\underline u\underline v\in \{0,1\}^{\mathbb Z}\,;\,
\underline u\in \{0,1\}^{\mathbb Z_{(-)}},\ 
\underline v\in \Sigma
\bigr\},
\]
where $\underline u\underline v$ is the element of $\{0,1\}^{\mathbb Z}$ defined by
\[
(\underline u\underline v)_j=
\begin{cases}
u_j & \text{if } j\leq -1,\\
v_j & \text{if } j\geq 0.
\end{cases}
\]

\begin{mainresult}[Pluripotency Lemma]\label{p-lemma}
Suppose that $f$ is an element of $\mathrm{Diff}^r(M)$ with a horseshoe 
$\Lambda$ associated with a coding pair $\{\mathbb U_0,\mathbb U_1\}$, and let 
$\Sigma$ be a non-empty subset of $\{0,1\}^{\mathbb N_0}$.
Then $f$ is $\Sigma$-describable over $\Lambda$ if and only if 
$f$ is strongly pluripotent for $\mathcal I^{-1}(\widehat \Sigma)$.
\end{mainresult}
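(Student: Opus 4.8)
The plan is to prove the two implications separately, with the $\Sigma$-describability $\Rightarrow$ strong pluripotency direction being the substantive one. Throughout, fix $\underline{w} = \underline{u}\,\underline{v} \in \widehat\Sigma$ with $\underline{v} \in \Sigma$, and write $x = \mathcal I^{-1}(\underline w) \in \Lambda$. First I would handle the easy direction: assume $f$ is strongly pluripotent for $\mathcal I^{-1}(\widehat\Sigma)$. Applying the definition to the point $x$ above yields $g$ arbitrarily $C^r$-close to $f$ and a positive Lebesgue measure set $D$ with $\frac1n\sum_{i=0}^{n-1}\sup_{y\in D}\mathrm{dist}(g^i(y),g^i(x_g)) \to 0$. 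Since $\mathbb U_{v_n}$ is open and $g^n(x_g) \in \mathbb U_{v_n}$ (as $\mathcal I_g(x_g)$ agrees with $\underline w$ in nonnegative coordinates), I would choose, for each $n$, the open ball $B_n$ of radius $\rho_n := \mathrm{dist}(g^n(x_g), M\setminus \mathbb U_{v_n}) > 0$ around $g^n(x_g)$; the Cesàro condition forces $\sup_{y\in D}\mathrm{dist}(g^n(y),g^n(x_g)) < \rho_n$ for all $n$ outside a set of density zero, so on the complementary set (whose lower density is $1$) we get $g^n(D)\subset B_n \subset \mathbb U_{v_n}$. Packaging that density-one set of integers into intervals $\mathbb I_k$ gives (DEI), and $g$, $D$ give (OCD); hence $f$ is $\Sigma$-describable.

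For the main direction, assume $f$ is $\Sigma$-describable. Given $x = \mathcal I^{-1}(\underline w)$ as above, apply (DEI) and (OCD) to $\underline v \in \Sigma$ to obtain intervals $\mathbb I_k = [\alpha_k, \alpha_k+\beta_k]\cap\mathbb Z$ of density one in $\mathbb N_0$, a perturbation $g$ close to $f$, and a positive-measure set $D$ with $g^n(D)\subset \mathbb U_{v_n}$ for all $n \in \bigcup_k \mathbb I_k$. The key point is that $x_g$ and every $y\in D$ have itineraries that agree on $\bigcup_k\mathbb I_k$: indeed $g^n(x_g) \in \mathbb U_{v_n}$ for all $n\ge 0$ since $\mathcal I_g(x_g) = \underline w$, and $g^n(y)\in \mathbb U_{v_n}$ for $n$ in the intervals by (OCD). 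On each block $\mathbb I_k$ we have both $g^{\alpha_k}(y)$ and $g^{\alpha_k}(x_g)$ lying in the same "tube" $\bigcap_{j=0}^{\beta_k} g^{-j}(\mathbb U_{v_{\alpha_k+j}})$; by uniform hyperbolicity of $\Lambda_g$ (and the Inclination/$\lambda$-Lemma, using that the coding pair sets are open so the stable/unstable cones are uniformly transverse), the diameter of such a tube decays like $C\lambda^{\min\{j,\beta_k-j\}}$ for some $\lambda\in(0,1)$, $C>0$ uniform in $k$. Hence for $n = \alpha_k + j \in \mathbb I_k$, $\sup_{y\in D}\mathrm{dist}(g^n(y),g^n(x_g)) \le C\lambda^{\min\{j,\beta_k-j\}}$.

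It then remains to average. I would split $\frac1N\sum_{n=0}^{N-1}\sup_{y\in D}\mathrm{dist}(g^n(y),g^n(x_g))$ into the contribution from $n\notin\bigcup_k\mathbb I_k$, which is at most $\mathrm{diam}(M)$ times the density of that set and tends to $0$ by (DEI), and the contribution from the blocks. Within a block $\mathbb I_k$ of length $\beta_k+1$, the sum $\sum_{j=0}^{\beta_k} C\lambda^{\min\{j,\beta_k-j\}} \le 2C\sum_{j\ge 0}\lambda^j = \tfrac{2C}{1-\lambda}$ is bounded \emph{independently of $k$}. Since there are at most $N$ indices below $N$, the number of blocks meeting $[0,N)$ is $o(N)$ unless their lengths are bounded — but in all cases the block contribution to the average is at most $\frac1N \sum_{k\,:\,\mathbb I_k\cap[0,N)\ne\emptyset} \tfrac{2C}{1-\lambda}$, and more carefully, grouping short blocks and long blocks, the total block-sum is at most $\tfrac{2C}{1-\lambda}\cdot(\text{number of blocks in }[0,N))$; dividing by $N$ and using that a block of length $\ell$ occupies $\ell$ of the $N$ slots, one gets that this is bounded by $\tfrac{2C}{1-\lambda}$ times the reciprocal of the average block length, which one shows tends to $0$ because (DEI) forces the blocks to eventually be long (their union has density one while, being separated by at least one gap index, there can be at most $N$ of them but their total length is $\sim N$, forcing the count to be $o(N)$ only if lengths grow — I will instead bound directly: $\sum_{\text{blocks}}\tfrac{2C}{1-\lambda} \le \tfrac{2C}{1-\lambda}\cdot\#\{k : \alpha_k < N\}$ and observe $\#\{k:\alpha_k<N\} = o(N)$ is not automatic, so the cleaner argument is to note $\sum_{n\in\mathbb I_k}C\lambda^{\min\{j,\beta_k-j\}} \le \tfrac{2C}{1-\lambda}$ and also $\le C(\beta_k+1)$, and split blocks by whether $\beta_k+1 \le \epsilon^{-1}$ or not).

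\medskip

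The routine but slightly delicate bookkeeping in this last averaging step is the main obstacle: one must show that the uniformly-bounded per-block contributions, summed over $O(N)$-many blocks and divided by $N$, vanish. The resolution is that (DEI) — the union of blocks has full density while consecutive blocks are separated by gaps — prevents blocks from being too short too often; precisely, for any $\epsilon > 0$, the indices lying in blocks of length $\le L_\epsilon$ themselves have density $0$ once $L_\epsilon$ is large (else those short blocks, together with their mandatory gaps, would carry density bounded away from $1$... ). So one fixes $\epsilon$, chooses $L_\epsilon$, handles long blocks by the geometric bound $\tfrac{2C}{1-\lambda}$ per block against their length $\ge L_\epsilon$ (contributing $\le \tfrac{2C}{(1-\lambda)L_\epsilon} < \epsilon$ to the average), handles short blocks by the trivial bound $\mathrm{diam}(M)$ against their density $< \epsilon$, and handles non-block indices by (DEI). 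Letting $\epsilon \to 0$ completes the proof, and \eqref{eq:0625} for the original $\underline w$ (with arbitrary negative-coordinate part $\underline u$) follows since the estimates never used $\underline u$. I expect the write-up of this density-splitting lemma for the intervals $\mathbb I_k$ to be the only place requiring genuine care; the hyperbolic contraction of coding tubes is standard once the openness of $\{\mathbb U_0,\mathbb U_1\}$ is used to get uniform transversality.
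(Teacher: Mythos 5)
Your proposal follows essentially the same route as the paper, and the plan is sound. Three small remarks. In the easy direction, before the Ces\`aro hypothesis \eqref{eq:0625_s} can give a density-zero exceptional set, your $\rho_n=\mathrm{dist}(g^n(x_g),M\setminus\mathbb U_{v_n})$ must be bounded below uniformly in $n$; this holds since each $\Lambda_g\cap\mathbb U_i$ is compact inside the open $\mathbb U_i$, but you leave it tacit, whereas the paper introduces the compact sets $\mathbb V_0,\mathbb V_1$ and the constant $d_0$ in Lemma~\ref{l_pluri_1} precisely to pin this down. In the main direction you invoke a quantitative exponential bound $C\lambda^{\min\{j,\beta_k-j\}}$ on the diameters of two-sided itinerary cylinders, which makes each block contribute a uniformly bounded sum $\leq 2C/(1-\lambda)$; the paper uses only the qualitative fact that $\sup_{\underline w}\mathrm{diam}\bigl(\bigcap_{k=-N}^N g^{-k}(\mathbb U_{w_k})\bigr)\to 0$, splitting each long block into an interior $\mathbb I_k'$ (where the cylinder diameter is $\leq\varepsilon$) and two length-$N$ edges estimated crudely by $\mathrm{diam}(M)$, which is slightly more elementary and does not presuppose any explicit contraction rate beyond what the abstract coding-pair definition supplies. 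Finally, your density-splitting claim --- that indices in blocks of bounded length have density zero --- is exactly the paper's Lemma~\ref{l_beta_kL}, and your heuristic (too many short blocks would force too many density-positive gaps, contradicting (DEI)) is the correct reason; as you yourself flag, that is the step that requires the careful bookkeeping.
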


In fact, the proof of Theorem \ref{p-lemma} shows that the perturbation $g$ and the positive-measure set $D$ appearing in Definition \ref{dfn:0624} can be chosen to be exactly those appearing in Definition \ref{describable}.

\subsection{Robustness of pluripotent property}\label{ss_robust_pluri}

We now turn to the robust realization of the pluripotent mechanism introduced above. 
The main point of this subsection is that strong pluripotency is not merely a formal consequence of symbolic hyperbolicity. 
It becomes robustly available when the underlying horseshoe is placed in a genuinely non-hyperbolic configuration, namely in a wild blender-horseshoe. 
We first recall the relevant notions.

A (cs-)\emph{blender} is a transitive hyperbolic set $\Lambda$ of 
$f\in \mathrm{Diff}^{1}(M)$ with s-index $k\geq 2$ and having a superposition region. 
Here, the \emph{superposition region} of $\Lambda$ means a $C^{1}$-open set $\mathcal{D}$ of embeddings of $(k-1)$-dimensional disks $D$ into $M$ such that, for every diffeomorphism $g$ in some $C^{1}$-neighborhood $\mathcal{U}$ of $f$, every disk $D\in \mathcal{D}$ intersects the local unstable manifold $\lumfd(\Lambda_{g})$ of the continuation $\Lambda_{g}$ of $\Lambda$.
The blender $\Lambda$ is called a \emph{blender-horseshoe} if the restriction $f|_{\Lambda}$ is topologically conjugate to the restriction of some diffeomorphism on a horseshoe. 
See, for example, \cite{BD96,BD12}. 
Moreover, a blender-horseshoe $\Lambda$ is called \emph{wild} if $\umfd(\Lambda)$ and $\smfd(\Lambda)$ have a homoclinic tangency in the closure of the superposition region of $\Lambda$. 
Such a non-hyperbolic situation already appeared in \cite{BD12}, although the terminology ``wild'' is not used there.

A blender-horseshoe alone does not give the observable coding required in Definition \ref{describable}. 
Indeed, if $\Lambda$ is a blender-horseshoe far from any homoclinic tangency, then $f|_{\Lambda}$ is topologically conjugate to the shift map on $\{0,1\}^{\mathbb{Z}}$. 
Thus one has a coding pair $\{\mathbb U_0,\mathbb U_1\}$ of $\Lambda$, and the density condition (DEI) in Definition \ref{describable} can be imposed on a prescribed sequence of integer intervals $\mathbb I_k$ for any strictly increasing sequence $(\alpha_k)_{k\in\mathbb N}$ of positive integers and non-negative integers $\beta_k$ with $\alpha_k+\beta_k+1\leq \alpha_{k+1}$. 
However, without a suitable non-hyperbolic mechanism, there is no reason for the observable condition (OCD) to hold on a set $D$ of positive Lebesgue measure. 
The role of wildness is precisely to supply this missing observable component. 
The following theorem shows that, for wild blender-horseshoes constructed in this paper, the symbolic coding can be realized on positive-measure sets in a robust way.

To state the theorem, we introduce two definitions.

\begin{dfn}[majority condition for codes]\label{m-cnd}
For a given binary code $\underline{v}=(v_{j})_{j\in\mathbb{Z}}\in \{0,1\}^{\mathbb{Z}}$ and $n\in \mathbb{N}$, define
\begin{equation}\label{eqn_pnv}
p_{n}(\underline{v})=\frac{
\#\left\{
j\in \mathbb{N}\ ;\ 
n-(3n)^{2/3}< j\leq n,\ 
v_{j}=0
\right\}
}{(3n)^{2/3}}.
\end{equation}
We say that $\underline{v}$ satisfies the \emph{majority condition} if
\[
\liminf_{n\rightarrow \infty}p_{n}(\underline{v})\geq \frac{1}{2}.
\]\index{p_nv@$p_{n}(\underline{v})$}
\end{dfn}

\noindent
We denote by $\widehat\Sigma^{(\mathrm{mj})}$ the subset of $\{0,1\}^{\mathbb{Z}}$ consisting of elements satisfying the majority condition. 
The relation between this majority condition for binary codes and the majority condition for diffeomorphisms used in \cite{KNS23} will be discussed in Remark \ref{r_majority}. 
We set $\Lambda^{(\mathrm{mj})}=\mathcal{I}^{-1}(\widehat\Sigma^{(\mathrm{mj})})$, which is an $f$-invariant dense subset of $\Lambda$.

\begin{dfn}[non-trivial wandering domain]
A \emph{non-trivial wandering domain} for $f$ in $\mathrm{Diff}^{r}(M)$ is a connected non-empty open subset $D$ of $M$ satisfying the following conditions: 
\begin{enumerate}[(1)]
\item $f^{i}(D)\cap f^{j}(D)=\emptyset$ for any integers $i,j\geq 0$ with $i\neq j$, 
\item \label{non-triviality} the union of the $\omega$-limit sets of all $x\in D$, namely $\omega(D,f)=\bigcup_{x\in D}\omega(x,f)$, is not equal to a single periodic orbit.
\end{enumerate}
\end{dfn}

\noindent 
This definition follows the terminology of \cite{dMvS}. 
Instead of \eqref{non-triviality}, one may adopt the stronger non-triviality condition used in \cite{CV01}, requiring that $D$ is not contained in the basin of a weak attractor. 
With this stronger convention, a wandering domain of the classical Denjoy counterexample is no longer non-trivial, since the basin of the weak attractor is the whole circle $S^{1}$. 
The wandering domains constructed in this paper are non-trivial in this stronger sense as well.

In one-dimensional dynamics, the absence of wandering domains is related to several regularity and non-flatness conditions; see \cite{vS10}. 
In higher dimensions, by contrast, phenomena caused by non-hyperbolicity may occur robustly or generically, and wandering domains have played an important role in this context; see \cite{CV01, KS17, B22, BB23}. 
In the present paper, the construction of non-trivial wandering domains is also a key ingredient in proving Theorem \ref{mainthm}; see Remark \ref{mainrmk}.

Let $\Lambda^{\prime}$ be any non-empty subset of a blender-horseshoe for $f$. 
For any diffeomorphism $g$ sufficiently $C^{r}$-close to $f$, we denote by $\Lambda^{\prime}_{g}$ the continuation of $\Lambda^{\prime}$; see \cite{BD12}. 
The following is the main theorem of this paper. 
Its proof occupies Sections \ref{S_WBH}--\ref{S_Proof_thms}.

\begin{mainresult}[$C^{r}$-robustness of strong pluripotency]\label{mainthm}
Let $\dim M\geq 3$ and $2\leq r<\infty$. 
Then 
there exists an $f_{0}\in \mathrm{Diff}^{r}(M)$ having a wild blender-horseshoe $\Lambda$ 
and an open neighborhood $\mathcal{O}$ of $f_0$ in $\mathrm{Diff}^{r}(M)$  
such that any element $f$ of $\mathcal{O}$ is strongly pluripotent for the continuation $\Lambda_{f}^{(\mathrm{mj})}$ of $\Lambda^{(\mathrm{mj})}$.
\end{mainresult}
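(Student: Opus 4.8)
The strategy is to reduce Theorem \ref{mainthm} to the Pluripotency Lemma (Theorem \ref{p-lemma}) by exhibiting a single diffeomorphism $f_0$ — and then a whole $C^r$-neighborhood $\mathcal O$ of it — which is $\Sigma'$-describable over a wild blender-horseshoe $\Lambda$, where $\Sigma'\subset\{0,1\}^{\mathbb N_0}$ is the set of one-sided codes satisfying the majority condition of Definition \ref{m-cnd} (matching the two-sided $\Sigma'$ in the statement via $\widehat{\cdot}$). First I would construct $f_0$: start from a model diffeomorphism on $M$ (with $\dim M\ge 3$) carrying a blender-horseshoe $\Lambda$ with $s$-index $\ge 2$ and a superposition region in the sense recalled before Definition \ref{m-cnd}, arrange a homoclinic tangency between $\umfd(\Lambda)$ and $\smfd(\Lambda)$ located inside the closure of the superposition region so that $\Lambda$ is \emph{wild}, and do so in a way that is $C^r$-persistent — the blender property and the robustness of the tangency (since a blender absorbs disks for a whole $C^1$-open set of perturbations) give an open $\mathcal O$ on which all of this survives with continuations $\Lambda_f$, $\Lambda_f'$. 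Then fix the coding pair $\{\mathbb U_0,\mathbb U_1\}$ of $\Lambda$ (available since away from the tangency $f_0|_\Lambda$ is conjugate to the full shift) and transport it to $\mathcal O$ via $\mathcal I_g$.

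Next I would verify the two conditions (DEI) and (OCD) of Definition \ref{describable} for every $\underline v\in\Sigma'$ and every $f\in\mathcal O$. For (DEI), the majority condition on $\underline v$ already controls the frequency of $0$'s in windows of length $(3n)^{2/3}$; the sequence of integer intervals $\mathbb I_k$ should be chosen as the (maximal) blocks of consecutive indices on which the prescribed itinerary can be \emph{forced} by a single perturbation-and-wandering-domain construction, with the gaps between them of controlled (sparse) density so that $\lim_N \#\{n<N: n\in\bigcup\mathbb I_k\}/N=1$; this is a combinatorial bookkeeping step, and the $(3n)^{2/3}$-type scaling is exactly what makes the gap lengths $o(n)$. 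The substantive content is (OCD): for the given code I must produce $g$ arbitrarily $C^r$-close to $f$ and a positive-Lebesgue-measure set $D$ with $g^n(D)\subset\mathbb U_{v_n}$ for all $n\in\bigcup_k\mathbb I_k$. Here I would use the wild blender-horseshoe machinery: near the homoclinic tangency inside the superposition region one can (after a $C^r$-small perturbation, à la Colli–Vargas \cite{CV01} and the affine-horseshoe / renormalization picture of \cite{KNS23}) create a \emph{non-trivial wandering domain} $D$ whose forward orbit shadows a chosen itinerary through the horseshoe; the blender's absorbing property is what guarantees, robustly in $f$, that the returning disk always re-enters $\lumfd(\Lambda_f)$ so that the shadowing can be continued indefinitely, and the majority condition is precisely the hypothesis under which the expansion/contraction budget along the itinerary keeps $D$ from being destroyed (the $0$-majority keeps the relevant orbit segments in the contracting regime of the blender). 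This yields $\Sigma'$-describability, and Theorem \ref{p-lemma} then upgrades it to: every $f\in\mathcal O$ is strongly pluripotent for $\mathcal I^{-1}(\widehat\Sigma')=\Lambda_f'$.

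The main obstacle is (OCD), and specifically making the wandering-domain construction work \emph{uniformly over the whole open set $\mathcal O$} and \emph{simultaneously for all codes in $\Sigma'$} rather than for one handpicked $f_0$ and one code. The difficulty is twofold: first, controlling the geometry of the images $g^n(D)$ — their diameters must shrink along the long $\mathbb I_k$-blocks (so that $g^n(D)$ actually fits inside the open set $\mathbb U_{v_n}$) while the domain must not collapse or leave the region where the blender can recapture it during the sparse gaps; this is where the quantitative estimates of \cite{KNS23} on distortion and on the ``majority'' of contraction enter, and where Proposition \ref{p_D_open} reminds us that $D$ must stay off $\Lambda_g$ itself. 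Second, the perturbation $g$ depends on $\underline v$, so I must check that for each fixed $f\in\mathcal O$ the perturbations realizing different codes are all $C^r$-small of the \emph{same} size — i.e.\ the construction is local and bounded near the tangency, independent of the code — which is what legitimately lets a single open $\mathcal O$ serve every $\underline v\in\Sigma'$. Once these quantitative shadowing-and-persistence estimates are in place (carried out in Sections \ref{S_WBH}–\ref{S_Proof_thms}), the reduction via Theorem \ref{p-lemma} is immediate, and the affirmative answer to Takens' last problem in dimension $\ge 3$ follows by taking $x_g$ along a code in $\widehat\Sigma'$ whose forward empirical measures do not converge.
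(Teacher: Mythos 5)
Your proposal follows essentially the same route as the paper: reduce Theorem~\ref{mainthm} to Theorem~\ref{p-lemma} by showing $\Sigma'$-describability for every $f$ in a $C^r$-neighborhood $\mathcal{O}$ of an explicit affine model $f_0$ with a wild blender-horseshoe, obtain (OCD) via a Colli--Vargas/KNS-style wandering-domain construction near the robust tangency, and invoke the majority condition to keep the contraction budget favorable. One clarification worth flagging, since you describe the perturbation as ``local and bounded near the tangency'': in the paper $g=f\circ\psi_n$ is \emph{not} a single bump but a $C^r$-convergent infinite sum of perturbations supported on disjoint shrinking cubes $\mathbb{D}_k$ (one per tangency return, Lemma~\ref{l_psi_n}, requiring $L>9r$), and the convergence is entangled with the free/scaffolding code decomposition of Lemma~\ref{lem-3-1} and the quadratic length $|\underline{u}_k|=k^2$ — which in turn is exactly what makes the $(3n)^{2/3}$ window in the majority condition match up; this interlocking is the real content of what your plan calls ``combinatorial bookkeeping.''
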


\begin{rmk}\label{mainrmk}
\begin{enumerate}[$\bullet$]
\item For the proof of Theorem \ref{mainthm}, we show that every element $f$ of $\mathcal{O}$ is $\Sigma^{(\mathrm{mj})}$-describable, which is equivalent to $f$ being strongly pluripotent by Theorem \ref{p-lemma}.
\item The set in the proof of Theorem \ref{mainthm} corresponding to $D$ in Definition \ref{dfn:0624} is connected and open. 
Hence, by Proposition \ref{p_D_open}, any such $D$ is disjoint from $\Lambda_g^{(\mathrm{mj})}$. 
Moreover, $D$ is constructed so as to be a non-trivial wandering domain for some $g$ arbitrarily $C^{r}$-close to $f$; see Theorem \ref{thmWD} for details.
\end{enumerate}
\end{rmk}

We finally explain that the Wasserstein condition \eqref{eq:0625} does not, by itself, force the stronger orbit-wise condition \eqref{eq:0625_s}. 
Although \eqref{eq:0625_s} immediately implies \eqref{eq:0625}, the converse implication fails for a given realizing perturbation and positive-measure set. 
The following theorem shows that any diffeomorphism $f$ as in Theorem \ref{mainthm} can be approximated by another diffeomorphism for which the pluripotency condition \eqref{eq:0625} is realized on a contracting wandering domain, while the corresponding strong condition \eqref{eq:0625_s} fails for the same prescribed orbit and domain.

\begin{thm}\label{thm_new}
Under the assumptions of Theorem \ref{mainthm}, 
for any element $f$ of $\mathcal{O}$, there exist 
$x\in \Lambda_f^{(\mathrm{mj})}$, a diffeomorphism $g\in \mathcal{O}$ arbitrarily $C^r$-close to $f$, and a contracting wandering domain $D$ for $g$ such that
\[
\liminf_{n\rightarrow\infty}\frac1{n}\sum_{j=0}^{n-1}\inf_{y\in D}\mathrm{dist}(g^j(y),g^j(x_g))>0
\quad
\text{and}
\quad
\lim_{n\rightarrow\infty} \sup_{y\in D}d_W(\delta_{y,g}^n,\delta_{x_g,g}^n)=0,
\]
where $x_g\in \Lambda_g^{(\mathrm{mj})}$ is the continuation of $x$.
\end{thm}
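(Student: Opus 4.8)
The plan is to produce, starting from $f\in\mathcal{O}$, a perturbation $g$ whose wandering domain $D$ is \emph{contracting} onto a single saddle periodic orbit of the blender-horseshoe, so that its empirical measures converge to a Dirac measure while the orbit itself stays at bounded distance from the reference orbit $x_g$. Concretely, I would first pick a code $\underline v=\underline u\,\underline w\in\widehat\Sigma'$ such that the forward part $\underline w\in\Sigma'$ is \emph{eventually periodic} (say eventually equal to the constant code $0^\infty$, which satisfies the majority condition since $p_n\to 1$), and set $x=\mathcal I^{-1}(\underline v)\in\Lambda_f'$. Its continuation $x_g$ then has a forward orbit that converges to a fixed saddle point $q_g$ of $\Lambda_g$ lying in $\mathbb U_0$. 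By Theorem \ref{mainthm} (equivalently, by $\Sigma'$-describability via Theorem \ref{p-lemma}) there is such a $g$ and a positive-Lebesgue-measure open $D$ with $g^n(D)\subset\mathbb U_{v_n}$ for all $n$ in the dominating union of intervals $\bigcup_k\mathbb I_k$; by the last line before Theorem \ref{thm_new} and by Remark \ref{mainrmk} this $D$ is a non-trivial wandering domain.

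Next I would upgrade $D$ to be \emph{contracting}. Because the forward orbit of $x_g$ enters a neighborhood of the hyperbolic fixed point $q_g$ and stays there (the code is eventually $0^\infty$), the pieces $g^n(D)$ for large $n$ sit in a small chart around $q_g$ where $g$ is uniformly hyperbolic; a further arbitrarily small $C^r$ perturbation supported away from $\Lambda_g$ (not destroying membership in $\mathcal O$) can be used to make the diameters $\operatorname{diam}(g^n(D))$ shrink to $0$ — this is exactly the mechanism by which contracting wandering domains are built in \cite{CV01, KNS23}, and I would cite that construction rather than redo it. Once $\operatorname{diam}(g^n(D))\to 0$ and $\operatorname{dist}(g^n(D),g^n(x_g))\to 0$ (both accumulation points $\omega(D,g)$ and the limit of $g^n(x_g)$ equal $\{q_g\}$), the sequence $(\delta_{y,g}^n)_n$ is asymptotically independent of $y\in D$ and asymptotic to $(\delta_{x_g,g}^n)_n$: indeed $d_W(\delta_{y,g}^n,\delta_{x_g,g}^n)\le \frac1n\sum_{j=0}^{n-1}\operatorname{dist}(g^j(y),g^j(x_g))$, and since each summand tends to $0$ its Cesàro average does too, giving the second displayed limit $\sup_{y\in D}d_W(\delta_{y,g}^n,\delta_{x_g,g}^n)\to 0$.

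It remains to arrange the first (strict) inequality, and this is the main obstacle. The point is that $D$ must \emph{not} be a small neighborhood of $x_g$: by Proposition \ref{p_D_open} it is disjoint from $\Lambda_g$, so there is a positive lower bound $c_0>0$ for $\operatorname{dist}(D,q_g)$ — but that alone only controls finitely many early terms, since $g^n(D)$ and $g^n(x_g)$ both approach $q_g$. The right way is to choose the \emph{negative} part $\underline u$ of the code, equivalently the place where $D$ sits relative to the horseshoe, so that there is a \emph{definite-density} set of times $j$ at which $g^j(D)$ lies in $\mathbb U_1$ while $g^j(x_g)$ has already been absorbed near $q_g\in\mathbb U_0$; on such times $\operatorname{dist}(g^j(y),g^j(x_g))$ is bounded below by the gap between the two disjoint open sets $\mathbb U_0,\mathbb U_1$. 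Concretely I would instead take the forward code of $x$ to be eventually $0^\infty$ while forcing $D$ through a sequence of excursions into $\mathbb U_1$ of positive upper density — this is compatible with (DEI)/(OCD) because $D$'s itinerary need only match $\underline v$ on $\bigcup_k\mathbb I_k$, and the complementary (zero-density-of-times but recurrent) excursions of $D$ can be routed through $\mathbb U_1$; but to get a \emph{positive Cesàro liminf} rather than merely positive upper density I would instead exploit the wandering/returning geometry of the blender: between consecutive rescaling returns, $D$ spends a fixed proportion of time away from the $q_g$-chart (this proportion is built into the renormalization scheme of the wild blender-horseshoe, cf. the majority-condition bookkeeping), which yields $\liminf_n \frac1n\#\{j<n: g^j(D)\not\subset \text{$q_g$-chart}\}\ge\kappa>0$ and hence $\liminf_n\frac1n\sum_{j<n}\inf_{y\in D}\operatorname{dist}(g^j(y),g^j(x_g))\ge \kappa\,c_1>0$. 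Making that density bound $\kappa$ genuinely uniform and positive — i.e.\ extracting it cleanly from the construction behind Theorem \ref{mainthm} without re-proving the whole renormalization — is where the real work lies; everything else is soft.
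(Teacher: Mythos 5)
Your proposal has a genuine gap, and it is a structural one: the construction you set up in the second paragraph cannot simultaneously satisfy both displayed conditions, and the patch you sketch in the third paragraph is internally inconsistent.

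In your second paragraph you arrange that both $g^n(D)$ and $g^n(x_g)$ converge to the same fixed point $q_g$; from $\operatorname{dist}(g^j(y),g^j(x_g))\to 0$ you correctly deduce that the Ces\`aro averages tend to $0$. But the quantity in the \emph{first} displayed inequality is exactly such a Ces\`aro average of $\inf_{y\in D}\operatorname{dist}(g^j(y),g^j(x_g))$, so your own argument shows that the first inequality \emph{fails} for this choice of $x$ and $D$: you get $0$, not a positive liminf. Your third paragraph then proposes to force $D$ to spend a fixed positive proportion $\kappa>0$ of its time outside a chart around $q_g$. This contradicts your earlier claim that the excursions have ``zero density of times.'' More seriously, if $D$ spends density $\ge\kappa$ of its time at bounded distance from $q_g$, then $\delta_{y,g}^n$ carries mass $\ge\kappa$ bounded away from $q_g$ while $\delta_{x_g,g}^n\to\delta_{q_g}$; hence $d_W(\delta_{y,g}^n,\delta_{x_g,g}^n)$ is bounded below away from $0$, and the \emph{second} displayed limit fails. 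The two conditions pull in opposite directions precisely when $x_g$'s forward orbit converges to a single point, which is why any eventually-constant code is a bad choice of $x$.

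The paper resolves this tension by a different, and essential, idea: it introduces \emph{two} distinct points $z$ and $x$ of $\Lambda_f'$ whose forward codes are ``mirror images'' of each other on each free block $\mathbb I_k$ — roughly the first half of $\underline u_k$ is $0$ for $z$ and $1$ for $x$, and the second half is the opposite, with $\gamma_k\approx k^2/2$. Strong pluripotency (from Theorem~\ref{mainthm}) is applied to $z$, giving a contracting wandering domain $D$ that tracks $z_g$. On the trimmed sub-intervals $\mathbb I_k^{(q)\pm}$ the two orbits sit near \emph{opposite} fixed points $P_g$ and $Q_g$, so $\operatorname{dist}(g^j(z_g),g^j(x_g))>L-2\varepsilon$ with $L=\operatorname{dist}(P_g,Q_g)$ on a density-one set of times — this gives the first inequality with bound $L$. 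Yet the time-$\gamma_k$ shift pairs $j\in\mathbb I_k^{(q)-}$ with $j+\gamma_k\in\mathbb I_k^{(q)+}$ so that $g^j(z_g)$ is near $g^{j+\gamma_k}(x_g)$ (and vice versa), which after the Ces\`aro pairing forces $|\frac1n\sum_j(\varphi(g^j(z_g))-\varphi(g^j(x_g)))|$ to be small for every Lipschitz $\varphi$. The two orbits are thus pointwise far on average but statistically indistinguishable. This ``phase-shifted pair of codes'' is precisely the mechanism your proposal is missing, and without it the statement cannot be proved by the route you take.
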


\subsection{Pluripotency and Takens' last problem}

We next explain how the pluripotent mechanism yields non-trivial Dirac physical measures and historic behavior, the two phenomena discussed in Subsection \ref{ss_motivation}. 
This is the point at which the abstract mechanism introduced above is converted into concrete statistical consequences related to Takens' last problem.

In \cite{KNS23}, we studied a three-dimensional diffeomorphism similar to the concrete model of $f_0$ used in the proof of Theorem \ref{mainthm}. 
It was shown in \cite[Theorem A]{KNS23} that there exist diffeomorphisms $g$ arbitrarily $C^{r}$-close to $f_0$ satisfying either of two contrasting properties: the existence of a non-trivial Dirac physical measure, or the existence of a historic wandering domain. 
However, the density of such diffeomorphisms in an ambient open set was not addressed there. 
Using the model and arguments developed for Theorem \ref{mainthm}, we prove that these properties are $C^{r}$-persistent relative to dense classes inside the same $C^r$-open set $\mathcal{O}$.

Let $\mathscr{D}_{\underline{0}}$ denote the property that a diffeomorphism $g\in \mathcal{O}$ has a non-trivial Dirac physical measure supported at the saddle fixed point $\mathcal{I}_{g}^{-1}(\underline{0})$, where $\underline{0}$ is the two-sided infinite sequence all of whose entries are zero. 
Let $\mathscr{H}$ denote the property that $g$ has a non-trivial wandering domain such that the forward orbit of every point in this domain has historic behavior.

\begin{thm}\label{DrcHst}
Suppose that $\mathcal{O}$ is the $C^{r}$-open set in Theorem \ref{mainthm}. 
Then there exist disjoint dense classes $\mathcal{D}$ and $\mathcal{H}$ in $\mathcal{O}$ satisfying the following conditions.
\begin{enumerate}[\rm (1)]
\item \label{DrcHst-1} $\mathscr{D}_{\underline{0}}$ is $C^{r}$-persistent relative to $\mathcal{D}$.
\item \label{DrcHst-2} $\mathscr{H}$ is $C^{r}$-persistent relative to $\mathcal{H}$.
\end{enumerate}
\end{thm}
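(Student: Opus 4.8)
The plan is to produce the two dense classes $\mathcal D$ and $\mathcal H$ by exploiting the mechanism already built for Theorem \ref{mainthm}, namely the $\Sigma'$-describability of every $f\in\mathcal O$ together with the Pluripotency Lemma. The key observation is that describability supplies, for each target code $\underline v\in\Sigma'$, a diffeomorphism $g$ arbitrarily $C^r$-close to $f$ and a positive-Lebesgue-measure set $D$ (in fact a non-trivial wandering domain, by Remark \ref{mainrmk} and Theorem \ref{thmWD}) whose forward orbit shadows the $g$-orbit of $x_g=\mathcal I_g^{-1}(\widehat{\underline v})$ in the strong sense \eqref{eq:0625_s}. Thus the statistical behaviour of $D$ is slaved to the statistical behaviour of the single orbit through $x_g$, and the whole problem reduces to choosing the code $\underline v$ so that the orbit of $x_g$ has the desired asymptotics.

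First I would construct $\mathcal D$. Take the code $\underline 0$, which lies in $\Sigma'$ (it trivially satisfies the majority condition since $p_n(\underline 0)\equiv 1$). By Theorem \ref{mainthm}/\ref{p-lemma}, for every $f\in\mathcal O$ and every neighbourhood of $f$ there is a $g$ in that neighbourhood and a positive-measure $D$ with $g^n(D)\subset\mathbb U_0$ for all $n$ in a density-one set of integers; refining the construction as in the proof of Theorem \ref{mainthm} one gets this for \emph{all} $n\ge 0$, so $D$ lies in the basin (in the Cesàro sense) of the saddle fixed point $\mathcal I_g^{-1}(\underline 0)$, giving $\delta^n_{y,g}\to\delta_{\mathcal I_g^{-1}(\underline 0)}$ for every $y\in D$. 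Hence this $g$ has property $\mathscr D_{\underline 0}$. One then checks that $\mathscr D_{\underline 0}$ is $C^r$-open: a saddle fixed point persists, its continuation stays hyperbolic, and the Cesàro-shadowing property of the wandering (or contracting) domain $D$ is stable under small perturbations because it is governed by the hyperbolic estimates near the blender-horseshoe. Let $\mathcal D$ be the interior of the set of $g\in\mathcal O$ with property $\mathscr D_{\underline 0}$; the above shows $\mathcal D$ is dense in $\mathcal O$, and $\mathscr D_{\underline 0}$ is $C^r$-persistent relative to it by construction.

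Next I would construct $\mathcal H$. Here I choose a code $\underline v\in\Sigma'$ whose tail blocks alternate between long runs of $0$'s and long runs of $1$'s, with block lengths growing fast enough that the empirical measures $\delta^n_{x_g,g}$ oscillate between neighbourhoods of $\delta_{\mathcal I_g^{-1}(\underline 0)}$ and $\delta_{\mathcal I_g^{-1}(\underline 1)}$ without converging — the standard device for producing historic behaviour (cf.\ \cite{KS17, KNS23}), and one arranges the $(3n)^{2/3}$-window averages so that the majority condition still holds. Strong pluripotency for $\mathcal I^{-1}(\widehat\Sigma')$ then transfers this non-convergence to every $y\in D$: \eqref{eq:0625_s} forces $d_W(\delta^n_{y,g},\delta^n_{x_g,g})\to 0$, so $(\delta^n_{y,g})_n$ also fails to converge, i.e.\ every point of the non-trivial wandering domain $D$ has historic behaviour. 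This $g$ has property $\mathscr H$, and $\mathscr H$ is again $C^r$-open: the wandering-domain property is robust (disjointness of iterates is an open condition and non-triviality persists since $\omega(D,g)$ meets the blender), and the oscillation of the empirical measures is forced by the persistent coded-itinerary structure, so small perturbations cannot destroy it. Let $\mathcal H$ be the interior of the set of such $g$; density follows from describability. Finally, $\mathcal D$ and $\mathcal H$ are disjoint because a diffeomorphism in $\mathcal D$ has a convergent empirical sequence on its distinguished domain whereas one in $\mathcal H$ does not — more precisely, the two itinerary prescriptions $\underline 0$ and the oscillating $\underline v$ are incompatible on the (open, hence $\Lambda_g'$-disjoint) domain $D$.

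The main obstacle I anticipate is the $C^r$-\emph{persistence} (openness) claims rather than density: density is immediate from $\Sigma'$-describability, but to see that $\mathscr D_{\underline 0}$ and $\mathscr H$ survive \emph{all} small perturbations I must verify that the positive-measure domain $D$ and its Cesàro-statistics (convergence to a Dirac mass in one case, prescribed oscillation in the other) depend on $g$ only through quantities that vary continuously and are controlled by the uniformly hyperbolic estimates of the wild blender-horseshoe. Concretely, the delicate point is that the construction of $D$ in Theorem \ref{mainthm} produces, for each $g$, its \emph{own} domain, and I need a version in which a single open $D$ works on a whole $C^r$-neighbourhood while preserving the itinerary $g^n(D)\subset\mathbb U_{v_n}$ on a density-one set of times; this is where the robustness of the superposition region of the blender and the persistence of the homoclinic tangency in its closure must be invoked carefully.
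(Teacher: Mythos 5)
There is a genuine gap, and it stems from a misreading of the statement. By Definition~\ref{PR}, a property is $C^r$-\emph{persistent} relative to $\mathcal{C}$ simply means every $f\in\mathcal{C}$ has the property; it does \emph{not} require $\mathcal{C}$ to be open (that stronger notion is robustness). Theorem~\ref{DrcHst} only asks for dense classes $\mathcal{D},\mathcal{H}$, not open ones. You instead try to prove that $\mathscr{D}_{\underline 0}$ and $\mathscr{H}$ are $C^r$-open and then take $\mathcal{D}$ and $\mathcal{H}$ to be the corresponding interiors. This cannot work for a purely structural reason: you also want $\mathcal{D}$ and $\mathcal{H}$ disjoint and both dense, but if both were open and dense in $\mathcal{O}$ they would intersect by Baire. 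Moreover, the paper explicitly says in the introduction that \emph{whether historic behavior is a robust property or not remains an open problem}; asserting $\mathscr{H}$ is $C^r$-open (the claim that the wandering domain and its oscillating statistics ``survive all small perturbations'' because they are ``governed by the hyperbolic estimates'') is precisely this open problem and is not established by the vague appeal to hyperbolicity. Finally, defining $\mathcal{D}$ as an interior ruins the density argument: showing that diffeomorphisms with $\mathscr{D}_{\underline 0}$ are dense says nothing about whether their interior is dense, and indeed that interior may well be empty.

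A smaller but real error: you claim one can refine the construction so that $g^n(D)\subset\mathbb{U}_0$ for \emph{all} $n\ge 0$. This is false. Lemma~\ref{lem-3-1} only gives you freedom in the sub-code $\underline{u}_k$; the connecting pieces $\underline{w}^{(n_0+Lk)}$, $\underline{\iota}_k$ and $\underline{\gamma}^{(m_k)}$ are forced and generically contain $1$'s. What the paper actually does (and what you should do) is set $\underline{u}_k=\underbrace{0\dots 0}_{k^2}$; the free part has length $k^2$ while the forced parts have total length $O(k)$, so the itinerary has $0$'s on a set of integers of density one, and this is enough to force $\delta^n_{y,g}\to\delta_{P_g}$ by the estimates in \cite{KNS23}. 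The correct proof thus takes $\mathcal{D}$ to be the (not necessarily open) set of diffeomorphisms $g=f\circ\psi_n$ obtained from arbitrary $f\in\mathcal{O}$ by plugging in $\underline u_k=0^{k^2}$, and $\mathcal{H}$ to be the set obtained from codes oscillating between prescribed zero/one proportions in alternating \emph{eras} (blocks of generations $k_s\le k<k_{s+1}$ long enough that the new era dominates all earlier ones), which makes the Cesàro averages fail to converge. Density is then immediate because the construction applies to every $f\in\mathcal{O}$ and every $C^r$-neighborhood, and persistence relative to $\mathcal{D}$ (resp.\ $\mathcal{H}$) holds by fiat since membership in $\mathcal{D}$ (resp.\ $\mathcal{H}$) is precisely what guarantees the property. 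Your idea of choosing the code $\underline 0$ for Dirac measures and an oscillating code for historic behavior is the right one; the flaw is entirely in the openness overlay.
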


Theorem \ref{DrcHst}\,\eqref{DrcHst-2} gives an affirmative answer to 
Takens' last problem in the sense of dense persistence inside the non-empty $C^r$-open set $\mathcal{O}$ for diffeomorphisms of dimension at least three. 
This should be distinguished from the higher-dimensional result of Barrientos \cite{B22}. 
The method of \cite{B22} is based on reducing the dynamics to an appropriate two-dimensional setting and applying the result of \cite{KS17}. 
Such an approach may yield a similar conclusion for a dense subset of some open set arbitrarily close to $f_0$, but it does not directly give density throughout the specific neighborhood $\mathcal{O}$ obtained in Theorem \ref{mainthm}. 
In contrast, the present approach uses the pluripotent mechanism associated with wild blender-horseshoes and yields a dense class $\mathcal{H}$ in the whole of $\mathcal{O}$.

\subsection{Further discussions and outline of this paper}
We end this introduction with a further question and an outline of the paper.

The question concerns the extent to which pluripotency can hold on the whole hyperbolic set. 
Theorem \ref{mainthm} gives strong pluripotency for the dense invariant subset $\Lambda_f^{(\mathrm{mj})}$ of the blender-horseshoe $\Lambda_f$. 
In related two-dimensional settings, it follows from \cite{KS17} that any diffeomorphism in a Newhouse domain is strongly pluripotent for a certain proper subset $\Lambda'$ of the corresponding basic set $\Lambda$. 
However, due to technical restrictions, this does not immediately imply strong pluripotency for the whole basic set. 
This leads to the following question.

\begin{op}\label{op_pluri2} 
Is any diffeomorphism in every Newhouse domain strongly pluripotent for the corresponding basic set?
\end{op}

\medskip

We now give an outline of the paper.
Theorem \ref{p-lemma} is independent of the remaining main results, and hence Section \ref{S_Pf_Pl}, which contains its proof, can be read without referring to the later sections. 
The proof of Theorem \ref{mainthm}, given in Section \ref{prThmB}, uses several lemmas and propositions from Sections \ref{S_WBH} through \ref{S_CWD}; these are proved by geometric arguments. 
In the first half of Section \ref{S_Proof_thms}, we prove Theorem \ref{thm_new} by combining ideas from the proofs of Theorems \ref{p-lemma} and \ref{mainthm}. 
In the second half of Section \ref{S_Proof_thms}, we prove Theorem \ref{DrcHst} using combinatorial descriptions of statistical behavior.

As a guide to the reader, a table of contents is provided at the beginning of the paper, and each subsection begins with a brief explanation of its contents. 
Appendix \ref{Ap_curvature} contains indispensable, though somewhat technical, differential-geometric estimates. 
An index is included at the end of the paper for the reader's convenience.

\section{Proof of Pluripotency lemma}\label{S_Pf_Pl}

The main aim of this section is to prove Pluripotency Lemma (Theorem \ref{p-lemma})  under the notations in Section \ref{S_Introduction}.
In addition we prove Proposition \ref{p_D_open}.

Let ${\mathbb{V}}_{0},{\mathbb{V}}_{1}$ be compact  subsets of ${\mathbb{U}}_{0}$ and ${\mathbb{U}}_{1}$ respectively such that, for any 
$g$ sufficiently close to $f$ in $\mathrm{Diff}^r(M)$, 
$\Lambda_g=\bigcap_{n\in\mathbb{Z}}g^n({\mathbb{V}}_{0}\cup {\mathbb{V}}_{1})$ 
is the continuation of $\Lambda$.
See Subsection \ref{ss_affine_model} for a practical example of such a compact set pair.

\begin{lem}\label{l_pluri_1}
Suppose that $f$ is strongly pluripotent for a subset $\Lambda'$ of a horseshoe $\Lambda$.
For any $x\in \Lambda'$, let $g$ be an element of $\mathrm{Diff}^r(M)$ arbitrarily $C^r$-close to $f$ and $D$ a subset of $M$ satisfying the conditions in (2) of Definition \ref{dfn:0624}.
Then, for $(v_i)_{i\in\mathbb{Z}}=\mathcal{I}(x)$,  
$$
\lim_{n\rightarrow \infty}
\frac{\#\bigl\{0\leq i\leq n-1;\, g^i(D)\subset {\mathbb{U}}_{v_i}\bigr\}}{n}=1
$$
holds.
\end{lem}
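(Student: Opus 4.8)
The plan is to convert the Cesàro smallness of the orbit distances in \eqref{eq:0625_s} into a statement about the symbolic itinerary, the bridge being the compactness of $\mathbb{V}_{0},\mathbb{V}_{1}$, which produces a \emph{uniform} margin separating them from the complements of $\mathbb{U}_{0},\mathbb{U}_{1}$.

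First I would fix $x\in\Lambda'$ together with $g$ and $D$ as in Definition \ref{dfn:0624}\,(2), and record two elementary facts. (a) Since $\mathbb{V}_{0}\subset\mathbb{U}_{0}$ and $\mathbb{V}_{1}\subset\mathbb{U}_{1}$ with $\mathbb{V}_{0},\mathbb{V}_{1}$ compact and $\mathbb{U}_{0},\mathbb{U}_{1}$ open, there exists $\varepsilon>0$ such that the $\varepsilon$-neighborhood of $\mathbb{V}_{v}$ is contained in $\mathbb{U}_{v}$ for $v=0,1$. (b) Because $x_{g}\in\Lambda_{g}=\bigcap_{n\in\mathbb{Z}}g^{n}(\mathbb{V}_{0}\cup\mathbb{V}_{1})$ and the coding map $\mathcal I_{g}$ of $\Lambda_{g}$ extends $\mathcal I$, so that $\mathcal I_{g}(x_{g})=\mathcal I(x)=(v_{i})_{i\in\mathbb{Z}}$, we have $g^{i}(x_{g})\in\mathbb{V}_{v_{i}}$ for every $i\in\mathbb{Z}$ (it lies in $\mathbb{V}_{0}\cup\mathbb{V}_{1}$ by the intersection formula, and in $\mathbb{U}_{v_{i}}$ by the itinerary, and these two memberships are compatible only via $\mathbb{V}_{v_{i}}$ since $\mathbb{U}_{0}\cap\mathbb{U}_{1}=\emptyset$).

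Next, set $a_{i}=\sup_{y\in D}\mathrm{dist}\bigl(g^{i}(y),g^{i}(x_{g})\bigr)$. If $a_{i}<\varepsilon$ then, by (b), every point of $g^{i}(D)$ lies in the $\varepsilon$-neighborhood of $\mathbb{V}_{v_{i}}$, hence in $\mathbb{U}_{v_{i}}$ by (a); that is, $g^{i}(D)\subset\mathbb{U}_{v_{i}}$. Consequently
\[
\bigl\{0\le i\le n-1\,;\, a_{i}<\varepsilon\bigr\}\subseteq\bigl\{0\le i\le n-1\,;\, g^{i}(D)\subset\mathbb{U}_{v_{i}}\bigr\},
\]
so it suffices to show that the proportion of indices $i\in\{0,\dots,n-1\}$ with $a_{i}\ge\varepsilon$ tends to $0$. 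But the hypothesis \eqref{eq:0625_s} says precisely that $\frac1n\sum_{i=0}^{n-1}a_{i}\to0$, and a Markov-type estimate gives $\#\{0\le i\le n-1\,;\, a_{i}\ge\varepsilon\}\le\varepsilon^{-1}\sum_{i=0}^{n-1}a_{i}$, whence
\[
\frac1n\,\#\bigl\{0\le i\le n-1\,;\, a_{i}\ge\varepsilon\bigr\}\le\frac1\varepsilon\cdot\frac1n\sum_{i=0}^{n-1}a_{i}\longrightarrow 0.
\]
Combining this with the inclusion above yields $\frac1n\#\{0\le i\le n-1\,;\, g^{i}(D)\subset\mathbb{U}_{v_{i}}\}\to1$, which is the assertion.

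There is essentially no analytic obstacle here; the only point demanding care is step (b), namely that the continuation $x_{g}$ actually visits the compact cores $\mathbb{V}_{v_{i}}$ rather than merely the open sets $\mathbb{U}_{v_{i}}$, since it is exactly this which guarantees that a single $\varepsilon>0$ works for all times $i$ simultaneously. This in turn rests on the defining property of $\mathbb{V}_{0},\mathbb{V}_{1}$ recalled just before the lemma and on the compatibility of $\mathcal I_{g}|_{\Lambda_{g}}$ with $\mathcal I$; once these are in place the rest is the elementary Cesàro/Markov argument above.
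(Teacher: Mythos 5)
Your proposal is correct and follows essentially the same route as the paper: both exploit the compact--open gap between $\mathbb{V}_v$ and $\mathbb{U}_v$ to get a uniform lower bound on $\mathrm{dist}(g^i(y),g^i(x_g))$ whenever $g^i(D)\not\subset\mathbb{U}_{v_i}$, and then convert the Ces\`aro hypothesis \eqref{eq:0625_s} into a density statement. The paper phrases this as a proof by contradiction with a constant $d_0$ and splits the indices according to $j=0,1$, whereas you apply a Markov-type inequality directly with a single margin $\varepsilon$; these are the same estimate written forwards versus backwards.
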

\begin{proof}
Let $x_g$ be the continuation of any element $x$ of $\Lambda'$.
We consider the positive number $d_{0}$ defined as
$$
d_0=\min\left\{\mathrm{dist}_M(\partial {\mathbb{V}}_0,\partial {\mathbb{U}}_0), \mathrm{dist}_M(\partial {\mathbb{V}}_1,\partial {\mathbb{U}}_1)\right\}.
$$
For $j=0,1$, let $(i_k^{(j)})_{k\geq 1}$ be the maximal sequence of strictly increasing non-negative integers with 
$g^{i_k^{(j)}}(x_g)\in {\mathbb{U}}_j$.
Take an arbitrarily small $\delta>0$.
For the proof, it suffices to show the following inequalities
\begin{equation}\label{eqn_2delta}
\begin{split}
&\frac{\#\bigl\{k\geq 1;\,0\leq i_k^{(0)}\leq n-1, g^{i_k^{(0)}}(D)\not\subset {\mathbb{U}}_0\bigr\}}{n}< \delta,\\
&\frac{\#\bigl\{k\geq 1;\,0\leq i_k^{(1)}\leq n-1, g^{i_k^{(1)}}(D)\not\subset {\mathbb{U}}_1\bigr\}}{n}< \delta
\end{split}
\end{equation}
hold for all sufficiently large $n$.
If the first inequality of \eqref{eqn_2delta} did not hold, then 
there would exist a strictly  
increasing sequence $(n_m)_{m\in \mathbb N}$ of positive integers satisfying 
$$
\frac{\#\bigl\{k\geq 1;\, 0\leq i_k\leq n_m-1, g^{i_k^{(0)}}(D)\not\subset {\mathbb{U}}_0\bigr\}}{n_m}\geq \delta.
$$
Since $\mathrm{dist}(g^{i_k^{(0)}}(y),g^{i_k^{(0)}}(x_g))\geq d_{0}$ for any $y \in D$ with $g^{i_k^{(0)}}(y)\not\in \mathbb{U}_{0}$, we have
\begin{align*}
\frac1{n_m}\sum_{i=0}^{n_m-1}\sup_{y\in D}\left\{\mathrm{dist}(g^i(y),g^i(x_g))\right\}&
\geq \frac1{n_m}\sum_{k=1}^{m'}\sup_{y\in D}\left\{\mathrm{dist}(g^{i_k^{(0)}}(y),g^{i_k^{(0)}}(x_g))\right\}\\
&\geq \frac{m'd_0}{n_m}\geq d_0\delta, 
\end{align*}
where $m'=\#\bigl\{0\leq i_k^{(0)}\leq n_m-1;\, g^{i_k^{(0)}}(D)\not\subset {\mathbb{U}}_0\bigr\}$.
This contradicts \eqref{eq:0625_s} and hence the first inequality of \eqref{eqn_2delta} holds.
The second inequality is proved quite similarly, so the proof is complete.
\end{proof}

\begin{lem}\label{l_beta_kL}
Suppose that $f$ is $\Sigma$-describable with respect to the intervals ${\mathbb{I}}_k=[\alpha_k,\alpha_k+\beta_k]\cap \mathbb{N}$ 
satisfying the conditions of Definition \ref{describable}.
Then one can suppose that, for any $L>0$, the following equation
\begin{equation}\label{eqn_beta_kL}
\lim_{n\rightarrow\infty}\frac{\#\{0\leq i\leq n-1;\, i\in {\mathbb{I}}_k \text{\rm\ with $\beta_k\geq L$}\}}{n}=1
\end{equation}
holds if necessary redefining ${\mathbb{I}}_k$'s.
\end{lem}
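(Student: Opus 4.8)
The plan is to exploit the freedom to \emph{redefine} the intervals by merging consecutive pieces. The point is that if $\mathbb I_k$ and $\mathbb I_{k+1}$ are \emph{adjacent}, meaning $\alpha_{k+1}=\alpha_k+\beta_k+1$, then $\mathbb I_k\cup\mathbb I_{k+1}$ is again an integer interval, and since (OCD) only constrains the \emph{union} $\bigcup_k\mathbb I_k$ and (DEI) only involves that same union, both conditions continue to hold verbatim — with the \emph{same} $g$ and $D$ — after replacing any maximal run of mutually adjacent intervals by its union. So first I would carry out this merging once and for all, obtaining new intervals, still denoted $\mathbb I_k=[\alpha_k,\alpha_k+\beta_k]\cap\mathbb N$, with the extra feature that no two consecutive ones are adjacent, i.e.\ $\alpha_{k+1}\geq\alpha_k+\beta_k+2$ for every $k$. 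The proof then splits according to whether the (original, equivalently merged) family has infinitely many or only finitely many genuine gaps.

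If only finitely many genuine gaps occur, then from some index on all the original intervals are mutually adjacent, so their union contains a half-line $[\alpha_\ast,\infty)\cap\mathbb N$; in particular $g^n(D)\subset\mathbb U_{v_n}$ for all $n\geq\alpha_\ast$. In this case I would simply discard the given intervals and use $\mathbb I'_k=[\gamma_k,\gamma_k+k]\cap\mathbb N$ with $\gamma_1=\alpha_\ast$ and $\gamma_{k+1}=\gamma_k+k+2$, so that consecutive $\mathbb I'_k$ are separated by exactly one integer. Then $\bigcup_k\mathbb I'_k$ omits only a density-zero set of integers, so (DEI) holds; (OCD) holds because $\mathbb I'_k\subset[\alpha_\ast,\infty)$; and $\beta'_k=k\to\infty$ makes \eqref{eqn_beta_kL} trivial for every $L$ (the short intervals are finite in number and bounded, hence of density $0$).

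Now suppose infinitely many genuine gaps occur, so after merging each $\mathbb I_k$ is followed by a gap containing at least the integer $\alpha_k+\beta_k+1\notin\bigcup_j\mathbb I_j$, and these omitted integers are pairwise distinct and strictly increasing. Fix $L>0$ and suppose, for contradiction, that \eqref{eqn_beta_kL} fails. Since every integer of $\bigcup_j\mathbb I_j$ lies in a unique $\mathbb I_k$, and (DEI) gives the union density $1$, the failure of \eqref{eqn_beta_kL} forces the integers lying in some $\mathbb I_k$ with $\beta_k<L$ to have positive upper density $c>0$. Each such $\mathbb I_k$ has at most $L$ points, so for $n$ along a suitable sequence tending to infinity there are at least $cn/(2L)$ indices $k$ with $\mathbb I_k\cap[0,n-1]\neq\emptyset$ and $\beta_k<L$; for each of them $\alpha_k+\beta_k+1$ lies in $[0,n+L-1]$ and is omitted from $\bigcup_j\mathbb I_j$. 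Hence $[0,n+L-1]$ contains at least $cn/(2L)$ integers outside $\bigcup_j\mathbb I_j$, contradicting (DEI), which forces that count to be $o(n)$. Thus \eqref{eqn_beta_kL} holds for every $L$, as desired.

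The main obstacle, I expect, is not the counting — which is routine — but rather recognizing that merging is legitimate: one must observe that (OCD) is a statement about the union of the intervals rather than the individual intervals, so no shadowing estimate or perturbation needs to be revisited, and then handle the degenerate eventually-adjacent configuration separately so as to keep each interval finite while retaining full density and unbounded lengths.
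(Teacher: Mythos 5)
Your proof is correct and follows essentially the same approach as the paper's: merge adjacent intervals so that consecutive intervals are separated by a gap, then split into the case of a terminal half-line (re-split into intervals of unbounded length and density-one union) and the case of infinitely many gaps (where a positive density of points in short intervals would force a positive density of gap points, contradicting (DEI)). The only cosmetic differences are that the paper first augments the intervals to be the \emph{exact} set of $n$ with $g^n(D)\subset\mathbb U_{v_n}$, and it re-splits the half-line dyadically rather than with length-$k$ pieces; neither affects the argument.
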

\begin{proof}
One can reconstruct the intervals ${\mathbb{I}}_k=[\alpha_k, \alpha_k+\beta_k]\cap \mathbb{Z}$ so that they satisfy the following conditions. 
\begin{itemize}
\setl
\item
$g^n(D)$ is contained in ${\mathbb{U}}_{v_n}$ if and only if $n$ is an element of some ${\mathbb{I}}_k$.
\item
$\alpha_k+\beta_k+2\leq \alpha_{k+1}$ for any $k$.
\end{itemize}
In the case when $\alpha_k+\beta_k+1=\alpha_{k+1}$, we consider the new interval $[\alpha_k,\alpha_{k+1}+\beta_{k+1}]\cap \mathbb{Z}$ instead of ${\mathbb{I}}_k\cup {\mathbb{I}}_{k+1}$.
From the construction, we know that the sequence of the new intervals, still denoted by $({\mathbb{I}}_k)$, satisfies (DEI) and (OCD).
Here we need to consider the following two cases.

\smallskip

\noindent{\bf Case 1.} $({\mathbb{I}}_k)$ consists of finitely many intervals. 
Then the last entry ${\mathbb{I}}_{k_0}$ is a half-open interval $[\alpha_{k_0},\infty)\cap \mathbb{Z}$.
 We split ${\mathbb{I}}_{k_0}$ into infinitely many intervals such that 
 ${\mathbb{I}}_{k_0}^{\mathrm{new}}=[\alpha_{k_0},\alpha_{k_0}+2]\cap \mathbb{Z}$ and 
 ${\mathbb{I}}_{k_0+i}^{\mathrm{new}}=[\alpha_{k_0}+2^{i+1},\alpha_{k_0}+2^{i+2}-2]\cap \mathbb{Z}$ for $i\geq 1$.
 It is not hard to see that the sequence of the new intervals satisfies \eqref{eqn_beta_kL}.
  
\smallskip

\noindent{\bf Case 2.} $({\mathbb{I}}_k)$ consists of infinitely many intervals. 
If \eqref{eqn_beta_kL} did not hold, then there would exist $\delta>0$ and a strictly increasing sequence $\{n_m\}$ 
of positive integers satisfying the following condition.
\begin{equation}\label{eqn_Nm}
\frac{\#\left\{0\leq i\leq n_m-1; i\in {\mathbb{I}}_k\,\ \text{for some $k$ with $\beta_k< L$}\right\}}{n_m}>\delta.
\end{equation}
Let $k_1<k_2<\cdots<k_p$ be the positive integers with $\beta_{k_j}<L$ and ${\mathbb{I}}_{k_j}\cap [0,n_m-1]\neq \emptyset$.
By \eqref{eqn_Nm}, we have $\dfrac{pL}{n_m}\geq \delta$ or equivalently $p\geq L^{-1}n_m\delta$.
Note that $[0,n_m-1]\setminus \bigcup_{k=1}^\infty[\alpha_k,\alpha_k+\beta_k]$ consists of at least $p-1$ connected components, each of which is 
either an open or half-open interval.
Since $\alpha_k+\beta_k+2\leq \alpha_{k+1}$, each of these intervals contains at least one positive integer.
It follows that 
$$
\liminf_{m\rightarrow \infty}
\frac{\#\left\{0\leq i\leq n_m-1; i\not\in \bigcup_{k=1}^\infty{\mathbb{I}}_k\right\}}{n_m}
\geq 
\lim_{m\rightarrow \infty}\frac{L^{-1}n_m\delta-1}{n_m}=L^{-1}\delta.$$
This contradicts that $({\mathbb{I}}_k)$ satisfies (DEI) and hence \eqref{eqn_beta_kL} holds.
\end{proof}

Now we are ready to prove Theorem \ref{p-lemma}.

\begin{proof}[Proof of Theorem \ref{p-lemma}]
Under the assumptions of Theorem \ref{p-lemma}, we suppose that $f$ is strongly pluripotent for $\Lambda'=\mathcal{I}^{-1}(\widehat\Sigma)$ and $g$ is an element of $\mathrm{Diff}^r(M)$ arbitrarily $C^r$-close $f$ and satisfying 
\eqref{eq:0625_s} for any $y\in D$.
By Lemma \ref{l_pluri_1}, 
$$
\lim_{n\rightarrow \infty}
\frac{\#\bigl\{0\leq i\leq n-1;\, g^i(D)\subset {\mathbb{U}}_{v_i}\bigr\}}{n}=1.
$$
Then one can construct a sequence $({\mathbb{I}}_k)_{k\in\mathbb{N}}$ satisfying (DEI) and (OCD) as in the proof of Lemma \ref{l_beta_kL}.
Thus $f$ is $\Sigma$-describable over $\Lambda$.

Conversely, we suppose that $f$ is $\Sigma$-describable over $\Lambda$. 
Fix $\underline v=(v_0v_1v_2\dots)\in \Sigma$ and choose $g$  arbitrarily $C^{r}$-close to $f$ such that $g$ satisfies the two conditions in Definition \ref{describable}  for increasing sequences $(\alpha _k)_{k\in \mathbb N}$, 
non-negative integers $\beta _k$ $(k\in \mathbb N)$ as in Lemma \ref{l_beta_kL} and a positive Lebesgue measure set $D$.
Let $\underline u=(\ldots u_{-3} u_{-2} u_{-1})\in \{0,1\}^{\mathbb{Z}_{(-)}}$, and denote 
$\mathcal I_g^{-1}(\underline u \underline v)$ by $x_g$ for simplicity. 
Fix $\varepsilon >0$ and $y\in D$ arbitrarily.

For a fixed positive integer $N$, consider any $\beta_i$ with $\beta_i\geq 2N+1$.
For any $0\le j\le \beta _i- 2N$, 
\begin{equation}\label{eq:0808b}
g^{\alpha _i + N+ j}(D \cup \{x_g\})\subset \bigcap _{k=- N}^{N} g^{-k}(\mathbb U_{v_{\alpha _i +N+j+k}})
\end{equation}
holds.
Indeed, it follows from  the choice of $j$ that 
\begin{equation}\label{eq:0808}
\alpha _i \le \alpha _i + N+ j+k \le \alpha _i+\beta _i
\end{equation}
if $-N\le k\le N$, so that $g^{\alpha _i + N+ j+k}(D)\subset \mathbb U_{v_{\alpha _i +N+j+k}}$ by 
(OCD) of Definition \ref{describable}. 
On the other hand, 
$$\{x_g\} = \bigcap _{n\ge 0}g^{-n} (\mathbb U_{v_{n}}) \cap \bigcap _{n<0} g^{-n} (\mathbb U_{u_{n}})$$
because $x_g=\mathcal I_g^{-1}(\underline u \underline v)$, 
so that $g^n(x_g)\in \mathbb U_{v_{n}}$ for all $n\ge 0$.
In particular, $g^{\alpha _i + N +j +k}(x_g)\in  \mathbb U_{v_{\alpha _i +N+j+k}}$ for any $-N\le k\le N$ because $\alpha _i +N+j+k \ge \alpha _i\ge 0$ by \eqref{eq:0808}. 
 That is, we have \eqref{eq:0808b}.  
Hence, since 
\[
 \lim _{N\rightarrow \infty} \sup _{(w_k)_{k\in \mathbb Z}\in \{0,1\}^{\mathbb Z}} \mathrm{diam}\left(\bigcap _{k=- N}^{N} g^{-k}(\mathbb U_{w_{k}})\right)  =0,
\]
 one can find $N\in \mathbb N$ such that for any $\beta_k$ with $\beta _k \ge  2N+1$ 
 and 
 any $j\in \mathbb I_k':=[\alpha_k +N+1, \alpha _k +\beta _k -N]\cap \mathbb{Z}$,
\begin{equation}\label{eq:1103}
\mathrm{diam}(g^j(D \cup \{ x_g\})) \leq \varepsilon.
\end{equation}

Let $N_0$ be the smallest integer with $N_0\geq \dfrac{4N\mathrm{diam}(M)}{\varepsilon}$ 
and denote by $({\mathbb{I}}_{k_a})_{a\in\mathbb{N}}$ the subsequence of $({\mathbb{I}}_k)_{k\in\mathbb{N}}$ consisting of all 
intervals $[\alpha_{k_a},\alpha_{k_a}+\beta_{k_a}]\cap \mathbb{Z}$ with $\beta_{k_a}\geq 2N+N_0$.
We set simply ${\mathbb{I}}_{k_a}={\mathbb{I}}_{(a)}$ and ${\mathbb{I}}_{k_a}'={\mathbb{I}}_{(a)}'$ 
and consider the splitting of   
$\dfrac{1}{n}\sum _{j=0}^{n-1} \sup_{y\in D}\left\{\mathrm{dist}(g^j(y),g^j(x_g))\right\}$ 
as follows.
\begin{align*}
\frac{1}{n}\sum _{j=0}^{n-1} \sup_{y\in D} \{ \mathrm{dist}(g^j(y),& g^j(x_g)) \} 
=  \frac{1}{n}\sum _{j\in [0,n-1] \cap ( \bigcup_a\mathbb I_{(a)}')} \sup_{y\in D} \{ \mathrm{dist}(g^j(y),g^j(x_g)) \}\\
&+\frac{1}{n}\sum _{j\in [0,n-1] \cap ( \bigcup_a\mathbb I_{(a)}\setminus \mathbb I_{(a)}')} 
\sup_{y\in D}\{\mathrm{dist}(g^j(y),g^j(x_g))\}\\
&+\frac{1}{n}\sum _{j\in [0,n-1] \cap (\mathbb{N}_{0}\setminus \bigcup_{a}\mathbb{I}_{(a)})} 
\sup_{y\in D}\{\mathrm{dist}(g^j(y),g^j(x_g))\}.
\end{align*}
By \eqref{eq:1103}, the first term of the right-hand side is bounded by $\varepsilon$.
Let $a_0$ be the greatest integer among $a\in \mathbb{N}$ with ${\mathbb{I}}_{(a)}\cap [0,n-1]\neq \emptyset$.
Since $\beta_{k_a}\geq 2N+N_0$, $a_0-1\leq \dfrac{\sum_{a=1}^{a_0-1}\beta_{k_a}}{2N+N_0}$.
Since moreover $\sum_{a=1}^{a_0-1}\beta_{k_a}<n$, 
the second term is bounded by 
\begin{align*}
\frac{2Na_0}{n}\,\mathrm{diam}(M)&\leq 
\frac{2N}{n}\left(\frac{\sum_{a=1}^{a_0-1}\beta_{k_a}}{2N+N_0}+1\right)\mathrm{diam}(M)\\
&< 
2N\left(\frac1{2N+N_0}+\frac1{n}\right)\mathrm{diam}(M)\\
&\leq \frac{4N}{2N+N_0}\,\mathrm{diam}(M)
< \frac{\varepsilon}{\mathrm{diam}(M)}\,\mathrm{diam}(M)=\varepsilon
\end{align*}
for any $n\geq 2N+N_0$.
By \eqref{eqn_beta_kL}, 
there exists $n_0$ such that the third term is bounded 
by $\varepsilon$ for any $n\geq n_0$.
It follows that
$$
\limsup_{n\rightarrow\infty} 
\frac1{n} \sum_{j=0}^{n-1}\sup _{y\in D}\{\mathrm{dist}\left(g^j(y),g^j(x_g)\right)\}< 3\varepsilon.
$$
Since $\varepsilon$ is arbitrary, $f$ is strongly pluripotent for $\Lambda'$.
This completes the proof of Theorem \ref{p-lemma}.
\end{proof}

As it is seen in the proof, actually we have shown a conclusion stronger than \eqref{eq:0625_s}. 
More precisely, 
since  
$g^n({\mathcal I_g^{-1}\bigl(\underline u\underline v)})$ and $g^n(x_g)$ are contained in the same ${\mathbb{U}}_{v_n}$ 
for any $\underline u \in \{0,1\}^{ \mathbb{Z}_{(-)}}$ and $n\geq 0$, it follows that 
\[
\lim _{n\rightarrow \infty} \sup _{\underline u \in \{0,1\}^{ \mathbb{Z}_{(-)}} }  
\frac1{n} \sum_{j=0}^{n-1}\sup_{ y\in D}\left\{\mathrm{dist}\left(g^j(y),g^j({\mathcal I_g^{-1}\bigl(\underline u\underline v)}\bigr)\right)\right\}=0,
\]
where $\mathcal I_g$ is the coding map of $g$ corresponding to $\mathcal I$ for $f$.

\bigskip

\begin{proof}[Proof of Proposition \ref{p_D_open}]
Suppose the contrary that $D\cap \Lambda_g$ would contain an element $y$.
Since $D$ is an open set, one can choose $\varepsilon>0$ sufficiently small so that the $\varepsilon$-neighborhood $O_\varepsilon(y)$ of $y$ in $M$ is contained in $D$.
We may assume that there exists a strictly increasing sequence $(n_m)_{m\in\mathbb{N}}$ of positive integers 
such that 
$$\#\{0\leq j\leq n_m\,;\,g^j(x_g)\in {\mathbb{V}}_0\}\geq \frac{n_m}2$$ 
if necessary replacing ${\mathbb{V}}_0$ with ${\mathbb{V}}_1$.
We set $\mathcal{I}_g(y)=(v_n)_{n\in \mathbb{Z}}$.
Then there exists a positive integer $n_0$ such that 
$$\mathcal{I}_g^{-1}
\bigl\{(v_n')_{n\in\mathbb{Z}}\,;\, v_n'=v_n \text{ for $|n|\leq n_0$}
\bigr\}\subset O_\varepsilon(y).$$
In particular, for $\underline w=(w_n)_{n\in \mathbb{Z}}$ with $w_n=v_n$ for $n\leq n_0$ and $w_n=1$ for $n\geq n_0+1$, 
$z=\mathcal{I}_g^{-1}(\underline w)$ is an element of $\Lambda_g$ contained in $O_\varepsilon(y)\subset D$.

Since $\mathrm{dist}(\mathbb{V}_0,\mathbb{V}_1)>0$, there exist a Lipschitz 
map $\varphi:M\longrightarrow [-1,1]$ and a constant $0<L\leq 1$ with 
$\mathrm{Lip}(\varphi)\leq 1$, $\varphi(M)\subset [0,L]$ and such that
$\varphi(x)=L$ for $x\in \mathbb{V}_0$ and $\varphi(x)=0$ for $x\in \mathbb{V}_1$. 
Then we have 
\begin{align*}
\limsup_{m\rightarrow \infty}d_W(\delta_{z,g}^{n_m}, \delta_{x_g,g}^{n_m})&\geq 
\limsup_{m\rightarrow \infty}\frac1{n_m}\left|\,\sum_{j=0}^{n_m-1}(\varphi\circ g^j(z)-\varphi\circ g^j(x_g))\right|\\
&\geq \lim_{m\rightarrow \infty}\frac1{n_m}\left(\frac{n_m}2-n_0\right)L=\frac{L}2.
\end{align*}
This contradicts \eqref{eq:0625}.
Thus we have $D\cap \Lambda_g=\emptyset$.
\end{proof}

\section{Wild blender-horseshoes}\label{S_WBH}

For simplicity, in this section, we only consider the case of $n=3$ in Theorem \ref{mainthm}.
So one can suppose that the manifold $M$ has a coordinate neighborhood which is identified with 
the sub-space $(-1,2)^3$ of $\mathbb{R}^3$.
We will see in Section \ref{S_Proof_thms} that  
our arguments here still hold in the case of $n>3$ 
for certain elements $f_0$ of $\mathrm{Diff}^r(M)$ having a horseshoe 
$\Lambda$ with $\dim W^{\mathrm{u}}(\Lambda)=\dim W^{\mathrm{cs}}(\Lambda)=1$ and $\dim W^{\mathrm{ss}}(\Lambda)=n-2$.

\subsection{A non-hyperbolic affine model with asymmetricity condition}\label{ss_affine_model}

In this subsection, we define a non-hyperbolic diffeomorphism $f_0$ which is similar to that given in \cite{KNS23}.
The open set $\mathcal{O}$ in the theorem is a small $C^r$-open neighborhood $\mathcal{O}(f_0)$ 
of $f_0$.

Let $\lambda_{\rm ss}, \lambda_{\rm cs0}, \lambda_{\rm cs1}$ and $\lambda_{\rm u}$ 
be real positive constants
with
\begin{subequations}
\begin{equation}\label{eqn_eigen_v}
\lambda_{\rm ss}<\lambda_{\rm cs0}<1/2<\lambda_{\rm cs1}<1<\lambda_{\rm cs0}+\lambda_{\rm cs1},\quad 2<\lambda_{\rm u}.
\index{lambda_ss@$\lambda_{\rm ss},\lambda_{\rm cs0},\lambda_{\rm cs1}$, 
$\lambda_{\rm cs0}, \lambda_{\rm cs1},\lambda_{\rm u}$}
\end{equation}
Moreover, we suppose that   
\begin{equation}\label{eqn_pdc}
\lambda_{\rm cs0}\lambda_{\rm cs1}\lambda_{\rm u}^{2}<1,
\end{equation}
which corresponds to the partially dissipative condition for $f_0$.
We fix a sufficiently small  positive number $\varepsilon_0$.
In particular, we may suppose that
\begin{equation}\label{eqn_lam1ev}
\lambda_{\mathrm{cs} 1}(1+\varepsilon_0)<1.
\end{equation}
\end{subequations}
Consider the 3-dimensional block 
 $\mathbb{B}\index{B@$\mathbb{B}$}=I_{\varepsilon_{0}}^{3}$ in $M$, where
$$
I_{\varepsilon_{0}}\index{Ie0@$I_{\varepsilon_0}$}=[-\varepsilon_{0},1+\varepsilon_{0}],
$$
and  the 
vertical sub-blocks of $\mathbb{B}$ defined as 
$$
\mathbb{V}_{0}= [-\varepsilon_{0}, \lambda_{\rm u}^{-1}+\varepsilon_0]\times I_{\varepsilon_{0}}^{2},\quad 
\mathbb{V}_{1}= [1-\lambda_{\rm u}^{-1}-\varepsilon_0,1+\varepsilon_{0}]\times I_{\varepsilon_{0}}^{2}. 
$$
\index{V01@$\mathbb{V}_0$, $\mathbb{V}_1$}
\begin{subequations}
Let $\index{f0@$f_0$}f_{0}$ be a 3-dimensional diffeomorphism such that $f_{0}|_{\mathbb{V}_{0}\cup \mathbb{V}_{1}}$ is defined as
\begin{equation}\label{eqn_f0V}
f_{0}(x,y,z)=
\begin{cases}
(\lambda_{\mathrm{u}}x,\lambda_{\mathrm{ss}}y,\zeta_0(z))&\text{if}\ (x,y,z)\in \mathbb{V}_{0},\\
(\lambda_{\mathrm{u}}(1-x),-\lambda_{\mathrm{ss}}y+1,\zeta_1(z)) &\text{if}\ (x,y,z)\in \mathbb{V}_{1},
\end{cases}
\end{equation} 
where 
$\zeta_0$ and $\zeta_1$ are the affine maps on $I_{\varepsilon_0}$ given by
\begin{equation}\label{eqn_def_zeta}
\zeta_0(z)=\lambda_{\mathrm{cs} 0}z\quad\text{and}\quad \zeta_1(z)=\lambda_{\mathrm{cs} 1}z+1-\lambda_{\mathrm{cs} 1}.
\end{equation}
\end{subequations}
See Figure \ref{f_3_1}.
\begin{figure}[hbtp]
\centering
\scalebox{0.6}{\includegraphics[clip]{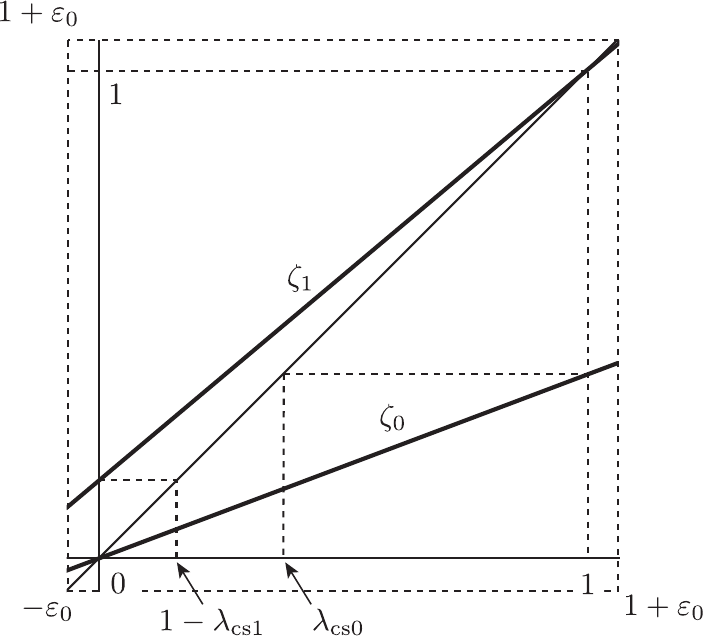}}
\caption{}
\label{f_3_1}
\end{figure}
From our setting, $f_{0}$ has the uniformly hyperbolic set  
\[\Lambda_{f_{0}}=\bigcap_{n\in \mathbb{Z}} f_{0}^n(\mathbb{V}_{0}\cup \mathbb{V}_{1})\]
\index{Lf0@$\Lambda_{f_{0}}$}
which belongs to the class of blender-horseshoes, see \cite{BD96,BD12} for details.

\begin{rmk}[Asymmetricity condition]\label{rmk2}
The inequalities 
\eqref{eqn_eigen_v} give asymmetric contractions for $f_{0}$ along the centre-stable direction for the blender-horseshoe.
This asymmetricity is unnecessary for general cases, but it is essential in our paper, 
which is used in the proof of Lemma \ref{lem7.3}.
\end{rmk}

In addition, we suppose another condition to obtain a non-hyperbolic situation.
Let $S_{1/2}\index{S12@$S_{1/2}$}$ be the $x=1/2$ section of $\mathbb{B}$ and $\mathbb{H}_{\varepsilon_0}\index{He0@$\mathbb{H}_{\varepsilon_0}$}$ the 
$\varepsilon_0$-neighborhood of $S_{1/2}$ in $\mathbb{B}$, that is, 
$\mathbb{H}_{\varepsilon_0}=[1/2-\varepsilon_0,1/2+\varepsilon_0]\times I_{\varepsilon_0}^2$.
For any $(x,y,z)\in \mathbb{H}_{\varepsilon_0}$, 
we suppose that   
\begin{equation}\label{eqn_tang}
f_{0}^{2}(x,y,z)=\left(-a_{1}\Bigl(x-\frac{1}{2}\Bigr)^{2}+a_{2}z+\mu,\ a_{3}\Bigl(y-\frac{1}{2}\Bigr)+\frac{1}{2},\ 
a_{4}\Bigl(x-\frac{1}{2}\Bigr)+\frac{1}{2} \right),
\end{equation}
where $a_{1},a_{2},a_{3},a_{4}$ are real constants 
 with 
\begin{equation}\label{eqn_a_1_4}
a_{1}>0,\ 
|a_{3}|<1-2\lambda_{\rm ss}\quad\text{and}\quad a_2a_3a_4<0.
\end{equation} 
The second condition assures that $f_{0}^{2}(\mathbb{G}_{\varepsilon_0})$ lies 
between $f_{0}(\mathbb{V}_{0})$ and $f_{0}(\mathbb{V}_{1})$. 
The third means that $f_0^2|_{\mathbb{G}_{\varepsilon_0}}$ is orientation preserving.
The constant $\mu$ is taken so that $f_0^2(\mathbb{H}_{\varepsilon_0})$ is contained in $(0,\lambda_{\mathrm{u}}^{-1})\times I_{\varepsilon_0}^2$.
See Figure \ref{f_3_2}.
\begin{figure}[hbtp]
\centering
\scalebox{0.7}{
\includegraphics[clip]{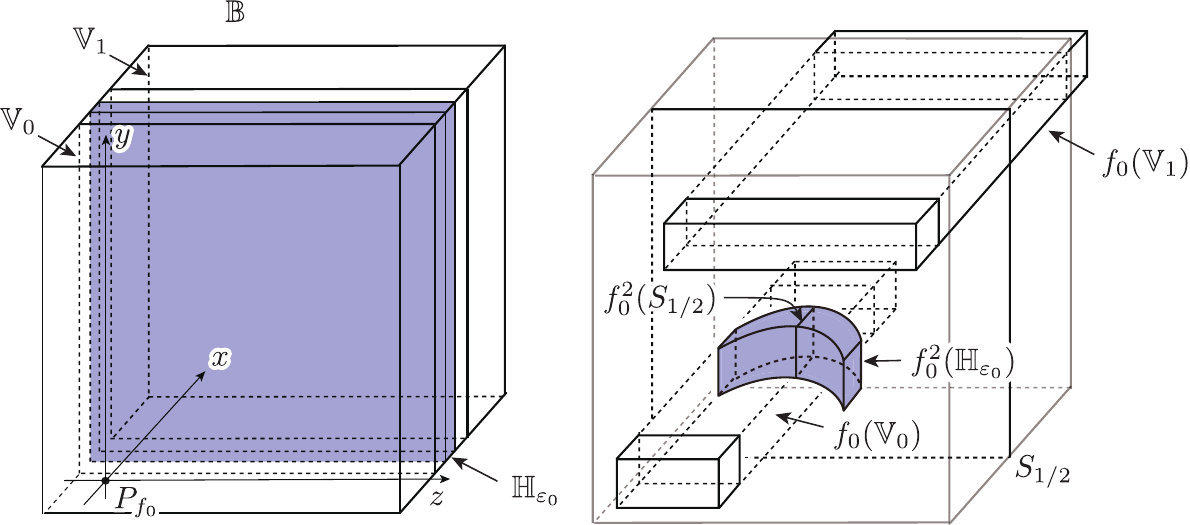}
}
\caption{ } 
\label{f_3_2}
\end{figure}
For example, in the case of $a_2>0$, the condition is equivalent to that $\mu$ satisfies 
$$a_1\varepsilon_0^2+a_2\varepsilon_0<\mu<\lambda_{\mathrm{u}}^{-1}-a_2(1+\varepsilon_0).$$
Hence there exists such a $\mu$ for any sufficiently small $\varepsilon_0>0$.

As in \cite{KNS23}, the diffeomorphism $f_{0}$ of \eqref{eqn_tang}  
gives to a $C^{1}$-robust homoclinic tangency associated with $\Lambda_{f_0}$. 
Such a blender-horseshoe with robust homoclinic tangency 
is called a \emph{wild blender-horseshoe}.

\subsection{Invariant cone-fields and stable and unstable foliations}\label{ss_cone_foliation}
One can choose the neighborhood $\index{Of0@$\mathcal{O}(f_0)$}\mathcal{O}(f_0)$ of $f_0$ in $\mathrm{Diff}^r(M)$ so that, for any $f\in \mathcal{O}(f_0)$, 
$f(\mathbb{V}_0\cup \mathbb{V}_1)\cap (\partial_y\mathbb{B}\cup \partial_z\mathbb{B})=\emptyset$, 
where $\partial_y\mathbb{B}=I_{\varepsilon_0}\times \{-\varepsilon_0,1+\varepsilon_0\}\times I_{\varepsilon_0}$ and 
$\partial_z\mathbb{B}=I_{\varepsilon_0}^2\times \{-\varepsilon_0,1+\varepsilon_0\}$.
For any $\varepsilon>0$ smaller than $\varepsilon_0$, we consider an open neighborhood $\mathcal{O}_{\varepsilon}$ of $f_0$ in $\mathrm{Diff}^r(M)$ 
such that the closure  $\overline{\mathcal{O}_{\varepsilon}}$ is contained in $\mathcal{O}(f_0)$, 
$\mathcal{O}_{\varepsilon}\subset \mathcal{O}_{\varepsilon'}$ if $\varepsilon<\varepsilon'<\varepsilon_0$ and $\bigcap_{0<\varepsilon<\varepsilon_0}\overline{\mathcal{O}_\varepsilon}=\{f_0\}$.
Here we note that $\mathcal{O}_{\varepsilon}$ is in general smaller than the $\varepsilon$-neighborhood of 
$f_0$ in $\mathrm{Diff}^r(M)$ with respect to the $C^r$-metric.
Real numbers $a_g$ depending on $g\in \mathcal{O}(f_0)$ are denoted by $O(\varepsilon)$ if there exists a constant $C>0$ independent of $\varepsilon$ 
and satisfying $|a_g|\leq C\varepsilon$ for any $g\in \mathcal{O}_{\varepsilon}$.
For functions  $a_{g,t}$ $(t\in T)$ defined on a compact subset $A$ of $\mathbb{B}$ with a compact parameter space $T$, 
$a_{g,t}=O(\varepsilon)$ means that $\max\{|a_{g,t}|\,;\, \boldsymbol{x}\in A,t\in T\}=O(\varepsilon)$.

If necessary replacing $\mathcal{O}(f_0)$ with $\mathcal{O}_{\varepsilon}$ for a sufficiently small $\varepsilon>0$, 
we may assume that any $f\in \mathcal{O}(f_0)$ is sufficiently $C^r$-close to $f_0$ in $\mathrm{Diff}^r(M)$.
However, it does not always mean that $f^n$ is close to $f_0^n$ for integers $n$ with large 
absolute value $|n|$.
To overcome the difficulty, we will employ the following $\mathrm{u}$, $\mathrm{ss}$, $\mathrm{cs}$-cone-fields on $\mathbb{B}$.
\begin{align*}
\boldsymbol{C}_{\varepsilon}^{\mathrm{u}}(\boldsymbol{x})&=
\left\{
\boldsymbol{v}=(v^{\mathrm{u}},v^{\mathrm{s}},v^{\mathrm{cs}})\in T_{\boldsymbol{x}}(\mathbb{B});\,
\sqrt{(v^{\mathrm{s}})^2+(v^{\mathrm{cs}})^2}\leq \varepsilon |v^{\mathrm{u}}|
\right\},\\
\boldsymbol{C}_{\varepsilon}^{\mathrm{ss}}(\boldsymbol{x})&=
\left\{
\boldsymbol{v}=(v^{\mathrm{u}},v^{\mathrm{s}},v^{\mathrm{cs}})\in T_{\boldsymbol{x}}(\mathbb{B});\,
\sqrt{(v^{\mathrm{u}})^2+(v^{\mathrm{cs}})^2}\leq\varepsilon |v^{\mathrm{s}}|
\right\},\\
\boldsymbol{C}_{\varepsilon}^{\mathrm{cs}}(\boldsymbol{x})&=
\left\{
\boldsymbol{v}=(v^{\mathrm{u}},v^{\mathrm{s}},v^{\mathrm{cs}})\in T_{\boldsymbol{x}}(\mathbb{B});\,
|v^{\mathrm{u}}|\leq\varepsilon\sqrt{(v^{\mathrm{s}})^2+(v^{\mathrm{cs}})^2}
\right\}
\index{Ceu@$\boldsymbol{C}_{\varepsilon}^{\mathrm{u}}(\boldsymbol{x})$, $\boldsymbol{C}_{\varepsilon}^{\mathrm{ss}}(\boldsymbol{x})$, $\boldsymbol{C}_{\varepsilon}^{\mathrm{cs}}(\boldsymbol{x})$}
\end{align*}
for $\boldsymbol{x}\in\mathbb{B}$.
We say that a $C^1$-surface $F$ in $\mathbb{B}$ is \emph{adaptable} to $\boldsymbol{C}_{\varepsilon}^{\mathrm{cs}}$ if, for any $\boldsymbol{x}\in F$, 
the tangent plane $T_{\boldsymbol{x}} F$ is contained in $\boldsymbol{C}_{\varepsilon}^{\mathrm{cs}}(\boldsymbol{x})$.
Similarly a $C^1$-arc $\alpha$ in $\mathbb{B}$ is \emph{adaptable} to $\boldsymbol{C}_{\varepsilon}^{\mathrm{u}}$ (resp.\ $\boldsymbol{C}_{\varepsilon}^{\mathrm{ss}}$) if, 
for any $\boldsymbol{x}\in \alpha$, $T_{\boldsymbol{x}}\alpha$ is contained in  $\boldsymbol{C}_{\varepsilon}^{\mathrm{u}}(\boldsymbol{x})$ (resp.\ in $\boldsymbol{C}_{\varepsilon}^{\mathrm{ss}}(\boldsymbol{x})$).

One can suppose that, for any $f\in\mathcal{O}(f_0)$, these cone-fields are $f$-\emph{invariant}.
This means that
\begin{subequations}
\begin{equation}\label{eqn_fCu}
Df(\boldsymbol{x})(\boldsymbol{C}_{\varepsilon}^{\mathrm{u}}(\boldsymbol{x}))\subset \boldsymbol{C}_{\varepsilon}^{\mathrm{u}}(f(\boldsymbol{x}))
\end{equation}
for any $\boldsymbol{x}\in\mathbb{B}\cap f^{-1}(\mathbb{B})$, and 
\begin{equation}\label{eqn_fcscs}
Df^{-1}(\boldsymbol{x})(\boldsymbol{C}_{\varepsilon}^{\mathrm{ss}}(\boldsymbol{x}))\subset \boldsymbol{C}_{\varepsilon}^{\mathrm{ss}}(f^{-1}(\boldsymbol{x})),\quad
Df^{-1}(\boldsymbol{x})(\boldsymbol{C}_{\varepsilon}^{\mathrm{cs}}(\boldsymbol{x}))\subset \boldsymbol{C}_{\varepsilon}^{\mathrm{cs}}(f^{-1}(\boldsymbol{x}))
\end{equation}
\end{subequations}
for any $\boldsymbol{x}\in\mathbb{B}\cap f(\mathbb{B})$.
We know that, for any $f\in\mathcal{O}(f_0)$, 
there exists the blender horseshoe $\Lambda_f$\index{Lf@$\Lambda_{f}$} for $f$ which is the continuation of $\Lambda_{f_0}$.
For any $\boldsymbol{x}\in \Lambda_f$, we define by $\lumfd(\boldsymbol{x})$ the component of 
$W^{\mathrm{u}}(\boldsymbol{x})\cap \mathbb{B}$ containing $\boldsymbol{x}$ and fix the local unstable manifold of 
$\Lambda_f$ by $\lsmfd(\Lambda_{f})=\bigcup_{\boldsymbol{x}\in \Lambda_f}\lumfd(\boldsymbol{x})
\index{W_lusmfd@$W_{\mathrm{loc}}^{\mathrm{u}}(\Lambda_f)$, $W_{\mathrm{loc}}^{\mathrm{s}}(\Lambda_f)$}$, 
and the local stable manifold $W_{\mathrm{loc}}^{\mathrm{s}}(\Lambda_f)$ is fixed similarly.
Then any components of $W_{\mathrm{loc}}^{\mathrm{u}}(\Lambda_f)$ and $W_{\mathrm{loc}}^{\mathrm{s}}(\Lambda_f)$ 
are proper one and two dimensional submanifolds of $\mathbb{B}$ respectively.

\medskip

By the same procedure as in \cite[Subsection 2.4]{PT93},
we can obtain a  
$C^{0}$ stable foliation $\mathcal{F}^{\rm s}_{f}\index{Ffs@$\mathcal{F}^{\rm s}_{f}$}$ on $\mathbb{B}$ 
which is compatible with $\smfd_{\mathrm{loc}}(\Lambda_{f})$ and satisfying the following conditions.
\begin{enumerate}[({F}1)]
\makeatletter
\renewcommand{\p@enumi}{F}
\makeatother
\item
Each leaf of $\mathcal{F}_f^{\mathrm{s}}$ is a $C^r$-surface in $\mathbb{B}$.\label{F1}
\item
The restriction $\mathcal{F}_f^{\mathrm{s}}|_{\mathbb{H}_{\varepsilon_0}}$ consists of flat leaves parallel to the $yz$-plane.\label{F2}
\item
Any leaf of $\mathcal{F}_f^{\mathrm{s}}$ is adaptable to $\boldsymbol{C}_{\varepsilon}^{\mathrm{cs}}$.\label{F3}
\end{enumerate} 
By \eqref{F3}, $f^2(S_{1/2})$ meets leaves of $\mathcal{F}_f^{\mathrm{s}}|_{\mathbb{V}_{0,f}}$ $O(\varepsilon)$-almost orthogonally, that is, 
the intersection angle is $\pi/2+O(\varepsilon)$, where $\mathbb{V}_{i,f}$\index{V01f@$\mathbb{V}_{0,f}$, $\mathbb{V}_{1,f}$}  $(i=0,1)$ is the component of $\mathbb{B}\cap f^{-1}(\mathbb{B})$ contained in $\mathbb{V}_i$.
Moreover, by Proposition \ref{p_curvature_plane}, one can choose $\mathcal{F}_f^{\mathrm{s}}$ so that, 
for any leaf of $F$ of $\mathcal{F}_f^{\mathrm{s}}$ and any unit vector $\boldsymbol{u}$ tangent to $F$ 
at a point $\boldsymbol{x}$, the absolute value $|\kappa_{\boldsymbol{u}}(\boldsymbol{x})|$  of 
the normal curvature is $O(\varepsilon)$.

\subsection{U-bridges}\label{ss_u-bridge}
For any element $f$ of $\mathcal{O}(f_0)$, we may assume that 
then the continuation $\Lambda_f$ of $\Lambda_{f_0}$ is also a wild blender-horseshoe.
We fix a maximal segment in $\mathbb{B}$ parallel to the $x$-axis, which is  
naturally identified with $I_{\varepsilon_0}$.
Then $\varGamma_f^{\mathrm{u}}=\bigcap_{i=0}^\infty f^{-i}(\mathbb{B})\cap I_{\varepsilon_0}$ is a Cantor set in $I_{\varepsilon_0}$.
Let $B^{\mathrm{u}}(0)$ and $B^{\mathrm{u}}(1)$ be the smallest sub-intervals of $I_{\varepsilon_0}$ containing 
$\varGamma_f^{\mathrm{u}}\cap [-\varepsilon_0,1/2]$ and $\varGamma_f^{\mathrm{u}}\cap [1/2,1+\varepsilon_0]$ 
respectively.
Consider the continuous projection
$$\pi^{\rm u}_{f}:\mathbb{B}\longrightarrow
I_{\varepsilon_0}
\index{piuf@$\pi^{\rm u}_{f}$}
$$
along leaves of $\mathcal{F}_f^{\mathrm{s}}$.
We set $\mathbb{B}^{\mathrm{u}}(0)=(\pi_f^{\mathrm{u}})^{-1}(B^{\mathrm{u}}(0))$ and  $\mathbb{B}^{\mathrm{u}}(1)=(\pi_f^{\mathrm{u}})^{-1}(B^{\mathrm{u}}(1))$.
Note that $\mathbb{B}^{\mathrm{u}}(i)$ is contained in $\mathbb{V}_{i,f}$ for $i=0,1$.
For any integer $n\geq 1$, let  
$\underline{w}^{(n)}$ be a binary code of $n$ entries, that is, $\underline{w}^{(n)}=w_{1}\dots w_{n}\in \{0,1\}^{n}$, 
and let 
\begin{equation}\label{eqn_BBu}
\mathbb{B}^{\rm u}(\underline{w}^{(n)})\index{Buwn@$\mathbb{B}^{\mathrm u}(\underline{w}^{(n)})$}
=\left\{\boldsymbol{x}\in \mathbb{B}\,;\, f^{i-1}(\boldsymbol{x})\in \mathbb{B}^{\mathrm{u}}(w_i),
i=1,\dots,n\right\},  
\end{equation}
which is  called the \emph{u-bridge block} with the code $\underline{w}^{(n)}$. 
If it is necessary to specify the diffeomorphism $f$ concerning the u-bridge block, 
we may write $\mathbb{B}_{f}^{\rm u}(\underline{w}^{(n)})$.
Observe that, for any $n$, the family  
$\bigl(\mathbb{B}^{\rm u}(\underline{w}^{(n)})\bigr)_{\underline{w}^{(n)}\in \{0,1\}^{n}}$ 
consists of $2^{n}$ mutually disjoint 3-dimensional blocks. 
Then we say that the sub-interval  
\begin{equation}\label{eqn_BuBu}
B^{\mathrm{u}}(\underline{w}^{(n)})\index{Buwn@$B^{\mathrm{u}}(\underline{w}^{(n)})$}=\mathbb{B}^{\mathrm{u}}(\underline{w}^{(n)})\cap I_{\varepsilon_0}
=\pi_f^{\mathrm{u}}(\mathbb{B}^{\mathrm{u}}(\underline{w}^{(n)}))
\end{equation}
of $I_{\varepsilon_0}$ is the \emph{u-bridge} associated with the code $\underline{w}^{(n)}$.
The length $n=|\underline{w}^{(n)}|$ of $\underline{w}^{(n)}$ is called the \emph{generation} of $B^{\rm u}(\underline{w}^{(n)})$.

Now we define the subfamilies  
$\bigl(B_{k}^{\rm u}\bigr)_{k\geq 1}$ and 
$\bigl(\widetilde B_{k}^{\rm u}\bigr)_{k\geq 0}$
of $\bigl(B^{\rm u}(\underline{w}^{(n)})\bigr)_{n\geq 0, \underline{w}^{(n)}\in\{0,1\}^{n}}$ 
for any $f\in \mathcal{O}$ as follows.
First we choose $\mu$ in \eqref{eqn_tang} so that 
\begin{equation}\label{eqn_B0u}
\widetilde B_0^{\rm u}=B^{\rm u}(\underline{\widetilde w}^{(n_0)})\subset 
\pi^{\rm u}_{f}\circ f^2(S_{1/2}).
\end{equation}
for any $f\in \mathcal{O}(f_0)$ and some binary code $\widetilde{\underline{w}}^{(n_0)}$ of finite length $n_0>0$.
See Figure \ref{f_3_3}.
\begin{figure}[hbtp]
\centering
\scalebox{0.6}{\includegraphics[clip]{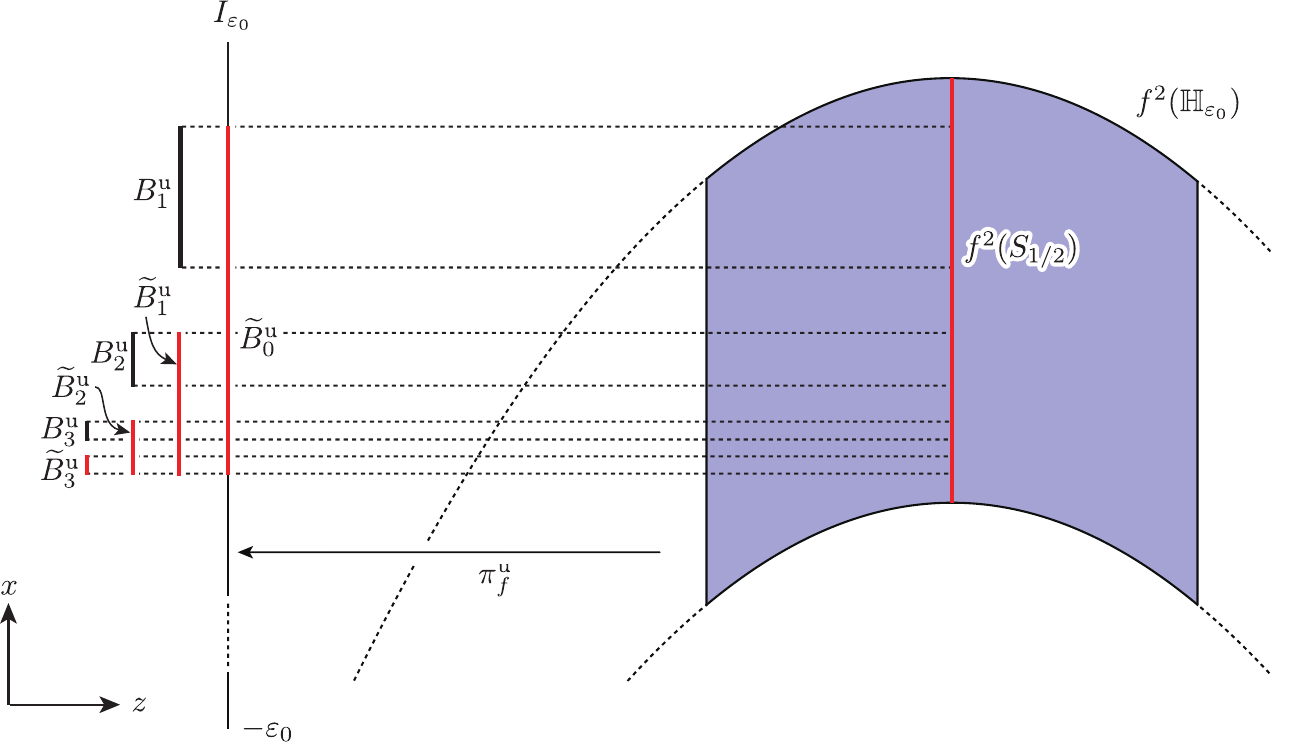}}
\caption{View from the top.}
\label{f_3_3}
\end{figure}
For each integer $k\geq 1$, 
we inductively define
the maximal sub-bridges
$B_k^{\rm u}$ and $\widetilde B_k^{\rm u}$\index{Bku@$\widetilde B_k^{\rm u}$, $B_k^{\rm u}$} of ${\widetilde B}_{k-1}^{\rm u}$ by
\begin{equation}\label{eqn_BBk}
\begin{split}
\widetilde B_k^{\rm u}&=B^{\rm u}(\underline{\widetilde w}^{(n_0+k-1)}\widetilde\alpha_k)
=B^{\rm u}(\underline{\widetilde w}^{(n_0+k)}),\\
B_k^{\rm u}
&=B^{\rm u}(\underline{\widetilde w}^{(n_0+k-1)}\alpha_k)
=B^{\rm u}(\underline{ w}^{(n_0+k)}),
\end{split}
\index{wn0k@$\underline{\widetilde w}^{(n_0+k)}$, $\underline{w}^{(n_0+k)}$}
\end{equation}
where 
$\widetilde\alpha_{k}\in \{0,1\}$, 
$\alpha_{k}=1-\widetilde\alpha_{k}$.
Here we choose $\alpha_k$ and $\widetilde\alpha_k$ so that $\widetilde B_k^{\mathrm{u}}$ 
lies in the component of $I_{\varepsilon_0}\setminus B_k^{\mathrm{u}}$ containing $-\varepsilon_0$.
We set as above $\widetilde{\mathbb{B}}_k^{\rm u}=\mathbb{B}^{\rm u}(\underline{\widetilde w}^{(n_0+k)})$.
\index{Bkub@$\widetilde{\mathbb{B}}_k^{\rm u}$, $\mathbb{B}_k^{\rm u}$}

\section{Conditions on diffeomorphisms near $f_0$}\label{S_condition_diffeo}

From \eqref{F1} in Subsection \ref{ss_cone_foliation}, 
we have the foliation $\mathcal{F}_f^{\mathrm{cs}}\index{Ffcs@$\mathcal{F}_f^{\mathrm{cs}}$}$ on $\mathbb{H}_{\varepsilon_0}$ 
induced from $\mathcal{F}_f^{\mathrm{s}}$ via $(f^2|_{\mathbb{H}_{\varepsilon_0}})^{-1}$, each leaf of which is a $C^r$-surface in $\mathbb{H}_{\varepsilon_0}$.
See Figure \ref{f_4_1} for the case of $f=f_0$.
\begin{figure}[hbtp]
\centering
\scalebox{0.6}{\includegraphics[clip]{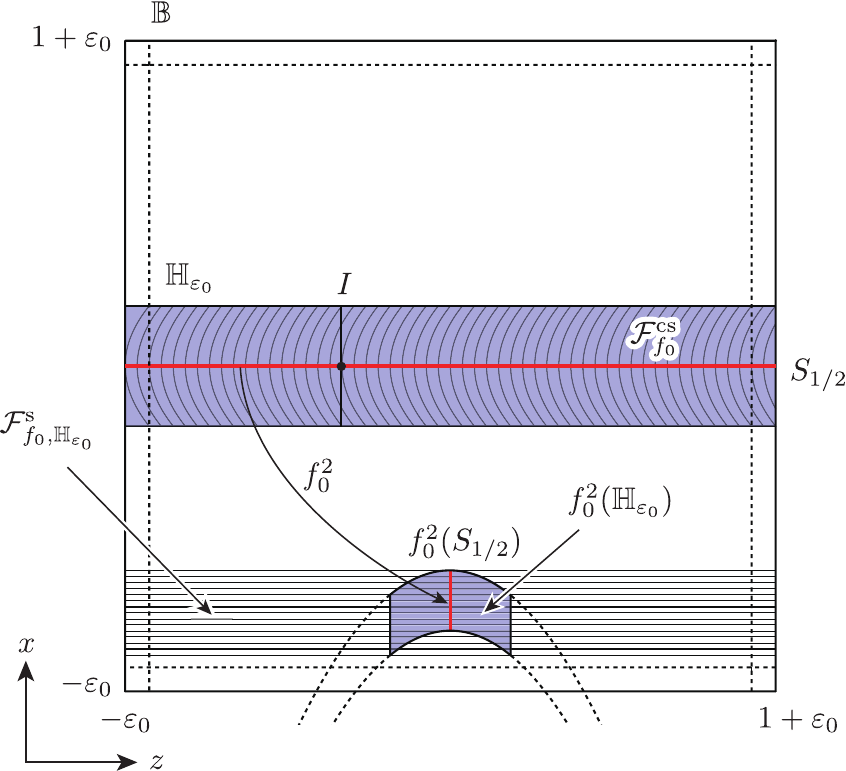}}
\caption{View from the top.
$\mathcal{F}_{f_0,\mathbb{H}_{\varepsilon_0}}^{\mathrm{s}}$ represents the sub-lamination of $\mathcal{F}_{f_0}^{\mathrm{s}}$ 
consisting of leaves meeting $f_0^2(\mathbb{H}_{\varepsilon_0})$ non-trivially.
}
\label{f_4_1}
\end{figure}
Then any maximal segment $I$ in $\mathbb{H}_{\varepsilon_0}$ parallel to the $x$-axis is tangent to a leaf $F$ of 
$\mathcal{F}_{f_0}^{\mathrm{cs}}$ at a point of $S_{1/2}$.
However, in the general case of $f\in \mathcal{O}(f_0)\setminus \{f_0\}$, we can not expect such a good situation.
So we introduce the notion of $\mathrm{cs}$-section instead of $S_{1/2}$.
To define $\mathrm{cs}$-sections, we need a 1-dimensional unstable foliation adaptable to the 
$f$-invariant cone-field $\boldsymbol{C}_{\varepsilon}^{\mathrm{u}}$ 
supported on a subset of $\mathbb{B}$ containing $\mathbb{H}_{\varepsilon_0}$.

\subsection{1-dimensional unstable foliations and cs-sections}\label{ss_1-dim}
For any binary code $\underline{w}^{(k)}=w_kw_{k-1}\dots w_2w_1$, let 
$\mathbb{H}_{\underline{w}^{(k)}}$ be the compact subset of $\mathbb{B}$ defined as
\begin{equation}\label{eqn_gg_gamma}
\mathbb{H}_{\underline{w}^{(k)}}\index{Hgk@$\mathbb{H}_{\underline{w}^{(k)}}$}=(f|_{\mathbb{V}_{w_1,f}}\circ f|_{\mathbb{V}_{w_2,f}}\circ\cdots \circ f|_{\mathbb{V}_{w_{k-1},f}}\circ f|_{\mathbb{V}_{w_k,f}})^{-1}(\mathbb{H}_{\varepsilon_0}),
\end{equation} 
and let 
$\mathbb{H}_{\,[k]}=\bigcup_{\underline{w}^{(k)}\in \{0,1\}^k}\mathbb{H}_{\underline{w}^{(k)}}$ and  
$\mathbb{H}_{\,[\infty]}=\bigcup_{k=0}^\infty \mathbb{H}_{\,[k]}$\index{Hkinf@$\mathbb{H}_{\,[k]}$, $\mathbb{H}_{\,[\infty]}$}, where $\mathbb{H}_{\,[0]}=\mathbb{H}_{\varepsilon_0}$.
Note that $\mathbb{H}_{\underline{w}^{(k)}}$ is contained in the u-bridge block $\mathbb{B}^{\mathrm{u}}(\underline{w}^{(k)})$ defined 
as \eqref{eqn_BBu} and called the \emph{u-flat block} of code $\underline{w}^{(k)}$.
Since $\mathbb{H}_{\varepsilon_0}$ is foliated by a sub-foliation of the $f$-invariant foliation $\mathcal{F}_f^{\mathrm{s}}$ by \eqref{F2}, 
$\mathbb{H}_{\,[\infty]}$ is also foliated by a sub-foliation of $\mathcal{F}_f^{\mathrm{s}}$, 
each leaf of which is adaptable to $\boldsymbol{C}_{\varepsilon}^{\mathrm{cs}}$.

For $a=x,y,z$, let $\pi_a:\mathbb{R}^3\longrightarrow \mathbb{R}$ be the orthogonal projection to the $a$-axis, that is, 
$\pi_x(x,y,z)=x$, $\pi_y(x,y,z)=y$, $\pi_z(x,y,z)=z$.
\index{pix@$\pi_x$, $\pi_y$, $\pi_z$}
For $k=0,1,2,\dots$, suppose that $\mathcal{L}_k$ is the 1-dimensional foliation on $\mathbb{H}_{\,[k]}$ each leaf of which is a straight segment 
in $\mathbb{H}_{\,[k]}$ parallel to the $x$-axis.
Let $\mathcal{N}(f(\mathbb{H}_{\,[k+1]}))$ be a small regular neighborhood of $f(\mathbb{H}_{\,[k+1]})$ in $\mathbb{H}_{\,[k]}$ 
such that $\mathbb{H}_{\,[k]}\setminus \mathcal{N}(f(\mathbb{H}_{\,[k+1]}))$ consists of leaves of 
$\mathcal{L}_{k}$, and let $\mathcal{N}_k$ be the closure of $\mathcal{N}(f(\mathbb{H}_{\,[k+1]}))\setminus 
 f(\mathbb{H}_{\,[k+1]})$ in $\mathbb{H}_{\,[k]}$.
The restriction $\mathcal{L}_k|_{f(\mathbb{H}_{\,[k+1]})}$ of the foliation $\mathcal{L}_k$ on $f(\mathbb{H}_{\,[k+1]})$ is not necessarily equal to the foliation 
$f(\mathcal{L}_{k+1})$ on $f(\mathbb{H}_{\,[k+1]})$ induced from $\mathcal{L}_{k+1}$ via $f|_{\mathbb{H}_{\,[k+1]}}$.
However, by \eqref{eqn_fCu}, any leaf of $f(\mathcal{L}_{k+1})$ is adaptable to $\boldsymbol{C}_{\varepsilon}^{\mathrm{u}}$.
Thus one can obtain a $C^r$-foliation $\mathcal{L}_{(k;k+1)}$ on $\mathbb{H}_{(k)}$ extending 
$\mathcal{L}_k|_{\mathbb{H}_{\,[k]}\setminus \mathcal{N}(f(\mathbb{H}_{\,[k+1]}))}\cup f(\mathcal{L}_{k+1})$ such that 
each leaf of $\mathcal{L}_{(k;k+1)}$ is also adaptable to $\boldsymbol{C}_{\varepsilon}^{\mathrm{u}}$.
See Figures \ref{f_4_2} and  \ref{f_4_3}\,(a) for the case of $k=0$.
\begin{figure}[hbtp]
\centering
\scalebox{0.6}{\includegraphics[clip]{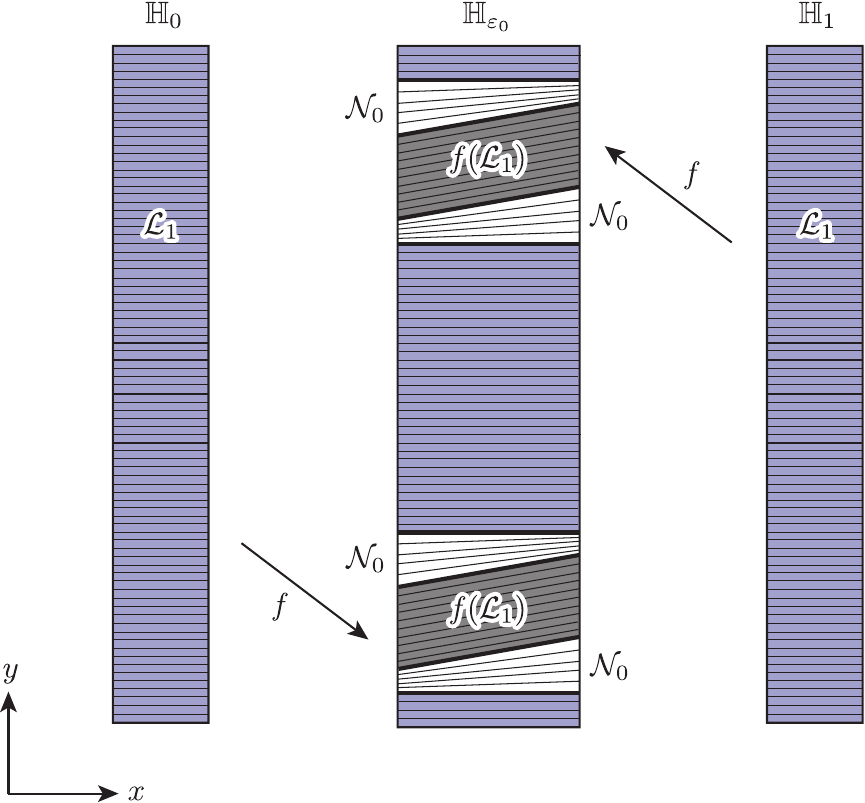}}
\caption{View from the side.
$\mathbb{H}_{\,[1]}=\mathbb{H}_0\cup \mathbb{H}_1$.}
\label{f_4_2}
\end{figure}
\begin{figure}[hbtp]
\centering
\scalebox{0.6}{\includegraphics[clip]{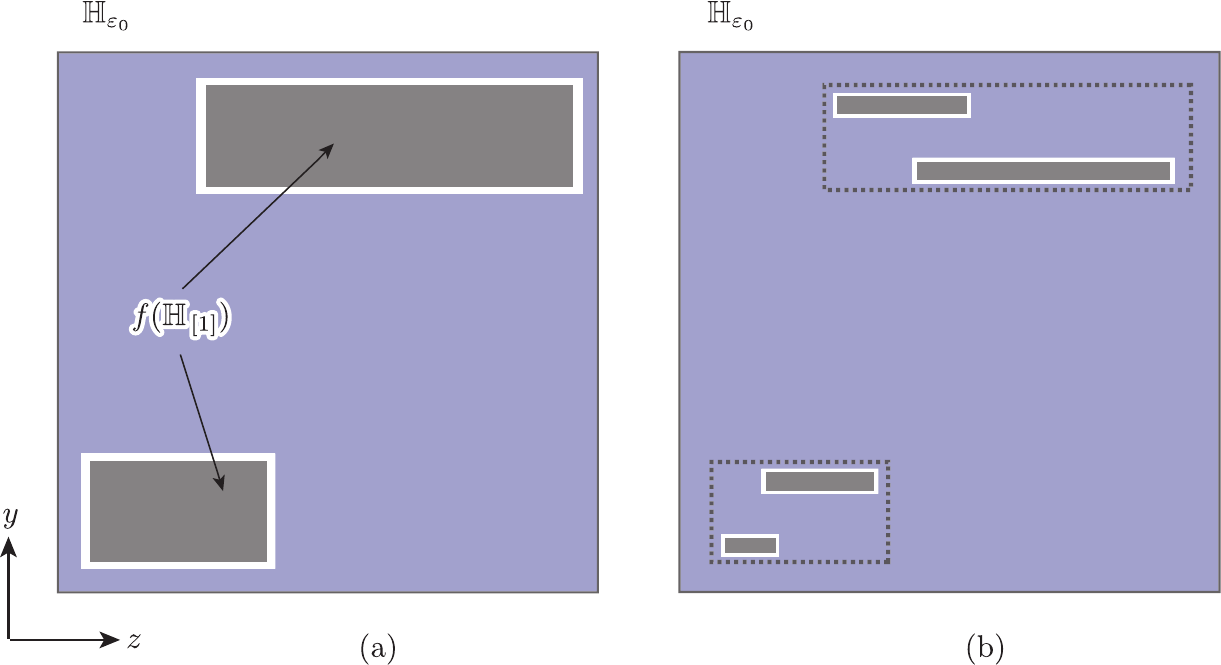}}
\caption{View from the front.
(a) The union of white frames represents $\mathcal{N}_0$.
(b) The union of gray rectangles represents $f^2(\mathbb{H}_{\,[2]})$.}
\label{f_4_3}
\end{figure}
Then $\mathcal{L}_{(k;k+1)}\cup\mathcal{L}_{k+1}$ is an $f$-invariant $C^r$-foliation on $\mathbb{H}_{\,[k]}\cup \mathbb{H}_{\,[k+1]}$.
By replacing $f(\mathcal{L}_{k+1})$ in $\mathcal{L}_{(k;k+1)}$ by $f(\mathcal{L}_{(k+1;k+2)})$, we have a foliation $\mathcal{L}_{(k;k+2)}$ 
on $\mathbb{H}_{\,[k]}$ so that $\mathcal{L}_{(k;k+2)}\cup \mathcal{L}_{(k+1;k+2)}\cup \mathcal{L}_{k+2}$ is an $f$-invariant foliation 
on $\mathbb{H}_{\,[k]}\cup \mathbb{H}_{\,[k+1]}\cup \mathbb{H}_{\,[k+2]}$.
See Figure \ref{f_4_3}\,(b)  for the case of $k=0$.
By applying the process repeatedly, we have a foliation $\mathcal{L}_{(k;\infty)}\index{Lkinf@$\mathcal{L}_{(k;\infty)}$}$ on $\mathbb{H}_{\,[k]}$ 
such that the union $\bigcup_{k=0}^\infty\mathcal{L}_{(k;\infty)}$ is an $f$-invariant foliation on $\mathbb{H}_{\,[\infty]}$ 
and each leaf $l$ of $\mathcal{L}_{(k;\infty)}$ is a $C^r$-arc adaptable to $\boldsymbol{C}_{\varepsilon}^{\mathrm{u}}$.
In particular, for any leaf $l$ of $\mathcal{L}_{(k;\infty)}$ contained in $f(\mathbb{H}_{\,[k+1]})$, 
$f^{-1}(l)$ is a leaf of $\mathcal{L}_{(k+1;\infty)}$.
Then we say that $\bigcup_{k=0}^\infty\mathcal{L}_{(k;\infty)}$ has the $f^{-1}$-\emph{invariance property}. 
This fact is used in Section \ref{S_BTC}.
From our construction, $\mathcal{L}_{(k;\infty)}$ is a $C^0$-foliation such that 
the restriction 
$\mathcal{L}_{(k;\infty)}|_{\mathbb{H}_{\,[k]}\setminus W_{\mathrm{loc}}^{\mathrm{u}}(\Lambda_f)}$ is a $C^r$-foliation on 
$\mathbb{H}_{\,[k]}\setminus W_{\mathrm{loc}}^{\mathrm{u}}(\Lambda_f)$.
However the authors do not know whether $\mathcal{L}_{(k;\infty)}$ is of $C^1$-class.
See Palis-Viana \cite[Example 3.1]{PV94} for a simple example of a $C^\infty$-diffeomorphism 
with a foliation of codimension 2 which is not $C^1$.

Since $Df(\boldsymbol{x})$ is sufficiently $C^{r-1}$-close to the constant diagonal matrix $Df_0(\boldsymbol{x})$ for any 
$\boldsymbol{x}\in \mathbb{V}_{0,f}\cup \mathbb{V}_{1,f}$, we may assume that the derivative  
of any entry of $Df(\boldsymbol{x})$ is an $O(\varepsilon)$-function, that is,
\begin{equation}\label{eqn_DDf}
\dfrac{\partial^2 (\pi_a\circ f)}{\partial x_j\partial x_k}
(\boldsymbol{x})=O(\varepsilon),
\end{equation}
where $a,x_j,x_k\in \{x,y,z\}$.
Hence one can choose the $C^r$-foliation $\mathcal{L}(k;k+1)$ so that, for any leaf 
$l_k$ of $\mathcal{L}(k;k+1)$ and any point $\boldsymbol{x}_k$ of $l_k$, 
the curvature $\kappa_{l_k}(\boldsymbol{x}_k)$ of $l_k$ at $\boldsymbol{x}_k$ is $O(\varepsilon)$.

From \eqref{eqn_tang}, we know that each leaf $F_0$ of $\mathcal{F}_{f_0}^{\mathrm{cs}}$ is 
a vertical parabolic cylinder parametrized as
\begin{equation}\label{eqn_F_cylinder}
\begin{split}
F_0=\biggl\{\left(a_4^{-1}t+\frac12,\ s,\ a_1a_2^{-1}a_4^{-2}t^2+c\right)\,;\,-|a_4|\varepsilon_0\leq &t\leq |a_4|\varepsilon_0,\\
&-\varepsilon_0\leq s\leq 1+\varepsilon_0\biggr\}
\end{split}
\end{equation}
for some constant $c$.
Since the restriction $f^2|_{\mathbb{H}_{\varepsilon_0}}$ is arbitrarily $C^r$ close to $f_0^2|_{\mathbb{H}_{\varepsilon_0}}$, 
any leaf $F$ of $\mathcal{F}_f^{\mathrm{cs}}$ also looks like a vertical parabolic cylinder.
In particular, we have the following lemma.

\begin{lem}\label{l_Fcs}
Any leaf $F$ of $\mathcal{F}_f^{\mathrm{cs}}$ has a non-singular $C^1$-vector field $X$ such that, 
for any $\boldsymbol{x}\in F$, $X(\boldsymbol{x})$ is contained in $\boldsymbol{C}_{\varepsilon}^{\mathrm{ss}}(\boldsymbol{x})$.
\end{lem}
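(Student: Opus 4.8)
The plan is to reduce the claim for a leaf $F$ of $\mathcal{F}_f^{\mathrm{cs}}$ to the analogous claim for the stable leaf $G$ of $\mathcal{F}_f^{\mathrm{s}}$ with $F=(f^{2}|_{\mathbb{H}_{\varepsilon_0}})^{-1}(G)$, and then transport a vector field from $G$ to $F$ by $(Df^{2})^{-1}$, using that the $\mathrm{ss}$-cone-field is \emph{exactly} $f^{-1}$-invariant (not merely invariant up to $O(\varepsilon)$). The case $f=f_{0}$ is immediate: by \eqref{eqn_F_cylinder} the leaf $F_{0}$ is the product of a parabolic arc in the $xz$-plane with the $y$-interval $I_{\varepsilon_0}$, so the constant field $X=(0,1,0)$ is tangent to $F_{0}$, non-singular, and lies in the interior of $\boldsymbol{C}_{\varepsilon}^{\mathrm{ss}}$ (its $\mathrm{u}$- and $\mathrm{cs}$-components vanish).

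For general $f\in\mathcal{O}(f_0)$, I would first produce a non-singular $C^{1}$ vector field $Y$ on $G$ with $Y(\boldsymbol{y})\in\boldsymbol{C}_{\varepsilon}^{\mathrm{ss}}(\boldsymbol{y})$ for all $\boldsymbol{y}\in G$, by taking the orthogonal projection of the constant field $(0,1,0)$ onto $TG$, i.e.\ $Y=(0,1,0)-\langle(0,1,0),\nu_{G}\rangle\,\nu_{G}$ where $\nu_{G}$ is a unit normal field of $G$. Since leaves of $\mathcal{F}_f^{\mathrm{s}}$ are $C^{r}$ with $r\ge 2$, $\nu_{G}$ is $C^{r-1}$ and hence $Y$ is $C^{1}$. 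By \eqref{F3} the plane $T_{\boldsymbol{y}}G$ lies in $\boldsymbol{C}_{\varepsilon}^{\mathrm{cs}}(\boldsymbol{y})$, which forces $\nu_{G}(\boldsymbol{y})$ to make angle at most $\arctan\varepsilon$ with the $\mathrm{u}$-direction, equivalently its $\mathrm{s}$-component satisfies $|\nu_{G}^{\mathrm{s}}|\le\varepsilon/\sqrt{1+\varepsilon^{2}}$; a short computation with this inequality then gives both $|Y|^{2}=1-(\nu_{G}^{\mathrm{s}})^{2}>0$ (so $Y$ is non-singular) and $Y(\boldsymbol{y})\in\boldsymbol{C}_{\varepsilon}^{\mathrm{ss}}(\boldsymbol{y})$.

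Next I would set $X:=(Df^{2}|_{\mathbb{H}_{\varepsilon_0}})^{-1}\circ Y\circ f^{2}$ on $F$. This field is $C^{1}$ because $f\in\mathrm{Diff}^{r}(M)$ with $r\ge 2$, it is non-singular because $Df^{2}$ is an isomorphism and $Y$ is non-singular, and it is tangent to $F$ because $Df^{2}$ carries $T_{\boldsymbol{x}}F$ onto $T_{f^{2}(\boldsymbol{x})}G$. Finally, applying the invariance \eqref{eqn_fcscs} twice along the orbit segment $\boldsymbol{x}\mapsto f(\boldsymbol{x})\mapsto f^{2}(\boldsymbol{x})$ yields $(Df^{2}(\boldsymbol{x}))^{-1}\bigl(\boldsymbol{C}_{\varepsilon}^{\mathrm{ss}}(f^{2}(\boldsymbol{x}))\bigr)\subset\boldsymbol{C}_{\varepsilon}^{\mathrm{ss}}(\boldsymbol{x})$, and since $Y(f^{2}(\boldsymbol{x}))\in\boldsymbol{C}_{\varepsilon}^{\mathrm{ss}}(f^{2}(\boldsymbol{x}))$ by the previous step, we conclude $X(\boldsymbol{x})\in\boldsymbol{C}_{\varepsilon}^{\mathrm{ss}}(\boldsymbol{x})$, as required.

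The step I expect to require the most care is this last one: one must know that the $\mathrm{ss}$-cone invariance is genuinely available at the points $f(\boldsymbol{x})$ and $f^{2}(\boldsymbol{x})$ for $\boldsymbol{x}\in\mathbb{H}_{\varepsilon_0}$, i.e.\ that this short orbit segment stays inside the region on which $\boldsymbol{C}_{\varepsilon}^{\mathrm{ss}}$ is defined and contracted by $Df^{-1}$. This is precisely where the explicit normal form \eqref{eqn_tang} of $f_{0}^{2}$ on $\mathbb{H}_{\varepsilon_0}$ enters: using \eqref{eqn_tang} together with the facts that $\partial(\pi_{x}\circ f^{2})/\partial y$ and $\partial(\pi_{z}\circ f^{2})/\partial y$ are $O(\varepsilon)$ while $\partial(\pi_{y}\circ f^{2})/\partial y$ is close to $a_{3}$ with $|a_{3}|<1$, one checks directly that $(Df^{2}|_{\mathbb{H}_{\varepsilon_0}})^{-1}$ maps $\boldsymbol{C}_{\varepsilon}^{\mathrm{ss}}$ into $\boldsymbol{C}_{\varepsilon}^{\mathrm{ss}}$, so the argument goes through even if this orbit segment temporarily leaves $\mathbb{B}$. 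A softer but less self-contained alternative is to invoke the observation made just before the lemma that every leaf of $\mathcal{F}_f^{\mathrm{cs}}$ is $C^{1}$-close to the parabolic cylinder $F_{0}$ and that $(0,1,0)$ lies in the interior of $\boldsymbol{C}_{\varepsilon}^{\mathrm{ss}}$; but quantifying ``$C^{1}$-close enough'' still requires weighing the $O(\varepsilon)$-distortion of $\mathcal{F}_f^{\mathrm{s}}$ against the angular width $\varepsilon$ of the cone, so the invariance-based argument above is the cleaner route.
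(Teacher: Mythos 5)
There is a genuine gap in the pull-back step. The paper actually states Lemma \ref{l_Fcs} with no explicit proof: the two sentences preceding it -- that each leaf of $\mathcal{F}_{f_0}^{\mathrm{cs}}$ is the vertical parabolic cylinder \eqref{eqn_F_cylinder}, ruled by lines parallel to the $y$-axis, and that leaves of $\mathcal{F}_f^{\mathrm{cs}}$ for $f$ near $f_0$ ``look like'' that cylinder -- are the intended justification. That is precisely your ``softer alternative,'' which you set aside in favor of the cone-invariance route. It is the cone-invariance route that does not work.

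Your main argument applies \eqref{eqn_fcscs} twice along $\boldsymbol{x}\mapsto f(\boldsymbol{x})\mapsto f^{2}(\boldsymbol{x})$, but \eqref{eqn_fcscs} is asserted only for $\boldsymbol{x}\in\mathbb{B}\cap f(\mathbb{B})$, and for $\boldsymbol{x}\in\mathbb{H}_{\varepsilon_0}$ the intermediate point $f(\boldsymbol{x})$ need not lie in $\mathbb{B}$ at all: $\mathbb{H}_{\varepsilon_0}$ is disjoint from $\mathbb{V}_{0,f}\cup\mathbb{V}_{1,f}=\mathbb{B}\cap f^{-1}(\mathbb{B})$, so the first iterate of $\mathbb{H}_{\varepsilon_0}$ is expected to leave the box (this is the horseshoe fold). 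You anticipate this and offer the backup claim that ``one checks directly'' from \eqref{eqn_tang} that $(Df^{2}|_{\mathbb{H}_{\varepsilon_0}})^{-1}$ maps $\boldsymbol{C}_{\varepsilon}^{\mathrm{ss}}$ into $\boldsymbol{C}_{\varepsilon}^{\mathrm{ss}}$, but this is not true under the hypotheses of the paper. From \eqref{eqn_tang} one computes that the inverse differential sends $(v^x,v^y,v^z)$ to $\bigl(v^z/a_4,\ v^y/a_3,\ v^x/a_2+2a_1(x-\tfrac12)v^z/(a_2a_4)\bigr)$ up to $O(\varepsilon)$, so for a unit $\mathrm{ss}$-cone vector $(\varepsilon,1,\varepsilon)$ the image has $xz$-norm of size $\varepsilon\sqrt{1/a_4^2+1/a_2^2}$ (plus smaller terms) and $y$-component $1/|a_3|$; the $\mathrm{ss}$-cone is preserved only if $|a_3|\sqrt{1/a_4^2+1/a_2^2}\lesssim 1$. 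The paper's condition \eqref{eqn_a_1_4} imposes $|a_3|<1-2\lambda_{\mathrm{ss}}$ and $a_2a_3a_4<0$ but places no lower bound on $|a_2|$ or $|a_4|$, so this inequality can fail (take $|a_2|=|a_4|$ small). The fact that $\partial(\pi_y\circ f^2)/\partial y\approx a_3$ with $|a_3|<1$ only controls what $Df^2$ does along the $y$-axis, not what $(Df^2)^{-1}$ does to the whole cone; the $1/a_2$ and $1/a_4$ factors in the $x,z$-columns are what matter.

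The ``softer alternative'' is the right one, and your worry about ``weighing the $O(\varepsilon)$-distortion against the angular width $\varepsilon$'' is legitimate but already handled by the paper's conventions: $\mathcal{O}_\varepsilon$ is explicitly allowed to be much smaller than the $C^r$-ball of radius $\varepsilon$ about $f_0$, so the constant in the $O(\varepsilon)$-distortion of the tangent plane of $F$ can be made to beat the cone aperture. The essential structural fact, which your invariance claim tries and fails to capture, is that $(0,1,0)$ is an eigenvector of $Df_0^2$ (eigenvalue $a_3$), so that only vectors near the $y$-axis -- not the whole $\mathrm{ss}$-cone -- need to be pulled back; your specific $Y$, being the projection of $(0,1,0)$ onto $TG$, is such a vector, and the estimate $\|X-(0,1/a_3,0)\|=O(\varepsilon)$ with a constant absorbed by $\mathcal{O}_\varepsilon$ is what actually closes the argument, rather than full cone-invariance.
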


Let $l$ be any leaf of $\mathcal{L}_{(0;\infty)}$.
By Propositions \ref{p_curvature_plane} and \ref{p_kappa_fn} in Appendix \ref{Ap_curvature}, 
$f^2(l)$ is quadratically tangent to a leaf of $\mathcal{F}_f^{\mathrm{s}}$.
Thus there exists a unique leaf of $\mathcal{F}_f^{\mathrm{cs}}$ quadratically tangent to $l$ at a single point.
We denote the leaf by $F^{\mathrm{cs}}(l)$\index{Fcsl@$F^{\mathrm{cs}}(l)$} and the tangent point by $\boldsymbol{x}(l)$.
See Figure \ref{f_4_4}\,(a).
\begin{figure}[hbtp]
\centering
\scalebox{0.6}{\includegraphics[clip]{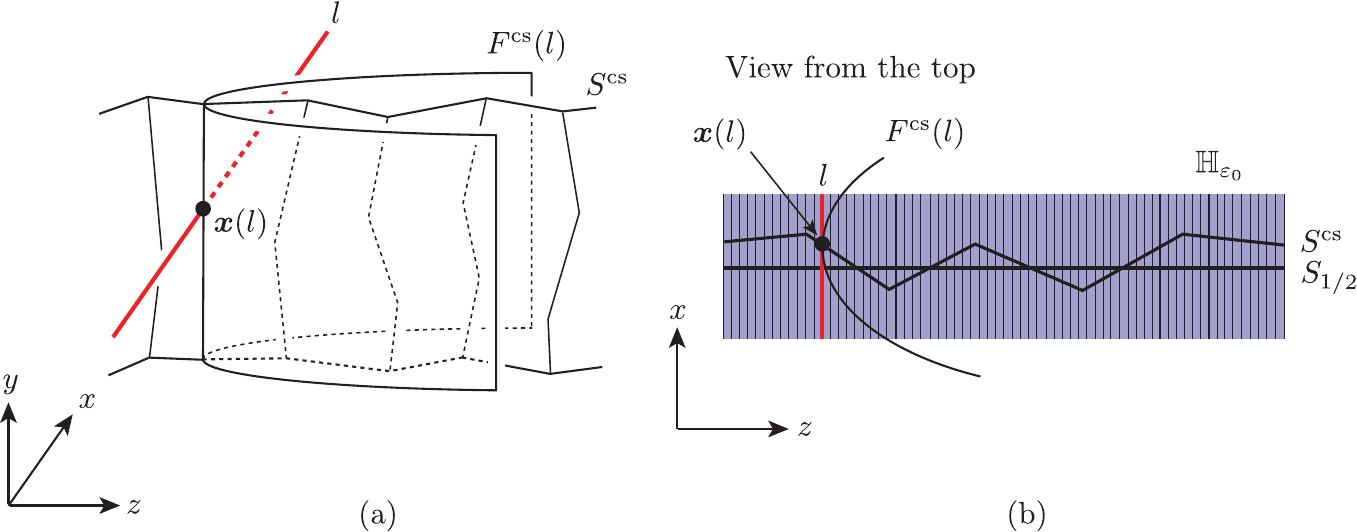}}
\caption{}
\label{f_4_4}
\end{figure}
Since $\mathcal{L}_{(0;\infty)}$ is a $C^0$-foliation on $\mathbb{H}_{\varepsilon_0}$, if $l_n\in \mathcal{L}_{(0;\infty)}$ converges to $l$, 
then $\boldsymbol{x}(l_n)$ converges to $\boldsymbol{x}(l)$.
Thus the subset  
$$S^{\mathrm{cs}}=\{\boldsymbol{x}(l)\,;\,l\in \mathcal{L}_{(0;\infty)}\}\index{Scs@$S^{\mathrm{cs}}$}$$ 
of $\mathbb{H}_{\varepsilon_0}$ is the `graph' of a continuous function on $S_{1/2}$ with respect to $\mathcal{L}_{(0;\infty)}$.
See Figure \ref{f_4_4}\,(b).
In particular, $S^{\mathrm{cs}}$ is a surface $C^0$-embedded in $\mathbb{H}_{\varepsilon_0}$ and 
$C^0$-converges to $S_{1/2}$ as $f\rightarrow f_0$.
We say that $S^{\mathrm{cs}}$ is the \emph{cs-section} of $\mathbb{H}_{\varepsilon_0}$ with respect to 
$\mathcal{L}_{(0;\infty)}$.

\subsection{Estimation of the norm of derivatives}\label{ss_norm_deriv}

\begin{subequations}
For the constants $\lambda_j$ with $j\in\{{\rm u,ss, cs0, cs1}\}$ given in Subsection \ref{ss_affine_model} and $\varepsilon>0$ as above, we write
\begin{equation}\label{abb1}
\underline{\lambda}_{j}=\lambda_{j}-\varepsilon,\ 
\bar{\lambda}_{j}=\lambda_{j}+\varepsilon.
\index{lambdabar@$\underline{\lambda}_j,\bar{\lambda}_j$}
\end{equation}
By \eqref{eqn_eigen_v}--\eqref{eqn_pdc}, 
one can suppose that the following inequalities hold.
\begin{equation}\label{eqn_eigen_v2}
0<\underline{\lambda}_{\rm ss}<\bar\lambda_{\rm cs0}<1/2<\underline\lambda_{\rm cs1}<\bar\lambda_{\rm cs1}<1<\underline\lambda_{\rm cs0}+\underline\lambda_{\rm cs1},\quad 2<\underline\lambda_{\rm u},
\end{equation}
\begin{equation}\label{pdc2}
\bar\lambda_{\rm cs0}\bar\lambda_{\rm cs1}\bar\lambda_{u}^{2}<1.
\end{equation}
\end{subequations}
The condition \eqref{pdc2} is used in the proof of Lemma \ref{lem7.3}.

By \eqref{eqn_f0V}, $Df(\boldsymbol{x})$ is arbitrarily $C^{r-1}$-close to the diagonal matrix  $Df_0(\boldsymbol{x})=\mathrm{diag}((-1)^i\lambda_{\mathrm{u}},(-1)^i\lambda_{\mathrm{ss}},\lambda_{\mathrm{cs} i})$
for $\boldsymbol{x}\in \mathbb{V}_{i,f}$ $(i=0,1)$.
Here we recall that $\mathbb{V}_{i,f}$ is the component of $\mathbb{B}\cap f^{-1}(\mathbb{B})$ containing $\mathbb{B}^{\mathrm{u}}(i)$.
\begin{subequations}
We may assume that   
\begin{equation}\label{eqn_lambda_maxS}
\begin{split}
&\max\bigl\{| D(\pi_z\circ f)(\boldsymbol{x})|\,;\, \boldsymbol{x}\in \mathbb{V}_{i,f}\bigr\}<\bar\lambda_{\mathrm{cs} i}-\frac{\varepsilon}2,\\
&\min\bigl\{m(D(\pi_z\circ f)(\boldsymbol{x}))\,;\, \boldsymbol{x}\in \mathbb{V}_{i,f}\bigr\}>\underline{\lambda}_{\mathrm{cs} i}+\frac{\varepsilon}2
\end{split}
\end{equation}
for $i=0,1$,
\begin{equation}\label{eqn_lambda_max}
\begin{split}
&\max\bigl\{| D(\pi_x\circ f)(\boldsymbol{x})|\,;\, \boldsymbol{x}\in \mathbb{V}_{0,f}\cup \mathbb{V}_{1,f}\bigr\}<\bar\lambda_{\mathrm{u}}-\frac{\varepsilon}2,\\
&\min\bigl\{m(D(\pi_x\circ f)(\boldsymbol{x}))\,;\, \boldsymbol{x}\in \mathbb{V}_{0,f}\cup \mathbb{V}_{1,f}\bigr\}>\underline{\lambda}_{\mathrm{u}}+\frac{\varepsilon}2,
\end{split}
\end{equation}
and
\begin{equation}\label{eqn_lambda_maxc}
\begin{split}
&\max\bigl\{| D(\pi_y\circ f)(\boldsymbol{x})|\,;\, \boldsymbol{x}\in \mathbb{V}_{0,f}\cup \mathbb{V}_{1,f}\bigr\}<\bar\lambda_{\mathrm{ss}}-\frac{\varepsilon}2,\\
&\min\bigl\{m(D(\pi_y\circ f)(\boldsymbol{x}))\,;\, \boldsymbol{x}\in \mathbb{V}_{0,f}\cup \mathbb{V}_{1,f}\bigr\}>\underline{\lambda}_{\mathrm{ss}}+\frac{\varepsilon}2,
\end{split}
\end{equation}
\end{subequations}
where we define, for any linear map $A:T_{\boldsymbol{x}}\mathbb{B}\longrightarrow \mathbb{R}$ $(\boldsymbol{x}\in \mathbb{V}_{0,f}\cup \mathbb{V}_{1,f})$, 
\begin{align*}
|A|&=\max\bigl\{|A(\boldsymbol{v})|\,;\,\boldsymbol{v}\in T_{\boldsymbol{x}}\mathbb{B}\text{ with }\|\boldsymbol{v}\|=1\bigr\},\\
m(A)&=\min\bigl\{|A(\boldsymbol{v})|\,;\,\boldsymbol{v}\in T_{\boldsymbol{x}}\mathbb{B}\text{ with }\|\boldsymbol{v}\|=1\bigr\}.
\end{align*}

By \eqref{eqn_lambda_max}, there exists a constant $0<C_0<1$ independent of $k$ such that 
\begin{equation}\label{eqn_C0GC0}
C_0\bar{\lambda}_{\mathrm{u}}^{-k}<|B^{\mathrm{u}}(\underline{w}^{(k)})|<C_0^{-1}\underline{\lambda}_{\mathrm{u}}^{-k}. 
\end{equation}
Hence, for any $C^1$-curve $l$ in $\mathbb{B}^{\mathrm{u}}(\underline{w}^{(k)})$ adaptable to the cone-field $\boldsymbol{C}_{\varepsilon}^{\mathrm{u}}$, 
we may assume that 
\begin{equation}\label{eqn_lC0lam}
\mathrm{length}(l)\leq C_0^{-1}\underline{\lambda}_{\mathrm{u}}^{-k}
\end{equation} if necessary replacing $C_0$ with a smaller positive number.

\section{Backtracking condition for cs-sections}\label{S_BTC}

This section provides geometric information near tangencies that will be perturbed in
Section \ref{S_perturb}.

\subsection{Forward sequence of cs-sections}\label{ss_forward}

In this subsection, we define a forward sequence of sc-sections, which is applied   
to construct a sequence from $S_{\widehat{\underline{w}}_k}^{\mathrm{cs}}$ to 
$S_{\underline{\gamma}^{(m_k)}}^{\mathrm{cs}}$ defined below as illustrated in Figure \ref{f_8_1}.

For any binary code $\underline{\gamma}^{(n)}=\gamma_n\dots \gamma_2\gamma_1\in \{0,1\}^n$ of finite length, we denote by 
$\zeta_{\underline\gamma}$ the composition $\zeta_{\gamma_1}\circ\zeta_{\gamma_2}\circ\cdots\circ 
\zeta_{\gamma_n}$.
Since $\lambda_{\mathrm{cs} 0}+\lambda_{\mathrm{cs} 1}>1$ by \eqref{eqn_eigen_v}, 
$1-\lambda_{\mathrm{cs} 1}+\eta<\lambda_{\mathrm{cs} 0}-\eta$ for any sufficiently small $\eta$.
Then we set 
$$I(\eta)=[1-\lambda_{\mathrm{cs} 1}+\eta,\lambda_{\mathrm{cs} 0}-\eta].$$
From the definition, $I(\eta)\subset I(\eta')$ if $\eta>\eta'$.

\begin{lem}\label{l_zeta_zeta}
There exists $\mu_0\in\mathbb{N}$ satisfying the following property.
For any 
$$z\in [-\varepsilon_0,1-\lambda_{\mathrm{cs} 1}+7\varepsilon]\cup[\lambda_{\mathrm{cs} 0}-7\varepsilon,1+\varepsilon_0]$$
there exists a binary code $\underline\iota$ with $|\underline\iota|\leq \mu_0$ such that 
$\zeta_{\underline\iota}(z)\in I(7\varepsilon)$ for any sufficiently small $\varepsilon>0$.
\end{lem}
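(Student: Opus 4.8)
The plan is to treat $\{\zeta_0,\zeta_1\}$ as an overlapping iterated function system and to find, inside the overlap interval $I(0)=[1-\lambda_{\mathrm{cs} 1},\lambda_{\mathrm{cs} 0}]$, an attracting fixed point of a suitable finite composition $\zeta_{\underline\iota_0}$; iterating $\zeta_{\underline\iota_0}$ a bounded number of times then carries every point of $I_{\varepsilon_0}$ into $I(7\varepsilon)$, so a single word of bounded length will do. I would begin by recording the elementary facts. By \eqref{eqn_def_zeta} each $\zeta_i$ is an affine contraction of $\mathbb R$ with ratio $\lambda_{\mathrm{cs} i}<1$, $\zeta_0$ fixing $0$ and $\zeta_1$ fixing $1$; moreover $\zeta_0([0,1])=[0,\lambda_{\mathrm{cs} 0}]$ and $\zeta_1([0,1])=[1-\lambda_{\mathrm{cs} 1},1]$, so by $\lambda_{\mathrm{cs} 0}+\lambda_{\mathrm{cs} 1}>1$ in \eqref{eqn_eigen_v} these intervals cover $[0,1]$ with $\zeta_0([0,1])\cap\zeta_1([0,1])=I(0)$, and each $\zeta_i$ maps $[0,1]$ into itself. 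A one-line induction on $N$ gives $\bigcup_{|\underline\iota|=N}\zeta_{\underline\iota}([0,1])=[0,1]$ for all $N\in\mathbb N$, and since the slope of $\zeta_{\underline\iota}$ is a product of $N$ numbers from $\{\lambda_{\mathrm{cs} 0},\lambda_{\mathrm{cs} 1}\}$, each cylinder $\zeta_{\underline\iota}([0,1])$ with $|\underline\iota|=N$ has length at most $\lambda_{\mathrm{cs} 1}^{N}$.

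Next I would let $x_0$ be the midpoint of $I(0)$, whose distance to $\partial I(0)$ is $c_0:=\tfrac12(\lambda_{\mathrm{cs} 0}+\lambda_{\mathrm{cs} 1}-1)>0$, and choose $N$ with $\lambda_{\mathrm{cs} 1}^{N}<c_0/2$. Among the length-$N$ cylinders one, say $\zeta_{\underline\iota_0}([0,1])$, contains $x_0$, hence is contained in $(x_0-c_0/2,\,x_0+c_0/2)$, which lies in the interior of $I(0)$. The affine contraction $\zeta_{\underline\iota_0}$ has a ratio $\rho<1$ and a unique fixed point $p$; since $p=\zeta_{\underline\iota_0}(p)\in\zeta_{\underline\iota_0}([0,1])$, the point $p$ lies in the interior of $I(0)$ with $\mathrm{dist}(p,\partial I(0))>c_0/2$.

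Finally, since $\zeta_{\underline\iota_0}$ is affine with fixed point $p$ and ratio $\rho$, one has $|\zeta_{\underline\iota_0}^{m}(z)-p|=\rho^{m}|z-p|\le\rho^{m}(1+\varepsilon_0)$ for every $z\in I_{\varepsilon_0}$ and $m\ge1$. I would pick $m_0$ with $\rho^{m_0}(1+\varepsilon_0)<c_0/4$, so that $\zeta_{\underline\iota_0}^{m_0}(z)\in(p-c_0/4,\,p+c_0/4)$ for all $z\in I_{\varepsilon_0}$, and note that $(p-c_0/4,\,p+c_0/4)\subset I(7\varepsilon)$ whenever $7\varepsilon<c_0/4$, because $\mathrm{dist}(p,\partial I(0))>c_0/2$. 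Taking $\underline\iota$ to be the concatenation of $m_0$ copies of $\underline\iota_0$ (so $\zeta_{\underline\iota}=\zeta_{\underline\iota_0}^{m_0}$) and $\mu_0:=m_0\,|\underline\iota_0|$, we get $|\underline\iota|=\mu_0$ and $\zeta_{\underline\iota}(z)\in I(7\varepsilon)$ for every $z\in I_{\varepsilon_0}$ and every sufficiently small $\varepsilon>0$. Since the set in the statement is contained in $I_{\varepsilon_0}$, this proves the lemma; in fact the same $\underline\iota$ works for all $z$ simultaneously, which is stronger than required.

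The one point that needs care is that a single-symbol word need not suffice: when $\lambda_{\mathrm{cs} 0}^2+\lambda_{\mathrm{cs} 1}<1$ the orbit under $\zeta_0$ of a point near the right end of $I(0)$ overshoots $I(0)$ to the left (and symmetrically for $\zeta_1$ near the left end), so the argument genuinely needs a mixed word $\underline\iota_0$ whose attracting fixed point sits well inside $I(0)$; once that is in hand the rest is routine.
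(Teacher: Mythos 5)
Your proof is correct, and it takes a genuinely different route from the paper. The paper proceeds by an explicit case analysis on $z$: it first checks that $\zeta_1$ sends $[0,1-\lambda_{\mathrm{cs} 1}+7\varepsilon]$ into $[1-\lambda_{\mathrm{cs} 1},1-\lambda_{\mathrm{cs} 1}^3]$, then composes with a long block $\zeta_{\underline 0^{(\mu_1)}}$ and another $\zeta_1$ to land in $I(7\varepsilon)$, and finally prepends $\zeta_1$ or $\zeta_{\underline 0^{(\mu_2)}}$ to deal with $z\in[-\varepsilon_0,0]$ and $z\in[\lambda_{\mathrm{cs} 0}-7\varepsilon,1+\varepsilon_0]$ respectively, so the resulting code $\underline\iota$ depends on which sub-interval $z$ belongs to (as the remark following the lemma emphasizes). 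Your argument instead treats $\{\zeta_0,\zeta_1\}$ as an overlapping IFS, picks a word $\underline\iota_0$ whose cylinder sits well inside the overlap $I(0)$ — so its attracting fixed point $p$ has $\mathrm{dist}(p,\partial I(0))>c_0/2$ — and then iterates $\zeta_{\underline\iota_0}$ a fixed number $m_0$ of times; since $\zeta_{\underline\iota_0}^{m_0}$ contracts all of $I_{\varepsilon_0}$ into $(p-c_0/4,p+c_0/4)\subset I(7\varepsilon)$ once $7\varepsilon<c_0/4$, a single code works uniformly in $z$. Your approach is more conceptual and yields a slightly stronger conclusion (one universal $\underline\iota$ of exact length $\mu_0$ rather than a $z$-dependent code of length at most $\mu_0$), at the cost of a less explicit, typically larger constant $\mu_0=m_0|\underline\iota_0|$; the paper's version is more hands-on and keeps $\mu_0=\mu_1+\mu_2+2$ small. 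Your closing caveat — that when $\lambda_{\mathrm{cs} 0}^2+\lambda_{\mathrm{cs} 1}<1$ a single-symbol word can overshoot $I(0)$, so a genuinely mixed $\underline\iota_0$ is needed — is accurate and a useful sanity check that neither proof can be shortened to one symbol. For the record, the only point worth making explicit in a write-up is that $\zeta_i(I_{\varepsilon_0})\subset I_{\varepsilon_0}$ (which follows from \eqref{eqn_lam1ev}), so the iterates never leave the domain on which $\zeta_0,\zeta_1$ are defined; the rest is exactly as you say.
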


Note that the code $\underline\iota$ depends on $z$ but is independent of $\varepsilon$.

\begin{proof}
From the definition \eqref{eqn_def_zeta} of $\zeta_0$ and $\zeta_1$, 
$$\zeta_1([0,1-\lambda_{\mathrm{cs} 1}+7\varepsilon])=[1-\lambda_{\mathrm{cs} 1}, 1-\lambda_{\mathrm{cs} 1}(\lambda_{\mathrm{cs} 1}-7\varepsilon)]
\subset [1-\lambda_{\mathrm{cs} 1},1-\lambda_{\mathrm{cs} 1}^3],$$
where $\varepsilon>0$ is taken so that $\lambda_{\mathrm{cs} 1}-7\varepsilon>\lambda_{\mathrm{cs} 1}^2$.
See Figure \ref{f_3_1}.
Fix $\varepsilon_1>0$ with $7\varepsilon_1<\lambda_{\mathrm{cs} 0}+\lambda_{\mathrm{cs} 1}-1-7\varepsilon_1$.
Let $\underline 0^{(\mu_1)}$ be the code $(00\dots 0)$ of length $\mu_1$.
One can take $\mu_1$ with 
$$\zeta_{\underline{0}^{(\mu_1)}}(1-\lambda_{\mathrm{cs} 1}^3)=\lambda_{\mathrm{cs} 0}^{\mu_1}(1-\lambda_{\mathrm{cs} 1}^3)<
\lambda_{\mathrm{cs} 1}^{-1}(\lambda_{\mathrm{cs} 0}+\lambda_{\mathrm{cs} 1}-1-7\varepsilon_1).$$
Take $0<\varepsilon_2\leq \varepsilon_1$ with 
$\zeta_{\underline{0}^{(\mu_1)}}(1-\lambda_{\mathrm{cs} 1})=\lambda_{\mathrm{cs} 0}^{\mu_1}(1-\lambda_{\mathrm{cs} 1})>7\lambda_{\mathrm{cs} 1}^{-1}\varepsilon_2$.
It follows that, for any $0<\varepsilon\leq \varepsilon_2$,
\begin{equation}\label{eqn_zeta_circ}
\begin{split}
\zeta_1\circ \zeta_{\underline{0}^{(\mu_1)}}\circ \zeta_1([0,& 1-\lambda_{\mathrm{cs} 1}+7\varepsilon])\\
&\subset \zeta_1\bigl(\,[7\lambda_{\mathrm{cs} 1}^{-1}\varepsilon,\lambda_{\mathrm{cs} 1}^{-1}(\lambda_{\mathrm{cs} 0}+\lambda_{\mathrm{cs} 1}-1-7\varepsilon)]\,\bigr)=I(7\varepsilon).
\end{split}
\end{equation}
On the other hand, by \eqref{eqn_lam1ev}, 
$$\zeta_1([-\varepsilon_0,0])=[1-\lambda_{\mathrm{cs} 1}(1+\varepsilon_0),1-\lambda_{\mathrm{cs} 1}]\subset [0,1-\lambda_{\mathrm{cs} 1}+7\varepsilon].$$
It follows from this fact together with \eqref{eqn_zeta_circ} that 
$$
\zeta_1\circ \zeta_{\underline{0}^{(\mu_1)}}\circ \zeta_{11}([-\varepsilon_0,0])
\subset I(7\varepsilon).$$
Now we fix $\mu_2$ with 
$$\zeta_{\underline{0}^{(\mu_2)}}(1+\varepsilon_0)=\lambda_{\mathrm{cs} 0}^{\mu_2}(1+\varepsilon_0)
<1-\lambda_{\mathrm{cs} 1}<1-\lambda_{\mathrm{cs} 1}+7\varepsilon.$$
Then, again by \eqref{eqn_zeta_circ},
$$\zeta_1\circ \zeta_{\underline{0}^{(\mu_1)}}\circ \zeta_1\circ \zeta_{\underline{0}^{(\mu_2)}}
([\lambda_{\mathrm{cs} 1}+7\varepsilon,1+\varepsilon_0])\subset I(7\varepsilon).$$
Thus $\mu_0=\mu_1+\mu_2+2$ is an integer satisfying the required condition.
\end{proof}

For any binary code $\underline{\gamma}^{(k)}=\gamma_k\gamma_{k-1}\dots \gamma_2\gamma_1$ of finite length, the 
surface $S_{\underline{\gamma}^{(k)}}^{\mathrm{cs}}\index{Scsgk@$S_{\underline{\gamma}^{(k)}}^{\mathrm{cs}}$}$ defined by 
$$S_{\underline{\gamma}^{(k)}}^{\mathrm{cs}}=(f|_{\mathbb{V}_{\gamma_1,f}}\circ f|_{\mathbb{V}_{\gamma_2,f}}\circ\dots \circ f|_{\mathbb{V}_{\gamma_{k-1},f}}\circ f|_{\mathbb{V}_{\gamma_k,f}})^{-1}(S^{\mathrm{cs}})$$
is called the \emph{cs-section} of $\mathbb{H}_{\underline{\gamma}^{(k)}}$.

\begin{lem}\label{l_fSw}
Suppose that $\mu_0$ is the positive integer given in Lemma \ref{l_zeta_zeta}.
Let $\underline{w}\underline{\gamma}$ be any binary code of finite length.
If $|\underline{w}|$ is sufficiently large, then there exists a binary code $\underline\iota$ of length at most $\mu_0$ 
(possibly $\underline\iota=\emptyset$) 
such that $\pi_z\circ f^{|\underline{w}|+|\underline\iota|}(S_{\underline{w}\underline\iota\underline{\gamma}}^{\mathrm{cs}})$ 
is contained in $I(4\varepsilon)$ 
for any $f\in\mathrm{Diff}^r(M)$ sufficiently $C^r$-close to $f_0$.
\end{lem}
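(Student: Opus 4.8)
The plan is to push the whole statement down to the one–dimensional $z$‑coordinate — whose $f_{0}$‑dynamics on $\mathbb{V}_{0}$ and $\mathbb{V}_{1}$ are exactly the affine contractions $\zeta_{0},\zeta_{1}$ of \eqref{eqn_def_zeta} — then to invoke Lemma~\ref{l_zeta_zeta}, and finally to absorb the discrepancy between $f$ and $f_{0}$ into the nesting $I(7\varepsilon)\subset I(5\varepsilon)\subset I(4\varepsilon)$ of intervals, which leaves a uniform margin of size $\varepsilon$ at each step.

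First I would record the scalar facts. Since $\zeta_{i}(I_{\varepsilon_{0}})\subset I_{\varepsilon_{0}}$ for $i=0,1$ (a direct consequence of \eqref{eqn_lam1ev} and $\lambda_{\mathrm{cs}0},\lambda_{\mathrm{cs}1}<1$), the image $\zeta_{\underline w}(I_{\varepsilon_{0}})$ of $I_{\varepsilon_{0}}$ under any finite code $\underline w$ is a non‑empty subinterval of $I_{\varepsilon_{0}}$ whose length is at most $\lambda_{\mathrm{cs}1}^{\,|\underline w|}(1+2\varepsilon_{0})$, because each $\zeta_{i}$ is affine with slope $\le\lambda_{\mathrm{cs}1}<1$. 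Next, with $\delta_{f}:=\max_{i}\|\pi_{z}\circ f-\zeta_{i}\circ\pi_{z}\|_{C^{0}(\mathbb{V}_{i,f})}$, which tends to $0$ as $f\to f_{0}$, a telescoping estimate using only the $z$‑contraction \eqref{eqn_lambda_maxS} (so that the per‑step error is damped by the factor $\lambda_{\mathrm{cs}1}<1$ rather than summing up) gives, along any forward orbit segment following a code $\underline v$ through $\mathbb{V}_{0}\cup\mathbb{V}_{1}$,
$$\bigl|\,\pi_{z}(f^{|\underline v|}(\boldsymbol{x}))-\zeta_{\underline v}(\pi_{z}(\boldsymbol{x}))\,\bigr|\le E_{f}:=\frac{\delta_{f}}{1-\lambda_{\mathrm{cs}1}},$$
uniformly in $|\underline v|$. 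As $S^{\mathrm{cs}}_{\underline w\underline\iota\underline\gamma}\subset\mathbb{H}_{\underline w\underline\iota\underline\gamma}\subset\mathbb{B}$ has $z$‑coordinates in $I_{\varepsilon_{0}}$ and follows the code $\underline w$ during its first $|\underline w|$ iterates, this yields $\pi_{z}\circ f^{|\underline w|}(S^{\mathrm{cs}}_{\underline w\underline\iota\underline\gamma})\subset\zeta_{\underline w}(I_{\varepsilon_{0}})+[-E_{f},E_{f}]$ for every choice of $\underline\iota$.

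Then I would split into two cases, both requiring $f$ close enough to $f_{0}$ that $E_{f}\le\varepsilon$. If $\zeta_{\underline w}(I_{\varepsilon_{0}})\subset I(5\varepsilon)$, take $\underline\iota=\emptyset$: the display above gives $\pi_{z}\circ f^{|\underline w|}(S^{\mathrm{cs}}_{\underline w\underline\gamma})\subset I(5\varepsilon-E_{f})\subset I(4\varepsilon)$. If $\zeta_{\underline w}(I_{\varepsilon_{0}})\not\subset I(5\varepsilon)$, then, being a subinterval of $I_{\varepsilon_{0}}$ that leaves $I(5\varepsilon)$, it contains a point $z^{*}$ of $[-\varepsilon_{0},1-\lambda_{\mathrm{cs}1}+7\varepsilon]\cup[\lambda_{\mathrm{cs}0}-7\varepsilon,1+\varepsilon_{0}]$, so Lemma~\ref{l_zeta_zeta} provides a code $\underline\iota$ with $|\underline\iota|\le\mu_{0}$ and $\zeta_{\underline\iota}(z^{*})\in I(7\varepsilon)$. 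Applying the scalar estimate to the first $|\underline w|+|\underline\iota|$ iterates of $S^{\mathrm{cs}}_{\underline w\underline\iota\underline\gamma}$ (whose code there is $\underline w\underline\iota$, with $\zeta_{\underline w\underline\iota}=\zeta_{\underline\iota}\circ\zeta_{\underline w}$), and using $|\zeta_{\underline\iota}'|\le1$, $z^{*}\in\zeta_{\underline w}(I_{\varepsilon_{0}})$ and the length bound, one obtains
$$\pi_{z}\circ f^{|\underline w|+|\underline\iota|}(S^{\mathrm{cs}}_{\underline w\underline\iota\underline\gamma})\subset\bigl[\zeta_{\underline\iota}(z^{*})-\rho,\ \zeta_{\underline\iota}(z^{*})+\rho\bigr],\qquad \rho:=\lambda_{\mathrm{cs}1}^{\,|\underline w|}(1+2\varepsilon_{0})+E_{f}.$$
Since $\zeta_{\underline\iota}(z^{*})\in I(7\varepsilon)$, the right‑hand side lies in $I(7\varepsilon-\rho)$, hence in $I(4\varepsilon)$ once $\rho\le3\varepsilon$; and $\rho\le3\varepsilon$ holds as soon as $|\underline w|$ is large enough (to shrink $\lambda_{\mathrm{cs}1}^{\,|\underline w|}(1+2\varepsilon_{0})$) and $f$ is $C^{r}$‑close enough to $f_{0}$ (to shrink $E_{f}$). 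One also notes, exactly as for $f_{0}$, that the chosen $z$‑itinerary stays in $I_{\varepsilon_{0}}$, so $\mathbb{H}_{\underline w\underline\iota\underline\gamma}$, and therefore $S^{\mathrm{cs}}_{\underline w\underline\iota\underline\gamma}$, are non‑empty.

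The one place that needs care is the passage from $f_{0}$ to nearby $f$: for $f\neq f_{0}$ the surface $S^{\mathrm{cs}}_{\underline w\underline\iota\underline\gamma}$ is no longer a flat $yz$‑slice, its $z$‑coordinate is a genuine function of all of $x,y,z$, and the number $|\underline w|$ of iterates is unbounded, so one must check that the scalar estimate holds with an error $E_{f}$ uniform in $|\underline w|$. This is exactly what the uniform contraction in the $z$‑direction in \eqref{eqn_lambda_maxS} buys — the errors are controlled by a convergent geometric series — and once that telescoping is in place the rest of the argument is the bookkeeping with $I(4\varepsilon)\supset I(5\varepsilon)\supset I(7\varepsilon)$ that leaves room for the $O(\lambda_{\mathrm{cs}1}^{\,|\underline w|})$ and $O(\delta_{f})$ errors.
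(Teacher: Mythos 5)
Your proof is correct, and it departs from the paper's argument in one substantive way worth noting. The paper fixes a single point $\boldsymbol{x}_{\underline{w}\underline{\gamma}}\in S^{\mathrm{cs}}_{\underline{w}\underline{\gamma}}$, applies Lemma~\ref{l_zeta_zeta} to the scalar $\pi_z\circ f^{|\underline{w}|}(\boldsymbol{x}_{\underline{w}\underline{\gamma}})$, and is then forced to compare that point's orbit with the orbit of the \emph{different} point $\boldsymbol{x}_{\underline{w}\underline{\iota}\underline{\gamma}}\in S^{\mathrm{cs}}_{\underline{w}\underline{\iota}\underline{\gamma}}$ (the code change moves the base point); this transfer is carried out geometrically via the $f$-invariant cone-field $\boldsymbol{C}^{\mathrm{u}}_\varepsilon$ — the segment through the two base points parallel to the $x$-axis is adaptable to $\boldsymbol{C}^{\mathrm{u}}_\varepsilon$, so its $f^{|\underline{w}|}$-image is almost horizontal and the $z$-coordinates stay $O(\varepsilon)$-close. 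You instead work with the deterministic interval $\zeta_{\underline w}(I_{\varepsilon_0})$ and a uniform-in-$|\underline w|$ scalar error bound $E_f=\delta_f/(1-\lambda_{\mathrm{cs}1})$ obtained by telescoping against the one-dimensional affine model; since your choice of $z^*$ and $\underline{\iota}$ is made from $\zeta_{\underline w}(I_{\varepsilon_0})$ rather than from a point-specific image, the comparison of two base points never arises, and the cone-field is not needed. What the paper's route buys is that it never writes down the uniform telescoping estimate (that work is hidden in the $u$-cone invariance, which is already set up for other purposes); what your route buys is a purely scalar argument on $\pi_z$ with all errors made explicit and summable, which is arguably cleaner here. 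Your bookkeeping ($I(7\varepsilon)$ margin absorbing $\rho=\lambda_{\mathrm{cs}1}^{|\underline w|}(1+2\varepsilon_0)+E_f\le 3\varepsilon$), the escape set for Lemma~\ref{l_zeta_zeta}, and the indexing conventions ($\zeta_{\underline{w}\underline{\iota}}=\zeta_{\underline{\iota}}\circ\zeta_{\underline{w}}$, orbit following $\underline{w}$ then $\underline{\iota}$ then $\underline{\gamma}$) are all consistent with the paper.
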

\begin{proof}
Fix an element $\boldsymbol{x}_{\underline{w}\underline{\gamma}}$ in $S_{\underline{w}\underline{\gamma}}^{\mathrm{cs}}$ and suppose first that 
$\pi_z\circ f^{|\underline{w}|}(\boldsymbol{x}_{\underline{w}\underline{\gamma}})$ is contained in $I(7\varepsilon)$.
By \eqref{eqn_lambda_maxS}, $|\pi_z\circ f^{|\underline{w}|}(S_{\underline{w}\underline{\gamma}}^{\mathrm{cs}})|\leq C(\bar\lambda_{\mathrm{cs} 1})^{|\underline{w}|}$ holds for some constant $C>0$ independent of $|\underline{w}|$ or $|\underline{\gamma}|$.
Since $I(4\varepsilon)\setminus I(7\varepsilon)$ 
consists of two intervals of length $3\varepsilon$, 
$\pi_z\circ f^{|\underline{w}|}(S_{\underline{w}\underline{\gamma}}^{\mathrm{cs}})$ is contained in $I(4\varepsilon)$ if 
$|\underline{w}|$ is sufficiently large.

Next we consider the case that $\pi_z\circ f^{|\underline{w}|}(\boldsymbol{x}_{\underline{w}\underline{\gamma}})$ is not an element of $I(7\varepsilon)$.
Then $\pi_z\circ f^{|\underline{w}|}(\boldsymbol{x}_{\underline{w}\underline{\gamma}})$ is contained in 
$[-\varepsilon_0,1-\lambda_{\mathrm{cs} 1}+7\varepsilon]\cup [\lambda_{\mathrm{cs} 0}-7\varepsilon,1+\varepsilon_0]$.
Suppose that $\pi_{yz}:\mathbb{B}\longrightarrow I_{\varepsilon_0}^2$\index{piyz@$\pi_{yz}$} is the orthogonal projection defined as 
$\pi_{yz}(x,y,z)=(y,z)$.
By Lemma \ref{l_zeta_zeta}, 
there exits a binary code $\underline\iota$ of length at most $\mu_0$ such that 
$\pi_z\circ f^{|\underline w|+|\underline\iota|}(\boldsymbol{x}_{\underline w\underline\gamma})$ is contained in $I(6\varepsilon)$ if 
$f$ is sufficiently $C^r$-close to $f_0$.
For the proof, we need to show that $\pi_z(f^{|\underline{w}|}(\boldsymbol{x}_{\underline{w}\underline{\iota}\underline{\gamma}}))$ is arbitrarily close to 
$\pi_z(f^{|\underline{w}|}(\boldsymbol{x}_{\underline{w}\underline{\gamma}}))$ even in the case that $|\underline{w}|$ is large.
We use here the $f$-invariant unstable cone-field $\boldsymbol{C}_{\varepsilon}^{\mathrm{u}}$.
Consider the straight segment $l$ in $\mathbb{B}$ passing through $\boldsymbol{x}_{\underline{w}\underline{\gamma}}$ and 
$\boldsymbol{x}_{\underline{w}\underline{\iota}\underline{\gamma}}$.
Since $l$ is parallel to the $x$-axis, $T_{\boldsymbol{x}}(l)$ is contained in $\boldsymbol{C}_{\varepsilon}^{\mathrm{u}}(\boldsymbol{x})$ for 
any $\boldsymbol{x}\in l$.
Let $l'$ be the component of $f^{|\underline{w}|}(l)\cap \mathbb{B}$ with $l'\supset \{f^{|\underline{w}|}(\boldsymbol{x}_{\underline{w}\underline{\iota}\underline{\gamma}}),
f^{|\underline{w}|}(\boldsymbol{x}_{\underline{w}\underline{\gamma}})\}$.
Since $\boldsymbol{C}_{\varepsilon}^{\mathrm{u}}$ is $f$-invariant, $T_{\boldsymbol{x}'}(l')$ is contained in $\boldsymbol{C}_{\varepsilon}^{\mathrm{u}}(\boldsymbol{x}')$ for any 
$\boldsymbol{x}'\in l'$.
This implies that $\pi_z(f^{|\underline{w}|+|\underline{\iota}|}(\boldsymbol{x}_{\underline{w}\underline{\iota}\underline{\gamma}}))$ is arbitrarily close to 
$\pi_z(f^{|\underline{w}|+|\underline\iota|}(\boldsymbol{x}_{\underline{w}\underline{\gamma}}))$.
See Figure \ref{f_5_1}.
\begin{figure}[hbtp]
\centering
\scalebox{0.6}{\includegraphics[clip]{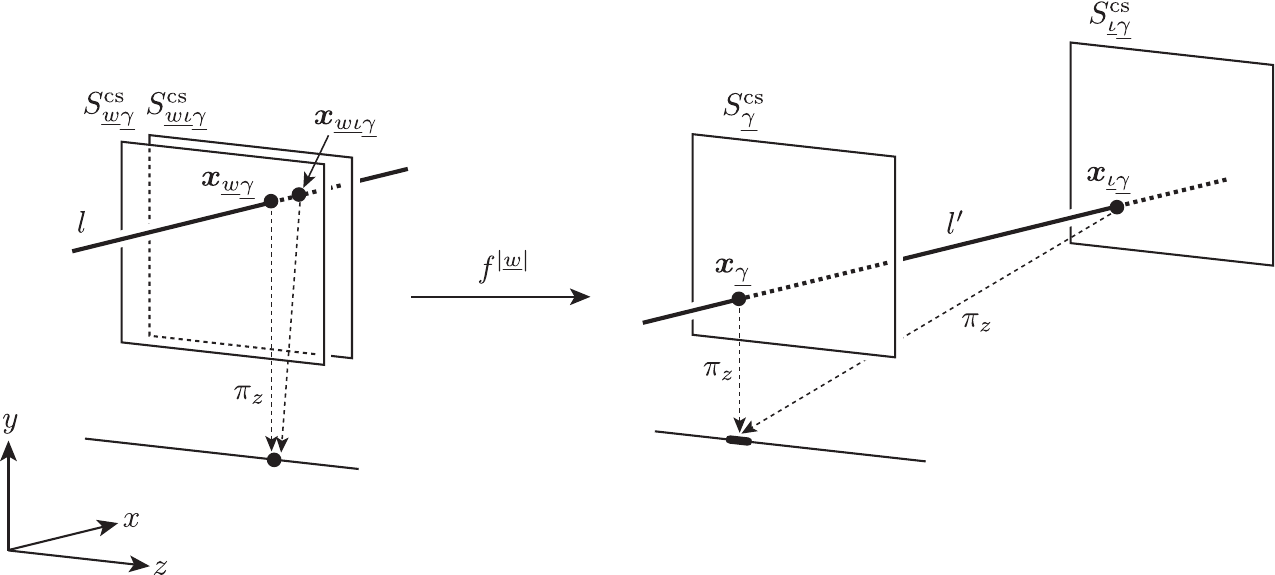}}
\caption{}
\label{f_5_1}
\end{figure}
Thus one can suppose that $\pi_z(f^{|\underline{w}|+|\underline{\iota}|}(\boldsymbol{x}_{\underline{w}\underline{\iota}\underline{\gamma}}))$ is 
contained in $I(5\varepsilon)$ and hence $\pi_z(f^{|\underline{w}|+|\underline{\iota}|}(S_{\underline{w}\underline{\iota}\underline{\gamma}}^{\mathrm{cs}}))$ 
is in $I(4\varepsilon)$ if $|\underline w|$ is sufficiently large.
\end{proof}

\subsection{Backward sequence of sub-surfaces of $S^{\mathrm{cs}}$}\label{ss_BTC}

This subsection is a preparation for the construction of a 
backward sequence from a certain sub-surface $\widehat{\Sigma}_{k+1}^{\mathrm{cs}}$ of $S^{\mathrm{cs}}$ to 
$\Sigma_{\underline{\gamma}^{(m_k)}}^{\mathrm{cs}}$ as illustrated in Figure \ref{f_8_1}.

For any binary code $\underline{\gamma}$ of finite length, we say that a compact connected sub-surface $\Sigma$ 
of $S_{\underline{\gamma}}^{\mathrm{cs}}$ satisfies the \emph{backtracking condition} if $\pi_z(\Sigma)$ is contained 
in $[\varepsilon,\lambda_{\mathrm{cs} 0}-\varepsilon]$ or $[1-\lambda_{\mathrm{cs} 1}+\varepsilon,1-\varepsilon]$.
Recall that $\mathcal{F}_f^{\mathrm{cs}}$ is the foliation on $\mathbb{H}_{\varepsilon_0}$ induced from $\mathcal{F}_f^{\mathrm{s}}$ via 
$(f|_{\mathbb{H}_{\varepsilon_0}})^{-2}$ defined in Section \ref{S_condition_diffeo}.
Let $\mathbb{U}^{\mathrm{cs}}\index{Ucs@$\mathbb{U}^{\mathrm{cs}}$}$ be the closure of the middle component of $\mathbb{H}_{\varepsilon_0}\setminus (F^{\mathrm{cs}-}\cup F^{\mathrm{cs}+})$ for 
some leaves $F^{\mathrm{cs}-}$, $F^{\mathrm{cs}+}$ of $\mathcal{F}_f^{\mathrm{cs}}$ with $F^{\mathrm{cs}-}\neq F^{\mathrm{cs}+}$, where 
$F^{\mathrm{cs}-}$ is assumed to be closer to the vertical plane $z=-\varepsilon_0$ compared with $F^{\mathrm{cs}+}$.
We set $F^{\mathrm{cs}-}\cup F^{\mathrm{cs}+}=\partial_z\mathbb{U}^{\mathrm{cs}}\index{Ucs_Pz@$\partial_z\mathbb{U}^{\mathrm{cs}}$}$ and call $F^{\mathrm{cs}-}$ and $F^{\mathrm{cs}+}$ respectively the 
\emph{left} and \emph{right components} of $\partial_z\mathbb{U}^{\mathrm{cs}}$.
See Figure \ref{f_5_2}.
\begin{figure}[hbtp]
\centering
\scalebox{0.6}{\includegraphics[clip]{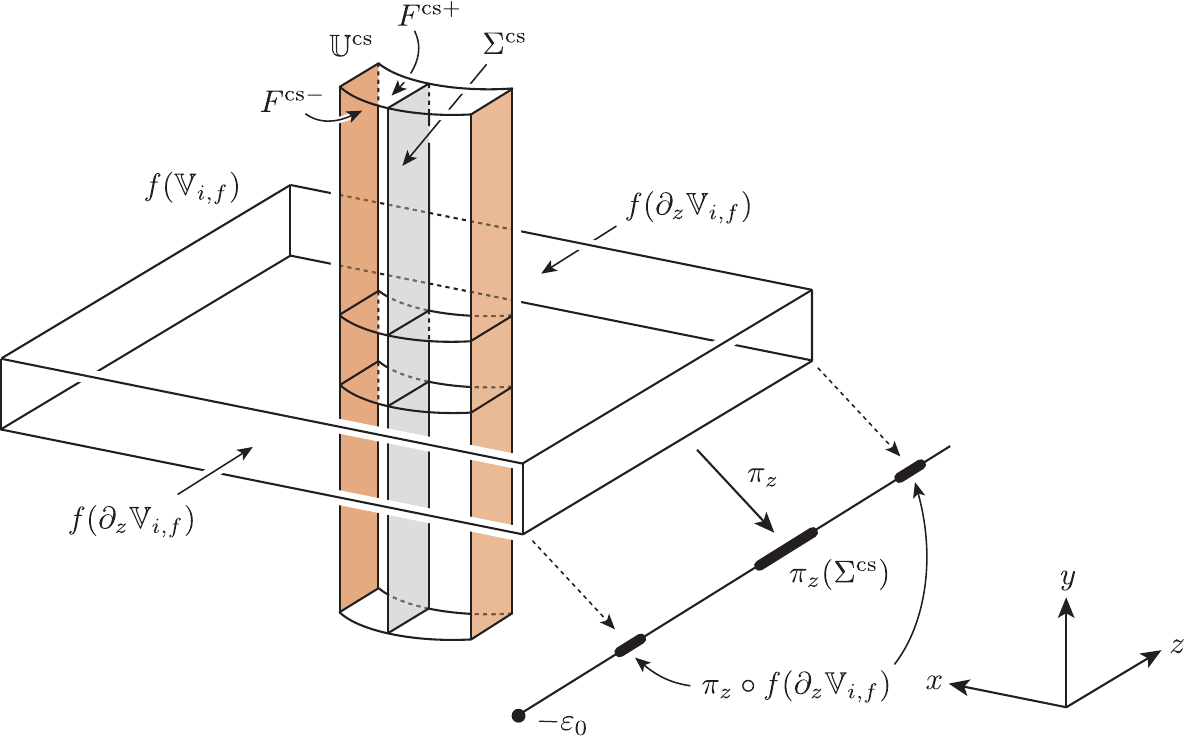}}
\caption{The case of $a_2>0$.
On the other hand, when $a_2<0$, $F_{\mathrm{cs}+}$ is convex and $F^{\mathrm{cs}-}$ is concave.}
\label{f_5_2}
\end{figure}
Suppose that the \emph{cs-section} $\Sigma^{\mathrm{cs}}=\mathbb{U}^{\mathrm{cs}}\cap S^{\mathrm{cs}}\index{Sgcs@$\Sigma^{\mathrm{cs}}$}$ of $\mathbb{U}^{\mathrm{cs}}$ satisfies the backtracking condition.
Then, at least one of $i=0,1$, $\pi_z(\Sigma^{\mathrm{cs}})\subset \pi_z(f(\mathbb{V}_{i,f}))$ and 
$\pi_z(\Sigma^{\mathrm{cs}})\cap \pi_z(f(\partial_z\mathbb{V}_{i,f}))=\emptyset$, where 
$\partial_z \mathbb{V}_{i,f}=\partial\mathbb{V}_{i,f}\cap (I_{\varepsilon_0}^2\times \{-\varepsilon_0,1+\varepsilon_0\})$.
We denote the `$i$' by $\gamma_1$.
Then one can obtain the \emph{cs-curved block} $\mathbb{U}_{\gamma_1}^{\mathrm{cs}}=(f|_{\mathbb{V}_{\gamma_1,f}})^{-1}(\mathbb{U}^{\mathrm{cs}})$ in $\mathbb{H}_{\gamma_1}$ with the 
section $\Sigma_{\gamma_1}^{\mathrm{cs}}=\mathbb{U}_{\gamma_1}^{\mathrm{cs}}\cap S_{\gamma_1}^{\mathrm{cs}}$.
If $\Sigma_{\gamma_1}^{\mathrm{cs}}$ also satisfies the backtracking condition, then we have the cs-curved block 
$\mathbb{U}_{\gamma_2\gamma_1}^{\mathrm{cs}}=(f|_{\mathbb{V}_{\gamma_2,f}})^{-1}(\mathbb{U}_{\gamma_1}^{\mathrm{cs}})$ in $\mathbb{H}_{\gamma_2\gamma_1}$ with the $\mathrm{cs}$-section $\Sigma_{\gamma_2\gamma_1}^{\mathrm{cs}}$ 
similarly.
We repeat the process as much as possible so that $\Sigma_{\underline{\gamma}^{(j)}}^{\mathrm{cs}}\index{Sggcs@$\Sigma_{\underline{\gamma}^{(j)}}^{\mathrm{cs}}$}$ satisfies the 
backtracking condition for $j=1,\dots,m-1$ and $\Sigma_{\underline{\gamma}^{(m)}}^{\mathrm{cs}}$ does not, where 
$\underline{\gamma}^{(j)}=\gamma_j\gamma_{j-1}\dots \gamma_2\gamma_1$.
We say that $\Sigma_{\underline{\gamma}^{(m)}}^{\mathrm{cs}}$ is a \emph{back-end section} based at $\Sigma^{\mathrm{cs}}$.
Then $\mathbb{U}_{\underline{\gamma}^{(j)}}^{\mathrm{cs}}\index{Ucsg@$\mathbb{U}_{\underline{\gamma}^{(j)}}^{\mathrm{cs}}$}$ $(j=1,\dots,m)$ is 
the cs-curved block in $\mathbb{H}_{\underline{\gamma}^{(j)}}$ defined inductively from $\mathbb{U}_{\gamma_1}^{\mathrm{cs}}$.

Let $F_{\underline{\gamma}^{(j)}}^{\mathrm{cs}-}$\index{Fgcs-@$F_{\underline{\gamma}^{(j)}}^{\mathrm{cs}-}$, $F_{\underline{\gamma}^{(j)}}^{\mathrm{cs}+}$} and $F_{\underline{\gamma}^{(j)}}^{\mathrm{cs}+}$ be the left and right components of 
$\partial_z \mathbb{U}_{\underline{\gamma}^{(j)}}^{\mathrm{cs}}$ respectively.
For any $t$ with $-\varepsilon_0\leq t\leq 1+\varepsilon_0$, let $P_t$ be the horizontal plane $y=t$ in $\mathbb{B}$.
Note that $F_{\underline{\gamma}^{(j)}}^{\mathrm{cs} \ast}\cap P_t$ $(\ast=\pm)$ is an almost parabolic curve in $P_t$, 
that is, it is represented as the graph of a $C^r$-function
$$z=a_{t;\ast}(x-b_{t;\ast})^2(1+O(x-b_{t;\ast}))+c_{t;\ast}$$
on $x$, 
where $a_{t;\ast}(\neq 0)$, $b_{t;\ast}$, $c_{t;\ast}$ are $C^r$-functions of $t$.
By \eqref{eqn_F_cylinder}, $a_{t;\ast}$ and $a_2$ have the same sign.
By Lemma \ref{l_Fcs}, there exists a non-singular $C^1$-vector field $X_\ast$ on $F_{\underline{\gamma}^{(j)}}^{\mathrm{cs} \ast}$ for 
$\ast=\pm$ with $X_\ast(\boldsymbol{x})\in \boldsymbol{C}_{\varepsilon}^{\mathrm{ss}}(\boldsymbol{x})$.
It follows that
$$a_{t;\ast}=a_{1/2;\ast}+O(\varepsilon),\quad b_{t;\ast}=b_{1/2;\ast}+O(\varepsilon)\quad\text{and}\quad c_{t;\ast}=c_{1/2;\ast}+O(\varepsilon)$$
for any $t\in [-\varepsilon_0,1+\varepsilon_0]$.
By the $f^{-1}$-invariance property on $\bigcup_{j=0}^\infty \mathcal{L}_{(j;\infty)}$, 
for any $\boldsymbol{x}_\ast\in F_{\underline{\gamma}^{(j)}}^{\mathrm{cs} \ast}\cap \Sigma_{\underline{\gamma}^{(j)}}^{\mathrm{cs}}$, 
there exists a leaf $l$ of $\mathcal{L}_{(j;\infty)}$ which is tangent to $F_{\underline{\gamma}^{(j)}}^{\mathrm{cs} \ast}$ at $\boldsymbol{x}_\ast$.
Since $l$ is adaptable to $\boldsymbol{C}_{\varepsilon}^{\mathrm{u}}$, $|\pi_z(\boldsymbol{x}_\ast)-c_{t;\ast}|=O(\varepsilon)$ if $\pi_y(\boldsymbol{x}_\ast)=t$ and hence 
$|\pi_z(\boldsymbol{x}_\ast)-c_{1/2;\ast}|=O(\varepsilon)$.
If necessary reconstructing $\mathcal{O}(f_0)$, one can suppose that 
\begin{equation}\label{eqn_pi_z_xq}
|\pi_z(\boldsymbol{x}_\ast)-c_{1/2;\ast}|<\varepsilon
\end{equation}
for any $\boldsymbol{x}_\ast \in F_{\underline{\gamma}^{(j)}}^{\mathrm{cs} \ast}\cap \Sigma_{\underline{\gamma}^{(j)}}^{\mathrm{cs}}$ if $f\in \mathcal{O}(f_0)$.
See Figure \ref{f_5_3}.
\begin{figure}[hbtp]
\centering
\scalebox{0.6}{\includegraphics[clip]{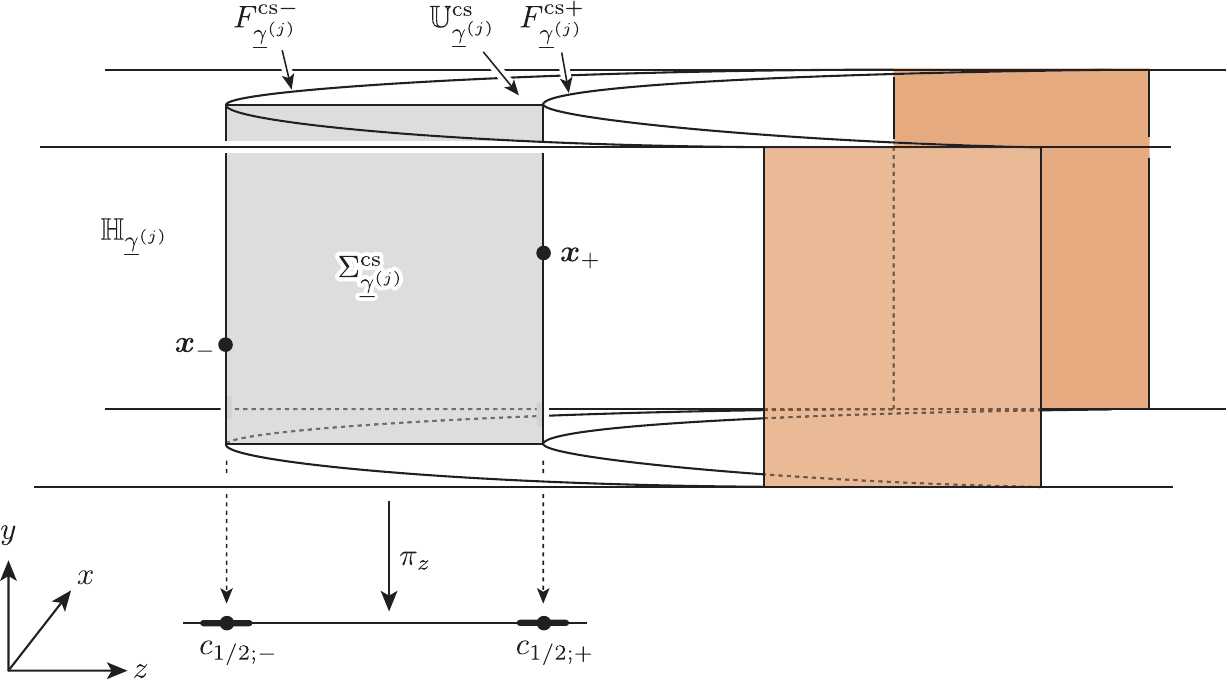}}
\caption{The case of $a_2>0$.}
\label{f_5_3}
\end{figure}
It follows that 
$$\pi_z(\partial_z\Sigma_{\underline{\gamma}^{(j)}}^{\mathrm{cs}})\subset [c_{1/2;-}-\varepsilon,c_{1/2;-}+\varepsilon]\cup 
[c_{1/2;+}-\varepsilon,c_{1/2;+}+\varepsilon],$$
where $\partial_z\Sigma_{\underline{\gamma}^{(j)}}^{\mathrm{cs}}=(F_{\underline{\gamma}^{(j)}}^{\mathrm{cs} -}\cup F_{\underline{\gamma}^{(j)}}^{\mathrm{cs} +})\cap \Sigma_{\underline{\gamma}^{(j)}}^{\mathrm{cs}}\index{Sggcs_Pz@$\partial_z\Sigma_{\underline{\gamma}^{(j)}}^{\mathrm{cs}}$}$.

\begin{lem}\label{l_S_not_BT}
Under the assumptions as above, the $\pi_z$-image $\pi_z(\Sigma_{\underline{\gamma}^{(m)}}^{\mathrm{cs}})$ of 
the back-end section $\Sigma_{\underline{\gamma}^{(m)}}^{\mathrm{cs}}$ contains $I(3\varepsilon)$.
\end{lem}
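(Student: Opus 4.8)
The plan is to read off the conclusion from the way the back‑end section is produced. By construction, $\Sigma_{\underline{\gamma}^{(m-1)}}^{\mathrm{cs}}$ satisfies the backtracking condition while its pull‑back $\Sigma_{\underline{\gamma}^{(m)}}^{\mathrm{cs}}=(f|_{\mathbb{V}_{\gamma_m,f}})^{-1}(\Sigma_{\underline{\gamma}^{(m-1)}}^{\mathrm{cs}})$ does not. Interchanging the symbols $0$ and $1$ if necessary, I would assume $\pi_z(\Sigma_{\underline{\gamma}^{(m-1)}}^{\mathrm{cs}})\subset[\varepsilon,\lambda_{\mathrm{cs}0}-\varepsilon]$, the case $\pi_z(\Sigma_{\underline{\gamma}^{(m-1)}}^{\mathrm{cs}})\subset[1-\lambda_{\mathrm{cs}1}+\varepsilon,1-\varepsilon]$ being symmetric with $\zeta_1$ in place of $\zeta_0$. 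First I would observe that $\pi_z(\Sigma_{\underline{\gamma}^{(m-1)}}^{\mathrm{cs}})$ is not contained in $I(\varepsilon)$: using the explicit formulas \eqref{eqn_def_zeta} together with $\lambda_{\mathrm{cs}0},\lambda_{\mathrm{cs}1}<1<\lambda_{\mathrm{cs}0}+\lambda_{\mathrm{cs}1}$ from \eqref{eqn_eigen_v}, one checks $\zeta_0^{-1}(I(\varepsilon))\subset[1-\lambda_{\mathrm{cs}1}+\varepsilon,1-\varepsilon]$ and $\zeta_1^{-1}(I(\varepsilon))\subset[\varepsilon,\lambda_{\mathrm{cs}0}-\varepsilon]$, so if $\pi_z(\Sigma_{\underline{\gamma}^{(m-1)}}^{\mathrm{cs}})$ were contained in $I(\varepsilon)$ then $\Sigma_{\underline{\gamma}^{(m)}}^{\mathrm{cs}}$ would still satisfy the backtracking condition, contradicting the minimality of $m$. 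Hence $\pi_z(\Sigma_{\underline{\gamma}^{(m-1)}}^{\mathrm{cs}})$ lies in exactly one of the two backtracking intervals, and in the construction $\gamma_m$ is the corresponding symbol, namely $\gamma_m=0$ in the case we are treating.

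The heart of the argument is the estimate that $\pi_z(\Sigma_{\underline{\gamma}^{(m)}}^{\mathrm{cs}})$ is a closed interval which coincides with $\zeta_0^{-1}\bigl(\pi_z(\Sigma_{\underline{\gamma}^{(m-1)}}^{\mathrm{cs}})\bigr)$ up to an $O(\varepsilon)$‑error. This is where the geometric preparations of this section enter: $\partial_z\mathbb{U}_{\underline{\gamma}^{(m)}}^{\mathrm{cs}}$ consists of the two parabolic leaves $F_{\underline{\gamma}^{(m)}}^{\mathrm{cs}\pm}=(f|_{\mathbb{V}_{\gamma_m,f}})^{-1}(F_{\underline{\gamma}^{(m-1)}}^{\mathrm{cs}\pm})$; since $\pi_z\circ f|_{\mathbb{V}_{0,f}}$ has $z$‑slope within $\varepsilon/2$ of $\lambda_{\mathrm{cs}0}$ by \eqref{eqn_lambda_maxS}, since $\mathbb{H}_{\underline{\gamma}^{(m)}}$ is exponentially thin in the $x$‑direction, and since the heights of $F_{\underline{\gamma}^{(m)}}^{\mathrm{cs}\pm}$ vary by only $O(\varepsilon)$ along the $y$‑direction (Lemma \ref{l_Fcs}), those heights at $y=1/2$ are $\zeta_0^{-1}$ of the corresponding heights of $F_{\underline{\gamma}^{(m-1)}}^{\mathrm{cs}\pm}$ up to $O(\varepsilon)$; with \eqref{eqn_pi_z_xq}, which confines $\pi_z(\Sigma_{\underline{\gamma}^{(m)}}^{\mathrm{cs}})$ to within $\pm\varepsilon$ of the band between these two heights, the estimate follows. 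Now $\zeta_0^{-1}(z)=z/\lambda_{\mathrm{cs}0}$ with $\lambda_{\mathrm{cs}0}<1/2$, so $\zeta_0^{-1}\bigl([\varepsilon,\lambda_{\mathrm{cs}0}-\varepsilon]\bigr)=[\varepsilon/\lambda_{\mathrm{cs}0},\,1-\varepsilon/\lambda_{\mathrm{cs}0}]$, whose endpoints are at distance more than $\varepsilon$ from those of $[\varepsilon,1-\varepsilon]$. Absorbing the $O(\varepsilon)$‑error into this margin — this is precisely why the lemma asserts $I(3\varepsilon)$ rather than $I(\varepsilon)$, and is arranged by shrinking $\mathcal{O}(f_0)$ if necessary — I would conclude $\min\pi_z(\Sigma_{\underline{\gamma}^{(m)}}^{\mathrm{cs}})>\varepsilon$ and $\max\pi_z(\Sigma_{\underline{\gamma}^{(m)}}^{\mathrm{cs}})<1-\varepsilon$.

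It remains to combine the two facts. The intervals $[\varepsilon,\lambda_{\mathrm{cs}0}-\varepsilon]$ and $[1-\lambda_{\mathrm{cs}1}+\varepsilon,1-\varepsilon]$ overlap in exactly $I(\varepsilon)$, because $\lambda_{\mathrm{cs}0}+\lambda_{\mathrm{cs}1}>1$, and their union is $[\varepsilon,1-\varepsilon]$. Thus $\pi_z(\Sigma_{\underline{\gamma}^{(m)}}^{\mathrm{cs}})$ is a closed interval inside $[\varepsilon,1-\varepsilon]$ which, by the failure of the backtracking condition, is contained in neither piece; since moreover its minimum is strictly above $\varepsilon$ and its maximum strictly below $1-\varepsilon$, this can only be because $\max\pi_z(\Sigma_{\underline{\gamma}^{(m)}}^{\mathrm{cs}})>\lambda_{\mathrm{cs}0}-\varepsilon$ and $\min\pi_z(\Sigma_{\underline{\gamma}^{(m)}}^{\mathrm{cs}})<1-\lambda_{\mathrm{cs}1}+\varepsilon$. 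Being an interval, $\pi_z(\Sigma_{\underline{\gamma}^{(m)}}^{\mathrm{cs}})$ then contains $[1-\lambda_{\mathrm{cs}1}+\varepsilon,\lambda_{\mathrm{cs}0}-\varepsilon]$, hence $I(3\varepsilon)$, which is the claim. In the symmetric case $\gamma_m=1$ one uses instead $\zeta_1^{-1}\bigl([1-\lambda_{\mathrm{cs}1}+\varepsilon,1-\varepsilon]\bigr)=[\varepsilon/\lambda_{\mathrm{cs}1},\,1-\varepsilon/\lambda_{\mathrm{cs}1}]\subset(\varepsilon,1-\varepsilon)$, valid since $\lambda_{\mathrm{cs}1}<1$. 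The main obstacle I foresee is the middle step: showing rigorously that the $\pi_z$‑image of the pulled‑back $\mathrm{cs}$‑section tracks $\zeta_{\gamma_m}^{-1}$ of the old one closely enough that the slack between $\varepsilon$ and $3\varepsilon$ genuinely absorbs the $O(\varepsilon)$ discrepancies coming from the curvature of $\mathcal{F}_f^{\mathrm{s}}$, the tilt of the $\mathcal{L}$‑leaves, and $f$ versus $f_0$. The combinatorial part in the last step is, by contrast, elementary.
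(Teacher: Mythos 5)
Your proof rests on the same two pillars as the paper's: that $\pi_z(\Sigma_{\underline{\gamma}^{(m)}}^{\mathrm{cs}})$ tracks $\zeta_{\gamma_m}^{-1}\bigl(\pi_z(\Sigma_{\underline{\gamma}^{(m-1)}}^{\mathrm{cs}})\bigr)$ up to an $O(\varepsilon)$ error, and that backtracking fails at step $m$. But you package the conclusion differently. The paper argues endpoint by endpoint inside $I$: it shows $\zeta_{\gamma_m}^{-1}(a)\le 1-\lambda_{\mathrm{cs}1}+2\varepsilon$ and $\zeta_{\gamma_m}^{-1}(b)\ge\lambda_{\mathrm{cs}0}-2\varepsilon$ by deriving a contradiction with the back-end condition for each, concludes $[\zeta_{\gamma_m}^{-1}(a),\zeta_{\gamma_m}^{-1}(b)]\supset I(2\varepsilon)$, and then transfers to $[a',b']\supset I(3\varepsilon)$. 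You instead first prove $\pi_z(\Sigma_{\underline{\gamma}^{(m)}}^{\mathrm{cs}})\subset(\varepsilon,1-\varepsilon)$ and then use the covering fact $[\varepsilon,\lambda_{\mathrm{cs}0}-\varepsilon]\cup[1-\lambda_{\mathrm{cs}1}+\varepsilon,1-\varepsilon]=[\varepsilon,1-\varepsilon]$ with overlap $I(\varepsilon)$, reading off $[a',b']\supset I(\varepsilon)$ directly. Your route is a bit cleaner and, if the middle estimate holds, gives a nominally stronger containment than the stated $I(3\varepsilon)$. Your opening step (that $\pi_z(\Sigma_{\underline{\gamma}^{(m-1)}}^{\mathrm{cs}})\not\subset I(\varepsilon)$) is an extra observation that the paper does not make; it helps pin down $\gamma_m$ but is not strictly needed.

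One place where your justification is too thin: in the symmetric case you claim the margin works "valid since $\lambda_{\mathrm{cs}1}<1$." The margin there is $\varepsilon(1/\lambda_{\mathrm{cs}1}-1)$, which is positive because $\lambda_{\mathrm{cs}1}<1$ but is \emph{strictly less than} $\varepsilon$ because $\lambda_{\mathrm{cs}1}>1/2$ by \eqref{eqn_eigen_v}. So an $O(\varepsilon)$ error that is only known to be $<\varepsilon$ is not automatically absorbed; you need the constant in the $O(\varepsilon)$ estimate to be small compared with $(1-\lambda_{\mathrm{cs}1})/\lambda_{\mathrm{cs}1}$, which is achievable by shrinking $\mathcal{O}(f_0)$ once $\lambda_{\mathrm{cs}1}$ is fixed, but it does not come from $\lambda_{\mathrm{cs}1}<1$ alone. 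The case with $\lambda_{\mathrm{cs}0}<1/2$ genuinely has margin $>\varepsilon$; the case with $\lambda_{\mathrm{cs}1}$ does not. In fairness, the paper's own proof also leans on this silently (e.g.\ the conclusion "$a'>1-\lambda_{\mathrm{cs}1}+\varepsilon$ and hence $\pi_z(\Sigma_{\underline{\gamma}^{(m)}}^{\mathrm{cs}})\subset[1-\lambda_{\mathrm{cs}1}+\varepsilon,1-\varepsilon]$" in the first case uses $b'\le 1-\varepsilon$, which has exactly this margin issue), and you already flagged this as the main obstacle; just be aware that the asymmetry between $\lambda_{\mathrm{cs}0}<1/2$ and $\lambda_{\mathrm{cs}1}>1/2$ matters here.
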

\begin{proof}
Since $\Sigma_{\underline{\gamma}^{(m-1)}}^{\mathrm{cs}}$ satisfies the backtracking condition, 
at least one of $[\varepsilon,\lambda_{\mathrm{cs} 0}-\varepsilon]$ and $[1-\lambda_{\mathrm{cs} 1}+\varepsilon,1-\varepsilon]$ contains 
$\pi_z(\Sigma_{\underline{\gamma}^{(m-1)}}^{\mathrm{cs}})$.
We set $\pi_z(\Sigma_{\underline{\gamma}^{(m-1)}}^{\mathrm{cs}})=[a,b]$ and $\pi_z(\Sigma_{\underline{\gamma}^{(m)}}^{\mathrm{cs}})=[a',b']$.

First we consider the case of $[a,b]\subset [1-\lambda_{\mathrm{cs} 1}+\varepsilon,1-\varepsilon]$.
If $\zeta_1^{-1}(a)>1-\lambda_{\mathrm{s} 1}+2\varepsilon$, then $a'>1-\lambda_{\mathrm{s} 1}+\varepsilon$ and hence 
$\pi_z(\Sigma_{\underline{\gamma}^{(m)}}^{\mathrm{cs}})\subset [1-\lambda_{\mathrm{cs} 1}+\varepsilon,1-\varepsilon]$.
See Figure \ref{f_5_4}.
\begin{figure}[hbtp]
\centering
\scalebox{0.6}{\includegraphics[clip]{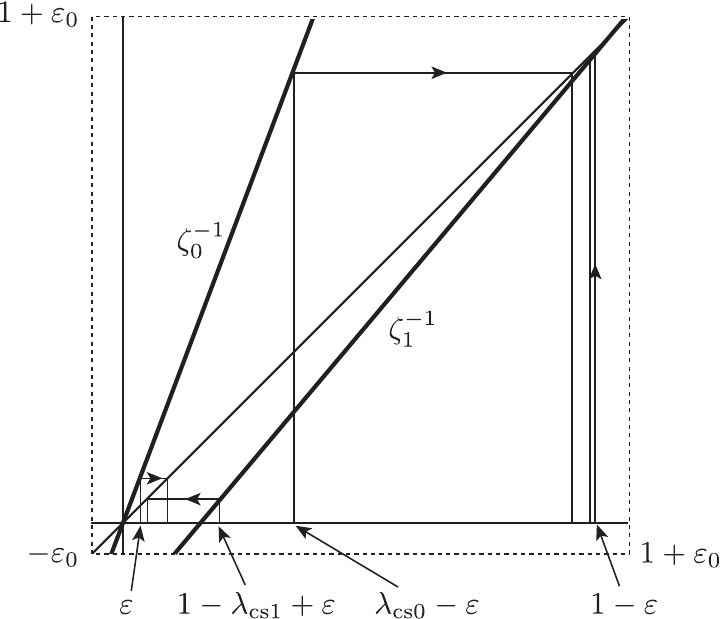}}
\caption{}
\label{f_5_4}
\end{figure}
If $\zeta_1^{-1}(b)<\lambda_{\mathrm{cs} 0}-2\varepsilon$, then $b'<\lambda_{\mathrm{cs} 0}-\varepsilon$ 
and hence $\pi_z(\Sigma_{\underline{\gamma}^{(m)}}^{\mathrm{cs}})\subset [\varepsilon,\lambda_{\mathrm{cs} 0}-\varepsilon]$.
In either case, it contradicts that $\pi_z(\Sigma_{\underline{\gamma}^{(m)}}^{\mathrm{cs}})$ is a back-end section.
Thus we have $[\zeta_1^{-1}(a),\zeta_1^{-1}(b)]\supset I(2\varepsilon)$.

Next we consider the case of $[a,b]\subset [\varepsilon,\lambda_{\mathrm{cs} 0}-\varepsilon]$.
If $\zeta_0^{-1}(a)>1-\lambda_{\mathrm{cs} 1}+2\varepsilon$, then 
$\pi_z(\Sigma_{\underline{\gamma}^{(m)}}^{\mathrm{cs}})\subset [1-\lambda_{\mathrm{cs} 1}+\varepsilon,1-\varepsilon]$.
If $\zeta_0^{-1}(b)<\lambda_{\mathrm{cs} 0}-2\varepsilon$, then 
$\pi_z(\Sigma_{\underline{\gamma}^{(m)}}^{\mathrm{cs}})\subset [\varepsilon,\lambda_{\mathrm{cs} 0}-\varepsilon]$.
In either case, we have again a contradiction, and hence 
$[\zeta_0^{-1}(a),\zeta_0^{-1}(b)]\supset I(2\varepsilon)$.

It follows from the two cases as above that $\pi_z(\Sigma_{\underline{\gamma}^{(m)}}^{\mathrm{cs}})$ contains $I(3\varepsilon)$ 
if $f$ is sufficiently $C^r$-closed to $f_0$.
\end{proof}

\section{Variation of tangent spaces of stable leaves}\label{S_variation}

This section provides geometric considerations to show Lemma \ref{l_xkinS} in Section \ref{S_perturb}.

In the case of dimension $>2$, we do not know whether the tangent plane 
$T_{\boldsymbol{x}}F^{\mathrm{cs}}(\boldsymbol{x})$ $C^1$-varies in contrast to the  
2-dimensional case, where $F^{\mathrm{cs}}(\boldsymbol{x})$ is the leaf of $\mathcal{F}_f^{\mathrm{cs}}$ containing $\boldsymbol{x}\in \mathbb{H}_{\varepsilon_0}$.
However Proposition \ref{c_Df2v} implies that the face angle $\omega$ between 
the tangent spaces of $F^{\mathrm{cs}}(\boldsymbol{x}_1)$ and $F^{\mathrm{cs}}(\boldsymbol{x}_2)$ is bounded by 
$C\|\boldsymbol{x}_1-\boldsymbol{x}_2\|$ for some constant $C>0$.
This fact is used to prove \eqref{eqn_Ak-E} in Section \ref{S_perturb}.
See Figure \ref{f_8_3} for the angle $\omega_k$ between the tangent space of 
$F^{\mathrm{cs}}(f^{\widehat{n}_k}(\widehat{\boldsymbol{x}}_k))$ and a line $l(f^{\widehat n_k}(\widehat{\boldsymbol{x}}_k))$ tangent to $F^{\mathrm{cs}}(\widehat{\boldsymbol{y}}_{k+1})$.
Our argument in this section is based on the fact that $f$ is sufficiently $C^2$-close to the affine model $f_0$ and 
hence in particular it satisfies \eqref{eqn_DDf}.

For $\boldsymbol{x}\in \mathbb{B}$, let $F^{\mathrm{s}}(\boldsymbol{x})$\index{Fsx@$F^{\mathrm{s}}(\boldsymbol{x})$} be the leaf of $\mathcal{F}_f^{\mathrm{s}}$ containing $\boldsymbol{x}$.
Consider the vectors $\boldsymbol{u}_0(\boldsymbol{x})$ and $\boldsymbol{u}_1(\boldsymbol{x})$ tangent to $F^{\mathrm{s}}(\boldsymbol{x})$ at $\boldsymbol{x}$ 
such that the $(y,z)$ entries of which are $(1,0)$ and $(0,1)$ respectively.
Since $F^{\mathrm{s}}(\boldsymbol{x})$ is adaptable to $\boldsymbol{C}_{\varepsilon}^{\mathrm{cs}}$,  
\begin{equation}\label{eqn_u0u1}
\boldsymbol{u}_0(\boldsymbol{x})=(O(\varepsilon),1,0)^T\quad\text{and}\quad\boldsymbol{u}_1(\boldsymbol{x})=(O(\varepsilon),0,1)^T,
\end{equation}
where $\boldsymbol{v}^T$ denotes the column vector obtained by transposing the row vector $\boldsymbol{v}$.
For any $\boldsymbol{x},\boldsymbol{x}'\in \mathbb{B}$, we naturally identify $T_{\boldsymbol{x}}\mathbb{B}$ and $T_{\boldsymbol{x}'}\mathbb{B}$ with $\mathbb{R}^3$.
So, for any $\boldsymbol{v}\in T_{\boldsymbol{x}} \mathbb{B}$ and $\boldsymbol{v}'\in T_{\boldsymbol{x}'} \mathbb{B}$, the sum $\boldsymbol{v}+ \boldsymbol{v}'$ is well 
defined.
In other words, $\boldsymbol{v}+\boldsymbol{v}'$ means $\boldsymbol{v}+ \tau_{(\boldsymbol{x}-\boldsymbol{x}')}\boldsymbol{v}'$ for 
the parallel transformation $\tau_{(\boldsymbol{x}-\boldsymbol{x}')}:T_{\boldsymbol{x}'}\mathbb{B}\longrightarrow T_{\boldsymbol{x}}\mathbb{B}$.

\begin{lem}\label{l_omega_k}
For any binary code $\underline{\gamma}^{(n)}$ of length $n$, let $\ell_n$ 
be a $C^1$-curve in $\mathbb{B}^{\mathrm{u}}(\underline{\gamma}^{(n)})$ adaptable to 
$\boldsymbol{C}_{\varepsilon}^{\mathrm{u}}$ and 
$\boldsymbol{x}_n^+$, $\boldsymbol{x}_n^-$ mutually distinct points of $\ell_n$.
Then 
$$\|\boldsymbol{u}_i(\boldsymbol{x}_n^+)-\boldsymbol{u}_i(\boldsymbol{x}_n^-)\|\leq \underline{\lambda}_{\mathrm{u}}^{-n}$$ 
holds for $i=0,1$.
\end{lem}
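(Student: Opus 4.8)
\emph{Approach.} The plan is to reduce the statement to a scalar estimate on the ``slopes'' of the stable leaves and then prove that estimate by induction on the generation $n$, using the forward invariance of $\mathcal F_f^{\mathrm s}$, the expansion estimate \eqref{eqn_lambda_max}, and the fact that a curve adaptable to $\boldsymbol C_\varepsilon^{\mathrm u}$ inside a u-bridge block of generation $n$ is very short. Since $F^{\mathrm s}(\boldsymbol x)$ is adaptable to $\boldsymbol C_\varepsilon^{\mathrm{cs}}$, its tangent plane at $\boldsymbol x$ is the graph of a linear map over the $(y,z)$-plane, so by \eqref{eqn_u0u1} we may write $\boldsymbol u_0(\boldsymbol x)=(p_0(\boldsymbol x),1,0)^T$ and $\boldsymbol u_1(\boldsymbol x)=(p_1(\boldsymbol x),0,1)^T$ with $p_i(\boldsymbol x)=O(\varepsilon)$, and set $P(\boldsymbol x)=(p_0(\boldsymbol x),p_1(\boldsymbol x))$. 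Because the difference $\boldsymbol u_i(\boldsymbol x_n^+)-\boldsymbol u_i(\boldsymbol x_n^-)$ has vanishing $(y,z)$-part, its norm equals $|p_i(\boldsymbol x_n^+)-p_i(\boldsymbol x_n^-)|$, so it suffices to prove $\|P(\boldsymbol x_n^+)-P(\boldsymbol x_n^-)\|\le \underline{\lambda}_{\mathrm u}^{-n}$ for a fixed norm on $\mathbb R^2$.

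\emph{The transfer rule.} I would first record how $P$ behaves under one step of $f$. Writing $Df(\boldsymbol x)$ in block form with respect to the splitting $\mathbb R\oplus\mathbb R^2$ ($x$-axis versus $(y,z)$-plane), with scalar $M_{xx}(\boldsymbol x)$, $1\times2$ row $\boldsymbol m_x(\boldsymbol x)$, $2\times1$ column $\boldsymbol n(\boldsymbol x)$ and $2\times2$ block $B(\boldsymbol x)$, the $f$-invariance of $\mathcal F_f^{\mathrm s}$ gives $Df(\boldsymbol x)\bigl(T_{\boldsymbol x}F^{\mathrm s}(\boldsymbol x)\bigr)=T_{f(\boldsymbol x)}F^{\mathrm s}(f(\boldsymbol x))$ whenever $\boldsymbol x,f(\boldsymbol x)\in\mathbb B$, and expanding this identity in graph coordinates yields
\[
P(\boldsymbol x)=\bigl(P(f(\boldsymbol x))\,B(\boldsymbol x)-\boldsymbol m_x(\boldsymbol x)\bigr)\bigl(M_{xx}(\boldsymbol x)-P(f(\boldsymbol x))\,\boldsymbol n(\boldsymbol x)\bigr)^{-1}.
\]
By \eqref{eqn_f0V} and the $C^{r-1}$-closeness of $f$ to $f_0$, for $\boldsymbol x\in\mathbb V_{0,f}\cup\mathbb V_{1,f}$ one has $|M_{xx}(\boldsymbol x)|>\underline{\lambda}_{\mathrm u}$ by \eqref{eqn_lambda_max}, $\|B(\boldsymbol x)\|<\bar{\lambda}_{\mathrm{cs} 1}<1$ by \eqref{eqn_lambda_maxS}, \eqref{eqn_lambda_maxc} and \eqref{eqn_eigen_v2}, and $\boldsymbol m_x(\boldsymbol x)=O(\varepsilon)$, $\boldsymbol n(\boldsymbol x)=O(\varepsilon)$; moreover, by \eqref{eqn_DDf} each of $M_{xx},\boldsymbol m_x,\boldsymbol n,B$ is $O(\varepsilon)$-Lipschitz on $\mathbb V_{0,f}\cup\mathbb V_{1,f}$.

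\emph{The induction.} For $n=0$ the crude bound $p_i=O(\varepsilon)$ already gives $\|P(\boldsymbol x^+)-P(\boldsymbol x^-)\|=O(\varepsilon)<1$ once $\mathcal O(f_0)$ is small. For the inductive step, let $\ell_n\subset\mathbb B^{\mathrm u}(\underline{\gamma}^{(n)})$ be adaptable to $\boldsymbol C_\varepsilon^{\mathrm u}$ with distinct $\boldsymbol x_n^\pm\in\ell_n$; then $\boldsymbol x_n^\pm\in\mathbb V_{0,f}\cup\mathbb V_{1,f}$, while $f(\ell_n)$ is a curve adaptable to $\boldsymbol C_\varepsilon^{\mathrm u}$ (by $f$-invariance of the cone-field) contained in a u-bridge block of generation $n-1$, so the inductive hypothesis gives $\|P(f(\boldsymbol x_n^+))-P(f(\boldsymbol x_n^-))\|\le\underline{\lambda}_{\mathrm u}^{-(n-1)}$. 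Subtracting the transfer formula at $\boldsymbol x_n^+$ and $\boldsymbol x_n^-$, the dominant contribution is $\approx M_{xx}(\boldsymbol x_n^+)^{-1}\bigl(P(f(\boldsymbol x_n^+))-P(f(\boldsymbol x_n^-))\bigr)B(\boldsymbol x_n^+)$, of size at most $\bar{\lambda}_{\mathrm{cs} 1}\,\underline{\lambda}_{\mathrm u}^{-1}\cdot\underline{\lambda}_{\mathrm u}^{-(n-1)}=\bar{\lambda}_{\mathrm{cs} 1}\,\underline{\lambda}_{\mathrm u}^{-n}$, while every remaining term involves the variation of one of $M_{xx},B,\boldsymbol m_x,\boldsymbol n$ (or the second-order quantity $P(f(\cdot))\,\boldsymbol n$) between $\boldsymbol x_n^+$ and $\boldsymbol x_n^-$, hence is bounded by $C\varepsilon\cdot\mathrm{length}(\ell_n)\le C'\varepsilon\,\underline{\lambda}_{\mathrm u}^{-n}$ for constants $C,C'$ independent of $n$ and $\varepsilon$, using \eqref{eqn_DDf} and \eqref{eqn_lC0lam}. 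Shrinking $\mathcal O(f_0)$ so that $\bar{\lambda}_{\mathrm{cs} 1}+C'\varepsilon\le1$ closes the induction and proves the lemma.

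\emph{Main obstacle.} The delicate point is the bookkeeping in the inductive step: one must isolate the single ``contraction'' term, which alone carries the extra factor $\underline{\lambda}_{\mathrm u}^{-1}$ relative to the inductive hypothesis, from all the other terms, and check that each of the latter is genuinely of size $O(\varepsilon\,\underline{\lambda}_{\mathrm u}^{-n})$ and not merely $O(\varepsilon)$. This is exactly where both structural inputs are needed: adaptedness of $\ell_n$ to $\boldsymbol C_\varepsilon^{\mathrm u}$ forces $\mathrm{length}(\ell_n)=O(\underline{\lambda}_{\mathrm u}^{-n})$ through \eqref{eqn_lC0lam}, and the uniform second-derivative bound \eqref{eqn_DDf} makes the entries of $Df$ Lipschitz with constant $O(\varepsilon)$, so their variation along $\ell_n$ is $O(\varepsilon\,\underline{\lambda}_{\mathrm u}^{-n})$; together with $P(f(\cdot))=O(\varepsilon)$ (which renders the $\boldsymbol n$-terms and the $B$-variation term harmless of second order in $\varepsilon$) this produces precisely the gain by $\underline{\lambda}_{\mathrm u}^{-1}$ per step.
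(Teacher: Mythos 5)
Your proof is correct, and it gets to the same per‐step contraction factor $\lambda_{\mathrm{cs}\,j}\lambda_{\mathrm{u}}^{-1}$ as the paper, but by a noticeably cleaner route. The paper also inducts on the generation, and its one‐step estimate likewise splits into a leading term (the difference $\boldsymbol u_{1,m-1}^+-\boldsymbol u_{1,m-1}^-$ lives only in the $x$-direction, where $D(f^{-1})$ contracts by $\lambda_{\mathrm u}^{-1}$) plus a variation term bounded via \eqref{eqn_DDf} and \eqref{eqn_lC0lam}. The paper, however, works with the non‐normalized vectors $\widehat{\boldsymbol u}_{i,m}^\pm=D(f^{-1})\boldsymbol u_{i,m-1}^\pm$, whose $(y,z)$-entries have grown to $\approx\lambda_{\mathrm{ss}}^{-1},\lambda_{\mathrm{cs}\,j}^{-1}$, and then runs a separate geometric construction (the points $A_1^\pm,A_2^\pm,A_3^\pm,B,B',C$ and the plane $P_0$) to renormalize back to vectors with $(y,z)$-entries $(1,0)$, $(0,1)$; the renormalization contributes the extra factor $\lambda_{\mathrm{cs}\,j}(1+O(\varepsilon))$. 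You instead pass to graph (slope) coordinates $P(\boldsymbol x)=(p_0(\boldsymbol x),p_1(\boldsymbol x))$ and write a single algebraic transfer formula — essentially the graph transform for tangent planes — in which the same factor $\lambda_{\mathrm{cs}\,j}$ appears automatically as $\|B\|$ in $M_{xx}^{-1}(\,\cdot\,)B$. This absorbs the normalization into the algebra and avoids the auxiliary geometric construction entirely, which I find more transparent. One small bookkeeping remark: your citation of \eqref{eqn_lambda_maxS}–\eqref{eqn_lambda_maxc} for $\|B\|<\bar\lambda_{\mathrm{cs} 1}$ is not quite what those inequalities say — they bound the full gradients $|D(\pi_y\circ f)|,|D(\pi_z\circ f)|$, and stitching rows together could a priori lose a constant. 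The correct (and equally easy) justification is that $Df$ is $C^{r-1}$-close to the diagonal $Df_0$, so the $(y,z)$-block $B$ is $O(\varepsilon)$-close to $\mathrm{diag}((-1)^i\lambda_{\mathrm{ss}},\lambda_{\mathrm{cs}\,i})$, giving $\|B\|\le\lambda_{\mathrm{cs}\,i}+O(\varepsilon)<1$ by \eqref{eqn_eigen_v}. With that adjustment your closing inequality $\bar\lambda_{\mathrm{cs} 1}+C'\varepsilon\le 1$ is exactly the same smallness-of-$\varepsilon$ condition the paper uses, and the induction closes.
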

\begin{proof}
We set $\boldsymbol{x}_{n-j}^\pm=f^j(\boldsymbol{x}_n^\pm)$ and $\ell_{n-j}=f^j(\ell_n)$ for $j=1,\dots,n$.
Then $\boldsymbol{x}_{n-j}^+$ and $\boldsymbol{x}_{n-j}^-$ are points of $\mathbb{B}^{\mathrm{u}}(\underline{\gamma}^{(n-1)})$ contained in 
$\ell_{n-j}$, where $\underline{\gamma}^{(n-j)}$ is the code consisting of 
the latter $n-j$ entries of $\underline{\gamma}^{(n)}$.
Since $\boldsymbol{C}_{\varepsilon}^{\mathrm{u}}$ is a $f$-invariant cone-field, $\ell_{n-j}$ is adaptable to $\boldsymbol{C}_{\varepsilon}^{\mathrm{u}}$.
We prove inductively 
\begin{equation}\label{eqn_induction}
\|\boldsymbol{u}_i(\boldsymbol{x}_k^+)-\boldsymbol{u}_i(\boldsymbol{x}_k^-)\|\leq \underline{\lambda}_{\mathrm{u}}^{-k}
\end{equation}
for $k=0,1,\dots,n$.
Since $\mathcal{F}_f^{\mathrm{s}}$ is adaptable to $\boldsymbol{C}_{\varepsilon}^{\mathrm{cs}}$, \eqref{eqn_induction} holds 
for $k=0$.
Here we suppose that $1<m\leq n$ and \eqref{eqn_induction} holds for $k=0,1,\dots,m-1$ and 
set $\boldsymbol{u}_{i,k}^\pm=\boldsymbol{u}_i(\boldsymbol{x}_k^\pm)$.
The diagonal entries of $D(f^{-1})(\boldsymbol{x}_{m-1}^\pm)$ are $\lambda_{\mathrm{u}}^{-1}+O(\varepsilon)$, $\lambda_{\mathrm{ss}}^{-1}+O(\varepsilon)$ and 
$\lambda_{\mathrm{cs} j}^{-1}+O(\varepsilon)$ in order if $\boldsymbol{x}_m^\pm\in \mathbb{V}_{j,f}$ and any non-diagonal entry is $O(\varepsilon)$.
Hence, by \eqref{eqn_u0u1}, 
\begin{align*}
\widehat{\boldsymbol{u}}_{0,m}^\pm &:=D(f^{-1})(\boldsymbol{x}_{m-1}^\pm)\boldsymbol{u}_{0,m-1}^\pm=\bigl(O(\varepsilon),\lambda_{\mathrm{ss}}^{-1}+O(\varepsilon),O(\varepsilon)\bigr),\\
\widehat{\boldsymbol{u}}_{1,m}^\pm &:=D(f^{-1})(\boldsymbol{x}_{m-1}^\pm)\boldsymbol{u}_{1,m-1}^\pm=\bigl(O(\varepsilon),O(\varepsilon),\lambda_{\mathrm{cs} j}^{-1}+O(\varepsilon)\bigr).
\end{align*}
This shows that
\begin{equation}\label{eqn_|u|}
\begin{split}
\|\widehat{\boldsymbol{u}}_{0,m}^\pm\|&=\lambda_{\mathrm{ss}}^{-1}+O(\varepsilon)=\lambda_{\mathrm{ss}}^{-1}(1+O(\varepsilon)),\\
\|\widehat{\boldsymbol{u}}_{1,m}^\pm\|&=\lambda_{\mathrm{cs} j}^{-1}+O(\varepsilon)=\lambda_{\mathrm{cs} j}^{-1}(1+O(\varepsilon)).
\end{split}
\end{equation}
Since we assumed that \eqref{eqn_induction} holds for $k=m-1$, 
$\boldsymbol{u}_{1,m-1}^+-\boldsymbol{u}_{1,m-1}^-$ is represented as $(a_{m-1},0,0)^T$ for some $a_{m-1}$ with $|a_{m-1}|\leq \underline{\lambda}_{\mathrm{u}}^{-(m-1)}$.
Thus we have
$$D(f^{-1})(\boldsymbol{x}_{m-1}^+)(\boldsymbol{u}_{1,m-1}^+-\boldsymbol{u}_{1,m}^-)=
\bigl((\lambda_{\mathrm{u}}^{-1}+O(\varepsilon))a_{m-1}, O(\varepsilon)a_{m-1},O(\varepsilon)a_{m-1}\bigr).$$
It follows that 
$$\|D(f^{-1})(\boldsymbol{x}_{m-1}^+
)(\boldsymbol{u}_{1,m-1}^+-\boldsymbol{u}_{1,m-1}^-)\|\leq (\lambda_{\mathrm{u}}^{-1}+O(\varepsilon))\underline{\lambda}_{\mathrm{u}}^{-(m-1)}.$$
Since the derivative of any entry of $D(f^{-1})(\boldsymbol{x})$ with $\boldsymbol{x}\in \mathbb{B}\cap f^{-1}(\mathbb{B})$ is an $O(\varepsilon)$-function as 
\eqref{eqn_DDf} for $Df(\boldsymbol{x})$, by \eqref{eqn_lC0lam} 
\begin{align*}
\|(D(f^{-1})(\boldsymbol{x}_{m-1}^+)&-D(f^{-1})(\boldsymbol{x}_{m-1}^-))\boldsymbol{u}_{1,m-1}^-\|\leq 
O(\varepsilon)\|\boldsymbol{x}_{m-1}^+-\boldsymbol{x}_{m-1}^-\|\,\|\boldsymbol{u}_{1,m-1}^-\|\\
&\leq O(\varepsilon)\underline{\lambda}_{\mathrm{u}}^{-(m-1)}(1+O(\varepsilon))=O(\varepsilon)\underline{\lambda}_{\mathrm{u}}^{-(m-1)}.
\end{align*}
This shows that
\begin{subequations}
\begin{equation}\label{eqn_u1u1m}
\begin{split}
\|\widehat{\boldsymbol{u}}_{1,m}^+ -\widehat{\boldsymbol{u}}_{1,m}^-\|&=\|D(f^{-1})(\boldsymbol{x}_{m-1}^+)\boldsymbol{u}_{1,m-1}^+
-D(f^{-1})(\boldsymbol{x}_{m-1}^-)\boldsymbol{u}_{1,m-1}^-\|\\
&\leq \|D(f^{-1})(\boldsymbol{x}_{m-1}^+)(\boldsymbol{u}_{1,m-1}^+-\boldsymbol{u}_{1,m-1}^-)\|\\
&\hspace{40pt}+
\|(D(f^{-1})(\boldsymbol{x}_{m-1}^+)-D(f^{-1})(\boldsymbol{x}_{m-1}^-))\boldsymbol{u}_{1,m-1}^-\|\\
&\leq (\lambda_{\mathrm{u}}^{-1}+O(\varepsilon))\underline{\lambda}_{\mathrm{u}}^{-(m-1)}.
\end{split}
\end{equation}
Similarly one can show that 
\begin{equation}\label{eqn_u0u0m}
\|\widehat{\boldsymbol{u}}_{0,m}^+ -\widehat{\boldsymbol{u}}_{0,m}^-\|\leq (\lambda_{\mathrm{u}}^{-1}+O(\varepsilon))\underline{\lambda}_{\mathrm{u}}^{-(m-1)}.
\end{equation}
\end{subequations}

Let $A_1^\pm$ be the points of $\mathbb{R}^3$ with $\overrightarrow{OA_1^+}=\widehat{\boldsymbol{u}}_{1,m}^+$ and $\overrightarrow{OA_1^-}=\widehat{\boldsymbol{u}}_{1,m}^-$ and 
$P_0$ the $xz$-plane in $\mathbb{R}^3$.
We denote by $l^\pm$ the lines in $\mathbb{R}^3$ passing through $A_1^\pm$ and parallel to $\widehat{\boldsymbol{u}}_{0,m}^-$ 
and set $C=l^+\cap P_0$ and $A_2^-=l^-\cap P_0$.
Suppose that $B$ is a point of $\mathbb{R}^3$ such that either $\overrightarrow{A_1^+B}=\widehat{\boldsymbol{u}}_{0,m}^+$ or $\overrightarrow{A_1^+B}=-\widehat{\boldsymbol{u}}_{0,m}^+$ 
and the straight segment $\overline{A_1^+B}$ connecting $A_1^+$ with $B$ meets $P_0$ non-trivially.
The intersection point is denoted by $A_2^+$.
In the case of $A_1^+\in P_0$, $A_2^+=A_1^+=C$.
Let $B'$ be the point in $l^+$ which lies in the side same as $B$ with respect to $P_0$ and such that the 
length of $\overline{A_1^+B'}$ is $\|\widehat{\boldsymbol{u}}_{0,m}^-\|$. 
Since $\widehat{\boldsymbol{u}}_{0,m}^\pm,\widehat{\boldsymbol{u}}_{1,m}^\pm \in T_{\boldsymbol{x}_m^\pm}F^{\mathrm{s}}(\boldsymbol{x}_m^\pm)$, we have 
$\overrightarrow{OA_2^+}\in T_{\boldsymbol{x}_m^+}F^{\mathrm{s}}(\boldsymbol{x}_m^+)\cap P_0$ and 
$\overrightarrow{OA_2^-}\in T_{\boldsymbol{x}_m^-}F^{\mathrm{s}}(\boldsymbol{x}_m^-)\cap P_0$.
Let $A_3^\pm$ be the intersection points of $\overline{OA_2^\pm}$ and the line $z=1$ in $P_0$.
See Figure \ref{f_6_1}.
\begin{figure}[hbtp]
\centering
\scalebox{0.6}{\includegraphics[clip]{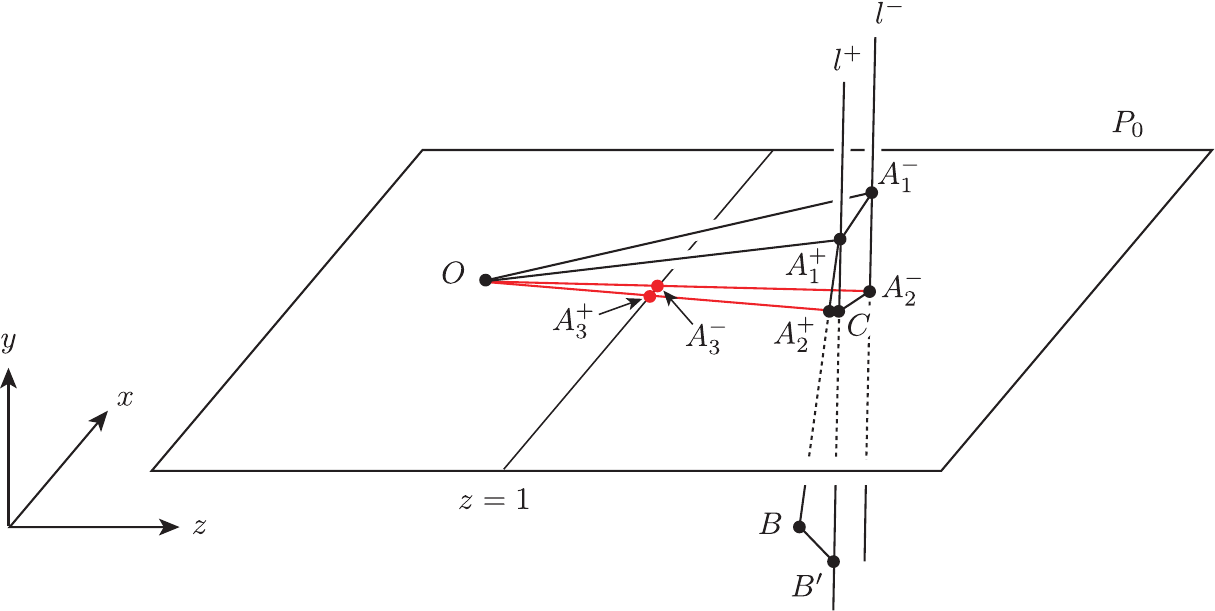}}
\caption{}
\label{f_6_1}
\end{figure}
From the construction, we know that $\boldsymbol{u}_{1,m}^+=\overrightarrow{OA_3^+}$ and $\boldsymbol{u}_{1,m}^-=\overrightarrow{OA_3^-}$.
By \eqref{eqn_u1u1m}, $\|A_1^+-A_1^-\|\leq (\lambda_{\mathrm{u}}^{-1}+O(\varepsilon))\underline{\lambda}_{\mathrm{u}}^{-(m-1)}$.
By \eqref{eqn_u0u0m}, both $l^+$ and $l^-$ meet $P_0$ $O(\varepsilon)$-\emph{almost orthogonally}.
It follows that $\overline{CA_2^-}$ meets $l^+$ and $l^-$ $O(\varepsilon)$-almost orthogonally and 
hence $\|C-A_2^-\| \leq  (\lambda_{\mathrm{u}}^{-1}+O(\varepsilon))\underline{\lambda}_{\mathrm{u}}^{-(m-1)}$.
By \eqref{eqn_u0u0m}, $\|B-B'\|\leq  (\lambda_{\mathrm{u}}^{-1}+O(\varepsilon))\underline{\lambda}_{\mathrm{u}}^{-(m-1)}$.
Since $\|A_1^+-A_2^+\|=O(\varepsilon)\|A_1^+-B\|$ and $\|A_1^+-C\|=O(\varepsilon)\|A_1^+-B'\|$, 
we have $\|A_2^+-C\|\leq O(\varepsilon)(\lambda_{\mathrm{u}}^{-1}+O(\varepsilon))\underline{\lambda}_{\mathrm{u}}^{-(m-1)}$ and hence 
$\|A_2^+-A_2^-\|\leq (\lambda_{\mathrm{u}}^{-1}+O(\varepsilon))\underline{\lambda}_{\mathrm{u}}^{-(m-1)}$.
Then, by \eqref{eqn_|u|}, $\|A_3^\pm\|\leq \lambda_{\mathrm{cs} j}(1+O(\varepsilon))\|A_2^\pm\|$.
It follows that
$$\|\boldsymbol{u}_{1,m}^+-\boldsymbol{u}_{1,m}^-\|=\|A_3^+-A_3^-\|\leq 
\lambda_{\mathrm{cs} j}(1+O(\varepsilon))(\lambda_{\mathrm{u}}^{-1}+O(\varepsilon))\underline{\lambda}_{\mathrm{u}}^{-(m-1)}\leq \underline{\lambda}_{\mathrm{u}}^{-m}.$$
This completes the induction.
The proof of $\|\boldsymbol{u}_{0,m}^+-\boldsymbol{u}_{0,m}^-\|\leq \underline{\lambda}_{\mathrm{u}}^{-m}$ is done quite similarly.
\end{proof}

The following proposition is used in the proof of Lemma \ref{lem-3-1}.
See also Remark \ref{r_Df2v} for the role.

\begin{prop}\label{c_Df2v}
Under the notations as in Lemma \ref{l_omega_k}, 
suppose that $\boldsymbol{x}_n^+,\boldsymbol{x}_n^-\in f^2(\mathbb{H}_{\varepsilon_0})\cap \mathbb{B}^{\mathrm{u}}(\underline{\gamma}^{(n)})$.
Then there exists a constant $C_1>0$ independent of $n$ and satisfying 
$$\|D(f^{-2})(\boldsymbol{x}_n^+)\boldsymbol{u}_i(\boldsymbol{x}_n^+)-D(f^{-2})(\boldsymbol{x}_n^-)\boldsymbol{u}_i(\boldsymbol{x}_n^-)\|\leq C_1\underline{\lambda}_{\mathrm{u}}^{-n}$$
for $i=0,1$.
\end{prop}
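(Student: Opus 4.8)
The plan is to split the quantity to be estimated, via the triangle inequality, into one piece measuring how much the tangent vectors $\boldsymbol{u}_i$ vary along $\ell_n$ and one piece measuring how much the linear map $D(f^{-2})$ varies along $\ell_n$, and then to feed in Lemma~\ref{l_omega_k} together with the exponential shrinking of the u-bridge block $\mathbb{B}^{\mathrm{u}}(\underline{\gamma}^{(n)})$. Writing $A^{\pm}=D(f^{-2})(\boldsymbol{x}_n^{\pm})$ and $\boldsymbol{u}_i^{\pm}=\boldsymbol{u}_i(\boldsymbol{x}_n^{\pm})$, I would start from
\[
\|A^{+}\boldsymbol{u}_i^{+}-A^{-}\boldsymbol{u}_i^{-}\|\le \|A^{+}\|\,\|\boldsymbol{u}_i^{+}-\boldsymbol{u}_i^{-}\|+\|A^{+}-A^{-}\|\,\|\boldsymbol{u}_i^{-}\|.
\]

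For the first summand, the hypothesis $\boldsymbol{x}_n^{\pm}\in f^2(\mathbb{H}_{\varepsilon_0})$ together with the fact that $f^2|_{\mathbb{H}_{\varepsilon_0}}$ is $C^r$-close (with $r\ge 2$) to the explicit quadratic map $f_0^2|_{\mathbb{H}_{\varepsilon_0}}$ of \eqref{eqn_tang} --- whose differential has determinant $-a_2a_3a_4>0$ by \eqref{eqn_a_1_4}, hence is uniformly invertible on $\mathbb{H}_{\varepsilon_0}$ --- gives a bound $\|A^{+}\|\le C'$ with $C'$ independent of $n$ and of $f\in\mathcal{O}(f_0)$. Combined with $\|\boldsymbol{u}_i^{+}-\boldsymbol{u}_i^{-}\|\le \underline{\lambda}_{\mathrm{u}}^{-n}$ from Lemma~\ref{l_omega_k}, the first summand is at most $C'\underline{\lambda}_{\mathrm{u}}^{-n}$.

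For the second summand, the same $C^r$-closeness ($r\ge2$) and the $O(\varepsilon)$-control of second derivatives from \eqref{eqn_DDf} provide a Lipschitz constant $L$ for $\boldsymbol{x}\mapsto D(f^{-2})(\boldsymbol{x})$ on the compact set $f^2(\mathbb{H}_{\varepsilon_0})$, again uniform in $f\in\mathcal{O}(f_0)$, so that $\|A^{+}-A^{-}\|\le L\,\|\boldsymbol{x}_n^{+}-\boldsymbol{x}_n^{-}\|$; moreover $\|\boldsymbol{u}_i^{-}\|=1+O(\varepsilon)$ by \eqref{eqn_u0u1}. The key remaining step is that $\boldsymbol{x}_n^{+}$ and $\boldsymbol{x}_n^{-}$ lie on the curve $\ell_n\subset\mathbb{B}^{\mathrm{u}}(\underline{\gamma}^{(n)})$, which is adaptable to $\boldsymbol{C}_{\varepsilon}^{\mathrm{u}}$, so that $\|\boldsymbol{x}_n^{+}-\boldsymbol{x}_n^{-}\|\le \mathrm{length}(\ell_n)\le C_0^{-1}\underline{\lambda}_{\mathrm{u}}^{-n}$ by \eqref{eqn_lC0lam}. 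Hence the second summand is $O(\underline{\lambda}_{\mathrm{u}}^{-n})$ as well, and choosing $C_1$ to absorb $C'$, $L$, $C_0^{-1}$ and the $O(\varepsilon)$-factor completes the proof.

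The only point needing real attention --- and the one I would regard as the main (and quite mild) obstacle --- is the uniformity: one has to be sure the norm bound $C'$ and the Lipschitz constant $L$ for $D(f^{-2})$ can be chosen independently of $n$ and of $f$ throughout $\mathcal{O}(f_0)$. This is exactly where the standing assumption $r\ge 2$ enters (so that $D(f^{-2})$ is $C^1$ and hence locally Lipschitz on the compact region, with the quantitative bounds coming from \eqref{eqn_DDf}), and where the explicit normal form \eqref{eqn_tang} on $\mathbb{H}_{\varepsilon_0}$ --- valid, up to $C^r$-small perturbation, on all of $\mathcal{O}(f_0)$ --- makes the invertibility of $D(f^2)$ and the required derivative estimates on $f^2(\mathbb{H}_{\varepsilon_0})$ completely transparent.
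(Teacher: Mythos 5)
Your proposal is correct and takes essentially the same route as the paper: the same triangle-inequality decomposition, followed by Lemma~\ref{l_omega_k} for the variation of $\boldsymbol{u}_i$ and a Lipschitz/mean-value bound on $D(f^{-2})$ over the compact set $f^2(\mathbb{H}_{\varepsilon_0})$, with \eqref{eqn_lC0lam} controlling $\|\boldsymbol{x}_n^+-\boldsymbol{x}_n^-\|$. Your excursion into the determinant condition \eqref{eqn_a_1_4} and invertibility of $Df^2$ is unnecessary (the paper simply sets $C_{10}=\max_{\boldsymbol{x}\in f^2(\mathbb{H}_{\varepsilon_0})}|D(f^{-2})(\boldsymbol{x})|$, which is finite by compactness since $f^{-2}$ is a given $C^r$-diffeomorphism), and uniformity in $f$ over $\mathcal{O}(f_0)$ is not required by the statement — only independence of $n$ — but these are cosmetic differences and do not affect correctness.
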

\begin{proof}
As in the proof of Lemma \ref{l_omega_k}, we set $\boldsymbol{u}_i(\boldsymbol{x}_n^+)=\boldsymbol{u}_{i,n}^+$ and $\boldsymbol{u}_i(\boldsymbol{x}_n^-)=\boldsymbol{u}_{i,n}^-$.
By the mean value theorem together with \eqref{eqn_lC0lam}, 
\begin{align*}
\|D(f^{-2})&(\boldsymbol{x}_n^+)\boldsymbol{u}_{i,n}^+-D(f^{-2})(\boldsymbol{x}_n^-)\boldsymbol{u}_{i,n}^-\|\\
&\leq 
\| D(f^{-2})(\boldsymbol{x}_n^+)\|\,\|\boldsymbol{u}_{i,n}^+-\boldsymbol{u}_{i,n}^-\|+
\|D(f^{-2})(\boldsymbol{x}_n^+)-D(f^{-2})(\boldsymbol{x}_n^-)\|\,\|\boldsymbol{u}_{i,n}^-\|\\
&\leq C_{10}\|\boldsymbol{u}_{i,n}^+-\boldsymbol{u}_{i,n}^-\|+C_{11}\|\boldsymbol{x}_n^+-\boldsymbol{x}_n^-\|(1+O(\varepsilon))\\
&\leq (C_{10}+C_{11}C_0^{-1}(1+O(\varepsilon)))\underline{\lambda}_{\mathrm{u}}^{-n},
\end{align*}
where $C_{10}=\max\bigl\{|D(f^{-2})(\boldsymbol{x})|\,;\,\boldsymbol{x}\in f^2(\mathbb{H}_{\varepsilon_0})\bigr\}$ and 
$$C_{11}=\max\left\{ \left|\frac{\partial^2(\pi_a\circ f^{-2})}{\partial x_j\partial x_k}(\boldsymbol{x})\right|\,;\, 
a,x_j,x_k\in \{x,y,z\},\boldsymbol{x}\in f^2(\mathbb{H}_{\varepsilon_0})\right\}.$$
Hence the required inequality is obtained by setting $C_1=C_{10}+2C_{11}C_0^{-1}$.
\end{proof}

\section{Backward sequences of cs-curved blocks}\label{S_BWS}

In this section, we specify the cs-section $\widehat\Sigma_k^{\mathrm{cs}}$ associated with $B_k^{\mathrm{u}}$ 
and show that, if $\Sigma_{\underline{\gamma}^{(m_k)}}^{\mathrm{cs}}$ is a back-end section based at 
$\widehat\Sigma_k^{\mathrm{cs}}$, then the length $m_k$ is $O(k)$.
See Figure \ref{f_8_1} for the situation.
The code $\underline{\gamma}^{(m_k)}$ obtained here is a part of 
the code $\widehat{\underline{w}}_k$ defined in Lemma \ref{lem-3-1}.

Recall that $B_k^{\mathrm{u}}=B^{\mathrm{u}}(\underline{w}^{(n_0+k)})$ is the $\mathrm{u}$-bridge of \eqref{eqn_BBk}.
For a fixed integer $L\geq 4$, consider any sequence of sub-bridges 
$B^{\mathrm{u}}(\underline{w}^{(n_0+Lk)})$ of $B_k^{\mathrm{u}}$ 
such that 
$\underline{w}^{(n_0+Lk)}=\underline{w}^{(n_0+k)}\underline{\nu}^{(Lk-k)}$\index{wn0Lk@$\underline{w}^{(n_0+Lk)}$}  for binary codes $\underline{\nu}^{(Lk-k)}$ of length $Lk-k$.
In Lemma \ref{l_psi_n}, $L$ will be taken so that $L>9r$.
By \eqref{eqn_C0GC0},
\begin{equation}\label{eqn_CBuC}
C_0\bar{\lambda}_{\mathrm{u}}^{-(n_0+Lk)}<|B^{\mathrm{u}}(\underline{w}^{(n_0+Lk)})|<C_0^{-1}\underline{\lambda}_{\mathrm{u}}^{-(n_0+Lk)}.
\end{equation}
Here we consider the cs-curved block 
$\mathbb{U}_k^{\mathrm{cs}}=f^{-2}(\mathbb{B}^{\mathrm{u}}(\underline{w}^{(n_0+Lk)}))\cap \mathbb{H}_{\varepsilon_0}$\index{Ucsgk@$\mathbb{U}_k^{\mathrm{cs}}$} 
and the cs-section  
$$\widehat\Sigma_{k}^{\mathrm{cs}}=\mathbb{U}_{k}^{\mathrm{cs}}\cap S^{\mathrm{cs}},
\index{Sgkcsh@$\widehat{\Sigma}_{k}^{\mathrm{cs}}$}$$
of $\mathbb{U}_{k}^{\mathrm{cs}}$.
The \emph{width} of $\mathbb{U}_k^{\mathrm{cs}}$ is defined as
$$\mathrm{width}(\mathbb{U}_{k}^{\mathrm{cs}})=\min\{\|\boldsymbol{x}_- -\boldsymbol{x}_+\|\,;\,
\boldsymbol{x}_-\in F^{\mathrm{cs} -}, \boldsymbol{x}_+\in F^{\mathrm{cs}+}\},$$ 
where $F^{\mathrm{cs} -}$ and $F^{\mathrm{cs}+}$ are the left and right components of $\partial_z\mathbb{U}_k^{\mathrm{cs}}$ respectively.
See Figure \ref{f_5_2} again.
From the definitions of $\mathbb{U}_{k}^{\mathrm{cs}}$ together with \eqref{eqn_CBuC}, 
there exists a constant $0<C_2<1$ independent of $k$ and such that
\begin{equation}\label{eqn_piB}
C_0C_2\bar{\lambda}_{\mathrm{u}}^{\,-(n_0+Lk)}
\leq \mathrm{width}(\mathbb{U}_k^{\mathrm{cs}})\leq (C_0C_2)^{-1}\underline{\lambda}_{\mathrm{u}}^{\,-(n_0+Lk)}.
\end{equation}

We use the notations given in Section \ref{S_BTC} by letting $\mathbb{U}_{k+1}^{\mathrm{cs}}=\mathbb{U}^{\mathrm{cs}}$ and 
$\widehat\Sigma_{k+1}^{\mathrm{cs}}=\Sigma^{\mathrm{cs}}$.
Suppose that $\underline{\gamma}^{(m_k)}=\gamma_{m_k}\gamma_{m_k-1}\dots \gamma_2\gamma_1$\index{gmk@$\underline{\gamma}^{(m_k)}$} is a binary code such that 
$\Sigma_{\underline{\gamma}^{(m_k)}}^{\mathrm{cs}}$ is a back-end section based at $\widehat{\Sigma}_{k+1}^{\mathrm{cs}}$.
Strictly $\mathbb{U}_{\underline{\gamma}^{(m_k)}}^{\mathrm{cs}}=(\mathbb{U}_{k+1}^{\mathrm{cs}})_{\underline{\gamma}^{(m_k)}}$ and 
$\Sigma_{\underline{\gamma}^{(m_k)}}^{\mathrm{cs}}=(\widehat{\Sigma}_{k+1}^{\mathrm{cs}})_{\underline{\gamma}^{(m_k)}}$.

\begin{lem}\label{l_transf}
There exist positive integers $N_1,N_2$ independent of $k$ or $\underline{\gamma}^{(m_k)}$ such that 
$m_k\leq N_0+N_1k$.
\end{lem}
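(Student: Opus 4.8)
The plan is to bound the number of backtracking steps by controlling how fast the process inflates the $z$-direction of the sections, and then comparing this growth rate with the \emph{lower} bound on $\mathrm{width}(\mathbb{U}_{k+1}^{\mathrm{cs}})$ supplied by \eqref{eqn_piB}. For $0\le j\le m_k$ I would track the nonnegative number $\delta_j$ equal to the $z$-distance between the vertex loci of the two almost-parabolic bounding leaves $F_{\underline\gamma^{(j)}}^{\mathrm{cs}-}$ and $F_{\underline\gamma^{(j)}}^{\mathrm{cs}+}$ of $\mathbb{U}_{\underline\gamma^{(j)}}^{\mathrm{cs}}$, where $\mathbb{U}_{\underline\gamma^{(0)}}^{\mathrm{cs}}=\mathbb{U}_{k+1}^{\mathrm{cs}}$; by \eqref{eqn_pi_z_xq} and the estimate on $\partial_z\Sigma_{\underline\gamma^{(j)}}^{\mathrm{cs}}$ following it, $\delta_j$ differs from $|\pi_z(\Sigma_{\underline\gamma^{(j)}}^{\mathrm{cs}})|$ by at most $O(\varepsilon)$. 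Since the leaves of $\mathcal F_f^{\mathrm{cs}}$ are $C^r$-close to vertical parabolic cylinders with common axis $\{x=1/2\}$ (by \eqref{eqn_F_cylinder} and Section \ref{S_WBH}), the gap $\delta_0$ is comparable to $\mathrm{width}(\mathbb{U}_{k+1}^{\mathrm{cs}})$, so \eqref{eqn_piB} yields $\delta_0\ge c\,\bar\lambda_{\mathrm{u}}^{-(n_0+L(k+1))}$ for a constant $c>0$; and since $\Sigma_{\underline\gamma^{(m_k-1)}}^{\mathrm{cs}}$ still satisfies the backtracking condition, $\pi_z(\Sigma_{\underline\gamma^{(m_k-1)}}^{\mathrm{cs}})$ lies in an interval of length $<1$, whence $\delta_{m_k-1}<2$.

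The heart of the argument is the one-step inequality $\delta_{j+1}\ge\lambda_{\mathrm{cs}1}^{-1}\delta_j-C\varepsilon$, with $C$ independent of $j,k$. Indeed $F_{\underline\gamma^{(j+1)}}^{\mathrm{cs}\pm}=(f|_{\mathbb{V}_{\gamma_{j+1},f}})^{-1}(F_{\underline\gamma^{(j)}}^{\mathrm{cs}\pm})$, and evaluating $\pi_z\circ(f|_{\mathbb{V}_{\gamma_{j+1},f}})^{-1}$ at the two vertices chosen at a common $y$-level, the mean value theorem and \eqref{eqn_lambda_maxS} show the difference of $z$-coordinates is multiplied by at least $\lambda_{\mathrm{cs}\gamma_{j+1}}^{-1}+O(\varepsilon)\ge\lambda_{\mathrm{cs}1}^{-1}+O(\varepsilon)$, using $\lambda_{\mathrm{cs}0}<\lambda_{\mathrm{cs}1}<1$ from \eqref{eqn_eigen_v}; the remaining $O(\varepsilon)$-discrepancy between these image points and the true vertices of $F_{\underline\gamma^{(j+1)}}^{\mathrm{cs}\pm}$ is absorbed into $C\varepsilon$ using the flatness estimates of Section \ref{S_WBH} ($O(\varepsilon)$ curvature of the leaves, $O(\varepsilon)$ off-diagonal entries of $Df$). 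Setting $\widetilde\delta_j=\delta_j+C\varepsilon/(\lambda_{\mathrm{cs}1}^{-1}-1)$ — legitimate precisely because $\lambda_{\mathrm{cs}1}^{-1}>1$ strictly, which is where \eqref{eqn_eigen_v} (equivalently \eqref{eqn_eigen_v2}) is used — turns this into $\widetilde\delta_{j+1}\ge\lambda_{\mathrm{cs}1}^{-1}\widetilde\delta_j$, so that $\lambda_{\mathrm{cs}1}^{-(m_k-1)}\delta_0\le\widetilde\delta_{m_k-1}<2+C\varepsilon/(\lambda_{\mathrm{cs}1}^{-1}-1)$. Substituting the lower bound for $\delta_0$ and taking logarithms gives $m_k\le N_0+N_1k$ with $N_1=\lceil L\log\bar\lambda_{\mathrm{u}}/\log(\lambda_{\mathrm{cs}1}^{-1})\rceil$ and a suitable integer $N_0$ depending only on $c,n_0,L,\lambda_{\mathrm{u}},\lambda_{\mathrm{cs}1}$ and $\varepsilon$.

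I expect the one-step inequality to be the main obstacle: one must show the $z$-gap genuinely grows by a definite factor rather than merely not shrinking, and must control all the $O(\varepsilon)$-errors — from the curvature of the $\mathcal F_f^{\mathrm{cs}}$-leaves, from $f\ne f_0$, and from the two vertices sitting at slightly different $y$-levels after the pull-back — uniformly in $j$ and $k$. The fixed positive margin $\lambda_{\mathrm{cs}1}^{-1}-1$ granted by \eqref{eqn_eigen_v} is exactly what makes this robust once $f$ is sufficiently $C^2$-close to $f_0$, and all the needed geometric control is already in place from Sections \ref{S_WBH}, \ref{S_BTC} and \ref{S_BWS}. In particular the \emph{upper} width bound in \eqref{eqn_piB} is not used.
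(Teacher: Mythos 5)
Your global strategy — track a $z$-direction scale quantity, show it grows geometrically per backtracking step, compare against the lower bound on $\mathrm{width}(\mathbb{U}_{k+1}^{\mathrm{cs}})$ from \eqref{eqn_piB}, and note that it must stay $O(1)$ as long as the backtracking condition holds — is exactly the paper's, and you are also right that only the lower bound in \eqref{eqn_piB} is needed. But the specific quantity you track (the $z$-gap $\delta_j$ between the \emph{vertex loci} of $F_{\underline\gamma^{(j)}}^{\mathrm{cs}\pm}$) introduces an additive $O(\varepsilon)$ error that your argument cannot survive. Your one-step inequality has the form $\delta_{j+1}\ge\lambda_{\mathrm{cs}1}^{-1}\delta_j-C\varepsilon$, with $C\varepsilon$ a \emph{fixed} error (it comes from locating the true vertices of the pulled-back leaves to within $O(\varepsilon)$, as in \eqref{eqn_pi_z_xq}, and does not shrink with $\delta_j$). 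But $\delta_0$ is comparable to $\mathrm{width}(\mathbb{U}_{k+1}^{\mathrm{cs}})\le (C_0C_2)^{-1}\underline\lambda_{\mathrm{u}}^{-(n_0+L(k+1))}$, which tends to $0$ as $k\to\infty$ while $\varepsilon>0$ is fixed once $\mathcal O(f_0)$ is. For all $k$ past some threshold you have $\delta_0<C\varepsilon/(\lambda_{\mathrm{cs}1}^{-1}-1)$, and then the affine recursion yields no lower bound at all; the lemma must hold for \emph{every} $k$. (There is also a sign slip: to convert $\delta_{j+1}\ge\lambda_{\mathrm{cs}1}^{-1}\delta_j-C\varepsilon$ into a pure geometric recursion you must set $\widetilde\delta_j=\delta_j-C\varepsilon/(\lambda_{\mathrm{cs}1}^{-1}-1)$, not $+$; with the correct minus sign the threshold problem above is exactly what you run into.)

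The paper sidesteps this by tracking a different quantity: the $z$-extent $|\pi_z(\sigma_j)|$ of an arc $\sigma_j$ lying in the stable leaf $F^{\mathrm s}(\boldsymbol x_j^+)$ inside the plane $P_j=\{y=\mathrm{const}\}$ and joining $\boldsymbol x_j^+\in F_{\underline\gamma^{(j)}}^{\mathrm{cs}+}$ to a point of $F_{\underline\gamma^{(j)}}^{\mathrm{cs}-}$. Because $\sigma_j$ lives in a stable leaf, its image $f^{-1}(\sigma_j)$ is estimated by the \emph{pointwise} derivative bound \eqref{eqn_lambda_maxS}, giving $|\pi_z(f^{-1}(\sigma_j))|\ge(\bar\lambda_{\mathrm{cs}1}^{-1}+O(\varepsilon))|\pi_z(\sigma_j)|$, and the passage from $f^{-1}(\sigma_j)$ to $\sigma_{j+1}$ (both inside the same stable leaf $F^{\mathrm s}(\boldsymbol x_{j+1}^+)$, linked by the almost-vertical arc $F_{\underline\gamma^{(j+1)}}^{\mathrm{cs}-}\cap F^{\mathrm s}(\boldsymbol x_{j+1}^+)$) only costs a multiplicative $(1+O(\varepsilon))$ factor — there is no additive term. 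All $O(\varepsilon)$'s are then swallowed by weakening the rate from $\bar\lambda_{\mathrm{cs}1}^{-1}$ to $\bar\lambda_{\mathrm{cs}1}^{-1/3}$, which explains the extra factor of $3$ in the paper's $N_1$ compared with yours. At $j=0$, the flatness condition \eqref{F2} makes $\sigma_0$ a genuine segment parallel to the $z$-axis, so $|\pi_z(\sigma_0)|=\mathrm{length}(\sigma_0)\ge\mathrm{width}(\mathbb{U}_{k+1}^{\mathrm{cs}})$, which is the clean link to \eqref{eqn_piB}. If you replace your vertex-gap $\delta_j$ by $|\pi_z(\sigma_j)|$ and make the errors multiplicative throughout, your outline goes through; as written, the additive $C\varepsilon$ is fatal for large $k$.
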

\begin{proof}
We consider the case of $a_2>0$.
Fix a point $\boldsymbol{x}_{m_k}^+$ of $\Sigma_{\underline{\gamma}^{(m_k)}}^{\mathrm{cs}}\cap F_{\underline{\gamma}^{(m_k)}}^{\mathrm{cs}+}$ and define the points 
$\boldsymbol{x}_j^+\in \Sigma_{\underline{\gamma}^{(j)}}^{\mathrm{cs}}\cap F_{\underline{\gamma}^{(j)}}^{\mathrm{cs}+}$ by
$\boldsymbol{x}_{j}^+=f^{m_k-j}(\boldsymbol{x}_{m_k}^+)$ for $j=m_k-1,m_k-2,\dots,1,0$.
Let $F^{\mathrm{s}}(\boldsymbol{x}_j^+)$ be the leaf of $\mathcal{F}_f^{\mathrm{s}}$ containing $\boldsymbol{x}_j^+$ and 
$P_j$ the plane in $\mathbb{B}$ with $P_j\ni \boldsymbol{x}_j^+$ and parallel to the $xz$-plane.
We denote by $\sigma_j$ an arc in $F^{\mathrm{s}}(\boldsymbol{x}_j^+)\cap P_j$ connecting $\boldsymbol{x}_j^+$ with a point of 
$F_{\underline{\gamma}^{(j)}}^{\mathrm{cs}-}\cap P_j$.
See Figure \ref{f_7_1}.
\begin{figure}[hbtp]
\centering
\scalebox{0.6}{\includegraphics[clip]{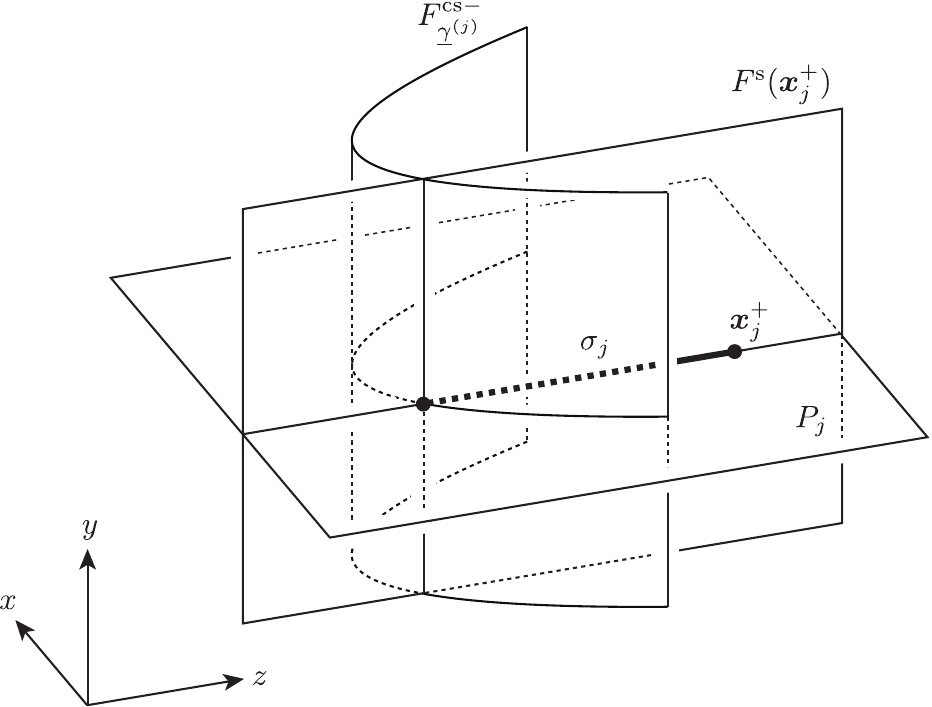}}
\caption{}
\label{f_7_1}
\end{figure}
Since $F^{\mathrm{s}}(\boldsymbol{x}_j^+)$ is adaptable to the cone-field $\boldsymbol{C}_{\varepsilon}^{\mathrm{cs}}$, we have 
$$\mathrm{length}(\sigma_j)=|\pi_z(\sigma_j)|(1+O(\varepsilon))\quad\text{and}\quad |\pi_x(\sigma_j)|=O(\varepsilon).$$
Since by \eqref{eqn_f0V} $Df(\boldsymbol{x})$ is arbitrarily $C^{r-1}$-close to the diagonal matrix $Df_0(\boldsymbol{x})=\mathrm{diag}((-1)^i\lambda_{\mathrm{u}},(-1)^i\lambda_{\mathrm{ss}},\lambda_{\mathrm{cs} i})$ for $\boldsymbol{x}\in \mathbb{V}_{i,f}$, 
we may assume that an non-diagonal entry of $Df(\boldsymbol{x})$ has $O(\varepsilon)$-value.
It follows from this fact together with \eqref{eqn_lambda_maxS} and \eqref{eqn_lambda_max} that
\begin{equation}\label{eqn_pi_f_sg}
\begin{split}
|\pi_z(f^{-1}(\sigma_j))|&\geq \left(\left(\bar{\lambda}_{\mathrm{cs} 1}-\frac{\varepsilon}2\right)^{-1}+O(\varepsilon)\right)|\pi_z(\sigma_j)|
\geq \left(\bar{\lambda}_{\mathrm{cs} 1}^{-1}+O(\varepsilon)\right)|\pi_z(\sigma_j)|\\
&\geq \bar\lambda_{\mathrm{cs} 1}^{-1/2}|\pi_z(\sigma_j)|,
\end{split}
\end{equation}\label{eqn_len_f_sg}
\begin{equation}
\mathrm{length}(\pi_x(f^{-1}(\sigma_j)))\leq \left(\underline{\lambda}_{\mathrm{u}}+\frac{\varepsilon}2\right)^{-1}O(\varepsilon)=O(\varepsilon),
\end{equation}
where we use the fact that $\bar\lambda_{\mathrm{cs} 0}^{-1}>\bar\lambda_{\mathrm{cs} 1}^{-1}$.
Since $\mathcal{F}_f^{\mathrm{s}}$ is $f$-invariant, $f^{-1}(\sigma_j)$ and $\sigma_{j+1}$ are contained in the same 
leaf $F^{\mathrm{s}}(\boldsymbol{x}_{j+1}^+)$ of $\mathcal{F}_f^{\mathrm{s}}$.
Since $F^{\mathrm{s}}(\boldsymbol{x}_{j+1}^+)$ is adaptable to $\boldsymbol{C}_{\varepsilon}^{\mathrm{cs}}$, 
by Lemma \ref{l_Fcs} $F_{\underline{\gamma}^{(j+1)}}^{\mathrm{cs}-}\cap F^{\mathrm{s}}(\boldsymbol{x}_{j+1}^+)$ is an 
$O(\varepsilon)$-almost vertical arc which contains end points of $f^{-1}(\sigma_j)$ and $\sigma_{j+1}$ other than $\boldsymbol{x}_{j+1}^+$.
This implies that  
$$|\pi_z(\sigma_{j+1})|=|\pi_z(f^{-1}(\sigma_{j}))|(1+O(\varepsilon)).$$
Hence, by \eqref{eqn_pi_f_sg}, $|\pi_z(\sigma_{j+1})|\geq \bar{\lambda}_{\mathrm{cs} 1}^{-1/3}|\pi_z(\sigma_j)|$.
This shows that
\begin{equation}\label{eqn_pi_sg_m}
|\pi_z(\sigma_{m_k})|\geq \bar{\lambda}_{\mathrm{cs} 1}^{-m_k/3}|\pi_z(\sigma_0)|.
\end{equation}
Let $\boldsymbol{x}_{m_k}^-$ be a point of $F_{\underline{\gamma}^{(m_k)}}^{\mathrm{cs}-}\cap \Sigma_{\underline{\gamma}^{(m_k)}}^{\mathrm{cs}}\cap P_{m_k}$ 
and $\boldsymbol{x}_{m_k}^0$ the end point of $\sigma_{m_k}$ other than $\boldsymbol{x}_{m_k}^+$.
By \eqref{eqn_pi_z_xq},
$\pi_z(\boldsymbol{x}_{m_k}^-)< \pi_z(\boldsymbol{x}_{m_k}^0)+\varepsilon$.
See Figure \ref{f_7_2}.
\begin{figure}[hbtp]
\centering
\scalebox{0.6}{\includegraphics[clip]{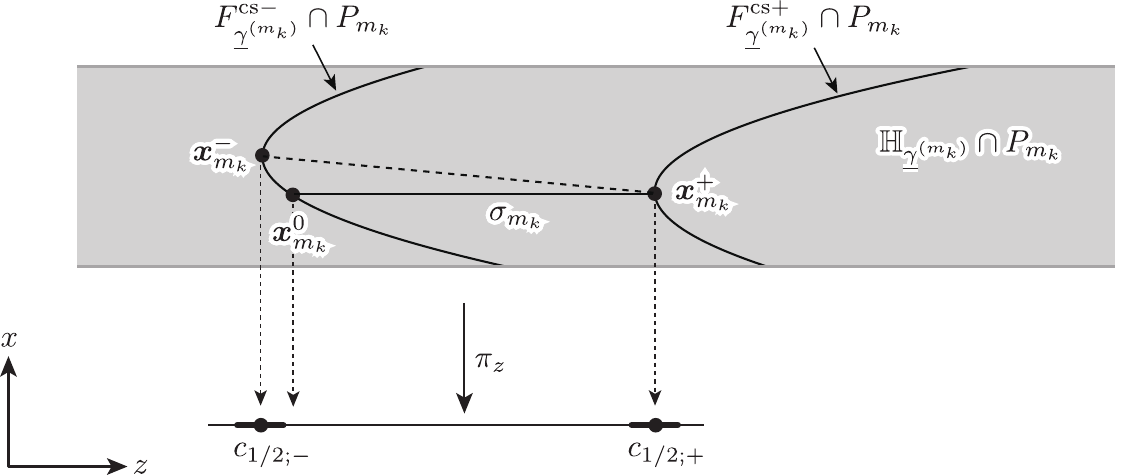}}
\caption{View from the top.}
\label{f_7_2}
\end{figure}
By \eqref{eqn_pi_sg_m}, there exists a constant $C_3>0$ independent of $\underline{\gamma}^{(m_k)}$ and satisfying
$$
|\pi_z(\Sigma_{\underline{\gamma}^{(m_k)}}^{\mathrm{cs}})|\geq C_3\bar{\lambda}_{\mathrm{cs} 1}^{-m_k/3}
|\pi_z(\sigma_0)|.
$$
Since $\sigma_0$ is contained in $F^{\mathrm{s}}({\boldsymbol{x}}_0^+)\cap P_0$ and $F^{\mathrm{s}}({\boldsymbol{x}}_0^+)$ is a plane parallel to the $yz$-plane 
by the condition \eqref{F2} on $\mathcal{F}_f^{\mathrm{s}}$ given in Subsection \ref{ss_cone_foliation}, 
$\sigma_0$ is a straight segment parallel to the $z$-axis.
This shows that 
$$|\pi_z(\sigma_0)|=\mathrm{length}(\sigma_0)\geq \mathrm{width}(\mathbb{U}_{k+1}^{\mathrm{cs}}).$$
Hence we have
$$C_3\bar{\lambda}_{\mathrm{cs} 1}^{-m_k/3}\mathrm{width}(\mathbb{U}_{k+1}^{\mathrm{cs}})\leq |\pi_z(\Sigma_{\underline{\gamma}^{(m_k)}}^{\mathrm{cs}})|
<1+2\varepsilon_0<2.$$
By this fact together with \eqref{eqn_piB} that 
$\bar{\lambda}_{\mathrm{cs} 1}^{-m_k/3}\leq 2(C_0C_2C_3)^{-1}\bar{\lambda}_{\mathrm{u}}^{(n_0+Lk)}$.
It follows that
$$m_k\leq \frac{3\log\bigl(2(C_0C_2C_3)^{-1}\bar{\lambda}_{\mathrm{u}}^{n_0}\bigr)}{\log \bar{\lambda}_{\mathrm{cs} 1}^{-1}}
+\frac{3\log \bar{\lambda}_{\mathrm{u}}^{L}}{\log \bar{\lambda}_{\mathrm{cs} 1}^{-1}}k.$$
Let $N_0$ and $N_1$ be the smallest positive integers with 
$$N_0\geq \frac{3\log\bigl(2(C_0C_2C_3)^{-1}\bar{\lambda}_{\mathrm{u}}^{n_0}\bigr)}{\log \bar{\lambda}_{\mathrm{cs} 1}^{-1}}
\quad\text{and}\quad 
N_1\geq \frac{3\log \bar{\lambda}_{\mathrm{u}}^{L}}{\log \bar{\lambda}_{\mathrm{cs} 1}^{-1}}.$$
Then $m_k\leq N_0+N_1k$.
This completes the proof in the case of $a_2>0$.

When $a_2<0$, one can prove the lemma quite similarly by considering a 
point $\boldsymbol{x}_{m_k}^-$ of $\Sigma_{\underline{\gamma}^{(m_k)}}^{\mathrm{cs}}\cap F_{\underline{\gamma}^{(m_k)}}^{\mathrm{cs}-}$ 
instead of $\boldsymbol{x}_{m_k}^+$.
\end{proof}

\section{$C^r$-perturbations of $f$}\label{S_perturb}
In Subsection \ref{ss_binary_free}, we define the binary code $\widehat{\underline{w}}_k$ the main part $\underline{u}_k$ 
of which can be chosen freely and the front and back complements are used to connect 
$\widehat{\underline{w}}_k$ with $\widehat{\underline{w}}_{k-1}$ and 
$\widehat{\underline{w}}_{k+1}$ respectively.
Furthermore we present an $f$-pseudo-orbit
\begin{equation}\label{eqn_pseudo_orb}
(\dots, \widehat{\boldsymbol{x}}_k, f(\widehat{\boldsymbol{x}}_k),\dots,f^{\widehat{n}_k}(\widehat{\boldsymbol{x}}_k), 
f(\widehat{\boldsymbol{y}}_{k+1}),\widehat{\boldsymbol{x}}_{k+1},\dots)
\end{equation}
as illustrated in Figure \ref{f_8_1}, where $\widehat{\boldsymbol{y}}_{k+1}$ is 
a point of $f^{-2}(S_{\widehat{\underline{w}}_{k+1}}^{\mathrm{cs}})\cap \mathbb{H}_{\varepsilon_0}$ 
which is $O(\underline{\lambda}_{\mathrm{u}}^{-(n_0+L(k+1))})$--close to $f^{\widehat{n}_k}(\widehat{\boldsymbol{x}}_k)$. 
\begin{figure}[hbtp]
\centering
\scalebox{0.6}{\includegraphics[clip]{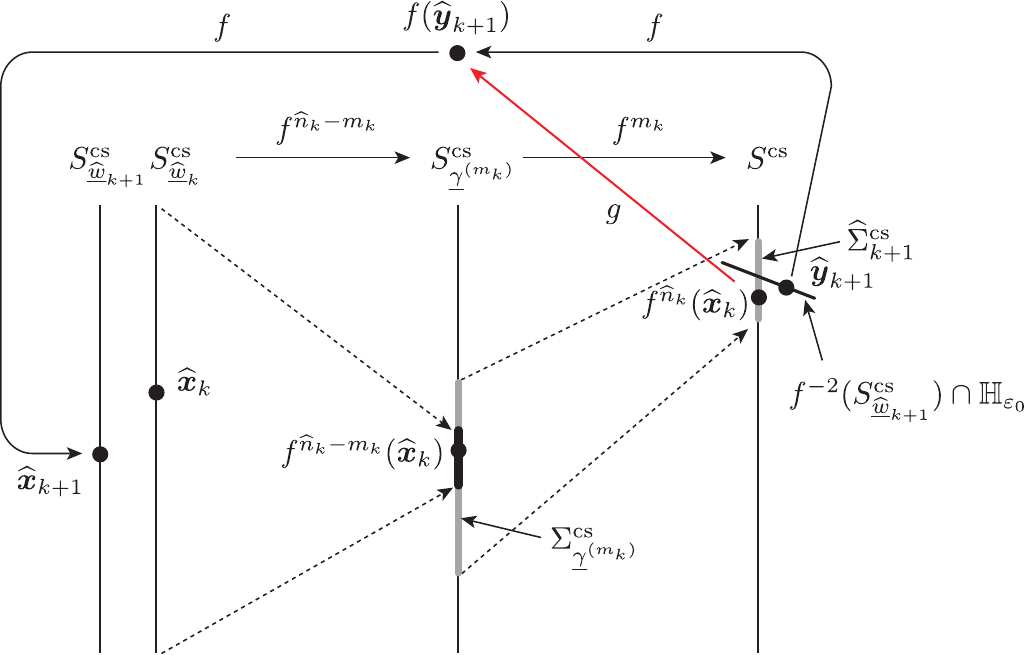}}
\caption{}
\label{f_8_1}
\end{figure}
In Subsection \ref{ss_bump}, 
we define a diffeomorphism $g$ by a $C^r$-perturbation of $f$ supported in a small neighborhood of 
$f^{\widehat{n}_k}(\widehat{\boldsymbol{x}}_k)$ such that 
$g(f^{\widehat{n}_k}(\widehat{\boldsymbol{x}}_k))$ coincides with $f(\widehat{\boldsymbol{y}}_{k+1})$.
In particular, the sequence \eqref{eqn_pseudo_orb} is an actual orbit of $g$.

\subsection{Binary codes with free parts and mutually disjoint cubes}\label{ss_binary_free}
Recall that 
$S_{\underline{\gamma}^{(k)}}^{\mathrm{cs}}$
and $\widehat{\Sigma}_{k}^{\mathrm{cs}}$
are the cs-sections of 
$\mathbb{H}_{\underline{\gamma}^{(k)}}$
and $\mathbb{U}_{k}^{\mathrm{cs}}$
defined in Sections \ref{S_BTC} and \ref{S_BWS} respectively.

\begin{lem}\label{lem-3-1}
Let $f$ be any element of $\mathcal{O}(f_0)$ and 
$\underline{w}^{(n_0+Lk)}$ the binary code given Section \ref{S_BWS}.
For any binary code $\underline{u}_k$ with arbitrary finite length, 
there exists a binary code $\underline{\widehat w}_k$ 
satisfying the following {\rm (1)} and {\rm (2)}.
\begin{enumerate}[\rm (1)]
\item\label{lem-3-1(1)}
$\underline{\widehat w}_k$\index{wkhk@$\underline{\widehat w}_k$} 
is represented as $\underline{w}^{(n_0+Lk)}\underline{u}_k\underline{\iota}_k\underline{\gamma}^{(m_k)}$, 
where 
$\underline{\iota}_k$ and $\underline{\gamma}^{(m_k)}$ are binary codes given as follows. 
\begin{itemize}
\item
The length of $\underline{\iota}_k$ is at most $\mu_0$ (possibly $\underline{\iota}_k=\emptyset$), where 
$\mu_0$ is the constant given in Lemma \ref{l_zeta_zeta}, 
\item
$\underline{\gamma}^{(m_k)}=\gamma_{m_k}\gamma_{m_k-1}\dots \gamma_2\gamma_1$ 
for some $0<m_k\leq N_0+N_1k$, where $N_0$ and $N_1$ are the positive integers given in Lemma \ref{l_transf}.
\end{itemize}
\item \label{lem-3-1(2)}
$f^{|\widehat{\underline{w}}_k|}(S_{\widehat{\underline{w}}_k}^{\mathrm{cs}})$ is contained in $\widehat{\Sigma}_{k+1}^{\mathrm{cs}}$.
\end{enumerate}
\end{lem}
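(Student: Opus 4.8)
The plan is to build $\underline{\widehat w}_k$ in three stages, reading the code from left to right as $\underline{w}^{(n_0+Lk)}\,\underline{u}_k\,\underline{\iota}_k\,\underline{\gamma}^{(m_k)}$, and to verify (2) by tracking the $\pi_z$-coordinate of the cs-section under forward iteration. First I would fix the free part: set the prefix to be the code $\underline w^{(n_0+Lk)}=\underline w^{(n_0+k)}\underline\nu^{(Lk-k)}$ from Section~\ref{S_BWS} (which determines $\mathbb U_k^{\mathrm{cs}}$ and $\widehat\Sigma_k^{\mathrm{cs}}$), and append the arbitrary block $\underline u_k$ chosen by the user. At this point $S_{\underline w^{(n_0+Lk)}\underline u_k}^{\mathrm{cs}}$ is a well-defined cs-section of $\mathbb{H}_{\underline w^{(n_0+Lk)}\underline u_k}$ by the construction in Subsection~\ref{ss_forward}, provided $n_0+Lk$ is large enough, which it is for the relevant range of $k$ (and the small-$k$ cases are absorbed into $N_0$).

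Next I would invoke Lemma~\ref{l_fSw} with $\underline w:=\underline w^{(n_0+Lk)}\underline u_k$ and with $\underline\gamma$ eventually chosen as $\underline\gamma^{(m_k)}$ below: since $|\underline w|$ is large, there is a binary code $\underline\iota_k$ with $|\underline\iota_k|\le\mu_0$ (possibly empty) such that $\pi_z\circ f^{|\underline w|+|\underline\iota_k|}\bigl(S^{\mathrm{cs}}_{\underline w\,\underline\iota_k\,\underline\gamma}\bigr)\subset I(4\varepsilon)$. In particular the $\pi_z$-image lies in $I(4\varepsilon)\subset[\varepsilon,\lambda_{\mathrm{cs}0}-\varepsilon]\cup[1-\lambda_{\mathrm{cs}1}+\varepsilon,1-\varepsilon]$, so the relevant cs-section satisfies the backtracking condition. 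This is precisely the hypothesis needed to run the backward construction of Subsection~\ref{ss_BTC}: starting from $\widehat\Sigma_{k+1}^{\mathrm{cs}}:=\Sigma^{\mathrm{cs}}$ (the cs-section of $\mathbb U_{k+1}^{\mathrm{cs}}$, which by Section~\ref{S_BWS} is $f^{-2}(\mathbb B^{\mathrm{u}}(\underline w^{(n_0+L(k+1))}))\cap\mathbb H_{\varepsilon_0}$) one produces the maximal backtracking run and obtains a back-end section $\Sigma_{\underline\gamma^{(m_k)}}^{\mathrm{cs}}$ based at $\widehat\Sigma_{k+1}^{\mathrm{cs}}$, with $m_k\le N_0+N_1k$ by Lemma~\ref{l_transf}. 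By Lemma~\ref{l_S_not_BT}, $\pi_z\bigl(\Sigma_{\underline\gamma^{(m_k)}}^{\mathrm{cs}}\bigr)\supset I(3\varepsilon)$. The point is that $\mathbb H_{\widehat{\underline w}_k}$, traced forward by $f^{|\widehat{\underline w}_k|}$, lands inside $\mathbb H_{\underline\gamma^{(m_k)}}$ after the prefix part, and the back-end section is exactly the image of $\widehat\Sigma_{k+1}^{\mathrm{cs}}$ pulled back along $\underline\gamma^{(m_k)}$; so to get (2) one needs the forward image $f^{|\widehat{\underline w}_k|}(S^{\mathrm{cs}}_{\widehat{\underline w}_k})$ to be contained in $\Sigma_{\underline\gamma^{(m_k)}}^{\mathrm{cs}}$'s preimage structure, which by definition of $\widehat\Sigma_{k+1}^{\mathrm{cs}}=\Sigma^{\mathrm{cs}}$ and of the cs-curved blocks forces it into $\widehat\Sigma_{k+1}^{\mathrm{cs}}$.

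The matching condition (2) then follows by a containment chase. By construction $f^{|\widehat{\underline w}_k|}\bigl(\mathbb H_{\widehat{\underline w}_k}\bigr)\subset\mathbb H_{\underline\gamma^{(m_k)}}$, and the cs-section is natural under these iterates, so $f^{|\widehat{\underline w}_k|}\bigl(S^{\mathrm{cs}}_{\widehat{\underline w}_k}\bigr)\subset S^{\mathrm{cs}}_{\underline\gamma^{(m_k)}}$; moreover, since $\Sigma^{\mathrm{cs}}_{\underline\gamma^{(m_k)}}=(\widehat\Sigma_{k+1}^{\mathrm{cs}})_{\underline\gamma^{(m_k)}}$ is obtained by pulling $\widehat\Sigma_{k+1}^{\mathrm{cs}}$ back along $\underline\gamma^{(m_k)}$ through the cs-curved blocks, applying $f^{m_k}$ (the last $m_k$ symbols of $\widehat{\underline w}_k$) sends this into $\widehat\Sigma_{k+1}^{\mathrm{cs}}$. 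The $\pi_z$-bookkeeping—that the relevant iterate of $S^{\mathrm{cs}}_{\widehat{\underline w}_k}$ has $\pi_z$-image inside $I(4\varepsilon)$, which is inside $I(3\varepsilon)\subset\pi_z(\Sigma_{\underline\gamma^{(m_k)}}^{\mathrm{cs}})$, and that transverse directions are controlled via the cone-field $\boldsymbol C_\varepsilon^{\mathrm u}$—guarantees the inclusion is into the correct component, so no part of the section escapes through $\partial_z\mathbb U^{\mathrm{cs}}_{k+1}$. The length bounds in (1) are then read off directly: $|\underline\iota_k|\le\mu_0$ from Lemma~\ref{l_fSw}, and $0<m_k\le N_0+N_1k$ from Lemma~\ref{l_transf}.

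\textbf{Main obstacle.} The delicate point is not the existence of each code fragment but the compatibility of the "forward" construction (Lemma~\ref{l_fSw}, which pushes $S^{\mathrm{cs}}_{\underline w\,\underline\iota\,\underline\gamma}$ into $I(4\varepsilon)$) with the "backward" construction (Subsection~\ref{ss_BTC}, which pulls $\widehat\Sigma_{k+1}^{\mathrm{cs}}$ back to a back-end section): one must choose $\underline\gamma^{(m_k)}$ to be simultaneously the tail produced by the backtracking run from $\widehat\Sigma_{k+1}^{\mathrm{cs}}$ \emph{and} the $\underline\gamma$ appearing in Lemma~\ref{l_fSw} applied to $\underline w=\underline w^{(n_0+Lk)}\underline u_k$. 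Resolving this requires noticing that Lemma~\ref{l_fSw} only needs $|\underline w|$ large (not any relation between $\underline w$ and $\underline\gamma$), so one first runs the backward construction to pin down $\underline\gamma^{(m_k)}$, and only then applies Lemma~\ref{l_fSw} with that $\underline\gamma$ fixed to extract $\underline\iota_k$; the bound $|\underline w|\ge$ (threshold) must be uniform in the—already bounded—length $m_k=O(k)$, which it is. Checking that this circular-looking dependency actually closes, and that the resulting $f^{|\widehat{\underline w}_k|}$-image lands in $\widehat\Sigma_{k+1}^{\mathrm{cs}}$ rather than merely in its ambient cs-curved block, is the technical heart of the argument.
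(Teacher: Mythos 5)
Your argument matches the paper's proof: you apply Lemma~\ref{l_fSw} to produce $\underline{\iota}_k$ so that the $\pi_z$-image of the appropriate iterate of $S^{\mathrm{cs}}_{\widehat{\underline{w}}_k}$ lands in $I(4\varepsilon)$, invoke Lemma~\ref{l_S_not_BT} to get $\pi_z(\Sigma^{\mathrm{cs}}_{\underline{\gamma}^{(m_k)}})\supset I(3\varepsilon)\supset I(4\varepsilon)$, and conclude via the pullback structure of the cs-curved blocks; your observation that $\underline{\gamma}^{(m_k)}$ must be fixed by the backward construction before Lemma~\ref{l_fSw} is applied is exactly the implicit ordering in the paper. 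One slip to correct: in the containment chase you write $f^{|\widehat{\underline{w}}_k|}(\mathbb{H}_{\widehat{\underline{w}}_k})\subset \mathbb{H}_{\underline{\gamma}^{(m_k)}}$ and $f^{|\widehat{\underline{w}}_k|}(S^{\mathrm{cs}}_{\widehat{\underline{w}}_k})\subset S^{\mathrm{cs}}_{\underline{\gamma}^{(m_k)}}$, but the exponent there must be $n_0+Lk+|\underline{u}_k|+|\underline{\iota}_k|=|\widehat{\underline{w}}_k|-m_k$ (you then apply the remaining $f^{m_k}$ to land in $\widehat{\Sigma}_{k+1}^{\mathrm{cs}}$, as you correctly say next).
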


\begin{proof}
By Lemma \ref{l_fSw}, there exists a binary cord $\underline{\iota}_k$ of length at most $\mu_0$ such that 
$\pi_z(f^{n_0+Lk+|\underline{u}_k|+|\underline{\iota}_k|}(S_{\widehat{\underline w}_k}^{\mathrm{cs}}))$ is contained in $I(4\varepsilon)$, 
where 
$\widehat{\underline{w}}_k=\underline{w}^{(n_0+Lk)}\underline{u}_k\underline{\iota}_k\underline{\gamma}^{(m_k)}$.
On the other hand, by Lemma \ref{l_S_not_BT}, for a back-end section $\Sigma_{\underline{\gamma}^{(m_k)}}^{\mathrm{cs}}$ of $\widehat{\Sigma}_{k+1}^{\mathrm{cs}}$, $\pi_z(\Sigma_{\underline{\gamma}^{(m_k)}}^{\mathrm{cs}})$ contains $I(3\varepsilon)$.
It follows that $f^{n_0+Lk+|\underline{u}_k|+|\underline{\iota}_k|}(S_{\widehat{\underline{w}}^{(m_k)}}^{\mathrm{cs}})\subset \Sigma_{\underline{\gamma}^{(m_k)}}^{\mathrm{cs}}$ and hence 
$f^{|\widehat{\underline{w}}_k|}(S_{\widehat{\underline{w}}_k}^{\mathrm{cs}})$ is contained in $\widehat{\Sigma}_{k+1}^{\mathrm{cs}}$.
This shows the assertion (2).
\end{proof}

\begin{rmk}\label{r_Lemma8.1} 
(1) 
The freedom on the choice of the sub-codes $\underline{u}_k$ in Lemma \ref{lem-3-1} is one of the essential ideas of this paper, which is 
a generalization of \cite[page 4015, Lemma 1.4]{KNS23}\footnote{Note that \cite{KNS23} contains two Lemma 1.4 
due to some editorial mistake.}. 
Such an idea of incorporating a free choice of sub-codes comes from \cite{CV01}.
We will see in subsequent sections that 
this is a mechanism to realize the pluripotency of wandering domains.

\noindent(2) 
The length $|\underline{\iota}_k|$ of $\underline{\iota}_k$ depends on the choice of $\underline{u}_k$, which is 
crucial in the process of determining $\alpha_k$ in Subsection \ref{ss_quad_majority}.
\end{rmk}

We set $|\widehat{\underline{w}}_k|=\widehat n_k$\index{nkh@$\widehat n_k$} for short.
From the definition of the binary code $\widehat{\underline{w}}_k$ in Lemma \ref{lem-3-1}, 
\begin{equation}\label{eqn_hatn0}
\widehat n_k=n_0+Lk+|\underline{u}_k|+|\underline{\iota}_k|+m_k.
\end{equation}

\begin{lem}\label{l_xkinS}
There exists a sequence $(\widehat{\boldsymbol{x}}_k)_{k\geq 1}\index{x_k@$\widehat{\boldsymbol{x}}_k$}$ with $\widehat{\boldsymbol{x}}_k\in S_{\widehat{\underline{w}}_k}^{\mathrm{cs}}$ and satisfying $f^{-2}(\widehat{\boldsymbol{x}}_{k+1})\in \mathbb{U}_{k+1}^{\mathrm{cs}}$ and 
$\|f^{\widehat n_k}(\widehat{\boldsymbol{x}}_k)- f^{-2}(\widehat{\boldsymbol{x}}_{k+1})\|=O(\underline{\lambda}_{\mathrm{u}}^{-(n_0+L(k+1))})$.
\end{lem}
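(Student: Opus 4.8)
The plan is to build the sequence $(\widehat{\boldsymbol{x}}_k)_{k\ge 1}$ by first producing, for each $k$, a point in $S^{\mathrm{cs}}_{\widehat{\underline w}_k}$ whose forward image under $f^{\widehat n_k}$ lands in the sub-surface $\widehat\Sigma^{\mathrm{cs}}_{k+1}\subset S^{\mathrm{cs}}$ and then pulling this point back two steps to get a point of $\mathbb U^{\mathrm{cs}}_{k+1}$; the work is in controlling the distance between these two natural choices. First I would fix $k$ and use Lemma \ref{lem-3-1}\,\eqref{lem-3-1(2)}: $f^{\widehat n_k}(S^{\mathrm{cs}}_{\widehat{\underline w}_k})\subset \widehat\Sigma^{\mathrm{cs}}_{k+1}$. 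Since $\widehat\Sigma^{\mathrm{cs}}_{k+1}=\mathbb U^{\mathrm{cs}}_{k+1}\cap S^{\mathrm{cs}}$ and $\mathbb U^{\mathrm{cs}}_{k+1}=f^{-2}(\mathbb B^{\mathrm{u}}(\underline w^{(n_0+L(k+1))}))\cap \mathbb H_{\varepsilon_0}$, any point $\boldsymbol q\in\widehat\Sigma^{\mathrm{cs}}_{k+1}$ has $f^2(\boldsymbol q)\in \mathbb B^{\mathrm{u}}(\underline w^{(n_0+L(k+1))})$, a $\mathrm u$-bridge block whose $\pi^{\mathrm u}_f$-image has length $O(\underline\lambda_{\mathrm u}^{-(n_0+L(k+1))})$ by \eqref{eqn_CBuC}. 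So the plan is: take $\widehat{\boldsymbol x}_k\in S^{\mathrm{cs}}_{\widehat{\underline w}_k}$ arbitrary, set $\boldsymbol q_k:=f^{\widehat n_k}(\widehat{\boldsymbol x}_k)\in\widehat\Sigma^{\mathrm{cs}}_{k+1}$, and \emph{define} $\widehat{\boldsymbol x}_{k+1}$ to be $f^2$ of a suitably chosen point of $\widehat\Sigma^{\mathrm{cs}}_{k+1}$ (hence automatically $f^{-2}(\widehat{\boldsymbol x}_{k+1})\in\mathbb U^{\mathrm{cs}}_{k+1}$, as required) — but then one must still guarantee $\widehat{\boldsymbol x}_{k+1}\in S^{\mathrm{cs}}_{\widehat{\underline w}_{k+1}}$, so the cleanest route is induction: assume $\widehat{\boldsymbol x}_{k+1}\in S^{\mathrm{cs}}_{\widehat{\underline w}_{k+1}}$ is already built, let $\boldsymbol p_{k+1}:=f^{-2}(\widehat{\boldsymbol x}_{k+1})\in\mathbb U^{\mathrm{cs}}_{k+1}$, and choose $\widehat{\boldsymbol x}_k$ on $S^{\mathrm{cs}}_{\widehat{\underline w}_k}$ so that $f^{\widehat n_k}(\widehat{\boldsymbol x}_k)$ is the point of $\widehat\Sigma^{\mathrm{cs}}_{k+1}$ that projects under $\pi^{\mathrm u}_f\circ f^2$ to the same point of $I_{\varepsilon_0}$ as $\boldsymbol p_{k+1}$ does. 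This is possible because, by Lemma \ref{lem-3-1}\,\eqref{lem-3-1(2)}, $f^{\widehat n_k}$ maps $S^{\mathrm{cs}}_{\widehat{\underline w}_k}$ \emph{into} $\widehat\Sigma^{\mathrm{cs}}_{k+1}$ covering it in the unstable direction.

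Granting this construction, the distance estimate $\|f^{\widehat n_k}(\widehat{\boldsymbol x}_k)-f^{-2}(\widehat{\boldsymbol x}_{k+1})\|=O(\underline\lambda_{\mathrm u}^{-(n_0+L(k+1))})$ is then obtained as follows. Both $\boldsymbol q_k=f^{\widehat n_k}(\widehat{\boldsymbol x}_k)$ and $\boldsymbol p_{k+1}=f^{-2}(\widehat{\boldsymbol x}_{k+1})$ lie in $\mathbb U^{\mathrm{cs}}_{k+1}$, hence $f^2(\boldsymbol q_k)$ and $f^2(\boldsymbol p_{k+1})$ both lie in the $\mathrm u$-bridge block $\mathbb B^{\mathrm{u}}(\underline w^{(n_0+L(k+1))})$. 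By construction they have the same $\pi^{\mathrm u}_f$-coordinate, so $f^2(\boldsymbol q_k)$ and $f^2(\boldsymbol p_{k+1})$ lie on a common leaf of $\mathcal F^{\mathrm s}_f$; since that leaf is adaptable to $\boldsymbol C^{\mathrm{cs}}_\varepsilon$ and the block has $\mathrm u$-width $O(\underline\lambda_{\mathrm u}^{-(n_0+L(k+1))})$ by \eqref{eqn_CBuC}, while $\mathbb H_{[\infty]}$ is foliated by leaves of $\mathcal F^{\mathrm s}_f$ whose diameters in the $yz$-directions are uniformly bounded, we get $\|f^2(\boldsymbol q_k)-f^2(\boldsymbol p_{k+1})\|=O(\underline\lambda_{\mathrm u}^{-(n_0+L(k+1))})$. (Here one uses that on each fixed stable leaf the distance between two points is governed by their difference in the $(y,z)$-coordinates, which is controlled by the width of $\mathbb U^{\mathrm{cs}}_{k+1}$ via the adaptability to $\boldsymbol C^{\mathrm{cs}}_\varepsilon$ together with \eqref{eqn_pi_z_xq}-type bounds.) Finally, pulling back by $f^{-2}$, which is a diffeomorphism with bounded derivative on $f^2(\mathbb H_{\varepsilon_0})$ (the constant $C_{10}$ of Proposition \ref{c_Df2v}), preserves the order of magnitude: $\|\boldsymbol q_k-\boldsymbol p_{k+1}\|\le C_{10}\|f^2(\boldsymbol q_k)-f^2(\boldsymbol p_{k+1})\|=O(\underline\lambda_{\mathrm u}^{-(n_0+L(k+1))})$, which is exactly the claim.

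To start the induction one may take $\widehat{\boldsymbol x}_1$ to be any point of $S^{\mathrm{cs}}_{\widehat{\underline w}_1}$ together with the corresponding point $\widehat{\boldsymbol x}_2$ produced by the covering property of $f^{\widehat n_1}$ as above, or equivalently run the construction backward from a fixed base point; the choices at each stage are consistent because $f^{\widehat n_k}$ maps $S^{\mathrm{cs}}_{\widehat{\underline w}_k}$ onto a sub-surface of $\widehat\Sigma^{\mathrm{cs}}_{k+1}$ that meets every $\mathcal L_{(0;\infty)}$-leaf through $\widehat\Sigma^{\mathrm{cs}}_{k+1}$, so the point with the prescribed $\pi^{\mathrm u}_f\circ f^2$-coordinate exists. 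The main obstacle I anticipate is the geometric bookkeeping in the middle step: verifying that ``same $\pi^{\mathrm u}_f$-coordinate after applying $f^2$'' really forces the two points to sit on a single $\mathcal F^{\mathrm s}_f$-leaf, and that the induced separation in $\mathbb U^{\mathrm{cs}}_{k+1}$ — measured along stable leaves that are only $C^1$ and only $O(\varepsilon)$-flat, not genuinely flat — is still of the order of the block width. This is where one leans on the adaptability condition \eqref{F3}, on the vector-field description of stable leaves in Lemma \ref{l_Fcs}, and on the curvature control $|\kappa_{\boldsymbol u}(\boldsymbol x)|=O(\varepsilon)$ from Proposition \ref{p_curvature_plane}; the rest of the argument is routine once those geometric facts are combined with the uniform expansion estimate \eqref{eqn_CBuC}.
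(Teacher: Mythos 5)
There is a genuine gap in the distance estimate, and it comes precisely from matching the wrong coordinate. You arrange for $f^2(\boldsymbol q_k)$ and $f^2(\boldsymbol p_{k+1})=\widehat{\boldsymbol x}_{k+1}$ to lie on a common leaf of $\mathcal F^{\mathrm s}_f$ (same $\pi^{\mathrm u}_f$-value). But a stable leaf is adaptable to $\boldsymbol C^{\mathrm{cs}}_\varepsilon$, so it is almost parallel to the $yz$-plane and extends across the whole block in the $y$- and $z$-directions; its diameter is $O(1)$, not $O(\underline\lambda_{\mathrm u}^{-(n_0+L(k+1))})$. The small $\mathrm u$-width of $\mathbb B^{\mathrm u}(\underline w^{(n_0+L(k+1))})$ controls only the $x$-separation, which your matching has already made zero; it says nothing about the $yz$-separation of the two points. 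Thus the inequality $\|f^2(\boldsymbol q_k)-f^2(\boldsymbol p_{k+1})\|=O(\underline\lambda_{\mathrm u}^{-(n_0+L(k+1))})$ does not follow from the premises you cite, and there is no reason $\widehat{\boldsymbol x}_{k+1}$ (on the surface $S^{\mathrm{cs}}_{\widehat{\underline w}_{k+1}}$) and $f^{\widehat n_k+2}(\widehat{\boldsymbol x}_k)$ (on the surface $f^2(\widehat\Sigma^{\mathrm{cs}}_{k+1})$) should be nearby merely because they sit on the same stable leaf. You also invoke an unproved ``covering in the unstable direction'' property of $f^{\widehat n_k}|_{S^{\mathrm{cs}}_{\widehat{\underline w}_k}}$, and the backward induction (build $\widehat{\boldsymbol x}_k$ from $\widehat{\boldsymbol x}_{k+1}$) has no base case; your closing paragraph tries to restart forward but then the backward scheme is abandoned.

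The paper avoids all of this by matching in the \emph{unstable} direction rather than projecting to it. It introduces the auxiliary foliation $\mathcal M_{k+1}$ of $\mathbb B^{\mathrm u}(\underline w^{(n_0+L(k+1))})$ by maximal segments parallel to the $x$-axis (so any two points on a single leaf share the same $(y,z)$-coordinates). Given $\widehat{\boldsymbol x}_k$, it takes the unique leaf $l\in\mathcal M_{k+1}$ with $f^{-2}(l)\ni f^{\widehat n_k}(\widehat{\boldsymbol x}_k)$ and \emph{defines} $\widehat{\boldsymbol x}_{k+1}:=l\cap S^{\mathrm{cs}}_{\widehat{\underline w}_{k+1}}$; this is forward induction, starting from an arbitrary $\widehat{\boldsymbol x}_1$, and automatically puts $f^{-2}(\widehat{\boldsymbol x}_{k+1})\in\mathbb U^{\mathrm{cs}}_{k+1}$. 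Now both $f^{\widehat n_k+2}(\widehat{\boldsymbol x}_k)$ and $\widehat{\boldsymbol x}_{k+1}$ lie on the \emph{same} short segment $l$, whose length is $O(\underline\lambda_{\mathrm u}^{-(n_0+L(k+1))})$ by \eqref{eqn_lC0lam}, so their distance is small in \emph{all three} coordinates; applying the mean value theorem to $f^{-2}$ on $\mathbb B^{\mathrm u}(\underline w^{(n_0+L(k+1))})$ (bounded derivative) then transports this bound to $\|f^{\widehat n_k}(\widehat{\boldsymbol x}_k)-f^{-2}(\widehat{\boldsymbol x}_{k+1})\|$. The missing idea in your proposal is precisely this: one must match along a short unstable segment, not a long stable leaf.
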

\begin{proof}
Let $\widehat{\boldsymbol{x}}_1$ be any element  of $\Sigma_{\widehat{\underline{w}}_1}^{\mathrm{cs}}\setminus W_{\mathrm{loc}}^{\mathrm{u}}(\Lambda_f)$ and suppose that 
$\widehat{\boldsymbol{x}}_1,\dots,\widehat{\boldsymbol{x}}_k$ are already determined.
By Lemma \ref{lem-3-1}\,\eqref{lem-3-1(2)}, $f^{\widehat{n}_k}(\widehat{\boldsymbol{x}}_k)$ is an element of 
$\widehat{\Sigma}_{k+1}^{\mathrm{cs}}=\mathbb{U}_{k+1}^{\mathrm{cs}}\cap S^{\mathrm{cs}}$.
We denote by $\mathcal{M}_{k+1}$ the 1-dimensional foliation on $\mathbb{B}^{\mathrm{u}}(\underline{w}^{(n_0+L(k+1))})$ consisting of 
maximal segments in $\mathbb{B}^{\mathrm{u}}(\underline{w}^{(n_0+L(k+1))})$ parallel to the $x$-axis.
Since $f^{-2}(\mathbb{B}^{\mathrm{u}}(\underline{w}^{(n_0+L(k+1))}))\supset \mathbb{U}_{k+1}^{\mathrm{cs}}$, 
there exists a unique leaf $l$ of $\mathcal{M}_{k+1}$ such that $f^{-2}(l)$ passes through $f^{\widehat n_k}(\widehat{\boldsymbol{x}}_k)$.
Let $\widehat{\boldsymbol{x}}_{k+1}$ be the intersection point of $l$ with $S_{\widehat{\underline{w}}_{k+1}}^{\mathrm{cs}}$.
See Figure \ref{f_8_2}.
\begin{figure}[hbtp]
\centering
\scalebox{0.6}{\includegraphics[clip]{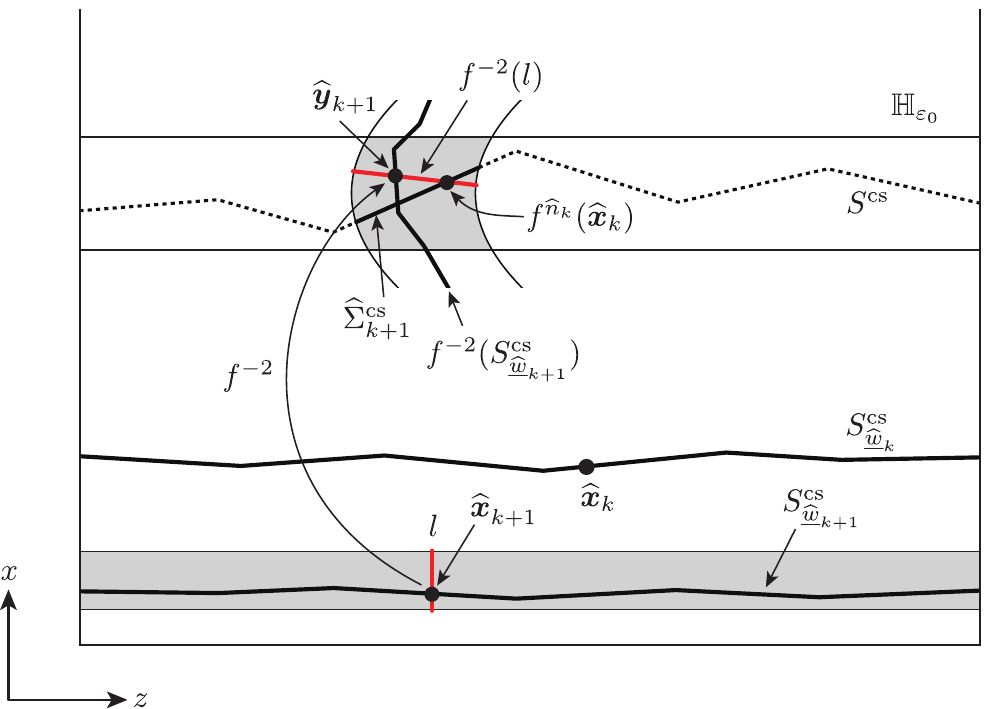}}
\caption{View from the top.
The lower shaded region represents $\mathbb{B}^{\mathrm{u}}(\underline{w}^{(n_0+L(k+1))})$ and the upper does $\mathbb{U}_{k+1}^{\mathrm{cs}}$.}
\label{f_8_2}
\end{figure}
By \eqref{eqn_lC0lam}, $\mathrm{length}(l)=O(\underline{\lambda}_{\mathrm{u}}^{-(n_0+L(k+1))})$.
We set $f^{-2}(\widehat{\boldsymbol{x}}_{k+1})=\widehat{\boldsymbol{y}}_{k+1}$ for short.
Since both $f^{\widehat n_k+2}(\widehat{\boldsymbol{x}}_{k})$ and 
$f^{2}(\widehat{\boldsymbol{y}}_{k+1})$ are contained in $l$, 
by applying the mean value theorem to $f^{-2}|_{\mathbb{B}^{\mathrm{u}}(\underline{w}^{(n_0+L(k+1))})}$ 
we have
\begin{equation}\label{eqn_fxfxD}
\begin{split}
\|f^{\widehat n_k}(\widehat{\boldsymbol{x}}_k)- \widehat{\boldsymbol{y}}_{k+1}\|
&\leq \| Df^{-2}\|\,\|f^{\widehat n_k+2}(\widehat{\boldsymbol{x}}_k)- \widehat{\boldsymbol{x}}_{k+1}\|\\
&\leq \| Df^{-2}\|\,\mathrm{length}(l)=
O(\underline{\lambda}_{\mathrm{u}}^{-(n_0+L(k+1))}).
\end{split}
\end{equation}
This completes the proof.
\end{proof}

By Lemma \ref{l_xkinS}, 
the vector $\boldsymbol{u}_k=\widehat{\boldsymbol{y}}_{k+1}-f^{\widehat{n}_k}(\widehat{\boldsymbol{x}}_k)$ 
 satisfies 
\begin{equation}\label{eqn_|u_k|}
\|\boldsymbol{u}_k\|=O(\underline{\lambda}_{\mathrm{u}}^{-(n_0+L(k+1))}).
\end{equation}
We denote by $F^{\mathrm{cs}}(\boldsymbol{x})$\index{Fcsx@$F^{\mathrm{cs}}(\boldsymbol{x})$} the leaf of $\mathcal{F}_f^{\mathrm{cs}}$ containing $\boldsymbol{x}\in \mathbb{H}_{\varepsilon_0}$.
Let $A_k$ be an orthogonal matrix  of order 3 with determinant $+1$ and 
$$A_k(T_{f^{\widehat{n}_k}(\widehat{\boldsymbol{x}}_k)}F^{\mathrm{cs}}(f^{\widehat{n}_k}(\widehat{\boldsymbol{x}}_k)))=T_{\widehat{\boldsymbol{y}}_{k+1}}F^{\mathrm{cs}}(\widehat{\boldsymbol{y}}_{k+1}).$$
Since the segment $l$ in the proof of Lemma \ref{l_xkinS} is parallel to the $x$-axis, 
$l$ is adaptable to $\boldsymbol{C}_\varepsilon^{\mathrm{u}}$.
So we may apply Proposition \ref{c_Df2v} to $\widehat{\boldsymbol{x}}_{k+1}$ and 
$f^{\widehat n_k+2}(\boldsymbol{x}_k)$.
Hence, by \eqref{eqn_|u_k|},  
one can choose $A_k$ so that 
\begin{equation}\label{eqn_Ak-E}
\|A_k-E\|_{C^r}=\|A_k-E\|_{C^0}=O(\underline{\lambda}_{\mathrm{u}}^{-(n_0+L(k+1))}),
\end{equation}
where $E$ is the unit matrix of order 3.
Here the former equality holds due to the linearity of $A_k$.
Let $\alpha_k:\mathbb{R}^3\longrightarrow \mathbb{R}^3$ be the isometry defined by
$$\alpha_k(\boldsymbol{x})=A_k(\boldsymbol{x}-f^{\widehat{n}_k}(\widehat{\boldsymbol{x}}_k))+f^{\widehat{n}_k}(\widehat{\boldsymbol{x}}_k)+\boldsymbol{u}_k.$$
Then $\alpha_k(l(f^{\widehat{n}_k}(\widehat{\boldsymbol{x}}_k)))+\boldsymbol{u}_k$ is a $C^r$-arc tangent to $F^{\mathrm{cs}}(\widehat{\boldsymbol{y}}_{k+1})$ at 
$\widehat{\boldsymbol{y}}_{k+1}$.
See Figure \ref{f_8_3}, 
where $\omega_k$ denotes the angle between $l(f^{\widehat{n_k}}(\widehat{\boldsymbol{x}}_k))+\boldsymbol{u}_k$ and $T_{\widehat{\boldsymbol{y}}_{k+1}}F^{\mathrm{cs}}(\widehat{\boldsymbol{y}}_{k+1})$ at $\widehat{\boldsymbol{y}}_{k+1}$.
\begin{figure}[hbtp]
\centering
\scalebox{0.6}{\includegraphics[clip]{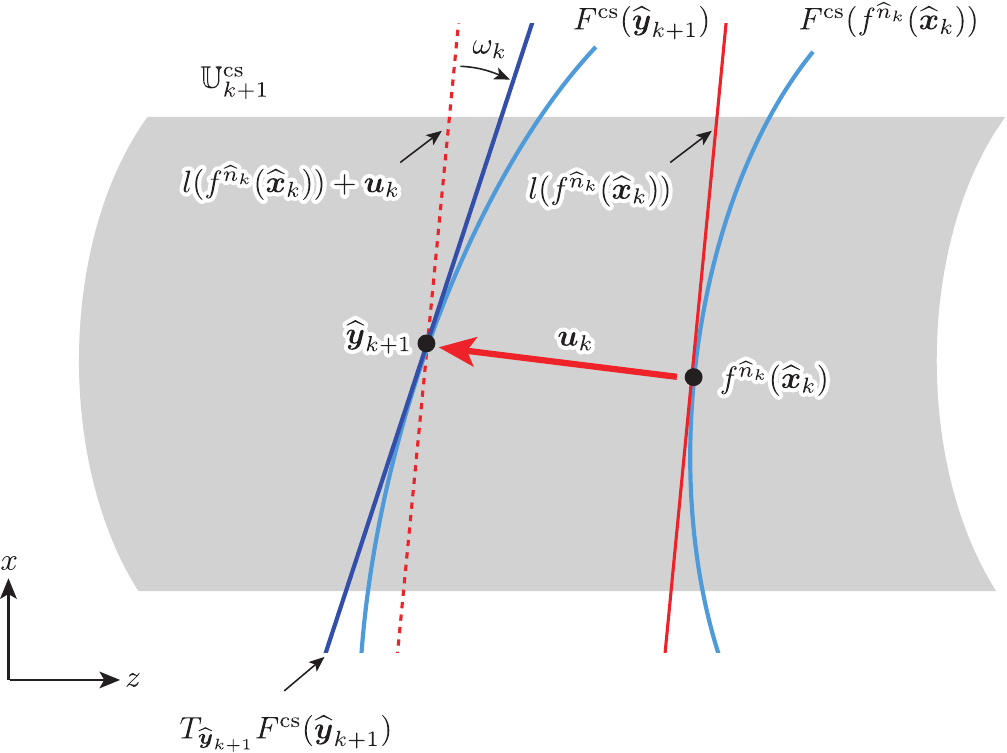}}
\caption{View from the top.}
\label{f_8_3}
\end{figure}
Our situation is similar to that in \cite[Section 7]{KS17}.
Compare the figure here with Figure 7.7 in \cite{KS17}.
Since any isometry on $\mathbb{R}^3$ preserving curvature, 
the tangency of $\alpha_k(l(f^{\widehat{n}_k}(\widehat{\boldsymbol{x}}_k)))$ and $F^{\mathrm{cs}}(\widehat{\boldsymbol{y}}_{k+1})$ at $\widehat{\boldsymbol{y}}_{k+1}$ is quadratic.
Since $\alpha_k(\boldsymbol{x})-\boldsymbol{x}=(A_k-E)(\boldsymbol{x}-f^{\widehat{n}_k}(\widehat{\boldsymbol{x}}_k))+\boldsymbol{u}_k$ on the compact set $\mathbb{B}$, 
we have
\begin{equation}\label{eqn_xiId}
\|(\alpha_k-\mathrm{Id}_{\mathbb{R}^3})|_{\mathbb{B}}\|_{C^r}=O(\underline{\lambda}_{\mathrm{u}}^{-(n_0+L(k+1))}).
\end{equation}

\begin{rmk}\label{r_Df2v}
Suppose that $g$ is a 2-dimensional $C^3$-diffeomorphism with a basic set $\Lambda$ and $\mathcal{F}_g^{\mathrm{s}}$ is 
a stable foliation of $g$ compatible with a locally stable manifold of $\Lambda$.
Then leaves of $\mathcal{F}_g^{\mathrm{s}}$ vary $C^1$ with respect to any transverse direction, for example 
see \cite[Appendix 1, Theorem 8]{PT93} or \cite[Lemma 4.1]{KKY92}.
In \cite{KS17}, the fact is used to get the situation corresponding to our \eqref{eqn_xiId}.
On the other hand, in the 3-dimensional case, we may not expect such a good property of stable foliations.
So we used Proposition \ref{c_Df2v} instead of it.
\end{rmk}

Let $\mathbb{G}^{\mathrm{u}}(\underline{\widetilde w}_k^{(n_0+k)})$ be the closure of the component of 
$\mathbb{B}\setminus (\widetilde{\mathbb{B}}_{k+1}^{\mathrm{u}}\cup \mathbb{B}_{k+1}^{\mathrm{u}})$ such that 
$\mathbb{B}\setminus \mathbb{G}^{\mathrm{u}}(\underline{\widetilde w}_k^{(n_0+k)})$ consists of two components 
and let $\mathbb{C}_k^{\mathrm{cs}}=f^{-2}(\mathbb{G}^{\mathrm{u}}(\underline{\widetilde w}_k^{(n_0+k)}))\cap \mathbb{H}_{\varepsilon_0}$.
By \eqref{abb1}, $\bar\lambda_{\mathrm{u}}=\underline{\lambda}_{\mathrm{u}}+2\varepsilon$ and hence 
$\bar\lambda_{\mathrm{u}}<\underline{\lambda}_{\mathrm{u}}^2$ for any sufficiently small $\varepsilon>0$.
Applying \eqref{eqn_piB} to $\mathbb{C}_k^{\mathrm{cs}}$ instead of $\mathbb{U}_k^{\mathrm{cs}}$, we have
$$
\mathrm{width}(\mathbb{C}_k^{\mathrm{cs}})> C\bar{\lambda}_{\mathrm{u}}^{-(n_0+k-1)}
>C\underline{\lambda}_{\mathrm{u}}^{-2(n_0+k)}
$$
for some constant $C>0$ independent of $k$.
Since $f^{\widehat n_k}(\widehat{\boldsymbol{x}}_k)\in \mathbb{U}_{k+1}^{\mathrm{cs}}\subset f^{-2}(\mathbb{B}_{k+1}^{\mathrm{u}})$ and 
$f^{\widehat n_{k+1}}(\widehat{\boldsymbol{x}}_{k+1})\in \mathbb{U}_{k+2}^{\mathrm{cs}}\subset f^{-2}(\mathbb{B}_{k+2}^{\mathrm{u}})\subset f^{-2}(\widetilde{\mathbb{B}}_{k+1}^{\mathrm{u}})$, 
the segment $\alpha$ in $\mathbb{H}_{\varepsilon_0}$ connecting $f^{\widehat n_k}(\widehat{\boldsymbol{x}}_k)$ with $f^{\widehat n_{k+1}}(\widehat{\boldsymbol{x}}_{k+1})$ 
goes across $\mathbb{C}_k^{\mathrm{cs}}$, see Figure \ref{f_8_4} and also Figure \ref{f_3_3} 
for the placements of $\mathbb{B}_{k+1}^{\mathrm{u}}$, $\widetilde{\mathbb{B}}_{k+1}^{\mathrm{u}}$ and 
$\mathbb{B}_{k+2}^{\mathrm{u}}$.
\begin{figure}[hbtp]
\centering
\scalebox{0.6}{\includegraphics[clip]{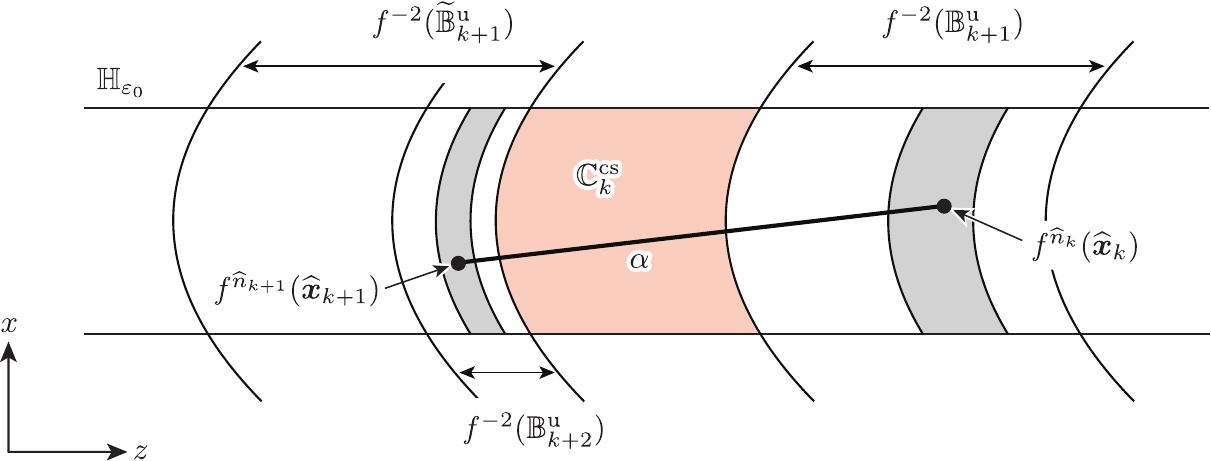}}
\caption{View from the top.
The left-side shaded region represents $\mathbb{U}_{k+2}^{\mathrm{cs}}$ and the right-side does $\mathbb{U}_{k+1}^{\mathrm{cs}}$.}
\label{f_8_4}
\end{figure}
This shows that 
\begin{equation}\label{eqn_wBwB}
\|f^{\widehat n_k}(\widehat{\boldsymbol{x}}_k)-f^{\widehat n_{k+1}}(\widehat{\boldsymbol{x}}_{k+1})\|>C\underline{\lambda}_{\mathrm{u}}^{-2(n_0+k)}.
\end{equation}
Recall that $L$ is the integer given in Section \ref{S_BWS} with $L\geq 4$.
By \eqref{eqn_|u_k|}, there exists a positive integer $k_0$ such that, for any $k\geq k_0$, 
\begin{equation}\label{eqn_root3}
C\underline{\lambda}_{\mathrm{u}}^{-2(n_0+k)}\geq 2\sqrt3 \underline{\lambda}_{\mathrm{u}}^{-(n_0+3k)}>\underline{\lambda}_{\mathrm{u}}^{-(n_0+3k)}\geq 
3\|\boldsymbol{u}_k\|.
\end{equation}
Here `$\sqrt3$' means that the radius of the circumscribed sphere of a cube of edge length $2d$ is $\sqrt3 d$.
We set $f^{\widehat{n}_k}(\widehat{\boldsymbol{x}}_k)=(\widehat x_k,\widehat y_k,\widehat z_k)$, $d_k=\underline{\lambda}_{\mathrm{u}}^{-(n_0+3k)}$
and consider the cube in $\mathbb{B}$ of edge length $2d_k$ defined as
$$\mathbb{D}_k\index{Dk@$\mathbb{D}_k$}=\left[\widehat x_k-d_k,\widehat x_k+d_k\right]\times \left[\widehat y_k-d_k,\widehat y_k+d_k\right]\times \left[\widehat z_k-d_k,\widehat z_k+d_k\right].$$
By \eqref{eqn_wBwB} and 
\eqref{eqn_root3}, we know that $\mathbb{D}_k$ $(k\geq k_0)$ are mutually disjoint.

\subsection{Bump functions for perturbations}\label{ss_bump}

We here prepare bump functions for our perturbations.
Let $\beta$ be  
a non-negative, non-decreasing $C^{r}$ function on $\mathbb{R}$
such that $\beta (x)=0$ if $x\leq -1$ while $\beta(x)=1$ if $x\geq 0$. 
Using it, we define the bump function as   
$$
\beta _{c, J}(x)=\beta \biggl(\frac{x-a}{c|J|}\biggr)+\beta \biggl(-\frac{x-a'}{c|J|}\biggr)-1,
$$
where $c$ is a positive constant and $J=[a, a']$ for any $a, a'\in \mathbb{R}$ with $a<a'$.
Note that $\beta_{c,J}$ is a non-negative function with $\beta_{c,J}(x)=1$ if $x\in J$ and 
the support of which is contained in the $c|J|$-neighborhood of $J$ in $\mathbb{R}$.
From the definition of $\beta_{c,J}$,  
\begin{equation}\label{eqn_|bump|}
\left\|
\beta_{c, J}
\right\|_{C^{r}}\leq 
\frac{1}{(c|J|)^{r}} 
\|
\beta 
\|_{C^{r}}
\end{equation}
if $c|J|\leq 1$.

Let $\boldsymbol{\beta}_k:\mathbb{B}\longrightarrow\mathbb{R}$ be the map defined as
\begin{align*}
\boldsymbol{\beta}_k(\boldsymbol{x})=\beta_{1/2,[\widehat x_k-d_k/2,\widehat x_k+d_k/2]}(x)\,&\beta_{1/2,[\widehat y_k-d_k/2,\widehat y_k+d_k/2]}(y)\,\\
&\hspace{40pt}\times \beta_{1/2,[\widehat z_k-d_k/2,\widehat z_k+d_k/2]}(z)
\end{align*}
for $\boldsymbol{x}=(x,y,z)$, which is the bump function supported on the cube $\mathbb{D}_k$ given in the previous subsection.
Since $d_k=\underline{\lambda}_{\mathrm{u}}^{-(n_0+3k)}$, we have 
by \eqref{eqn_|bump|}   
\begin{equation}\label{eqn_bold_beta}
\|\boldsymbol{\beta}_k\|_{C^r}\leq O\biggl(\biggl(\frac2{d_k}\biggr)^r\biggr)^3=O(\underline{\lambda}_{\mathrm{u}}^{9rk}).
\end{equation}
For any integers $n,a$ with $1\leq n<a$,
we define the sequence of $C^r$-perturbation maps $\psi_{n,a}:M\longrightarrow M$ supported on the disjoint union 
$\bigcup_{k=n}^a\mathbb{D}_k\subset \mathbb{B}$ by
$$\psi_{n,a}(\boldsymbol{x})=\boldsymbol{x}+\sum_{k=n}^a\boldsymbol{\beta}_k(\boldsymbol{x})(\alpha_k(\boldsymbol{x})-\boldsymbol{x})$$
for $\boldsymbol{x}\in \bigcup_{k=1}^a\mathbb{D}_k$.
By \eqref{eqn_|u_k|} and \eqref{eqn_bold_beta},   
\begin{equation}\label{eqn_|beta_u|}
\|\boldsymbol{\beta}_k\|_{C^r}\|\boldsymbol{u}_k\|\leq O(\underline{\lambda}_{\mathrm{u}}^{(9r-L)k}).
\end{equation}

\begin{lem}\label{l_psi_n}
The sequence $\{\psi_{n,a}\}_{a=1}^\infty$ $C^r$-converges as $a\rightarrow \infty$ to the $C^r$-map 
$\psi_n:M\longrightarrow M$ with 
$$\psi_n(\boldsymbol{x})=\boldsymbol{x}+\sum_{k=n}^\infty\boldsymbol{\beta}_k(\boldsymbol{x})(\alpha_k(\boldsymbol{x})-\boldsymbol{x})$$
for $\boldsymbol{x}\in \bigcup_{k=n}^\infty \mathbb{D}_k$ 
if $L>9r$ and $n\geq k_0$.
Moreover $\psi_n$ are $C^r$-diffeomorphisms on $M$ for all sufficiently large $n$ which $C^r$-converges to 
the identity as $n\rightarrow \infty$.
\end{lem}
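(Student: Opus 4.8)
The plan is to reduce everything to a single geometric–decay estimate, exploiting the fact, established just above, that the cubes $\mathbb{D}_k$ with $k\geq k_0$ are pairwise disjoint. Since $\boldsymbol{\beta}_k$ is supported in $\mathbb{D}_k$, for any $n\geq k_0$ and any (finite or infinite) index set $S\subset\{n,n+1,\dots\}$ the map $\boldsymbol{x}\mapsto\sum_{k\in S}\boldsymbol{\beta}_k(\boldsymbol{x})(\alpha_k(\boldsymbol{x})-\boldsymbol{x})$ has $C^r$-norm equal to $\sup_{k\in S}\|\boldsymbol{\beta}_k(\alpha_k-\mathrm{Id})\|_{C^r}$, because at each point of $M$ at most one summand is nonzero. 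Thus it suffices to bound $\|\boldsymbol{\beta}_k(\alpha_k-\mathrm{Id})\|_{C^r}$ and to check that it decays geometrically in $k$ once $L>9r$.

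First I would prove that estimate. By the Leibniz rule there is a constant $C_r>0$ depending only on $r$ and $\dim M=3$ with $\|\boldsymbol{\beta}_k(\alpha_k-\mathrm{Id})\|_{C^r}\leq C_r\,\|\boldsymbol{\beta}_k\|_{C^r}\,\|(\alpha_k-\mathrm{Id})|_{\mathbb{B}}\|_{C^r}$. Combining this with \eqref{eqn_bold_beta} and \eqref{eqn_xiId} (equivalently, splitting $\alpha_k-\mathrm{Id}$ into its linear part, controlled by \eqref{eqn_Ak-E}, and the translation $\boldsymbol{u}_k$, controlled by \eqref{eqn_|beta_u|}) yields $\|\boldsymbol{\beta}_k(\alpha_k-\mathrm{Id})\|_{C^r}=O(\underline{\lambda}_{\mathrm{u}}^{(9r-L)k})$. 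Since $L>9r$ and $\underline{\lambda}_{\mathrm{u}}>2$, the right-hand side tends to $0$ geometrically as $k\to\infty$.

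Next, for $n\geq k_0$ and $a'<a$ one has $\psi_{n,a}-\psi_{n,a'}=\sum_{k=a'+1}^a\boldsymbol{\beta}_k(\alpha_k-\mathrm{Id})$, so by the reduction above $\|\psi_{n,a}-\psi_{n,a'}\|_{C^r}=\max_{a'<k\leq a}\|\boldsymbol{\beta}_k(\alpha_k-\mathrm{Id})\|_{C^r}\to0$ as $a'\to\infty$. Hence $(\psi_{n,a})_{a}$ is Cauchy in the $C^r$-topology and therefore converges to a $C^r$-map $\psi_n:M\to M$; since at each point at most one summand survives, the limit is exactly the map given by the displayed formula for $\psi_n$. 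Letting $n$ vary, the same computation gives $\|\psi_n-\mathrm{Id}\|_{C^r}=\sup_{k\geq n}\|\boldsymbol{\beta}_k(\alpha_k-\mathrm{Id})\|_{C^r}=O(\underline{\lambda}_{\mathrm{u}}^{(9r-L)n})\to0$ as $n\to\infty$, so $\psi_n\to\mathrm{Id}$ in $C^r$.

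Finally, to see that $\psi_n$ is a $C^r$-diffeomorphism for all large $n$, fix $n$ so large that $\|\psi_n-\mathrm{Id}\|_{C^1}<1$. Outside the compact set $\bigcup_{k\geq n}\mathbb{D}_k$, which lies in a fixed compact subset of the coordinate chart, $\psi_n$ is the identity, while on that set the mean value inequality gives $\|\psi_n(\boldsymbol{x})-\psi_n(\boldsymbol{y})-(\boldsymbol{x}-\boldsymbol{y})\|<\|\boldsymbol{x}-\boldsymbol{y}\|$, so $\psi_n$ is injective; moreover $D\psi_n$ is everywhere invertible, so $\psi_n$ is a local diffeomorphism with open image, and being the identity off a compact set it is proper, so its image is also closed and hence all of $M$. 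A bijective local diffeomorphism is a diffeomorphism, whose inverse is $C^r$ by the inverse function theorem. The only genuine care needed is keeping the Leibniz constant $C_r$ uniform in $k$ and verifying that the single geometric factor $\underline{\lambda}_{\mathrm{u}}^{(9r-L)k}$ absorbs all contributions once $L>9r$; the rest is the standard principle that a $C^1$-small perturbation of the identity coinciding with the identity near infinity is a diffeomorphism.
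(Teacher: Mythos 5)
Your proof is correct and follows essentially the same route as the paper: both reduce to the termwise bound $\|\boldsymbol{\beta}_k(\alpha_k-\mathrm{Id})\|_{C^r}=O(\underline{\lambda}_{\mathrm{u}}^{(9r-L)k})$ coming from \eqref{eqn_bold_beta} and \eqref{eqn_xiId}, deduce that $(\psi_{n,a})_a$ is Cauchy in the $C^r$-topology (hence converges to a $C^r$-map), and obtain the diffeomorphism claim from the $C^1$-smallness of $\psi_n-\mathrm{Id}$. Your only variant is a mild sharpening: the paper controls the Cauchy difference by the geometric tail $\sum_{k>a}O(\underline{\lambda}_{\mathrm{u}}^{(9r-L)k})$, whereas you observe that the pairwise disjointness of the cubes $\mathbb{D}_k$ lets one replace the sum by a supremum, and you also spell out the injectivity/properness argument that the paper compresses into the remark that the identity is a diffeomorphism.
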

\begin{proof}
By \eqref{eqn_xiId} and \eqref{eqn_|beta_u|},
\begin{equation}\label{eqn_|psi_psi|}
\|\psi_{n,a}-\psi_{n,b}\|_{C^r}\leq 
O\left(\sum_{k=a+1}^\infty\underline{\lambda}_{\mathrm{u}}^{(9r-L)k}\right)
=O\left((1-\underline{\lambda}_{\mathrm{u}}^{9r-L})^{-1}\underline{\lambda}_{\mathrm{u}}^{(9r-L)(a+1)}\right)
\end{equation}
for any integers $a,b$ with $n\leq a<b$.
This shows that $\{\psi_{n,a}\}_{a=n}^\infty$ is a Cauchy sequence in the space $(\mathrm{Map}^r(M),\|\cdot\|_{C^r})$ 
of $C^r$-maps on $M$, which is a complete metric space.
Thus $\psi_{n,a}$ $C^r$-converges to the $C^r$-mas $\psi_n$ as $a\rightarrow \infty$.
Furthermore, by \eqref{eqn_|psi_psi|}, we know that $\psi_{n}$ $C^r$-converges to the identity as $n\rightarrow\infty$.
Since the identity is a diffeomorphism on $M$, $\psi_n$ is also a diffeomorphism for all sufficiently large $n$.
\end{proof}

This lemma shows that the composition 
\begin{equation}\label{eqn_gfpsi}
g=f\circ \psi_n:M\longrightarrow M
\end{equation}
is a $C^r$-diffeomorphism arbitrarily $C^r$-close to $f$ and hence contained 
in $\mathcal{O}(f_0)$ if $n$ is sufficiently large. 
From the definition of $g$, we know that $\mathcal{F}_g^{\mathrm{s}}=\mathcal{F}_f^{\mathrm{s}}$ and $g^{{\widehat n}_k+2}(\widehat{ \boldsymbol{x}}_k)=\widehat{\boldsymbol{x}}_{k+1}$ if $k\geq n$.
In particular, $(\widehat{\boldsymbol{x}}_k)_{k\geq n}$ is a subsequence of the $g$-orbit $\mathrm{Orb}_g(\widehat{\boldsymbol{x}}_n)$ emanating from $\widehat{\boldsymbol{x}}_n$.

\section{Construction of contracting wandering domains}\label{S_CWD}

\subsection{Quadratic and majority conditions}\label{ss_quad_majority} 
Let $\underline{\widehat w}_{k}=\underline{w}^{(n_0+Lk)}\underline{u}_{k}\underline{\iota}_k\underline{\gamma}^{(m_k)}$ 
be the binary code presented in Lemma \ref{lem-3-1}.
Recall that the length $\widehat{n}_k$ of $\underline{\widehat w}_{k}$ is given by \eqref{eqn_hatn0} and 
$$
|\underline{w}^{(n_0+Lk)}|=n_{0}+Lk=O(k),\ 
|\underline{\iota}_k\underline{\gamma}^{(m_k)}|=|\underline{\iota}_k|+m_{k}=O(k).
$$
As described in Lemma \ref{lem-3-1}-(1), 
the sub-code $\underline{u}_{k}$ of 
$\widehat{\underline{w}}_k$ can be chosen freely.
So we may assume the extra condition, called the \emph{quadratic condition}, that 
the length of $\underline{u}_{k}$ is just  
\begin{equation}\label{eqn_k2}
 |\underline{u}_{k}|=k^{2}.
\end{equation}
Then 
\begin{equation}\label{eqn_|nkhat|}
\widehat n_k=|\widehat{\underline{w}}_k|=n_0+Lk+k^2+|\underline{\iota}_k|+m_k=k^2+O(k).
\end{equation}
This implies that $\widehat n_k$ 
increases subexponentially as $k\rightarrow \infty$.
More precisely, we have the following lemma.

\begin{lem}\label{subexp}
For any $\eta>0$, there is an integer $k_{1}\geq k_0$ such that, for any integer $k\geq k_{0}$, 
\[
\widehat n_{k}<\widehat n_{k+1}<(1+\eta) \widehat n_{k}. 
\]
\end{lem}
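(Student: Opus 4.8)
The plan is to extract everything from the explicit length formula. By \eqref{eqn_hatn0} and the quadratic condition \eqref{eqn_k2}, write $\widehat n_k = k^2 + c_k$ with $c_k := n_0 + Lk + |\underline\iota_k| + m_k$. By Lemmas \ref{lem-3-1} and \ref{l_transf} one has $0 \le |\underline\iota_k| \le \mu_0$ and $0 < m_k \le N_0 + N_1 k$, so $c_k$ is a positive integer and there is a constant $C > 0$, independent of $k$, with $0 < c_k \le C k$ for all $k \ge k_0$. In particular $k^2 \le \widehat n_k \le k^2 + C k$, so $\widehat n_k = k^2(1 + o(1))$ as $k \to \infty$; the lemma will follow once the difference $\widehat n_{k+1} - \widehat n_k = (2k + 1) + (c_{k+1} - c_k)$ is understood.

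The upper bound is the easy half. From $\widehat n_k \ge k^2$ and $\widehat n_{k+1} \le (k+1)^2 + C(k+1)$ one gets
\[
\frac{\widehat n_{k+1}}{\widehat n_k} \le \frac{(k+1)^2 + C(k+1)}{k^2} \longrightarrow 1 \qquad (k \to \infty).
\]
Hence, given $\eta > 0$, one fixes $k_1 \ge k_0$ so large that the right-hand side lies below $1 + \eta$ for every $k \ge k_1$; then $\widehat n_{k+1} < (1 + \eta)\widehat n_k$ for all such $k$. This step uses only the quadratic condition and the bound $m_k = O(k)$, and the threshold here depends solely on $\eta$ and $C$.

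The monotonicity $\widehat n_k < \widehat n_{k+1}$ is the substantive part. Here $c_{k+1} - c_k = L + \bigl(|\underline\iota_{k+1}| - |\underline\iota_k|\bigr) + (m_{k+1} - m_k)$, and the first two summands contribute at most $L + \mu_0$ in absolute value, so everything comes down to bounding $m_{k+1} - m_k$ from below. A crude appeal to $0 < m_k \le N_0 + N_1 k$ only yields $m_{k+1} - m_k \ge 1 - N_0 - N_1 k$, which is far too weak since $N_1$ is large. Instead I would go back to the back-end construction of Sections \ref{S_BTC}--\ref{S_BWS}: $m_k$ is the number of backtracking steps needed to expand the section $\widehat\Sigma_{k+1}^{\mathrm{cs}}$, whose $z$-extent is comparable to $\mathrm{width}(\mathbb{U}_{k+1}^{\mathrm{cs}})$ and hence to $\bar\lambda_{\mathrm{u}}^{-(n_0 + L(k+1))}$ by \eqref{eqn_piB}, out of the fixed backtracking zone, each step multiplying that extent by a factor lying in the fixed compact interval $[\underline\lambda_{\mathrm{cs}1}^{-1}, \bar\lambda_{\mathrm{cs}0}^{-1}] \subset (1, \infty)$. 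Comparing this for the indices $k$ and $k+1$ — the two starting extents differ only by the fixed factor $\bar\lambda_{\mathrm{u}}^{L}$, and the sections $\widehat\Sigma_{k+1}^{\mathrm{cs}}$ and $\widehat\Sigma_{k+2}^{\mathrm{cs}}$ occupy nearby parts of $\mathbb{H}_{\varepsilon_0}$ — one should obtain $m_{k+1} - m_k = O(1)$, whence $c_{k+1} - c_k = O(1)$ and $\widehat n_{k+1} - \widehat n_k = 2k + 1 + O(1) > 0$ for all $k$ beyond some threshold, which we absorb into $k_1$ by enlarging it if necessary.

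The step I expect to be the main obstacle is precisely this last estimate on $m_{k+1} - m_k$: Lemma \ref{l_transf} bounds $m_k$ but says nothing about its increments, so one genuinely has to use the fine geometry of the back-end sequences — in particular that the proportion of steps using $\lambda_{\mathrm{cs}0}^{-1}$ versus $\lambda_{\mathrm{cs}1}^{-1}$ cannot change abruptly from index $k$ to index $k+1$ — in order to prevent $m_k$ from oscillating by an amount of order $k$, which would destroy monotonicity.
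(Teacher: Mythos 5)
Your treatment of the upper bound reproduces the paper's argument precisely: from \eqref{eqn_|nkhat|} one has $\widehat n_k = k^2 + O(k)$, hence $\widehat n_{k+1}/\widehat n_k \to 1$ as $k\to\infty$, and $k_1$ is chosen accordingly. (The lemma almost certainly means ``for any integer $k\ge k_1$''; with ``$k\ge k_0$'' the index $k_1$ plays no role.) On this half you and the paper coincide.

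For the monotonicity $\widehat n_k < \widehat n_{k+1}$, you are right that the crude bound $0 < m_k \le N_0 + N_1 k$ from Lemma \ref{l_transf} yields nothing, since $N_1$ is large (the proof gives $N_1 \ge 3L\log\bar\lambda_{\mathrm{u}}/\log\bar\lambda_{\mathrm{cs} 1}^{-1}>3L\ge 12$, so $2k+1$ cannot absorb a potential drop of order $N_1 k$ in $m_k$). What you could not see is that the paper's entire treatment of this half is the single sentence ``From the definition, $\widehat n_k < \widehat n_{k+1}$'' --- no analysis of $m_{k+1}-m_k$ or $|\underline\iota_{k+1}|-|\underline\iota_k|$ is offered. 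So you have not overlooked an argument that exists in the paper; you have located a point the paper asserts without substantiating. Your proposed repair --- deducing $m_{k+1}-m_k=O(1)$ from the geometry of the back-end sequences --- is the right kind of idea but, as you acknowledge, is not carried through: the sections $\widehat\Sigma_{k+1}^{\mathrm{cs}}$ and $\widehat\Sigma_{k+2}^{\mathrm{cs}}$ lie over the disjoint u-bridges $B_{k+1}^{\mathrm{u}}$ and $B_{k+2}^{\mathrm{u}}$ (both inside $\widetilde B_k^{\mathrm{u}}$, hence within $O(\underline\lambda_{\mathrm{u}}^{-(n_0+k)})$ of each other), and one would need to show that the two backtracking itineraries $\underline\gamma^{(m_k)}$, $\underline\gamma^{(m_{k+1})}$, which are dictated step by step by the $z$-position of the shrinking interval, stay close enough to give a bounded difference in length; nothing in Section \ref{S_BTC}--\ref{S_BWS} does this. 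Note finally that the inequality $\widehat n_{k+i}\ge\widehat n_k$ is used directly in the proof of Theorem \ref{thmWD}, so the monotonicity is not merely decorative.
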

\begin{proof}
From the definition, $\widehat n_{k}<\widehat n_{k+1}$. 
Moreover, it follows from \eqref{eqn_|nkhat|} that  
\[
\frac{\widehat n_{k+1}}{\widehat n_{k}}
=\frac{(k+1)^{2}+O(k+1)}{k^{2}+O(k)}\rightarrow 1\ \text{as $k\rightarrow+\infty$},
\]
and the claim is correct.
\end{proof}

Suppose that $\underline{v}=(v_j)_{j\in\mathbb{Z}}$ is any element of $\{0,1\}^{\mathbb{Z}}$ 
with the majority condition in Definition \ref{m-cnd}, that is, $\liminf\limits_{n\rightarrow \infty}p_n(\underline{v})\geq 
\dfrac12$ holds for the sequence $p_n(\underline{v})$ of \eqref{eqn_pnv}.
So, for any $\eta>0$, there exists an integer $n_*\in\mathbb{N}$ with
\begin{equation}\label{eqn_pn}
p_n(\underline v)>\dfrac12-\frac{\eta}2
\end{equation}
if $n\geq n_*$.

We set $\beta_k=k^2$ for $k\in \mathbb{N}$ and will determine a sequence $(\alpha_k)_{k\geq 1}$ inductively.
Let $\alpha_1=n_0+L$ and suppose that
$\alpha_j$ for $j=1,\dots,k-1$ is already determined.
Fix the free code $\underline u_k$ as 
\begin{equation}\label{eqn_ukvalpha}
\underline u_k=(v_{\alpha_k+1}v_{\alpha_k+2}\dots v_{\alpha_k+\beta_k}),
\end{equation}
which determines $|\underline{\iota}_k|$ by Lemma \ref{lem-3-1} and hence $\widehat n_k$ by \eqref{eqn_|nkhat|}.
Then one can define $\alpha_{k+1}$ as 
\begin{equation}\label{eqn_alpha(k+1)}
\alpha_{k+1}=\sum_{i=1}^k(\widehat n_i+2)+n_0+L(k+1).
\end{equation}
Since 
$$
\alpha_{k+1}-(\alpha_k+\beta_k)=|\underline{\iota}_k|+m_k+2+n_0+L(k+1)=O(k),
$$
the sequences $(\alpha_k)_{k\in\mathbb{N}}$ and $(\beta_k)_{k\in\mathbb{N}}$ 
satisfy (DEI) in Definition \ref{describable}.
See Figure \ref{f_9_1}.
\begin{figure}[hbtp]
\centering
\scalebox{0.6}{\includegraphics[clip]{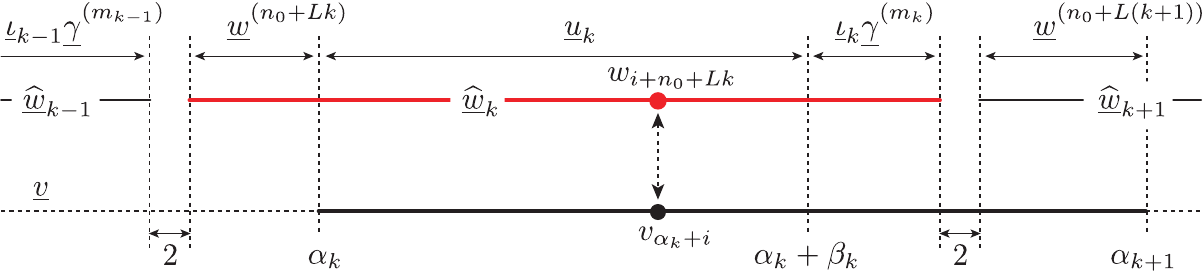}}
\caption{}
\label{f_9_1}
\end{figure}
By \eqref{eqn_|nkhat|} and \eqref{eqn_alpha(k+1)}, 
$$\alpha_{k+1}=\sum_{i=1}^k(i^2+O(i))+O(k+1)=\frac{k(k+1)(2k+1)}6+O(k^2)=\frac13 k^3+O(k^2).$$
If we write $\alpha_{k+1}=n$, then 
\begin{equation}\label{eqn_3n}
(3n)^{2/3}=(k^3+O(k^2))^{2/3}=k^2\left(1+O(k^{-1})\right)^{2/3}.
\end{equation}
This implies that 
\begin{equation}\label{eqn_alpha_alpha}
\begin{split}
\left|(n-(3n)^{2/3})-\alpha_k\right|&=
\left|(\alpha_{k+1}-\alpha_k)-(3n)^{2/3}\right|\\
&=
k^2\left|1+O(k^{-1})-\left(1+O(k^{-1})\right)^{2/3}\right|=O(k).
\end{split}
\end{equation}
We denote the total numbers of $0$ and $1$ entries in the code $\underline{\widehat w}_{k}$ by 
$\widehat n_{k(0)}$ and $\widehat n_{k(1)}$, respectively.
If $\widehat{\underline{w}}_k=(w_1w_2\dots w_{\widehat n_k})$, then 
by \eqref{eqn_ukvalpha}
\begin{equation}\label{eqn_wiva}
w_{i+n_0+Lk}=v_{\alpha_k+i}\quad(i=1,\dots,k^2).
\end{equation}
Since 
\begin{align*}
\#\bigl\{i\,;\,1\leq i\leq \widehat n_k,w_i=0\bigr\}&=\#\bigl\{i\,;\,n_0+Lk+1\leq i\leq n_0+Lk+k^2, 
w_i=0\bigr\}+O(k),\\
\#\bigl\{j\,;\,\alpha_k+1\leq j\leq \alpha_{k+1},v_j=0\bigr\}&=
\#\bigl\{j\,;\,\alpha_k+1\leq j\leq \alpha_{k}+\beta_k,v_j=0\bigr\}+O(k),
\end{align*} 
we have 
$$\left|\,\#\bigl\{i\,;\,1\leq i\leq \widehat n_k,w_i=0\bigr\}-\#\bigl\{j\,;\,\alpha_k+1\leq j\leq \alpha_{k+1},v_j=0\bigr\}\right|=O(k).$$
It follows from this fact together with \eqref{eqn_pn} and \eqref{eqn_alpha_alpha} that  
\begin{align*}
\frac{\widehat n_{k(0)}}{\widehat n_k}&=
\frac{\#\bigl\{i\,;\,1\leq i\leq \widehat n_k,w_i=0\bigr\}}{\widehat n_k}\\
&=
\frac{\#\bigl\{j\,;\,\alpha_k+1\leq j\leq \alpha_{k+1},v_j=0\bigr\}+O(k)}{k^2+O(k)}\\
&=\frac{\#\{j\,;\, n-(3n)^{2/3}< j\leq n, v_j=0\}+O(k)}{k^2+O(k)}\\
&=p_n(\underline v)\frac{(3n)^{2/3}}{k^2+O(k)}+\frac{O(k)}{k^2+O(k)}\\
&=p_n(\underline v)\frac{\left(1+O(k^{-1})\right)^{2/3}}{1+O(k^{-1})}+O(k^{-1})
>\frac12-\eta
\end{align*}
for any sufficiently large $k$.
Since 
$\widehat n_{k}=\widehat n_{k(0)}+\widehat n_{k(1)}$, 
the preceding inequality implies  
\begin{equation}\label{eqn_majority_nk}
\widehat n_{k(1)}<(1+\eta_0)\widehat n_{k(0)}, 
\end{equation}
where $\eta_0=\dfrac{4\eta}{1-\eta}$.

\begin{rmk}\label{r_majority}
The inequality \eqref{eqn_majority_nk} is a sort of majority condition on $\widehat{\underline{w}}_k$,  
which corresponds to the original majority condition $\widehat n_{k(1)}\leq \widehat n_{k(0)}$ in \cite[(4.1b)]{KNS23}.
The inequality \eqref{eqn_majority_nk} is indispensable to show Lemma \ref{lem7.3}, which 
is a key to Theorem \ref{thmWD}. 
To define the condition in \cite{KNS23}, we need some constants associated with $f_0$, 
which are used to determine $\widehat n_k$.
On the other hand, the majority condition in Definition \ref{m-cnd} requires only data of the binary code 
$\underline v$ and independent of the choice of $f_0$.
\end{rmk}

\subsection{Settings for wandering domains}\label{ss_setting_WD}
Suppose that the binary code $\widehat{\underline{w}}_k$ satisfies the conditions given in the previous subsection.
For each integer $k\geq k_1$, we introduce the following notations:
\begin{equation}\label{eqn_xi_zeta}
\xi_{k}\index{xik@$\xi_k$}=\xi_{k,\sigma}=\sigma\biggl(\bar{\lambda}_{\rm u}^{\sum_{i=0}^{\infty} \tfrac{\widehat n_{k+i}}{2^{i}}} \biggr)^{-1}
\quad\text{and}\quad
\rho_{k}\index{rhok@$\rho_k$}=\rho_{k,\sigma}=\sigma^{-1}\xi_k^{\frac12},
\end{equation}
where $\sigma$ is a positive constant independent of $k$ and will be fixed 
in the proof of Theorem \ref{thmWD}.
Then we have 
\begin{equation}\label{eqn_xi_al_xi}
\xi_{k+1}=\sigma^{-1}\bar\lambda_{\mathrm{u}}^{2\widehat n_k}\xi_k^2.
\end{equation}

\begin{lem}\label{lem7.3}
$$
\bar\lambda_{\rm cs0}^{\widehat n_{k(0)}}\bar\lambda_{\rm cs1}^{\widehat n_{k(1)}}\rho_{k}=o(\xi_{k+1}).
$$
\end{lem}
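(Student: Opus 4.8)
The plan is to write out the quotient $\bar\lambda_{\rm cs0}^{\widehat n_{k(0)}}\bar\lambda_{\rm cs1}^{\widehat n_{k(1)}}\rho_k/\xi_{k+1}$ explicitly and to show that it tends to $0$ as $k\to\infty$, which is exactly the meaning of the assertion since all quantities are positive. Abbreviate $S_k=\sum_{i=0}^\infty \widehat n_{k+i}/2^i$, so that $\xi_k=\sigma\bar\lambda_{\rm u}^{-S_k}$ by \eqref{eqn_xi_zeta}. Using $\rho_k=\sigma^{-1}\xi_k^{1/2}$ together with $\xi_{k+1}=\sigma^{-1}\bar\lambda_{\rm u}^{2\widehat n_k}\xi_k^2$ from \eqref{eqn_xi_al_xi}, a direct substitution gives
\[
\frac{\bar\lambda_{\rm cs0}^{\widehat n_{k(0)}}\bar\lambda_{\rm cs1}^{\widehat n_{k(1)}}\rho_k}{\xi_{k+1}}
=\sigma^{-3/2}\,\bar\lambda_{\rm cs0}^{\widehat n_{k(0)}}\bar\lambda_{\rm cs1}^{\widehat n_{k(1)}}\,\bar\lambda_{\rm u}^{\frac32 S_k-2\widehat n_k},
\]
so it suffices to show that the logarithm of the right-hand side diverges to $-\infty$.

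First I would control the $\bar\lambda_{\rm u}$-exponent. Since $(\widehat n_k)$ is increasing, $S_k\ge \widehat n_k\sum_{i\ge0}2^{-i}=2\widehat n_k$, while Lemma \ref{subexp} gives $\widehat n_{k+i}<(1+\eta)^i\widehat n_k$ for all large $k$, hence $S_k<\widehat n_k\sum_{i\ge0}\bigl((1+\eta)/2\bigr)^i=\tfrac{2}{1-\eta}\widehat n_k$. Therefore $\widehat n_k\le\tfrac32 S_k-2\widehat n_k<\tfrac{1+2\eta}{1-\eta}\widehat n_k$; in particular this exponent is positive and comparable to $\widehat n_k$, and since $\bar\lambda_{\rm u}>1$,
\[
\Bigl(\tfrac32 S_k-2\widehat n_k\Bigr)\log\bar\lambda_{\rm u}<\tfrac{1+2\eta}{1-\eta}\,\widehat n_k\log\bar\lambda_{\rm u}.
\]

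For the centre-stable factor I would invoke the majority estimate \eqref{eqn_majority_nk}. From $\widehat n_{k(1)}<(1+\eta_0)\widehat n_{k(0)}$ and $\widehat n_{k(0)}+\widehat n_{k(1)}=\widehat n_k$ one obtains $\widehat n_{k(0)}>\tfrac{1}{2+\eta_0}\widehat n_k$; since $\bar\lambda_{\rm cs0}<\tfrac12<\bar\lambda_{\rm cs1}$ by \eqref{eqn_eigen_v2} we have $\log(\bar\lambda_{\rm cs0}/\bar\lambda_{\rm cs1})<0$, so rewriting $\widehat n_{k(0)}\log\bar\lambda_{\rm cs0}+\widehat n_{k(1)}\log\bar\lambda_{\rm cs1}=\widehat n_{k(0)}\log(\bar\lambda_{\rm cs0}/\bar\lambda_{\rm cs1})+\widehat n_k\log\bar\lambda_{\rm cs1}$ yields
\[
\widehat n_{k(0)}\log\bar\lambda_{\rm cs0}+\widehat n_{k(1)}\log\bar\lambda_{\rm cs1}
<\Bigl(\tfrac{1}{2+\eta_0}\log\bar\lambda_{\rm cs0}+\tfrac{1+\eta_0}{2+\eta_0}\log\bar\lambda_{\rm cs1}\Bigr)\widehat n_k.
\]

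Combining the two bounds, the logarithm of the quotient is at most $-\tfrac32\log\sigma+\Theta(\eta)\,\widehat n_k$, where
\[
\Theta(\eta)=\tfrac{1}{2+\eta_0}\log\bar\lambda_{\rm cs0}+\tfrac{1+\eta_0}{2+\eta_0}\log\bar\lambda_{\rm cs1}+\tfrac{1+2\eta}{1-\eta}\log\bar\lambda_{\rm u}
\]
and $\eta_0=4\eta/(1-\eta)$. As $\eta\to0^+$ one has $\eta_0\to0^+$ and $\Theta(\eta)\to\tfrac12\log\bigl(\bar\lambda_{\rm cs0}\bar\lambda_{\rm cs1}\bar\lambda_{\rm u}^2\bigr)$, which is strictly negative by the partially dissipative condition \eqref{pdc2}. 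Since $\eta$ may be taken as small as we wish — and the estimate \eqref{eqn_majority_nk} was arranged with exactly this in mind — we may assume $\Theta(\eta)<0$; as $\widehat n_k\to\infty$ by Lemma \ref{subexp}, the logarithm of the quotient tends to $-\infty$, hence the quotient tends to $0$, which is the assertion. The only genuine work is the exponent bookkeeping; the heart of the matter — and the point at which the asymmetricity condition of Remark \ref{rmk2} enters, through the near-balanced split of $\widehat n_k$ forced by the majority condition — is that the limit of $\Theta$ is precisely $\tfrac12\log(\bar\lambda_{\rm cs0}\bar\lambda_{\rm cs1}\bar\lambda_{\rm u}^2)$, so that \eqref{pdc2} is exactly the inequality needed to close the estimate.
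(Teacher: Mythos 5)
Your proof is correct and follows essentially the same route as the paper: you reduce the quotient to the same expression $\sigma^{-3/2}\bar\lambda_{\rm cs0}^{\widehat n_{k(0)}}\bar\lambda_{\rm cs1}^{\widehat n_{k(1)}}\bar\lambda_{\rm u}^{\frac32 S_k-2\widehat n_k}$, use Lemma \ref{subexp} to bound the $\bar\lambda_{\rm u}$-exponent by $(1+\eta_1)\widehat n_k$ (your $\frac{1+2\eta}{1-\eta}$ is exactly $1+\eta_1$), invoke the majority estimate \eqref{eqn_majority_nk}, and close with \eqref{pdc2}. The only cosmetic difference is that you pass to logarithms and express the final decay rate as $\Theta(\eta)\widehat n_k$ with $\widehat n_{k(0)}>\frac{1}{2+\eta_0}\widehat n_k$ and $\bar\lambda_{\rm cs0}<\bar\lambda_{\rm cs1}$, whereas the paper stays multiplicative, uses $\bar\lambda_{\rm cs1}\bar\lambda_{\rm u}>1$ to raise the $\widehat n_{k(1)}$-power up to $(1+\eta_0)\widehat n_{k(0)}$, and collapses everything to a quantity less than $1$ raised to the $\widehat n_{k(0)}$; these bounds coincide at the boundary case $\widehat n_{k(1)}=(1+\eta_0)\widehat n_{k(0)}$, so the two proofs are equivalent in substance.
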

\begin{proof}
By Lemma \ref{subexp},  for any $\eta$ with $0<\eta<1$, there exists a positive integer $k_1$ such that 
$\widehat n_{k+i}<(1+\eta)^i\widehat n_{k}$ if $k\geq k_1$ and $i\geq 0$.
Then we have 
$$
\frac{3}{2}\sum_{i=0}^{\infty}\frac{\widehat n_{k+i}}{2^{i}}
\leq 
\frac{3\widehat n_{k}}{2}\sum_{i=0}^{\infty}
\biggl(\frac{1+\eta}{2}\biggr)^{i}
=\frac{3\widehat n_{k}}{1-\eta}=(3+\eta_{1})\widehat n_{k},
$$
where $\eta_{1}=3\eta/(1-\eta)$.
Then, by \eqref{eqn_xi_zeta} and \eqref{eqn_xi_al_xi}, 
\begin{equation}\label{eqn_lambda_lambda}
\begin{split}
\frac{ \bar\lambda_{\rm cs0}^{\widehat n_{k(0)}}\bar\lambda_{\rm cs1}^{\widehat n_{k(1)}}\rho_{k}}
{\xi_{k+1}}
&=
\sigma^{-\frac{3}{2}} \bar\lambda_{\rm cs0}^{\widehat n_{k(0)}}\bar\lambda_{\rm cs1}^{\widehat n_{k(1)}} \bar{\lambda}_{\rm u}^{-2 \widehat n_{k}}
\biggl(\bar{\lambda}_{\rm u}^
{\sum_{i=0}^{\infty}\tfrac{\widehat n_{k+i}}{2^{i}}}
\biggr)^{\frac{3}{2}}\\
&\leq \sigma^{-\frac{3}{2}} \bar\lambda_{\rm cs0}^{\widehat n_{k(0)}}\bar\lambda_{\rm cs1}^{\widehat n_{k(1)}} \bar{\lambda}_{\rm u}^{(1+\eta_1)\widehat n_{k}}.
\end{split}
\end{equation}
Since 
$\bar\lambda_{\rm cs0}\bar\lambda_{\rm cs1}\bar\lambda_{\mathrm{u}}^{2}<1$ 
by \eqref{pdc2}, we have 
$$\bar\lambda_{\rm cs0}\bar\lambda_{\rm cs1}^{(1+\eta_0)}\bar\lambda_{\mathrm{u}}^{(2+\eta_0)(1+\eta_1)}<1$$
if $\eta>0$ is sufficiently small.
On the other hand, since $\bar\lambda_{\rm cs1}\bar\lambda_{\mathrm{u}}>1$ by \eqref{eqn_eigen_v2}, 
the majority condition \eqref{eqn_majority_nk} implies  
$$(\bar\lambda_{\rm cs1}\bar\lambda_{\mathrm{u}}^{(1+\eta_{1})})^{\widehat n_{k(1)}}
< (\bar\lambda_{\rm cs1}\bar\lambda_{\mathrm{u}}^{(1+\eta_{1})})^{(1+\eta_0)\widehat n_{k(0)}}.
$$
Then, by \eqref{eqn_lambda_lambda},   
\begin{align*}
\frac{\bar\lambda_{\rm cs0}^{\widehat n_{k(0)}}\bar\lambda_{\rm cs1}^{\widehat n_{k(1)}}\rho_{k}}{\xi_{k+1}}
&
\leq 
\sigma^{-\frac32} 
\bar\lambda_{\rm cs0}^{\widehat n_{k(0)}}
\bar\lambda_{\rm cs1}^{\widehat n_{k(1)}}
\bar\lambda_{\mathrm{u}}^{(1+\eta_{1})(\widehat n_{k(0)}+\widehat n_{k(1)})}\\
&= 
\sigma^{-\frac32}
(\bar\lambda_{\rm cs0}\bar\lambda_{\mathrm{u}}^{(1+\eta_{1})})^{\widehat n_{k(0)}}(\bar\lambda_{\rm cs1}\bar\lambda_{\mathrm{u}}^{(1+\eta_{1})})^{\widehat n_{k(1)}}\\
&<
\sigma^{-\frac32}
(\bar\lambda_{\rm cs0}\bar\lambda_{\mathrm{u}}^{(1+\eta_{1})})^{\widehat n_{k(0)}}(\bar\lambda_{\rm cs1}\bar\lambda_{\mathrm{u}}^{(1+\eta_{1})})^{(1+\eta_0)\widehat n_{k(0)}}\\
&\leq 
\sigma^{-\frac32}
\bigl(\bar\lambda_{\rm cs0}\bar\lambda_{\rm cs1}^{(1+\eta_0)}\bar\lambda_{\mathrm{u}}^{(2+\eta_0)(1+\eta_{1})}\bigr)^{\widehat n_{k(0)}}\rightarrow 0\quad\text{as $k\rightarrow \infty$}.
\end{align*}
This completes the proof.
\end{proof}

Let $(\widehat{\boldsymbol{x}}_k)_{k\geq 1}$ be the sequence given in Lemma \ref{l_xkinS} and $l(\widehat{\boldsymbol{x}}_k)$ the leaf of $\mathcal{L}_{(\widehat n_k;\infty)}$ containing $\widehat{\boldsymbol{x}}_k$.
Then $l(\widehat{\boldsymbol{x}}_k)$ is an arc in $\mathbb{H}_{\widehat{\underline{w}}_k}$ divided by $\widehat{\boldsymbol{x}}_k$ into sub-arcs $l_0(\widehat{\boldsymbol{x}}_k)$, $l_1(\widehat{\boldsymbol{x}}_k)$.
Since $\widehat{\boldsymbol{x}}_k$ is contained in $S_{\widehat{\underline{w}}_k}^{\mathrm{cs}}$, by 
\eqref{eqn_lambda_max}
there exists a constant $C_0>0$ independent of $k$ such that 
$$\delta_k= \min\bigl\{\mathrm{length}(l_0(\widehat{\boldsymbol{x}}_k)), \mathrm{length}(l_1(\widehat{\boldsymbol{x}}_k))\bigr\}
\geq C_0 \bar\lambda_{\mathrm{u}}^{-\widehat n_k}.$$
Since $\widehat n_{k+i}\geq \widehat n_k$ for any $i\geq 0$, \eqref{eqn_xi_zeta} implies  $\xi_{k}<\sigma\biggl(\bar{\lambda}_{\rm u}^{\sum_{i=0}^{\infty} \tfrac{\widehat n_k}{2^{i}}} \biggr)^{-1}=\sigma\bar\lambda_{\mathrm{u}}^{-2\widehat n_k}$.
So one can assume that 
$\delta_k>\xi_k$ for any $k\geq k_1$.
Let $J_k$\index{Jk@$J_k$} be the sub-arc of $l(\widehat{\boldsymbol{x}}_k)$ with $\widehat{\boldsymbol{x}}_k$ as its center and of length $\xi_k$.
Recall that, for any $\boldsymbol{x}\in J_k$, $F^{\mathrm{s}}(\boldsymbol{x})$ is the leaf of $\mathcal{F}_f^{\mathrm{s}}$ containing $\boldsymbol{x}$.
Let $U_{\rho_k}(\boldsymbol{x})$ be the disk in $F^{\mathrm{s}}(\boldsymbol{x})$ centered at $\boldsymbol{x}$ and of radius $\rho_k$.
Then the union $D_k=D_{k,\sigma}=\bigcup_{\boldsymbol{x}\in J_k}U_{\rho_k}(\boldsymbol{x})$\index{Dk@$D_k$} is a subset of $\mathbb{B}$ in shape of a thin solid cylinder.
See Figure \ref{f_9_2}.
\begin{figure}[hbtp]
\centering
\scalebox{0.6}{\includegraphics[clip]{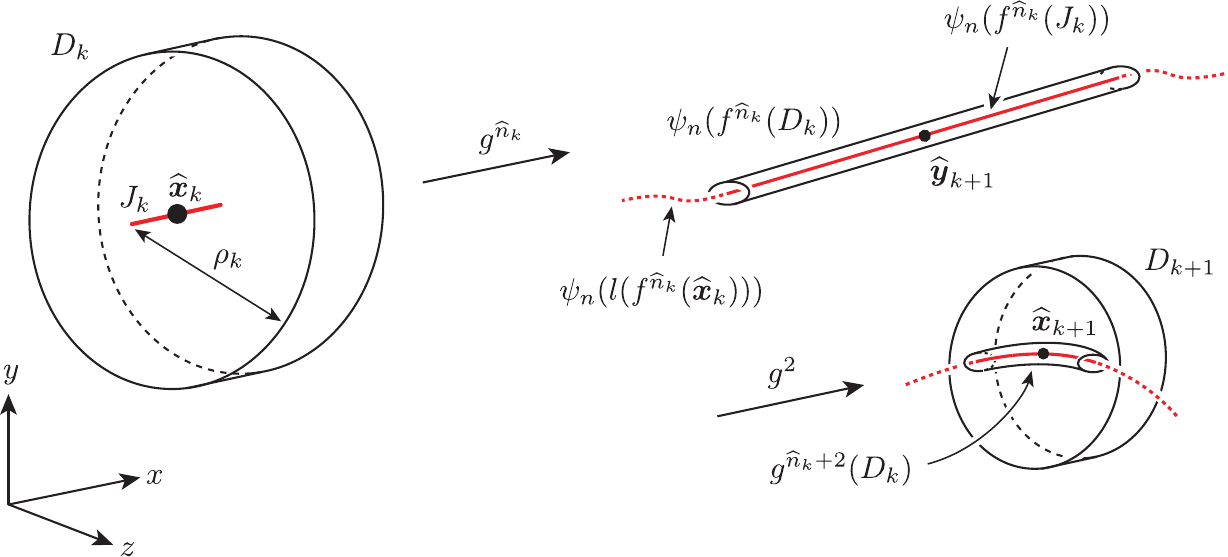}}
\caption{The leaf $l(f^{\widehat{n}_k}(\widehat{\boldsymbol{x}}_k))$ of $\mathcal{L}_{(0;\infty)}$ is slightly distorted by the perturbation $\psi_n$.}
\label{f_9_2}
\end{figure}

The following is the main result of this section.

\begin{thm}\label{thmWD}
Suppose that $g$ is the diffeomorphism of \eqref{eqn_gfpsi}.
Then there exists $\sigma>0$ and an integer $k_2\geq k_1$ such that, 
for every integer $k\geq k_2$, the interior $\mathrm{Int}D_k$ of 
$D_k=D_{k,\sigma}$ is a 
contracting wandering domain for $g$ 
satisfying
\[
g^{\widehat n_{k}+2}(D_{k})\subset \mathrm{Int}D_{k+1}.
\]
\end{thm}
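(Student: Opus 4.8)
The plan is to verify the three requirements in the definition of a contracting wandering domain — disjointness of forward iterates, non-triviality, and that each point is ``contracting'' (so in particular $\omega(D_k,g)$ is controlled), together with the nesting $g^{\widehat n_k+2}(D_k)\subset \mathrm{Int}D_{k+1}$ — by tracking the cylinder $D_k=\bigcup_{\boldsymbol x\in J_k}U_{\rho_k}(\boldsymbol x)$ through the two kinds of iteration it experiences: the ``transit'' iterates $g,\dots,g^{\widehat n_k}$ that push $\widehat{\boldsymbol x}_k$ through the u-bridge block $\mathbb B^{\mathrm u}(\widehat{\underline w}_k)$, and the two ``tangency'' iterates $g^{\widehat n_k+1},g^{\widehat n_k+2}$ near the folded region $\mathbb H_{\varepsilon_0}$, after which $g^{\widehat n_k+2}(\widehat{\boldsymbol x}_k)=\widehat{\boldsymbol x}_{k+1}$ by the construction of $g=f\circ\psi_n$ in Lemma \ref{l_psi_n}.

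First I would analyze the effect of the $\widehat n_k$ transit iterates. The core arc $J_k\subset l(\widehat{\boldsymbol x}_k)$ is adaptable to $\boldsymbol C_\varepsilon^{\mathrm u}$, so under $f$ it is stretched in the unstable direction by roughly $\bar\lambda_{\mathrm u}^{\widehat n_k}$ to $\bar\lambda_{\mathrm u}^{2\widehat n_k}$; by the choice $\xi_k=\sigma(\bar\lambda_{\mathrm u}^{\sum_i \widehat n_{k+i}/2^i})^{-1}$ and the recursion \eqref{eqn_xi_al_xi}, $f^{\widehat n_k}(J_k)$ has length of order $\bar\lambda_{\mathrm u}^{\widehat n_k}\xi_k$, which — after the two parabolic folding iterates near $\mathbb H_{\varepsilon_0}$ (where one direction is halved in dimension and the quadratic tangency produces a new length comparable to the square root of the incoming transverse size) — must be made to fit inside $J_{k+1}$ of length $\xi_{k+1}$; this is exactly the role of \eqref{eqn_xi_al_xi} and of the freedom \eqref{eqn_xi_zeta}, so I would show $f^{\widehat n_k+2}(J_k)\subset J_{k+1}$ with a definite margin once $\sigma$ is taken suitably and $k\ge k_2$. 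In the stable (cs and ss) directions the disks $U_{\rho_k}(\boldsymbol x)$ contract: along the $\mathrm{ss}$-direction by at most $\bar\lambda_{\mathrm{ss}}^{\widehat n_k}$, and along the $\mathrm{cs}$-direction by at most $\bar\lambda_{\mathrm{cs}0}^{\widehat n_{k(0)}}\bar\lambda_{\mathrm{cs}1}^{\widehat n_{k(1)}}$, since the code $\widehat{\underline w}_k$ records exactly how many times the orbit visits $\mathbb V_0$ versus $\mathbb V_1$. The key point is then Lemma \ref{lem7.3}: $\bar\lambda_{\mathrm{cs}0}^{\widehat n_{k(0)}}\bar\lambda_{\mathrm{cs}1}^{\widehat n_{k(1)}}\rho_k=o(\xi_{k+1})$, together with $\rho_{k+1}=\sigma^{-1}\xi_{k+1}^{1/2}$, which guarantees that the contracted transverse radius stays well below $\rho_{k+1}$, so the image cylinder lies strictly inside $D_{k+1}$. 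Combining the $\mathrm u$-direction containment $f^{\widehat n_k+2}(J_k)\subset\operatorname{Int}J_{k+1}$ with the transverse-direction containment and using $g^{\widehat n_k+2}=f^{\widehat n_k+2}$ on $D_k$ (the perturbation $\psi_n$ only acts near $f^{\widehat n_k}(\widehat{\boldsymbol x}_k)$ and is absorbed by the estimates \eqref{eqn_xiId}, \eqref{eqn_|u_k|}) yields $g^{\widehat n_k+2}(D_k)\subset\operatorname{Int}D_{k+1}$.

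Next I would deduce the wandering property. From the nesting, the forward $g$-orbit of $D_k$ enters $D_{k+1},D_{k+2},\dots$, and the intermediate iterates $g^j(D_k)$ for $0<j<\widehat n_k+2$ lie in distinct u-bridge sub-blocks (read off from the prefixes of $\widehat{\underline w}_k$) or in $\mathbb H_{\varepsilon_0}$, which are pairwise disjoint from the blocks containing later $D_{k'}$ because the cubes $\mathbb D_k$ are disjoint \eqref{eqn_wBwB}–\eqref{eqn_root3} and the sub-blocks shrink geometrically; so a combinatorial bookkeeping argument gives $g^i(D_k)\cap g^j(D_k)=\emptyset$ for $i\neq j\ge 0$, i.e.\ condition (1) of the definition of wandering domain. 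For non-triviality, I would note that every point of $D_k$ has its orbit tracking $(\widehat{\boldsymbol x}_m)_{m\ge k}$, and since $\widehat{\boldsymbol x}_m\in S^{\mathrm{cs}}_{\widehat{\underline w}_m}$ with the codes $\widehat{\underline w}_m$ containing arbitrarily long freely-chosen sub-codes $\underline u_m$ (of length $m^2$ by \eqref{eqn_k2}), the $\omega$-limit set meets the full blender-horseshoe $\Lambda_g$ in more than a single periodic orbit; in fact, using the distances \eqref{eqn_wBwB}, one sees directly that $\omega(D_k,g)$ is an infinite set, giving the strong form of non-triviality mentioned after the definition. Finally, ``contracting'' means precisely that the transverse diameters $\to 0$ along the orbit, which is what the $\mathrm{ss}$- and $\mathrm{cs}$-contraction estimates plus $\rho_k\to 0$ establish; it also forces $\operatorname{diam}g^i(D_k)\to 0$, so $\omega(x,g)$ is the same set for all $x\in D_k$.

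The main obstacle I anticipate is the $\mathrm u$-direction bookkeeping at the fold: one must show $f^{\widehat n_k+2}(J_k)$ lands inside $J_{k+1}$, not merely inside $l(\widehat{\boldsymbol x}_{k+1})$, and with room to spare uniformly in $k$. This requires carefully pairing the exponential stretching $\bar\lambda_{\mathrm u}^{2\widehat n_k}$ across the transit with the square-root contraction of the quadratic tangency, controlling the distortion of $\psi_n$ via \eqref{eqn_xiId} and the $O(\underline\lambda_{\mathrm u}^{-(n_0+L(k+1))})$ error in Lemma \ref{l_xkinS}, and then solving the resulting recursion — which is exactly why $\xi_k$ was defined with the weighted sum $\sum_i \widehat n_{k+i}/2^i$ and why the subexponential growth $\widehat n_{k+1}<(1+\eta)\widehat n_k$ of Lemma \ref{subexp} is needed to make that sum converge. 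Choosing the single constant $\sigma$ so that all these inequalities hold simultaneously for $k\ge k_2$ is the delicate step; once $\sigma$ is fixed, the transverse estimates follow comfortably from Lemma \ref{lem7.3}.
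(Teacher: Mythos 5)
Your plan follows the paper's proof of Theorem~\ref{thmWD} closely: control the $\pi_f^{\mathrm u}$-size of $g^{\widehat n_k+2}(J_k)$ via the quadratic fold and the recursion $\xi_{k+1}=\sigma^{-1}\bar\lambda_{\mathrm u}^{2\widehat n_k}\xi_k^2$ of \eqref{eqn_xi_al_xi}, control the transverse disks via Lemma~\ref{lem7.3} and $\rho_{k+1}=\sigma^{-1}\xi_{k+1}^{1/2}$, then shrink $\sigma$ once to make both fits hold uniformly for $k\ge k_2$. Two places in your write-up, however, say something genuinely wrong and would derail a careful reader.

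First, the assertion ``$g^{\widehat n_k+2}=f^{\widehat n_k+2}$ on $D_k$, the perturbation $\psi_n$ \ldots is absorbed by the estimates \eqref{eqn_xiId}, \eqref{eqn_|u_k|}'' misrepresents the role of $\psi_n$. The translation part $\boldsymbol u_k$ of $\alpha_k$ has size $O(\underline\lambda_{\mathrm u}^{-(n_0+L(k+1))})$, which is \emph{enormously larger} than $\xi_{k+1}\sim\bar\lambda_{\mathrm u}^{-2\widehat n_{k+1}}\sim\bar\lambda_{\mathrm u}^{-2(k+1)^2}$ for large $k$, so $f^{\widehat n_k+2}(\widehat{\boldsymbol x}_k)$ misses $D_{k+1}$ completely. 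The perturbation is not a small error to be absorbed; it is precisely what recenters the orbit, giving $g^{\widehat n_k+2}(\widehat{\boldsymbol x}_k)=\widehat{\boldsymbol x}_{k+1}$ exactly. What the $C^r$-smallness in \eqref{eqn_xiId}--\eqref{eqn_|u_k|} absorbs is only the \emph{distortion}, i.e.\ the effect of $\alpha_k$ on curvature and derivative bounds, which is why the paper's Proposition~\ref{p_kappa_fn} still applies to $g^{\widehat n_k+2}(J_k)$. Second, ``$f^{\widehat n_k+2}(J_k)\subset\operatorname{Int}J_{k+1}$'' cannot literally hold: $J_{k+1}$ is an arc adaptable to $\boldsymbol C_\varepsilon^{\mathrm u}$, whereas $g^{\widehat n_k+2}(J_k)$ is the folded image, quadratically tangent to the stable leaf $F^{\mathrm s}(\widehat{\boldsymbol x}_{k+1})$ at $\widehat{\boldsymbol x}_{k+1}$ and hence essentially transverse to $J_{k+1}$. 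What the paper actually proves (and what you need) is the projection estimate $|\pi_f^{\mathrm u}(g^{\widehat n_k+2}(J_k))|<\xi_{k+1}/3$ coupled with $\operatorname{diam}(\pi_{yz}(g^{\widehat n_k+2}(J_k)))<\rho_{k+1}/3$; together with the $o(\xi_{k+1})$-bound on the images of the stable disks, these force $g^{\widehat n_k+2}(D_k)\subset\operatorname{Int}D_{k+1}$. Your ``square root'' phrasing at the fold is also backwards for the $\pi_f^{\mathrm u}$-projection, which scales as the \emph{square} of the incoming arc length; the square-root relation is $\rho_{k+1}\sim\xi_{k+1}^{1/2}$, which is what makes the transverse fit and the $\mathrm u$-fit compatible. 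With these corrections, the rest of your plan (including the verification of disjointness and non-triviality, which the paper largely leaves to the reader) is sound.
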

\begin{proof}
By \eqref{eqn_lambda_maxS} and \eqref{eqn_lambda_maxc}, for any leaf $F$ of $\mathcal{F}_f^{\mathrm{s}}|_{\mathbb{V}_{i,g}}$ $(i=0,1)$ and 
any $\boldsymbol{v}\in T_{\boldsymbol{x}}F$ with $\boldsymbol{x}\in F$,
$$\|Df(\boldsymbol{x})\boldsymbol{v}\|\leq \bar\lambda_{\mathrm{cs} i}\|\boldsymbol{v}\|.$$
By Lemma \ref{lem7.3}, $\mathrm{diam}(g^{\widehat n_k}(U_{\xi_k}(\boldsymbol{x})))=o(\xi_{k+1})$ and hence
\begin{equation}\label{eqn_diam_g}
\mathrm{diam}(g^{\widehat n_k+2}(U_{\xi_k}(\boldsymbol{x})))=o(\xi_{k+1}).
\end{equation}
By \eqref{eqn_lambda_max}, 
$\mathrm{length}(g^{\widehat n_k+2}(J_k))<C_1\bar\lambda_{\mathrm{u}}^{\widehat n_k}\xi_k$ for some constant $C_1>0$.
Since $g^{\widehat n_k}(J_k)$ is quadratically tangent to a leaf of $\mathcal{F}_f^{\mathrm{cs}}$ at $g^{\widehat n_k}(\widehat{ \boldsymbol{x}}_k)$, 
$g^{\widehat n_k+2}(J_k)$ is so to $F^{\mathrm{s}}(\widehat{\boldsymbol{x}}_{k+1})$ at $\widehat{\boldsymbol{x}}_{k+1}$.
By this fact 
together with \eqref{eqn_xi_al_xi}, there exists a constant $C_2>0$ independent of $k$ such that
$$|\pi_f^{\mathrm{u}}(g^{\widehat n_k+2}(J_k))|\leq C_1^2C_2\bar\lambda_{\mathrm{u}}^{2\widehat n_k}\xi_k^2=\sigma C_1^2C_2\xi_{k+1}.$$
In fact, Propositions \ref{p_curvature_plane} and \ref{p_kappa_fn} 
in Appendix \ref{Ap_curvature} imply that $C_2$ depends only on the constants $a_1$, $a_4$ given in \eqref{eqn_tang}.
Hence one can choose $\sigma>0$ sufficiently small so that  
\begin{equation}\label{eqn_pi_gJ}
|\pi_f^{\mathrm{u}}(g^{\widehat n_k+2}(J_k))|<\frac{\xi_{k+1}}3
\end{equation}
holds.
It follows from \eqref{eqn_diam_g} and \eqref{eqn_pi_gJ} that
$\pi_f^{\mathrm{u}}(g^{\widehat n_k+2}(D_k))\subset \mathrm{Int}\, \pi_f^{\mathrm{u}}(D_{k+1})$.
Again, by using the fact that $\mathrm{length}(g^{\widehat n_k+2}(J_k))<C_1\bar\lambda_{\mathrm{u}}^{\widehat n_k}\xi_k$, 
we have a constant $C_3>0$ independent of $k$ such that
$\mathrm{diam}(\pi_{yz}(g^{\widehat n_k+2}(J_k)))\leq C_3\bar\lambda_{\mathrm{u}}^{\widehat n_k}\xi_k$.
Since $\rho_{k+1}=\sigma^{-1}\xi_{k+1}^{1/2}=\sigma^{-3/2}\bar\lambda_{\mathrm{u}}^{\widehat n_k}\xi_k$, 
we may assume that $\mathrm{diam}(\pi_{yz}(g^{\widehat n_k+2}(J_k)))<\rho_{k+1}/3$ 
if necessary replacing $\sigma$ by a smaller positive number.
See Figure \ref{f_9_3}.
\begin{figure}[hbtp]
\centering
\scalebox{0.6}{\includegraphics[clip]{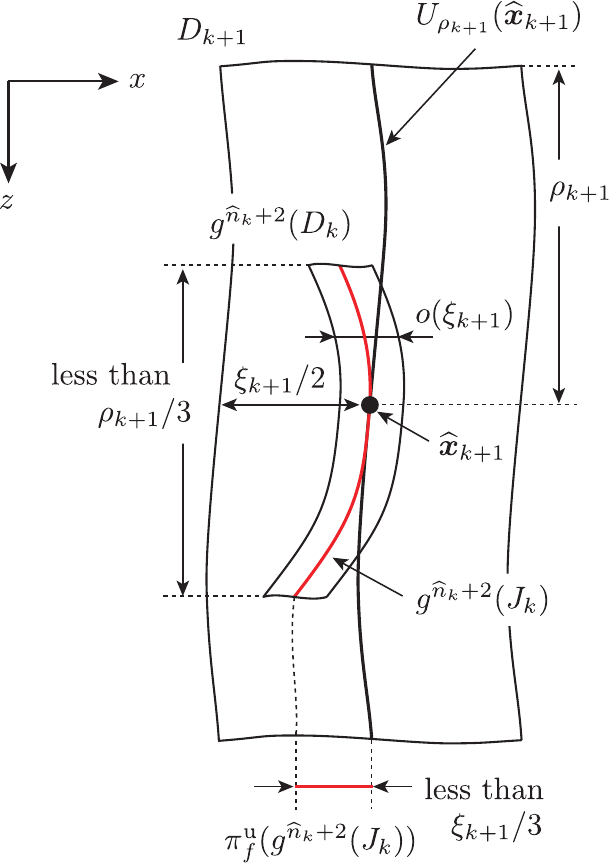}}
\caption{View from the top.}
\label{f_9_3}
\end{figure}
Hence, $\pi_{yz}(g^{\widehat n_k+2}(D_k))$ is contained in $\mathrm{Int}\pi_{yz}(D_{k+1})$.
This implies $g^{\widehat n_k+2}(D_k)\subset \mathrm{Int}D_{k+1}$ and 
completes the proof.
\end{proof}

\section{Proof of Theorem \ref{mainthm}} \label{prThmB}
By using arguments in the previous sections, we will prove Theorem  \ref{mainthm}.

\begin{proof}[Proof of Theorem \ref{mainthm}]
Recall that $\Sigma^{\prime}$ is the subset of $\{0,1\}^{\mathbb{Z}}$ consisting of elements 
with the majority condition. 
We first discuss the 3-dimensional diffeomorphism $g$ defined in \eqref{eqn_gfpsi}.
From Subsection \ref{ss_quad_majority} and Theorem \ref{thmWD}, 
we already have sequences of integer intervals 
satisfying (DEI) and wandering domains  
satisfying (OCD) in Definition \ref{describable}. That is, 
$f$ is  $\Sigma^{\prime}$-describable. 
Thus it follows immediately from Theorem \ref{p-lemma} that  
$f$ is pluripotent for $\Lambda_{f}^{\prime}$, where $\Lambda_{f}^{\prime}=\mathcal{I}^{-1}_{f}(\Sigma^{\prime})$.

Now we consider the case of $\dim M=n\geq 4$.
Then $M$ has a coordinate neighborhood identified with $(-1,2)^n$ the coordinate of which 
is represented as
$$\boldsymbol{x}=(x,y,z,y_1,y_2,\dots,y_{n-3}).$$
We set $\widetilde{\mathbb{B}}=I_{\varepsilon_0}^n$, $\widetilde{\mathbb{V}}_i=\mathbb{V}_i\times 
I_{\varepsilon_0}^{n-3}$ $(i=0,1)$ and $\widetilde{\mathbb{H}}_{\varepsilon_0}=\mathbb{H}_{\varepsilon_0}\times 
I_{\varepsilon_0}^{n-3}$.
Let $f_0:(-1,2)^3\longrightarrow (-1,2)^3$ is a $C^r$-diffeomorphism satisfying 
\eqref{eqn_f0V} and \eqref{eqn_tang}.
We define a $C^r$-diffeomorphism $\widetilde f_0:M\longrightarrow M$ extending $f_0|_{\mathbb{B}}$ and 
satisfying the following conditions.
\begin{align*}
&(1)\quad \widetilde f_0(\boldsymbol{x})=\bigl(f_0(x,y,z),\lambda_{\mathrm{ss}}y_1,\lambda_{\mathrm{ss}}y_2,\dots,
\lambda_{\mathrm{ss}}y_{n-3}\bigr)\quad \text{for $\boldsymbol{x}\in \widetilde{\mathbb{V}}_0\cup \widetilde{\mathbb{V}}_1$}.\\
&(2)\quad \widetilde f_0^2(\boldsymbol{x})=\bigl(f_0^2(x,y,z),2^{-1}y_1,2^{-1}y_2,\dots,2^{-1}y_{n-3}\bigr)\quad \text{for $\boldsymbol{x}\in \widetilde{\mathbb{H}}_{\varepsilon_0}$}.
\end{align*}
For any element $\widetilde{f}$ of $\mathrm{Diff}^r(M)$ contained in a sufficiently small 
neighborhood of $\widetilde{f}_0$, there exist a stable foliation $\mathcal{F}_{\widetilde f}^{\mathrm{s}}$ on 
$\widetilde{\mathbb{B}}$ satisfying the conditions corresponding to \eqref{F1}--\eqref{F3} in 
Subsection \ref{ss_cone_foliation} and an $\widetilde f$-invariant 1-dimensional foliation  $\widetilde{\mathcal{L}}_{(k;\infty)}$ on $\widetilde{\mathbb{H}}_{\,[k]}
=\widetilde f^{-k}(\widetilde{\mathbb{H}}_{\varepsilon_0})\cap \widetilde{\mathbb B}$
defined as in Subsection \ref{ss_1-dim}.
Then we have a leaf $\widetilde J_k$ of $\widetilde{\mathcal{L}}_{(k;\infty)}$ corresponding to $J_k$ in 
Subsection \ref{ss_setting_WD} and the $n$-dimensional cylinder $\widetilde{D}_k=\bigcup_{{\boldsymbol x}\in \widetilde J_k}\widetilde U_{\rho_k}({\boldsymbol x})$, 
where $\widetilde U_{\rho_k}({\boldsymbol x})$ is the $\rho_k$-neighborhood of ${\boldsymbol x}$ 
in the leaf of $\mathcal{F}_{\widetilde f}^{\mathrm{s}}$ containing $\widetilde{\boldsymbol x}$.
Note that $\widetilde U_{\rho_k}({\boldsymbol x})$ is an $(n-1)$-dimensional disk centered at ${\boldsymbol x}$.
By applying arguments in the proof of Theorem \ref{thmWD}, one can show 
that there exist an element $\widetilde g$ of $\mathrm{Diff}^r(M)$ arbitrarily $C^r$-close to $\widetilde f$ and a positive integer $k_0$ 
satisfying (OCD) in Definition \ref{describable}.
Thus, as in the 3-dimensional case discussed above,  
$\widetilde f$ is proved to be strongly pluripotent for $\Lambda_{\widetilde f}'$.
\end{proof}

\section{Proofs of Theorems \ref{thm_new} and \ref{DrcHst}}\label{S_Proof_thms}
\begin{proof}[Proof of Theorem \ref{thm_new}]
Here we work under the notations and conditions in Subsection  \ref{ss_quad_majority}.
Recall that $(\alpha_k)_{k\geq 1}$ is the increasing sequence of positive integers given in 
the proof of Lemma \ref{subexp}
and $\beta_k=k^2$.
We denote by $\gamma_k$ the greatest integer with $2\gamma_k\leq k^2$ for any $k\geq 2$.
For any positive integer $q$, we set $\mathbb{I}_k^{(q)}=\mathbb{I}_k^{(q)-}\sqcup \mathbb{I}_k^{(q)+}$, where
$$\mathbb{I}_k^{(q)-}= [\alpha_k+q,\alpha_k+\gamma_k-q]\cap 
\mathbb{Z}, \quad\mathbb{I}_k^{(q)+}=[\alpha_k+\gamma_k+q,\alpha_k+2\gamma_k-q]\cap 
\mathbb{Z}$$ 
if $q\leq \gamma_k/2$ and otherwise $\mathbb{I}_k^{(q)\pm}=\emptyset$.
For any integer $N\geq \alpha_1+\beta_1+1$, let $k_N$ be the greatest integer with 
$\alpha_{k_N}+\beta_{k_N}\leq N-1$.
By \eqref{eqn_|nkhat|}, for any sufficiently small $\varepsilon >0$, there exists an integer $N_0=N_0(\varepsilon,q)>0$ 
such that
\begin{equation}\label{eqn_I(q)N}
\begin{split}
\frac{\#\left\{ 0\le n \le N-1 \, ;\,  n \in \bigcup_{k=1}^{k_N} \mathbb{I}_k^{\,(q)}\right\}}{N}
&\geq 
\frac{\sum_{k=1}^{k_{N}}(k^2-4q+1)}{ \sum_{k=1}^{k_N+1}(k^2+O(k))}\\
&=\frac{2k_N^{3}/6+O(k_N^{2})}{2k_N^{3}/6+O(k_N^{2})}>1-\varepsilon
\end{split}
\end{equation}
for any $N\geq N_0$.
This implies that  
\begin{equation}\label{eqn_Nepsilon}
\#\Bigl\{[0,N-1]\cap \mathbb{Z}\setminus \bigcup _{k=1}^{k_N} \mathbb{I}_k^{\,(q)}\Bigr\}<N\varepsilon
\quad\text{if}\quad N\geq N_0.
\end{equation}

Let $z$ be any element of $\Lambda_f$ such that the binary code
$$\mathcal{I}(z)=\underline t=(\ \dots t_{-2}t_{-1}t_0t_1t_2\dots\ )$$
satisfies the following conditions for any $k\geq 2$.
\begin{itemize}
\item
$t_i=0$ for any $i$ with $\alpha_k+1\leq i\leq \alpha_k+\gamma_k$, 
\item 
$t_i=1$ for any $i$ with $\alpha_k+\gamma_k+1\leq i\leq \alpha_k+\beta_k$. 
\end{itemize}
From the definition of $\underline t$ together with \eqref{eqn_Nepsilon}, $\liminf\limits_{n\to\infty}p_n(\underline t)=1/2$, 
and hence $z\in \Lambda_f^{(\mathrm{mj})}$.
Then, by Theorem \ref{mainthm}, there exist an element $g$ of $\mathrm{Diff}^r(M)$ arbitrarily $C^r$-close to 
$f$ and a contracting wandering domain $D$ of $g$ satisfying the following equation.
\begin{equation}\label{eqn_g(z)}
\lim_{n\rightarrow \infty} \frac1{n}\sum_{j=0}^{n-1}\sup_{y\in D}\bigl\{\mathrm{dist}(g^j(y),g^j(z_g))\bigr\}=0,
\end{equation}
where $z_g\in \Lambda_g'$ is the continuation of $z$.

Next we consider another element $x$ of $\Lambda_f$ such that the binary code 
$$\mathcal{I}(x)=\underline v=(\ \dots v_{-2}v_{-1}v_0v_1v_2\dots\ )$$
satisfies the following conditions for any $k\geq 2$.
\begin{itemize}
\item
$v_i=1$ for any $i$ with $\alpha_k+1\leq i\leq \alpha_k+\gamma_k$,
\item 
$v_i=0$ for any $i$ with $\alpha_k+\gamma_k+1\leq i\leq \alpha_k+\beta_k$.
\end{itemize}
Then the continuation $x_g$ of $x$ is also an element of $\Lambda_g'$.

Note that $P_g=\mathcal{I}_g^{-1}(\ \dots 000\dots\ )$, $Q_g=\mathcal{I}_g^{-1}(\ \dots 111\dots\ )$ 
are the fixed points of $g$.
As in the proof of Theorem \ref{p-lemma}, one can choose $q$ so that the following 
condition holds for any positive integer $k$ with $k^2\geq 4q$ and $j\in \mathbb{I}_k^{(q)-}$.
\begin{itemize}
\item
$\{g^j(z_g),g^{j+\gamma_k}(x_g)\}$ and $\{g^{j+\gamma_k}(z_g),g^{j}(x_g)\}$ are contained in the $\varepsilon$-neighborhoods 
of $P_g$ and $Q_g$ in $M$ respectively.
\end{itemize}
In particular, we have 
\begin{align}
&\mathrm{dist}(g^j(z_g), g^{j+\gamma_k}(x_g))<2\varepsilon\quad\text{and}
\quad\mathrm{dist}(g^{j+\gamma_k}(z_g), g^{j}(x_g)) <2\varepsilon,\label{eqn_2e}\\
&\mathrm{dist}(g^j(z_g), g^{j}(x_g))>L-2\varepsilon\quad\text{and}
\quad\mathrm{dist}(g^{j+\gamma_k}(z_g), g^{j+\gamma_k}(x_g)) >L-2\varepsilon\label{eqn_L-2e}
\end{align}
for any $j\in \mathbb{I}_k^{(q)-}$, 
where $L=\mathrm{dist}(P_g,Q_g)$.

By \eqref{eqn_I(q)N}, \eqref{eqn_g(z)} and \eqref{eqn_L-2e}, 
for any sufficiently large $N\in\mathbb{N}$, 
\begin{align*}
\sum_{j=0}^{N-1}\inf_{y\in D}\mathrm{dist} &(g^j(y),g^j(x_g))\\
&>
\sum_{j\in \bigcup_{k=1}^{k_N} \mathbb{I}_k^{(q)}}
\inf_{y\in D}\mathrm{dist}(g^j(y),g^j(x_g))\\
&\geq 
\sum_{j\in \bigcup_{k=1}^{k_N} \mathbb{I}_k^{(q)}}
\inf_{y\in D}\bigl\{\mathrm{dist}(g^j(z_g),g^j(x_g))-\mathrm{dist}(g^j(z_g),g^j(y))\bigr\}\\
&\geq N(1-\varepsilon)(L-2\varepsilon)-\sum_{i=0}^{N-1}\sup_{y\in D}\{\mathrm{dist}(g^i(y),g^i(z_g))\}\\
&>N(1-\varepsilon)(L-2\varepsilon)-N\varepsilon.
\end{align*}
Since one can choose $\varepsilon$ arbitrarily small, this shows
$$\liminf_{n\rightarrow\infty}\frac1{n}\sum_{j=0}^{n-1}\inf_{y\in D}\mathrm{dist}(g^j(y),g^j(x_g))\geq L.$$

On the other hand, by \eqref{eqn_Nepsilon} and \eqref{eqn_g(z)}, 
for any sufficiently large $N$ and any Lipschitz function $\varphi:M\longrightarrow \mathbb{R}$ 
with $\varphi(M)\subset [-1,1]$ and $\mathrm{Lip}(\varphi)\leq 1$, 
\begin{align*}
&\left|\int_M\varphi\,d\delta_{y,g}^N-\int_M\varphi\,d\delta_{x_g,g}^N\right|
=\frac1{N}\biggl|\sum_{j=0}^{N-1}\bigr(\varphi(g^j(y))-\varphi(g^j(x_g))\bigr)\biggr|\\
&\hspace{70pt}\leq \frac1{N}\biggl|\sum_{j=0}^{N-1}\bigr(\varphi(g^j(z_g))-\varphi(g^j(x_g))\bigr)\biggr|
+\frac1{N}\biggl|\sum_{j=0}^{N-1}\bigr(\varphi(g^j(y))-\varphi(g^j(z_g))\bigr)\biggr|\\
&\hspace{70pt}\leq \frac1{N}\biggl|\sum_{j=0}^{N-1}\bigr(\varphi(g^j(z_g))-\varphi(g^j(x_g))\bigr)\biggr|+\varepsilon.
\end{align*}
Here we divide the total sum $\sum\limits_{j=0}^{N-1}$ into 
$\sum\limits_{j\in \bigcup_{k=1}^{k_N} \mathbb{I}_k^{(q)}}$ and 
$\sum\limits_{j\in [0,N-1]\cap\mathbb{Z}\setminus \bigcup_{k=1}^{k_N} \mathbb{I}_k^{(q)}}$.
By \eqref{eqn_2e}, 
\begin{align*}
\frac1{N}\biggl|& \sum_{j\in \bigcup_{k=1}^{k_N} \mathbb{I}_k^{(q)}}
\bigl(\varphi(g^j(z_g))-\varphi(g^j(x_g))\bigr)\biggr|\\
&=  
\frac1{N} \biggl|\sum_{j\in \bigcup_{k=1}^{k_N}\mathbb{I}_k^{(q)-}}
\Bigl(\varphi(g^j(z_g))-\varphi(g^j(x_g))+\varphi(g^{j+\gamma_k}(z_g))-\varphi(g^{j+\gamma_k}(x_g))\Bigr)\biggr|\\
&<\frac1{N} \sum_{j\in \bigcup_{k=1}^{k_N}\mathbb{I}_k^{(q)-}}
|\varphi(g^j(z_g))-\varphi(g^{j+\gamma_k}(x_g))|\\
&\hspace{70pt}
+\frac1{N} \sum_{j\in \bigcup_{k=1}^{k_N}\mathbb{I}_k^{(q)-}}
|\varphi(g^{j+\gamma_k}(z_g))-\varphi(g^{j}(x_g))|\\
&<\frac1{N}\sum_{j\in \bigcup_{k=1}^{k_N}\mathbb{I}_k^{(q)-}}
\mathrm{dist}(g^j(z_g),g^{j+\gamma_k}(x_g))
+\frac1{N} \sum_{j\in \bigcup_{k=1}^{k_N}\mathbb{I}_k^{(q)-}}
\mathrm{dist}(g^{j+\gamma_k}(z_g),g^{j}(x_g))\\
&<\frac1{N}\frac{N}2 2\varepsilon+\frac1{N}\frac{N}2 2\varepsilon=2\varepsilon.
\end{align*}
Since $\varphi(M)\subset [-1,1]$, $|\varphi(g^j(z_g))-\varphi(g^j(x_g))|\leq 2$.
Hence, by \eqref{eqn_Nepsilon},
$$\frac1{N}\biggl| \sum_{j\in [0,N-1]\cap\mathbb{Z}\setminus \bigcup_{k=1}^{k_N} \mathbb{I}_k^{(q)}}
\bigr(\varphi(g^j(z_g))-\varphi(g^j(x_g))\bigr)\biggr|
\leq \frac2{N}\#\Bigl\{[0,N-1]\cap \mathbb{Z}\setminus \bigcup _{k=1}^{k_N} \mathbb{I}_k^{\,(q)}\Bigr\}<2\varepsilon.
$$
By combining these inequalities, we have 
$$\sup_{y\in D}\left\{\sup_{\varphi} \left|\int_M\varphi\,d\delta_{y,g}^N-\int_M\varphi\,d\delta_{x_g,g}^N\right|\right\}\leq 5\varepsilon$$
for any sufficiently large $N$.
It follows that 
$$\lim_{n\to \infty}\sup_{y\in D}d_W(\delta_{y,g}^n,\delta_{x_g,g}^n)=
\lim_{n\to \infty}\sup_{y\in D}\left\{\sup_{\varphi} \left|\int_M\varphi\,d\delta_{y,g}^n-\int_M\varphi\,d\delta_{x_g,g}^n\right|\right\}=0.$$
This completes the proof.
\end{proof}

\begin{proof}[Proof of Theorem \ref{DrcHst}]
First, we give the proof of  \eqref{DrcHst-1}.
Let us focus on one of the saddle fixed points of  the $n$-dimensional diffeomorphism $\widetilde{g}$ 
given in the proof of Theorem \ref{mainthm}.
Then $\widetilde{g}$ has the saddle fixed point $P_{\widetilde g}$ which is the continuation of 
the saddle fixed point $P_{\widetilde f_0}$ of $\widetilde f_0$ with $P_{\widetilde f_0}=(0,\dots,0)\in 
\mathbb{R}^n$.

Consider a binary code satisfying the conditions in Lemma \ref{lem-3-1} for $\widetilde f$ instead of $f$ 
and the quadratic condition \eqref{eqn_k2},
that is, 
the length of the free part $\underline{u}_{k}$ is equal to $k^{2}$.
The binary code is still presented by $\underline{\widehat{w}}_{k}
=\underline{w}^{(n_{0}+Lk)}\underline{u}_{k}\underline{\iota}_{k}\underline{\gamma}^{(m_{k})}$ for simplicity.
Suppose that a sequence   
constructed from $(\underline{\widehat{w}}_{k})_{k\geq 1}$ as in Lemma \ref{l_xkinS}
 is also denoted by $(\widehat{\boldsymbol{x}}_{k})_{k\geq 1}$.
Now we set the free part $\underline{u}_{k}$ of $\underline{\widehat{w}}_{k}$
 such that the $\widetilde g$-orbit of ${\boldsymbol x}$ accumulates the saddle fixed point $P_{\tilde{g}}$.
 In practice, it should be set up as 
 \[
\underline{u}_{k}=\overbrace{
00\dots 0}^{k^{2}}.
\]
This implies that  $\tilde{g}$  
 has the non-trivial Dirac physical measure supported on 
 the saddle fixed point  $P_{\tilde{g}}$. See \cite[Theorem 5.5]{KNS23} for detail calculations.
 This concludes the proof of \eqref{DrcHst-1}.
 \smallskip
 
 Next,  let us prove  \eqref{DrcHst-2}. To implement historic behavior in 
 every forward orbit starting from a contracting wandering domain $\widetilde{D}$, 
 we have to prepare a code that oscillates between different dynamics in each generation and does not converge on any of them. 
 The easiest way might be the following.

\begin{itemize}
\item (Era condition)
We first consider 
an increasing sequence of integers 
 $(k_{s})_{s\in \mathbb{N}}$ 
such that, 
for every $s\in \mathbb{N}$,
\begin{equation}\label{era-ratio}
\sum_{k=k_{s}}^{k_{s+1}-1} k^{2}>s \sum_{k=k_{2}}^{k_{s}-1} k^{2}.
\end{equation}
\end{itemize}
Note that \eqref{era-ratio} provides the situation 
that the new era from 
$k_{s}$ to $k_{s+1}-1$ is so dominant that 
the old era from $k_{2}$ to $k_{s}-1$ is ignored. 
\begin{itemize}
\item (Code condition for oscillation)
Under the condition \eqref{era-ratio}, for each integer $k\ge k_{2}$, 
let $\underline{u}_{k}=(u_{1}u_{2}\ldots u_{k^{2}})$ be the code the entry of which
 satisfies the following rules: 
 \begin{subequations}
\begin{enumerate}[(1)]
\item 
if $s$ is even and $k_{s}\le k< k_{s+1}$, 
\begin{equation}\label{eqn_h1}
u_{i}=\left\{
\begin{array}{ll}
0 & \text{for}\ i=1,\ldots, \left\lfloor 3k^{2}/4\right\rfloor
\\[3pt]
1 & \text{for}\ i=\left\lfloor 3k^{2}/4\right\rfloor+1,\ldots, k^2
\end{array}\right.
\end{equation} 
that is, 
\[\underline{u}_k=
\overbrace{000\ldots\ldots0}^{\left\lfloor 3k^{2}/4\right\rfloor}
\overbrace{1\ldots 1}^{\left\lceil k^{2}/4\right\rceil},\]
\item
if $s$ is odd and $k_{s}\le k< k_{s+1}$,  
\begin{equation}\label{eqn_h2}
u_{i}=\left\{
\begin{array}{ll}
0 & \text{for}\ i=1,\ldots, \left\lfloor 7k^{2}/8\right\rfloor
\\[3pt]
1 & \text{for}\  i=\left\lfloor 7k^{2}/8\right\rfloor+1,\ldots, k^{2},
\end{array}\right.
\end{equation} 
that is, 
\[\underline{u}_k=
\overbrace{000\ldots\ldots0}^{\left\lfloor 7k^{2}/8\right\rfloor}
\overbrace{1\ldots 1}^{\left\lceil k^{2}/8\right\rceil},\]  
\end{enumerate}
\end{subequations}
where $\left\lfloor\cdot \right\rfloor$ and $\lceil\cdot\rceil$ indicate the 
floor and ceiling functions, respectively. 
\end{itemize}

The practical values of the ratios themselves, such as 3/4 or 7/8, are not meaningful, 
but it is important that they differ from each other according as the eras are even or odd.
Let $\underline{v}=(v_j)$ be any element of $\{0,1\}^{\mathbb{Z}}$ the sub-code 
$(v_j)_{j\geq k_2}$ of which satisfies \eqref{eqn_wiva}, 
see Figure \ref{f_9_1} again.
Note that, by \eqref{eqn_h1} and \eqref{eqn_h2}, $\underline{v}$ satisfies the quadratic condition 
\eqref{eqn_k2} and 
the majority condition  in  Definition \ref{m-cnd}. 
In fact, it follows from the equation $(3n)^{2/3}=k^{2}(1+O(k^{-1}))^{2/3}$ of \eqref{eqn_3n} that 
\[
\liminf_{n\to \infty}p_{n}(\underline{v})=\frac{3}{4}>\frac{1}{2}. 
\]
These facts imply that the open cylinder $\widetilde D=\mathrm{Int}\widetilde D_{k_2}$ given in the proof of Theorem \ref{mainthm} 
is a wandering domain of $\widetilde g$ the forward orbit of which 
has historic behavior. 
See \cite[Theorem 5.1]{KNS23}  for detail calculations. 
 This completes the proof of \eqref{DrcHst-2}.
 \end{proof}

\appendix
\section{Curvatures of leaves of 1 and 2-dimensional foliations}\label{Ap_curvature}

The results presented in this section are rather elementary.
Here we will use fundamental notations and results on differential geometry which are covered in standard textbooks, 
for example see \cite{dC76, Ko21} and so on.
For readers familiar with the differential geometry of curves and surfaces, the assertions below would 
be folklore.

For any $f\in \mathcal{O}(f_0)$, 
let $\ell$ be a $C^r$-arc in $\mathbb{H}_{\,[k]}$ with $k\geq 1$ adaptable to $\boldsymbol{C}_\varepsilon^{\mathrm{u}}$.
Then $\ell$ is parametrized as $\boldsymbol{x}(t)=(t,y(t),z(t))$ $(\alpha<t<\beta)$ with
\begin{equation}\label{eqn_y'}
|y'(t)|=O(\varepsilon),\quad |z'(t)|=O(\varepsilon).
\end{equation}
We denote by $\kappa_\ell(\boldsymbol{x}(t))$ and $\kappa_{f(\ell)}(f(\boldsymbol{x}(t)))$ the curvatures of $\ell$ and 
$f(\ell)$ at $\boldsymbol{x}(t)$ and $f(\boldsymbol{x}(t))$ respectively.
Then we have the following lemma.

\begin{lem}\label{l_l_curvature}
For any $t\in (\alpha,\beta)$, $\kappa_{f(\ell)}(f(\boldsymbol{x}(t)))< \dfrac12\kappa_\ell(\boldsymbol{x}(t))+O(\varepsilon)$.
\end{lem}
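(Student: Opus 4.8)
The plan is to compute the curvature of $f(\ell)$ directly from a parametrization and compare it with that of $\ell$, exploiting that $Df$ is $C^{r-1}$-close to the constant diagonal matrix $Df_0$ with diagonal entries of absolute value $\lambda_{\mathrm u}$ (in the $x$-direction), $\lambda_{\mathrm{ss}}$, $\lambda_{\mathrm{cs}i}$, together with the second-derivative smallness \eqref{eqn_DDf}. The key point is that $\lambda_{\mathrm u}>2$ while $\lambda_{\mathrm{ss}},\lambda_{\mathrm{cs}i}<1$, so $f$ expands strongly in the direction along $\ell$ (which is adaptable to $\boldsymbol C_\varepsilon^{\mathrm u}$) and this forces the curvature to contract by a factor better than $1/2$, up to an $O(\varepsilon)$ error coming from the off-diagonal entries and the variation of $Df$.

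First I would recall the coordinate-free formula for the curvature of a regular space curve: if $\gamma(t)$ is a $C^2$-parametrization then
\[
\kappa(\gamma(t))=\frac{\|\gamma'(t)\times \gamma''(t)\|}{\|\gamma'(t)\|^3}.
\]
Apply this with $\gamma(t)=\boldsymbol x(t)=(t,y(t),z(t))$ for $\ell$ and with $\eta(t)=f(\boldsymbol x(t))$ for $f(\ell)$. By \eqref{eqn_y'} we have $\boldsymbol x'(t)=(1,O(\varepsilon),O(\varepsilon))$, so $\|\boldsymbol x'(t)\|=1+O(\varepsilon)$; differentiating $\eta(t)=f(\boldsymbol x(t))$ gives $\eta'(t)=Df(\boldsymbol x(t))\boldsymbol x'(t)$ and
\[
\eta''(t)=Df(\boldsymbol x(t))\boldsymbol x''(t)+ D^2f(\boldsymbol x(t))\bigl(\boldsymbol x'(t),\boldsymbol x'(t)\bigr).
\]
From the diagonal structure of $Df$ and the $O(\varepsilon)$ bound on its off-diagonal entries, $\eta'(t)$ has $x$-component of absolute value $\lambda_{\mathrm u}+O(\varepsilon)$ and the other two components $O(\varepsilon)$, so $\|\eta'(t)\|=\lambda_{\mathrm u}(1+O(\varepsilon))\geq 2(1+O(\varepsilon))$. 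For the numerator, the bilinear term $D^2f(\boldsymbol x'(t),\boldsymbol x'(t))$ is $O(\varepsilon)$ by \eqref{eqn_DDf} and $\|\boldsymbol x'(t)\|=1+O(\varepsilon)$; hence $\eta''(t)=Df(\boldsymbol x(t))\boldsymbol x''(t)+O(\varepsilon)$, and a short bilinearity estimate for the cross product gives
\[
\|\eta'(t)\times\eta''(t)\|\le \|Df(\boldsymbol x(t))\boldsymbol x'(t)\times Df(\boldsymbol x(t))\boldsymbol x''(t)\|+O(\varepsilon).
\]

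Next I would estimate $\|Df\,\boldsymbol x'\times Df\,\boldsymbol x''\|$. Writing $Df=\operatorname{diag}(\pm\lambda_{\mathrm u},\pm\lambda_{\mathrm{ss}},\lambda_{\mathrm{cs}i})+O(\varepsilon)$ and using the fact that $\boldsymbol x'$ lies in the cone $\boldsymbol C_\varepsilon^{\mathrm u}$ (so it is essentially the $x$-direction), one checks component by component that each entry of $Df\,\boldsymbol x'\times Df\,\boldsymbol x''$ is bounded by $\lambda_{\mathrm u}\max\{\lambda_{\mathrm{ss}},\lambda_{\mathrm{cs}i}\}\,\|\boldsymbol x'\times\boldsymbol x''\|+O(\varepsilon)$; since $\lambda_{\mathrm{ss}},\lambda_{\mathrm{cs}i}<1$ this is $\le \lambda_{\mathrm u}\|\boldsymbol x'\times\boldsymbol x''\|+O(\varepsilon)$. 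Combining the numerator and denominator estimates,
\[
\kappa_{f(\ell)}(f(\boldsymbol x(t)))=\frac{\|\eta'\times\eta''\|}{\|\eta'\|^3}
\le \frac{\lambda_{\mathrm u}\|\boldsymbol x'\times\boldsymbol x''\|+O(\varepsilon)}{\lambda_{\mathrm u}^3(1+O(\varepsilon))}
= \frac{1}{\lambda_{\mathrm u}^2}\,\kappa_\ell(\boldsymbol x(t))\,(1+O(\varepsilon))+O(\varepsilon).
\]
Since $\lambda_{\mathrm u}>2$, we have $1/\lambda_{\mathrm u}^2<1/4$, and absorbing the multiplicative $(1+O(\varepsilon))$ into the additive error (using that $\kappa_\ell$ is bounded on the compact piece under consideration, or simply keeping the $O(\varepsilon)$ slack) yields $\kappa_{f(\ell)}(f(\boldsymbol x(t)))<\tfrac12\kappa_\ell(\boldsymbol x(t))+O(\varepsilon)$, which is the claim. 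I expect the main obstacle to be purely bookkeeping: tracking the $O(\varepsilon)$ errors carefully through the cross-product bilinear estimates and making sure the constants hidden in $O(\varepsilon)$ are uniform over $f\in\mathcal O(f_0)$ and over $t$ in the relevant compact range — the geometric content (strong expansion along $\ell$ forces curvature contraction by $1/\lambda_{\mathrm u}^2<1/4<1/2$) is immediate once the formula is set up.
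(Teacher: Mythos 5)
Your proposal is correct and follows essentially the same route as the paper: parametrize $\ell$ by $(t,y(t),z(t))$, compute $\kappa_\ell$ and $\kappa_{f(\ell)}$ via the cross-product formula, use that $Df$ is diagonal up to $O(\varepsilon)$ (so the tangent direction gets stretched by $\lambda_{\mathrm u}$ while the normal components contract by $\lambda_{\mathrm{ss}},\lambda_{\mathrm{cs}i}<1$), bound the second-derivative contribution by $O(\varepsilon)$ via \eqref{eqn_DDf}, and conclude from $\lambda_{\mathrm u}^{-2}<1/4<1/2$. The only point worth flagging is that the error in your cross-product estimate is really of the form $O(\varepsilon)\|\boldsymbol x'\times\boldsymbol x''\|$ (a multiplicative $(1+O(\varepsilon))$ factor, not a bare additive $O(\varepsilon)$), so the correct absorption mechanism is precisely the slack between $1/4$ and $1/2$, as you note at the end — not boundedness of $\kappa_\ell$, which is not assumed.
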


Note that $O(\varepsilon)$ here is a $C^{r-1}$-function of $\boldsymbol{x}\in \mathbb{H}_{\,[k]}$ satisfying $-C\varepsilon<O(\varepsilon)<C\varepsilon$ for some constant $C>0$ depending only on $\lambda_{\mathrm{u}}$, $\lambda_{\mathrm{ss}}$, 
$\lambda_{\mathrm{cs} 0}$ and $\lambda_{\mathrm{cs} 1}$.

\begin{proof}
Since $\boldsymbol{x}'(t)=(1,y'(t),z'(t))$, $\boldsymbol{x}''(t)=(0,y''(t),z''(t))$, by \eqref{eqn_y'} 
\begin{equation}\label{eqn_kappa_l}
\begin{split}
\kappa_\ell(\boldsymbol{x}(t))&=\frac{\|\boldsymbol{x}'(t)\times \boldsymbol{x}''(t)\|}{\|\boldsymbol{x}'(t)\|^3}=\frac{\sqrt{((y''(t))^2+(z''(t))^2)(1+O(\varepsilon))}}{(1+O(\varepsilon))^3}\\
&=\sqrt{(y''(t))^2+(z''(t))^2}(1+O(\varepsilon)).
\end{split}
\end{equation}
We set $f(\boldsymbol{x})=(f_1(\boldsymbol{x}),f_2(\boldsymbol{x}),f_3(\boldsymbol{x}))$ for $\boldsymbol{x}\in \mathbb{H}_{\,[k]}$.
By \eqref{eqn_f0V} and \eqref{eqn_def_zeta}, 
\begin{equation}\label{eqn_part_f}
\begin{split}
\frac{\partial f_1}{\partial x}(\boldsymbol{x})&=(-1)^i\lambda_{\mathrm{u}}+O(\varepsilon),\quad  
\frac{\partial f_2}{\partial y}(\boldsymbol{x})=(-1)^i\lambda_{\mathrm{ss}}+O(\varepsilon),\\
\frac{\partial f_3}{\partial z}(\boldsymbol{x})&=\lambda_{\mathrm{cs} i}+O(\varepsilon),
\end{split}
\end{equation}
where $i=0$ if $\boldsymbol{x}\in \mathbb{H}_{\,[k]}\cap \mathbb{V}_{0,f}$ and $i=1$ if $\boldsymbol{x}\in \mathbb{H}_{\,[k]}\cap\mathbb{V}_{1,f}$.
On the other hand, $\dfrac{\partial f_j}{\partial x_k}(\boldsymbol{x})=O(\varepsilon)$ for any $j,k\in \{1,2,3\}$ with $j\neq k$, where $(x_1,x_2,x_3)=(x,y,z)$.
We set $f(\boldsymbol{x}(t))=f(t)$ for short.
By the chain rule, 
\begin{align*}
\frac{df_j}{dt}(t)&=\frac{\partial f_j}{\partial x}(t)+\frac{\partial f_j}{\partial y}(t)y'(t)
+\frac{\partial f_j}{\partial z}(t)z'(t),\\
\frac{d^2f_j}{dt^2}(t)&=\frac{\partial^2 f_j}{\partial x^2}(t)+2\frac{\partial^2 f_j}{\partial x\partial y}(t)y'(t)
+2\frac{\partial^2 f_j}{\partial x\partial z}(t)z'(t)
+\frac{\partial^2 f_j}{\partial y^2}(t)(y'(t))^2\\
&\qquad+2\frac{\partial^2 f_j}{\partial y\partial z}(t)y'(t)z'(t)+\frac{\partial^2 f_j}{\partial z^2}(t)(z'(t))^2
+\frac{\partial f_j}{\partial y}(t)y''(t)+\frac{\partial f_j}{\partial z}(t)z''(t)
\end{align*}
for $j=1,2,3$.
Then, by \eqref{eqn_DDf} and \eqref{eqn_part_f}, we have
\begin{align*}
f''(t)=\bigl(O(\varepsilon)y''(t)+O(\varepsilon)z''(t),\ ((-1)^i&\lambda_{\mathrm{ss}}+O(\varepsilon))y''(t)+O(\varepsilon)z''(t),\\
&O(\varepsilon)y''(t)+(\lambda_{\mathrm{cs} i}+O(\varepsilon))z''(t)\bigr)+\boldsymbol{O}(\varepsilon)
\end{align*}
for $\boldsymbol{x}(t)\in \mathbb{V}_{i,f}$, 
where $\boldsymbol{O}(\varepsilon)=(O(\varepsilon),O(\varepsilon),O(\varepsilon))$.
Since $\lambda_{\mathrm{u}}^{-2}<1/4$, it follows from \eqref{eqn_kappa_l} that 
\begin{align*}
\kappa_{f(\ell)}(\boldsymbol{x}(t))&=
\frac{\|f'(t)\times f''(t)\|}{\|f'(t)\|^3}\\
&=\frac{\lambda_{\mathrm{u}}\sqrt{\lambda_{\mathrm{cs} i}^2(z''(t))^2+\lambda_{\mathrm{ss}}^2(y''(t))^2}\,(1+O(\varepsilon))}{\bigl((\lambda_{\mathrm{u}}\bigr)^2+O(\varepsilon)\bigr)^{3/2}}+O(\varepsilon)\\
&=\lambda_{\mathrm{u}}^{-2}\sqrt{\lambda_{\mathrm{cs} i}^2(z''(t))^2+\lambda_{\mathrm{ss}}^2(y''(t))^2}(1+O(\varepsilon))+O(\varepsilon)\\
&\leq \lambda_{\mathrm{u}}^{-2}\sqrt{(z''(t))^2+(y''(t))^2}(1+O(\varepsilon))+O(\varepsilon)< \frac12\kappa_\ell(\boldsymbol{x}(t))+O(\varepsilon).
\end{align*}
This completes the proof.
\end{proof}

\begin{prop}\label{p_curvature_curve}
For any leaf $l$ of $\mathcal{L}_{(0;\infty)}$ 
and any point $\boldsymbol{x}$ of $l$, 
$\kappa_\ell(\boldsymbol{x})=O(\varepsilon)$.
\end{prop}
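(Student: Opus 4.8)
The plan is to bound $\kappa_\ell(\boldsymbol x)$ by pulling the leaf $\ell$ back under $f$ until it coincides with a leaf of one of the ``building block'' foliations $\mathcal{L}_{(k;k+1)}$, whose leaves have curvature $O(\varepsilon)$ by construction, and then pushing it forward again, using Lemma~\ref{l_l_curvature} at each step to lose a factor $\tfrac12$ in the curvature at the cost of an $O(\varepsilon)$ error. For $\boldsymbol x\in\ell$ put $m(\boldsymbol x)=\sup\{m\ge 0:\ f^{-m}(\boldsymbol x)\in\mathbb H_{\,[m]}\}\in\{0,1,\dots,\infty\}$; since $\ell\subset\mathbb H_{\,[0]}=\mathbb H_{\varepsilon_0}$ we have $m(\boldsymbol x)\ge 0$, and $f^{-m}(\boldsymbol x)\in\mathbb H_{\,[m]}$ forces $f^{-j}(\boldsymbol x)\in\mathbb H_{\,[j]}$ for all $0\le j\le m$.

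First I would handle the generic case $m:=m(\boldsymbol x)<\infty$. At each stage the construction of $\mathcal{L}_{(0;\infty)}$ only alters $\mathcal{L}_{(k;k+1)}$ inside $f(\mathbb H_{\,[k+1]})$, so by the $f^{-1}$-invariance property the germ of $\ell$ at $\boldsymbol x$ equals $f^{m}(\tilde\ell)$ for the germ $\tilde\ell$ at $\boldsymbol y:=f^{-m}(\boldsymbol x)$ of a leaf of $\mathcal{L}_{(m;\infty)}$; and since $\boldsymbol y\notin f(\mathbb H_{\,[m+1]})$ this leaf agrees near $\boldsymbol y$ with a leaf of $\mathcal{L}_{(m;m+1)}$, whence $\kappa_{\tilde\ell}(\boldsymbol y)\le C'\varepsilon$ with $C'$ independent of $m$. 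The arcs $\tilde\ell\subset\mathbb H_{\,[m]}$, $f(\tilde\ell)\subset\mathbb H_{\,[m-1]},\dots,f^{m}(\tilde\ell)\subset\mathbb H_{\,[0]}$ are all adaptable to $\boldsymbol C^{\mathrm u}_\varepsilon$, and all the arcs to which one applies the lemma lie in some $\mathbb H_{\,[k]}$ with $k\ge 1$, so Lemma~\ref{l_l_curvature} applies $m$ times with a uniform error constant $C$, and a one-line induction yields
\[
\kappa_\ell(\boldsymbol x)\ <\ \frac{1}{2^{m}}\,\kappa_{\tilde\ell}(\boldsymbol y)\ +\ C\varepsilon\sum_{j=0}^{m-1}2^{-j}\ <\ C'\varepsilon+2C\varepsilon\ =\ O(\varepsilon),
\]
with a constant independent of $\boldsymbol x$, of the leaf $\ell$, and of $m$.

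Next I would deal with the points where $m(\boldsymbol x)=\infty$. If the set $\{\boldsymbol x\in\ell:\ m(\boldsymbol x)<\infty\}$ is dense in $\ell$, then since $\ell$ is a $C^r$-arc with $r\ge 2$ the leafwise curvature $\kappa_\ell$ is continuous along $\ell$, and the uniform bound of the previous paragraph passes to the limit. Otherwise $\ell\subseteq\bigcap_{m\ge 0}f^{m}(\mathbb H_{\,[m]})\subseteq\bigcap_{j\ge 0}f^{j}(\mathbb B)\subseteq W^{\mathrm u}_{\mathrm{loc}}(\Lambda_f)$. In that case I would use the $f$-invariance of $W^{\mathrm u}(\Lambda_f)$: since $W^{\mathrm u}_{\mathrm{loc}}(\Lambda_f)$ is a $C^r$-lamination over the compact set $\Lambda_f$, the quantity $K:=\sup\{\kappa(\boldsymbol x):\boldsymbol x\in W^{\mathrm u}_{\mathrm{loc}}(\Lambda_f)\}$ is finite a priori; applying to the $f^{-1}$-image of a local unstable leaf the estimate of Lemma~\ref{l_l_curvature} --- whose proof uses only \eqref{eqn_f0V}, \eqref{eqn_def_zeta}, \eqref{eqn_DDf} and \eqref{eqn_part_f}, all valid on the whole of $\mathbb V_{0,f}\cup\mathbb V_{1,f}$ --- gives $K\le\tfrac12 K+C\varepsilon$, hence $K=O(\varepsilon)$, so $\kappa_\ell(\boldsymbol x)\le K=O(\varepsilon)$.

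The hard part will be precisely this last case: because $\mathcal{L}_{(0;\infty)}$ is only a $C^0$-foliation and degenerates along $W^{\mathrm u}_{\mathrm{loc}}(\Lambda_f)$, the ``pull back to a building block, then push forward'' scheme fails exactly on $W^{\mathrm u}_{\mathrm{loc}}(\Lambda_f)$, and one must instead close a fixed-point inequality for $\sup\kappa$, for which the a priori finiteness of $K$ is essential. A secondary bookkeeping point is to ensure that the constants $C'$ (the curvature bound for the leaves of $\mathcal{L}_{(k;k+1)}$) and $C$ (the error constant of Lemma~\ref{l_l_curvature}, which the remark after that lemma already records as depending only on $\lambda_{\mathrm u},\lambda_{\mathrm{ss}},\lambda_{\mathrm{cs}0},\lambda_{\mathrm{cs}1}$) are genuinely independent of $k$ and of the number $m$ of iterations, which is what renders the geometric sum in the displayed estimate harmless.
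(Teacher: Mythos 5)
Your treatment of the generic case $m(\boldsymbol{x})<\infty$ coincides with the paper's: there the point $\boldsymbol{x}_k:=f^{-k}(\boldsymbol{x})\in\mathbb{H}_{\,[k]}\setminus f(\mathbb{H}_{\,[k+1]})$ is taken with $k=m(\boldsymbol{x})$, the leaf through $\boldsymbol{x}_k$ is recognised as a leaf of the building-block foliation $\mathcal{L}_{(k;k+1)}$ with curvature $O(\varepsilon)$ by construction, and Lemma~\ref{l_l_curvature} is applied $k$ times with the geometric series absorbing the accumulated errors. Where you genuinely diverge is the case $\boldsymbol{x}\in W^{\mathrm{u}}_{\mathrm{loc}}(\Lambda_f)$. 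The paper settles it in one line by citing the stable manifold theorem (Robinson, Ch.\ 10, Thm.\ 2.1) together with \eqref{eqn_DDf} to read off $\kappa=O(\varepsilon)$ directly; you instead invoke that theorem only in the soft form that $K:=\sup\kappa$ over the compact lamination $W^{\mathrm{u}}_{\mathrm{loc}}(\Lambda_f)$ is \emph{finite}, and then bootstrap with the estimate of Lemma~\ref{l_l_curvature} --- extended, as you rightly observe, from arcs in $\mathbb{H}_{\,[k]}$ to any $\boldsymbol{C}_{\varepsilon}^{\mathrm{u}}$-adapted $C^r$-arc in $\mathbb{V}_{0,f}\cup\mathbb{V}_{1,f}$, since only \eqref{eqn_DDf} and \eqref{eqn_part_f} enter its proof --- to close the fixed-point inequality $K\le\tfrac12 K+C\varepsilon$, hence $K=O(\varepsilon)$. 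This is a more self-contained route: the quantitative $O(\varepsilon)$ then comes from the same dynamical contraction mechanism as in the generic case, rather than from the quantitative internals of the stable-manifold black box.

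One point to tighten. In your second case, the implication ``$\{m<\infty\}$ not dense in $\ell$ implies $\ell\subseteq\bigcap_m f^m(\mathbb{H}_{\,[m]})$'' is not automatic on its face. What justifies it --- and renders your ``dense'' sub-case vacuous --- is the fact that $W^{\mathrm{u}}_{\mathrm{loc}}(\Lambda_f)\cap\mathbb{H}_{\varepsilon_0}$ is a union of leaves of $\mathcal{L}_{(0;\infty)}$ (the paper uses this implicitly when it asserts in its first case that ``$l$ is a leaf of $W^{\mathrm{u}}_{\mathrm{loc}}(\Lambda_f)$''); hence every leaf $\ell$ either avoids $W^{\mathrm{u}}_{\mathrm{loc}}(\Lambda_f)$ entirely, giving $m(\boldsymbol{x})<\infty$ on all of $\ell$, or is contained in it, giving $m(\boldsymbol{x})=\infty$ on all of $\ell$. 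Stated this way your two cases form a genuine dichotomy and the argument is complete.
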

\begin{proof}
First we consider the case of $\boldsymbol{x}\in W_{\mathrm{loc}}^{\mathrm{u}}(\Lambda_f)$.
Then $l$ is a leaf of $W_{\mathrm{loc}}^{\mathrm{u}}(\Lambda_f)$.
Since $f$ satisfies \eqref{eqn_DDf}, $l$ is a proper $C^r$-submanifold of $\mathbb{B}$ 
with $\kappa_l(\boldsymbol{x})=O(\varepsilon)$ by the stable manifold theorem (and its proof), 
for example, see Robinson \cite[Chapter 10, Theorem 2.1]{Ro99}.  
Next we suppose that $\boldsymbol{x}$ is an element of $\mathbb{H}_{\varepsilon_0}
\setminus W_{\mathrm{loc}}^{\mathrm{u}}(\Lambda_f)$.
Then there exist a positive integer $k$ and an element $\boldsymbol{x}_k\in \mathbb{H}_{\,[k]}\setminus 
f(\mathbb{H}_{\,[k+1]})$ with $f^k(\boldsymbol{x}_k)=\boldsymbol{x}$.
From the construction of $\mathcal{L}_{(k;\infty)}$, 
the leaf $l_k$ of $\mathcal{L}_{(k;\infty)}$ containing $\boldsymbol{x}_k$ is also a leaf of $\mathcal{L}_{(k;k+1)}$.
From the construction of $\mathcal{L}_{(k;k+1)}$, 
we know that  
$\kappa_{l_k}(\boldsymbol{x}_k)$ is an $O(\varepsilon)$-function.
By Lemma \ref{l_l_curvature}, 
$$\kappa_\ell(\boldsymbol{x})<\left(\sum_{i=0}^{k}\frac1{2^i}\right)O(\varepsilon)<2O(\varepsilon).$$
Thus one can complete the proof by regarding $2O(\varepsilon)$ as $O(\varepsilon)$ again.
\end{proof}

Let $F$ be any leaf of $\mathcal{F}_f^{\mathrm{s}}$ and $\boldsymbol{x}_0$ any point of $F\cap f(\mathbb{V}_{0,f}\cup \mathbb{V}_{1,f})$.
For any unit vector $\boldsymbol{u}$ tangent to $F$ at $\boldsymbol{x}_0$, $\widetilde{ \boldsymbol{u}}=D(f^{-1})(\boldsymbol{x}_0)\boldsymbol{u}$ 
is a non-zero vector tangent to $\widetilde F$ at $\widetilde{\boldsymbol{x}}_0=f^{-1}(\boldsymbol{x}_0)$, 
where $\widetilde F$ is the leaf of $\mathcal{F}_f^{\mathrm{s}}$ containing $\widetilde{\boldsymbol{x}}_0$.
Let $\kappa_{\boldsymbol{u}}(\boldsymbol{x}_0)$ (resp.\ $\kappa_{\widetilde{\boldsymbol{u}}}(\widetilde{\boldsymbol{x}}_0)$) be the normal curvature of $F$ (resp.\ 
$\widetilde F$) along $\boldsymbol{u}$ (resp.\ $\widetilde{\boldsymbol{u}}$).
Here we note that the the curvature of any spatial arc is non-negative by the definition.
On the other hand, for any spatial surface $S$ and a point $\boldsymbol{x}\in S$, 
the sign of normal curvature of $S$ along a vector tangent to $S$ at $\boldsymbol{x}$ depends on 
the choice of the normal direction of $S$ at $\boldsymbol{x}$.

As in Lemma \ref{l_l_curvature}, one can prove the following lemma.

\begin{lem}\label{l_F_curvature}
$|\kappa_{\widetilde{\boldsymbol{u}}}(\widetilde{\boldsymbol{x}}_0)|< \dfrac12 |\kappa_{\boldsymbol{u}}(\boldsymbol{x}_0)|+O(\varepsilon)$.
\end{lem}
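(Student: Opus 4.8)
The statement is the surface analogue of Lemma \ref{l_l_curvature}, and the plan is to mirror that proof almost verbatim, replacing the curvature of a space curve by the \emph{normal} curvature of a leaf of $\mathcal{F}_f^{\mathrm{s}}$ along a prescribed tangent direction. First I would parametrize a neighbourhood of $\boldsymbol{x}_0$ in $F$ as a graph over the $yz$-plane, which is legitimate because $F$ is adaptable to $\boldsymbol{C}_\varepsilon^{\mathrm{cs}}$, so the $x$-coordinate is an $O(\varepsilon)$-controlled $C^r$-function $x=h(y,z)$ of the $(y,z)$-coordinates with $|h_y|,|h_z|=O(\varepsilon)$. Choosing a unit-speed curve in $F$ through $\boldsymbol{x}_0$ whose velocity is $\boldsymbol{u}$, one reads the normal curvature $\kappa_{\boldsymbol{u}}(\boldsymbol{x}_0)$ off the second fundamental form of this graph; the key point, exactly as in \eqref{eqn_kappa_l}, is that up to a factor $1+O(\varepsilon)$ the normal curvature equals the length of the Hessian $(h_{yy},h_{yz},h_{zz})$ contracted twice against the $(y,z)$-components of $\boldsymbol{u}$, with an additive $O(\varepsilon)$ error coming from the first-order terms $h_y,h_z$. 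The same computation applies to $\widetilde F$ at $\widetilde{\boldsymbol{x}}_0$, giving $\kappa_{\widetilde{\boldsymbol{u}}}(\widetilde{\boldsymbol{x}}_0)$ in terms of the Hessian of the corresponding graph function $\widetilde h$.

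The second step is to relate the two Hessians via $f^{-1}$. Writing $f(\boldsymbol{x})=(f_1,f_2,f_3)(\boldsymbol{x})$ as in the proof of Lemma \ref{l_l_curvature}, the derivative estimates \eqref{eqn_DDf} and \eqref{eqn_part_f} say that $Df(\boldsymbol{x})$ is $O(\varepsilon)$-close to $\mathrm{diag}((-1)^i\lambda_{\mathrm{u}},(-1)^i\lambda_{\mathrm{ss}},\lambda_{\mathrm{cs} i})$ and that all second derivatives of $f$ are $O(\varepsilon)$. Pushing the graph $\widetilde F$ forward by $f$ and differentiating twice, the $yz$-block of the Hessian of $h$ is obtained from that of $\widetilde h$ by multiplying on the output side by the $x$-direction expansion $\lambda_{\mathrm{u}}+O(\varepsilon)$ and on each of the two input slots by the inverse of the $y$- or $z$-direction factor $\lambda_{\mathrm{ss}}^{-1}+O(\varepsilon)$ or $\lambda_{\mathrm{cs} i}^{-1}+O(\varepsilon)$, plus an additive $\boldsymbol{O}(\varepsilon)$ term from the second derivatives of $f$ and the first-order graph terms. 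Going the other way (which is what we want, since we are comparing $\widetilde F=f^{-1}(F)$ to $F$), the Hessian of $\widetilde h$ equals $\lambda_{\mathrm{u}}^{-1}$ times the Hessian of $h$ with each input slot scaled by $\lambda_{\mathrm{ss}}$ or $\lambda_{\mathrm{cs} i}$, up to $(1+O(\varepsilon))$ and an additive $O(\varepsilon)$. Since $\lambda_{\mathrm{ss}},\lambda_{\mathrm{cs} i}<1$ (indeed $\lambda_{\mathrm{cs} i}<1$ by \eqref{eqn_eigen_v}) while $\lambda_{\mathrm{u}}^{-1}<1/2$ by $\lambda_{\mathrm u}>2$, the scaling factors on the relevant Hessian entries are all bounded by $\lambda_{\mathrm{u}}^{-1}<1/2$ in absolute value, so $|\kappa_{\widetilde{\boldsymbol{u}}}(\widetilde{\boldsymbol{x}}_0)|\le \lambda_{\mathrm{u}}^{-1}|\kappa_{\boldsymbol{u}}(\boldsymbol{x}_0)|(1+O(\varepsilon))+O(\varepsilon)<\tfrac12|\kappa_{\boldsymbol{u}}(\boldsymbol{x}_0)|+O(\varepsilon)$, absorbing the $(1+O(\varepsilon))$ factor into the additive $O(\varepsilon)$ after shrinking $\mathcal{O}(f_0)$ if necessary.

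\textbf{Main obstacle.} The routine-but-delicate point is bookkeeping of the first and second fundamental forms of a surface rather than just the curvature vector of a curve: in the curve case the Frenet formula $\kappa=\|\boldsymbol{x}'\times\boldsymbol{x}''\|/\|\boldsymbol{x}'\|^3$ makes the estimate almost immediate, whereas here one must identify the normal curvature along $\boldsymbol{u}$ with the appropriate contraction of the shape operator and check that the $O(\varepsilon)$ errors — which enter through the non-unit normal field of $F$, through $Df$ not being exactly diagonal, and through the change-of-variables Jacobian between the $(y,z)$-graph coordinates of $F$ and of $\widetilde F$ — are genuinely uniform in $\boldsymbol{x}_0$, in the leaf $F$, and in the unit vector $\boldsymbol{u}$. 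This is precisely the assertion that the parenthetical remark after Lemma \ref{l_l_curvature} is making, and the same constant $C>0$ depending only on $\lambda_{\mathrm{u}},\lambda_{\mathrm{ss}},\lambda_{\mathrm{cs} 0},\lambda_{\mathrm{cs} 1}$ works here; the sign convention warning in the text before the lemma is exactly why the statement is phrased with absolute values on both sides, and no further care about orientation of the normal of $F$ is needed beyond that.
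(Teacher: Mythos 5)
Your route — graph coordinates $x=h(y,z)$ and the second fundamental form — is a genuinely different argument from the paper's. The paper never computes a shape operator: it forms the normal section $\ell=P\cap F$, where $P$ is the plane through $\boldsymbol{x}_0$ spanned by $\boldsymbol{u}$ and the unit normal $\boldsymbol{N}=(1,0,0)+\boldsymbol{O}(\varepsilon)$, parametrizes $\ell$ by a scalar $C^r$-function $x(t)$ with $x(0)=x'(0)=0$ so that $|\kappa_{\boldsymbol u}(\boldsymbol{x}_0)|=\kappa_\ell(\boldsymbol{x}_0)=|x''(0)|(1+O(\varepsilon))$, pushes $\ell$ back by $f^{-1}$, evaluates the Frenet formula $\|\widetilde{\boldsymbol x}'\times\widetilde{\boldsymbol x}''\|/\|\widetilde{\boldsymbol x}'\|^3$ at $t=0$, and finishes with the classical inequality $|\kappa_{\widetilde{\boldsymbol u}}(\widetilde{\boldsymbol{x}}_0)|\le\kappa_{f^{-1}(\ell)}(\widetilde{\boldsymbol{x}}_0)$ (since $f^{-1}(\ell)$ is a curve in $\widetilde F$ tangent to $\widetilde{\boldsymbol u}$ but not a normal section). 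Either route can be made to work; the paper's trades the surface bookkeeping for one cited fact about normal curvatures.

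But your derivation of the key factor has a gap. After writing $\mathrm{Hess}(\widetilde h)(\cdot,\cdot)=\lambda_{\mathrm{u}}^{-1}\mathrm{Hess}(h)(S\cdot,S\cdot)+O(\varepsilon)$ with $S=\mathrm{diag}(\lambda_{\mathrm{ss}},\lambda_{\mathrm{cs} i})$, you pass from "each Hessian entry of $\widetilde h$ is scaled by a factor $\le\lambda_{\mathrm{u}}^{-1}$" directly to $|\kappa_{\widetilde{\boldsymbol u}}|\le\lambda_{\mathrm{u}}^{-1}|\kappa_{\boldsymbol u}|(1+O(\varepsilon))+O(\varepsilon)$. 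That step is a non sequitur on two counts: entry-wise bounds on a symmetric $2\times2$ matrix do not bound the associated quadratic form (the entries of $\mathrm{Hess}(h)$ may have opposite signs and the scalings $\lambda_{\mathrm{u}}^{-1}\lambda_j\lambda_k$ are different for different $(j,k)$), and $\kappa_{\widetilde{\boldsymbol u}}$ is the contraction of $\mathrm{Hess}(\widetilde h)$ against the $(y,z)$-components of the \emph{unit} vector along $\widetilde{\boldsymbol u}=Df^{-1}\boldsymbol{u}$, which are $S^{-1}(a,b)/\|\widetilde{\boldsymbol u}\|$ — larger than $(a,b)$ before renormalization, since $S^{-1}$ expands. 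The correct mechanism is the exact cancellation $\lambda_{\mathrm{u}}^{-1}\,\mathrm{Hess}(h)\bigl(S\cdot S^{-1}(a,b),\,S\cdot S^{-1}(a,b)\bigr)/\|\widetilde{\boldsymbol u}\|^2 = \lambda_{\mathrm{u}}^{-1}\kappa_{\boldsymbol u}/\|\widetilde{\boldsymbol u}\|^2$, after which you need $\|\widetilde{\boldsymbol u}\|^2=\lambda_{\mathrm{ss}}^{-2}a^2+\lambda_{\mathrm{cs} i}^{-2}b^2+O(\varepsilon)\ge 1$ (which holds because $\lambda_{\mathrm{ss}},\lambda_{\mathrm{cs} i}<1$) to conclude. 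That quantity is precisely the $\varGamma_i$ appearing in the paper's $\|\widetilde{\boldsymbol{x}}'(0)\|=\sqrt{\varGamma_i}+O(\varepsilon)$, and the observation $\varGamma_i\ge1$ is load-bearing; your sketch omits it entirely. With that step inserted the graph-coordinate argument closes, but as written the inequality does not follow from what you assert.
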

\begin{proof}
Since $F$ is adaptable to $\boldsymbol{C}_{\varepsilon}^{\mathrm{cs}}$, the normal unit vector $\boldsymbol{N}$ of $F$ at $\boldsymbol{x}_0$ is 
represented as $(1,0,0)+\boldsymbol{O}(\varepsilon)$.
Recall that $\mathbb{B}$ is regarded as a subspace of $\mathbb{R}^3$.
Let $P$ be the plane in $\mathbb{R}^3$ containing $\boldsymbol{x}_0$ and tangent to $\boldsymbol{u}$ and $\boldsymbol{N}$ at $\boldsymbol{x}_0$.
See Figure \ref{f_A_1}.
\begin{figure}[hbtp]
\centering
\scalebox{0.6}{\includegraphics[clip]{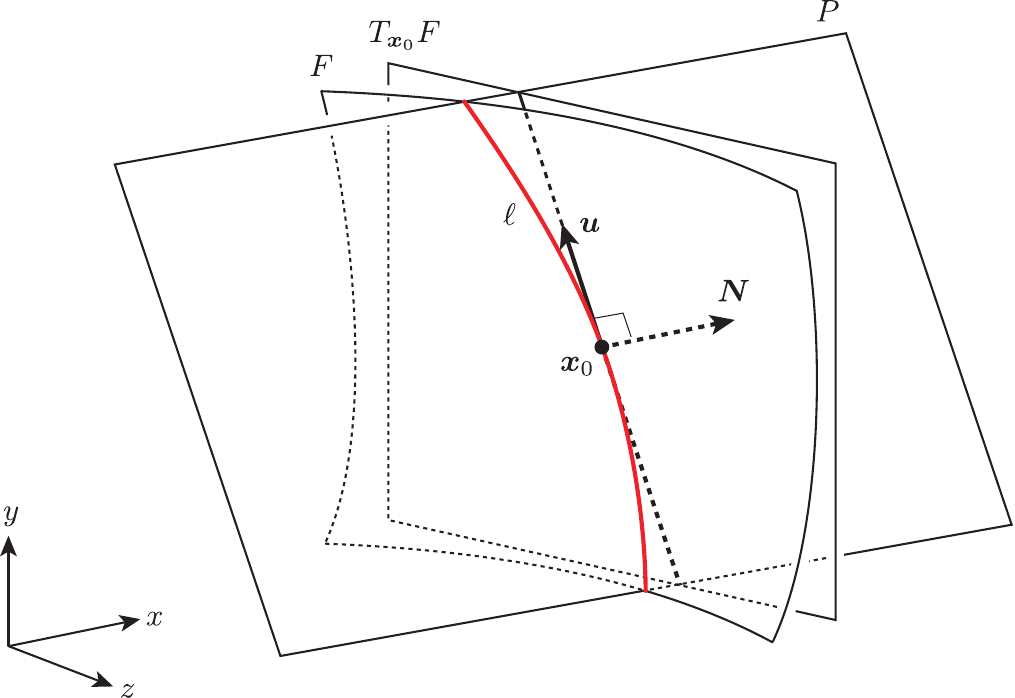}}
\caption{}
\label{f_A_1}
\end{figure}
Then there exists an orthogonal matrix $A$ of order three which has a form $A=E_3+\boldsymbol{O}_3(\varepsilon)$ and satisfies 
$(1,0,0)A=\boldsymbol{N}$ and $(0,a,b)A=\boldsymbol{u}$, where $E_3$ is the unit matrix of order three and 
$\boldsymbol{O}_3(\varepsilon)$ is a square matrix of order three each entry of which is an $O(\varepsilon)$-function and 
$a,b$ are constants with $a^2+b^2=1$.
Since $\ell=P\cap F$ is a curve with $T_{\boldsymbol{x}_0}\ell\ni \boldsymbol{u}$, it is parametrized as 
$$
\boldsymbol{x}(t)=\bigl(x(t)(1+O(\varepsilon)),\ x(t)O(\varepsilon)+at,\ 
x(t)O(\varepsilon)+bt\bigr)+\boldsymbol{x}_0+t\boldsymbol{O}(\varepsilon)
$$
for some $C^r$-function $x(t)$ with $x(0)=x'(0)=0$ 
defined on an open interval containing $0$.
Since the vector $\boldsymbol{O}(\varepsilon)$ here is independent of $t$, the first and second derivatives of $\boldsymbol{x}(t)$ are represented as
\begin{equation}\label{eqn_x'x''}
\begin{split}
\boldsymbol{x}'(t)&=\bigl(x'(t),\ x'(t)O(\varepsilon)+a,\ x'(t)O(\varepsilon)+b\bigr)+\boldsymbol{O}(\varepsilon),\\
\boldsymbol{x}''(t)&=\bigl(x''(t),\ x''(t)O(\varepsilon),\ x''(t)O(\varepsilon)\bigr)=x''(t)\bigl(1,O(\varepsilon),O(\varepsilon)\bigr).
\end{split}
\end{equation}
Here we do not incorporate $x''(t)O(\varepsilon)$ with $O(\varepsilon)$ since we could not exclude 
the case that $|x''(t)|$ is greater than $c\varepsilon^{-1}$ for some constant $c>0$.
The absolute value of the normal curvature $\kappa_{\boldsymbol{u}}(\boldsymbol{x}_0)$ of $F$ along $\boldsymbol{u}$ is equal to the curvature of $\ell$ at $\boldsymbol{x}_0$.
From the forms of $\boldsymbol{x}'(t)$ and $\boldsymbol{x}''(t)$, 
we have
\begin{equation}\label{eqn_kappau}
|\kappa_{\boldsymbol{u}}(\boldsymbol{x}_0)|=\kappa_\ell(\boldsymbol{x}_0)=|x''(0)|(1+O(\varepsilon)).
\end{equation}
On the other hand, the arc $f^{-1}(\ell)$ is parametrized as $\widetilde{\boldsymbol{x}}(t)=f^{-1}(\boldsymbol{x}(t))$.
By \eqref{eqn_f0V} and \eqref{eqn_def_zeta}, 
$$Df^{-1}(\boldsymbol{x})=\mathrm{diag}\bigl(\,(-1)^i\lambda_{\mathrm{u}}^{-1},\ (-1)^i\lambda_{\mathrm{ss}}^{-1},\ 
\lambda_{\mathrm{cs} i}^{-1}\,\bigr)+\widehat{\boldsymbol{O}}_3(\varepsilon)$$
if $\boldsymbol{x}\in f(\mathbb{V}_{i,f})$ for $i=0,1$, 
where $\widehat{\boldsymbol{O}}_3(\varepsilon)$ is a square matrix of order three each entry $\widehat O(\varepsilon)$ of which 
is a $C^{r-1}$ $O(\varepsilon)$-function on $\boldsymbol{x}=(x_1,x_2,x_3)$.
Moreover, by \eqref{eqn_DDf}, $\partial \widehat O(\varepsilon)/\partial x_j$ is also an $O(\varepsilon)$-function for $j=1,2,3$.
By these facts together with \eqref{eqn_x'x''}, the first and second derivatives of $\widetilde{\boldsymbol{x}}(t)$ are represented as
\begin{align*}
\widetilde{\boldsymbol{x}}'(t)&=
x'(t)\bigl((-1)^i\lambda_{\mathrm{u}}^{-1},\ (-1)^i\lambda_{\mathrm{ss}}^{-1}O(\varepsilon),\ \lambda_{\mathrm{cs} i}^{-1}O(\varepsilon)\bigr)\\
&\hspace{70pt}+\bigl(0,\ (-1)^i\lambda_{\mathrm{ss}}^{-1}a,\ \lambda_{\mathrm{cs} i}^{-1}b\bigr)+\boldsymbol{x}'(t)\widehat{\boldsymbol{O}}_3(\varepsilon)+\boldsymbol{O}(\varepsilon),\\
\widetilde{\boldsymbol{x}}''(t)&=x''(t)
\bigl((-1)^i\lambda_{\mathrm{u}}^{-1},\ (-1)^i\lambda_{\mathrm{ss}}^{-1}O(\varepsilon),\ \lambda_{\mathrm{cs} i}^{-1}O(\varepsilon)\bigr)
+\boldsymbol{x}''(t)\widehat{\boldsymbol{O}}_3(\varepsilon)+\boldsymbol{O}^\flat(\varepsilon).
\end{align*}
Here $\boldsymbol{O}^\flat(\varepsilon)$ represents $\boldsymbol{x}'(t)\dfrac{d\,\widehat{\boldsymbol{O}}_3(\varepsilon)(\boldsymbol{x}(t))}{dt}$, 
which is still an $O(\varepsilon)$-vector.
So we have  
\begin{align*}
\widetilde{\boldsymbol{x}}'(t)\times \widetilde{\boldsymbol{x}}''(t)&=x''(t)\bigl(O(\varepsilon),\ (-1)^i\lambda_{\mathrm{u}}^{-1}\lambda_{\mathrm{cs} i}^{-1}b,\ 
(-1)^{i+1}\lambda_{\mathrm{u}}^{-1}\lambda_{\mathrm{ss}}^{-1}a\bigr)\\
&\hspace{140pt}+x''(t)\boldsymbol{O}(\varepsilon)+\boldsymbol{O}(\varepsilon).
\end{align*}
This implies that 
\begin{align*}
\|\widetilde{\boldsymbol{x}}'(t)\|&=\sqrt{|x'(t)|^2\lambda_{\mathrm{u}}^{-2}+\varGamma_i}+O(\varepsilon)\geq \sqrt{\varGamma_i}+O(\varepsilon),\\
\|\widetilde{\boldsymbol{x}}'(t)\times \widetilde{\boldsymbol{x}}''(t)\|&=|x''(t)|\lambda_{\mathrm{u}}^{-1}(\sqrt{\varGamma_i}+O(\varepsilon))+O(\varepsilon),
\end{align*}
where $\varGamma_i=\lambda_{\mathrm{ss}}^{-2}a^2+\lambda_{\mathrm{cs} i}^{-2}b^2$.
Since $\lambda_{\mathrm{u}}>2$ and $\varGamma_i>1$, by \eqref{eqn_kappau} we have 
\begin{align*}
|\kappa_{\widetilde{\boldsymbol{u}}}(\widetilde{\boldsymbol{x}}_0)|&\leq \kappa_{f^{-1}(\ell)}(\widetilde{\boldsymbol{x}}_0)=\frac{\|\widetilde{\boldsymbol{x}}'(0)\times \widetilde{\boldsymbol{x}}''(0)\|}{\|\widetilde{\boldsymbol{x}}'(0)\|^3}
<\frac{|x''(0)|\lambda_{\mathrm{u}}^{-1}(\sqrt{\varGamma_i}+O(\varepsilon))+O(\varepsilon)}{(\sqrt{\varGamma_i}+O(\varepsilon))^3}\\
&<\lambda_{\mathrm{u}}^{-1}|x''(0)|+O(\varepsilon)<\frac12 |\kappa_{\boldsymbol{u}}(\boldsymbol{x}_0)|+O(\varepsilon).
\end{align*}
Here the first inequality is immediately obtained from the definition of normal curvature, for example, see 
\cite[Chapter 3, Definition 3]{dC76} or \cite[Section 2.2]{Ko21}.
This completes the proof.
\end{proof}

\begin{prop}\label{p_curvature_plane}
For any leaf $F$ of $\mathcal{F}_f^{\mathrm{s}}$ and any unit tangent vector $\boldsymbol{u}\in T_{\boldsymbol{x}} F$ 
with $\boldsymbol{x}\in F$, 
the absolute value $|\kappa_{\boldsymbol{u}}(\boldsymbol{x})|$ of the normal curvature of $F$ at $\boldsymbol{x}$ along $\boldsymbol{u}$ is $O(\varepsilon)$.
In particular, the principal curvatures $\kappa_{F,1}(\boldsymbol{x})$ and $\kappa_{F,2}(\boldsymbol{x})$ of $F$ at $\boldsymbol{x}$ 
satisfy $|\kappa_{F,i}(\boldsymbol{x})|=O(\varepsilon)$ for $i=1,2$.
\end{prop}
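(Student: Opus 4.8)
The plan is to bootstrap Proposition \ref{p_curvature_plane} from the iteration estimate in Lemma \ref{l_F_curvature}, exactly paralleling how Proposition \ref{p_curvature_curve} was deduced from Lemma \ref{l_l_curvature}. First I would fix a leaf $F$ of $\mathcal{F}_f^{\mathrm{s}}$, a point $\boldsymbol{x}\in F$ and a unit vector $\boldsymbol{u}\in T_{\boldsymbol{x}}F$, and split into two cases according to whether $\boldsymbol{x}$ lies on $W_{\mathrm{loc}}^{\mathrm{s}}(\Lambda_f)$ or not.

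In the case $\boldsymbol{x}\in W_{\mathrm{loc}}^{\mathrm{s}}(\Lambda_f)$, the leaf $F$ is a leaf of the genuine local stable manifold, which by \eqref{eqn_DDf} and the stable manifold theorem (see Robinson \cite[Chapter 10, Theorem 2.1]{Ro99}, as invoked in the proof of Proposition \ref{p_curvature_curve}) is a proper $C^r$-submanifold of $\mathbb{B}$ all of whose normal curvatures are $O(\varepsilon)$; hence $|\kappa_{\boldsymbol{u}}(\boldsymbol{x})|=O(\varepsilon)$ directly. For the generic case $\boldsymbol{x}\notin W_{\mathrm{loc}}^{\mathrm{s}}(\Lambda_f)$, I would push $\boldsymbol{x}$ backward under $f^{-1}$: since $\boldsymbol{x}\in\mathbb{B}$ and the forward orbit must eventually leave $\mathbb{B}$ (as $\boldsymbol{x}$ is not on the stable set), there is a last index $m\ge 0$ with $f^{-j}(\boldsymbol{x})\in\mathbb{B}$ for $0\le j\le m$ and $f^{-m}(\boldsymbol{x})\notin f(\mathbb{B})$. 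Set $\boldsymbol{x}_m=f^{-m}(\boldsymbol{x})$ and let $\boldsymbol{u}_m=D(f^{-m})(\boldsymbol{x})\boldsymbol{u}$, normalized; at $\boldsymbol{x}_m$ the leaf $F_m$ of $\mathcal{F}_f^{\mathrm{s}}$ through $\boldsymbol{x}_m$ meets the ``entering'' region, where the construction of $\mathcal{F}_f^{\mathrm{s}}$ (in the spirit of \cite[Subsection 2.4]{PT93}) gives $|\kappa_{\boldsymbol{u}_m}(\boldsymbol{x}_m)|=O(\varepsilon)$ as a base estimate. Then I would apply Lemma \ref{l_F_curvature} exactly $m$ times along the orbit $\boldsymbol{x}_m,f(\boldsymbol{x}_m),\dots,f^m(\boldsymbol{x}_m)=\boldsymbol{x}$: at each step the absolute value of the normal curvature along the transported vector is at most half the previous one plus an $O(\varepsilon)$ term, so
\[
|\kappa_{\boldsymbol{u}}(\boldsymbol{x})|<\Bigl(\sum_{i=0}^{m}\frac{1}{2^i}\Bigr)O(\varepsilon)<2\,O(\varepsilon),
\]
which is $O(\varepsilon)$ after absorbing the constant $2$, uniformly in $\boldsymbol{x}$, $\boldsymbol{u}$, and $m$. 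This gives the first assertion. Finally, the principal curvatures $\kappa_{F,1}(\boldsymbol{x})$, $\kappa_{F,2}(\boldsymbol{x})$ are, by definition, the extremal values of $\kappa_{\boldsymbol{u}}(\boldsymbol{x})$ over unit $\boldsymbol{u}\in T_{\boldsymbol{x}}F$ (up to sign conventions for the choice of normal), so $|\kappa_{F,i}(\boldsymbol{x})|\le\sup_{\boldsymbol{u}}|\kappa_{\boldsymbol{u}}(\boldsymbol{x})|=O(\varepsilon)$ for $i=1,2$, completing the proof.

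The one point requiring genuine care is the base case: I must make sure that when the orbit first enters $\mathbb{B}$ the relevant leaf of $\mathcal{F}_f^{\mathrm{s}}$ really does have $O(\varepsilon)$ normal curvature in the transported direction. This is where the construction of $\mathcal{F}_f^{\mathrm{s}}$ in Subsection \ref{ss_cone_foliation}—in particular the curvature control already built into it via Proposition \ref{p_curvature_plane} being used self-referentially, or rather the initial $C^r$-proximity to $f_0$ whose stable leaves are literally planes parallel to the $yz$-plane by \eqref{F2}—enters. So I would phrase the base estimate as a consequence of the $C^r$-closeness of $f$ to $f_0$ together with \eqref{F2} on the ``flat'' part $\mathbb{H}_{\varepsilon_0}$ and \eqref{F1}--\eqref{F3} elsewhere, and this is the only place where the argument is not purely a mechanical iteration of Lemma \ref{l_F_curvature}. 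Everything else is routine and mirrors the proof of Proposition \ref{p_curvature_curve} line by line.
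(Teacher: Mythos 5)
Your argument iterates in the wrong direction, and this is a fatal gap, not a cosmetic one. Lemma \ref{l_F_curvature} bounds the normal curvature at the \emph{preimage} $\widetilde{\boldsymbol{x}}_0=f^{-1}(\boldsymbol{x}_0)$ by half the normal curvature at $\boldsymbol{x}_0$ plus an $O(\varepsilon)$ term; equivalently, passing \emph{forward} from $\widetilde{\boldsymbol{x}}_0$ to $f(\widetilde{\boldsymbol{x}}_0)$ at best forces the curvature to roughly double. (This is the geometric fact that a stable leaf is a graph $x=g(y,z)$, and $f$ expands $x$ while contracting $y,z$, so second derivatives of $g$ get multiplied by roughly $\lambda_{\mathrm{u}}/\lambda_{\mathrm{ss}}^2>1$ under pushforward.) You set $\boldsymbol{x}_m=f^{-m}(\boldsymbol{x})$, obtain a base estimate $|\kappa_{\boldsymbol{u}_m}(\boldsymbol{x}_m)|=O(\varepsilon)$ there, and then claim to iterate Lemma \ref{l_F_curvature} along $\boldsymbol{x}_m,f(\boldsymbol{x}_m),\dots,f^m(\boldsymbol{x}_m)=\boldsymbol{x}$ with halving at each step; but along that forward orbit the lemma gives \emph{growth} $|\kappa_{\boldsymbol{u}_{j+1}}(\boldsymbol{x}_{j+1})|>2\bigl(|\kappa_{\boldsymbol{u}_j}(\boldsymbol{x}_j)|-O(\varepsilon)\bigr)$, so the geometric series diverges in the direction you need. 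In addition, the existence of the escape time $m$ is justified by saying the \emph{forward} orbit leaves $\mathbb{B}$, which is irrelevant to backward iterates; points near $W^{\mathrm{u}}_{\mathrm{loc}}(\Lambda_f)$ need not have their backward orbit exit $f(\mathbb{B})$ at all.

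The paper's proof goes the other way around precisely to respect the direction in which Lemma \ref{l_F_curvature} contracts. Writing $\mathbb{G}_0$ for the component of $\mathbb{B}\setminus W_{\mathrm{loc}}^{\mathrm{s}}(\Lambda_f)$ containing $\mathbb{H}_{\varepsilon_0}$, it first handles $\boldsymbol{x}\in W_{\mathrm{loc}}^{\mathrm{s}}(\Lambda_f)$ via the stable manifold theorem, then arranges (this is a free choice in the construction of $\mathcal{F}_f^{\mathrm{s}}$, interpolating between the flat leaves on $\mathbb{H}_{\varepsilon_0}$ from \eqref{F2} and the two adjacent stable leaves along lines parallel to the $x$-axis) that $|\kappa_{\boldsymbol{u}}|=O(\varepsilon)$ already holds on all of $\mathbb{G}_0$. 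For $\boldsymbol{x}\in\mathbb{B}\setminus(W_{\mathrm{loc}}^{\mathrm{s}}(\Lambda_f)\cup\mathbb{G}_0)$, the \emph{forward} orbit $\boldsymbol{x}_j=f^j(\boldsymbol{x})$ reaches $\mathbb{G}_0$ at some finite time $k$ (because the gaps of the Cantor set $\varGamma_f^{\mathrm{u}}$ are forward-mapped into the central gap), the base estimate is applied at $\boldsymbol{x}_k\in\mathbb{G}_0$, and then Lemma \ref{l_F_curvature} is applied $k$ times \emph{going backward} from $\boldsymbol{x}_k$ to $\boldsymbol{x}_0=\boldsymbol{x}$, which is the contracting direction and gives the geometric series $\sum_{j<k}2^{-j}O(\varepsilon)<2\,O(\varepsilon)$. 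Your second concern (circularity and the base case near $\partial\mathbb{B}$) would indeed be a real problem for the backward-escape formulation, but it simply does not arise once the base set is taken to be $\mathbb{G}_0$, where near-flatness is built in by construction rather than deduced from Proposition \ref{p_curvature_plane} itself. The deduction of the statement about principal curvatures from the uniform bound on normal curvatures in your last sentence is fine as written.
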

\begin{proof}
If $\boldsymbol{x}\in W_{\mathrm{loc}}^{\mathrm{s}}(\Lambda_f)$, then we have as in the proof of 
Proposition \ref{p_curvature_curve} $|\kappa_{\boldsymbol{u}}(\boldsymbol{x})|=O(\varepsilon)$.
Let $\mathbb{G}_0$ be the component of $\mathbb{B}\setminus W_{\mathrm{loc}}^{\mathrm{s}}(\Lambda_f)$ 
containing $\mathbb{H}_{\varepsilon_0}$.
One can choose $\mathcal{F}_f^{\mathrm{s}}$ so that $|\kappa_{\boldsymbol{u}}(\boldsymbol{x})|
=O(\varepsilon)$ 
if $\boldsymbol{x}\in \mathbb{G}_0$.
Intuitively, such a foliation on $\mathbb{G}_0$ is obtained by pushing the two leaves of $W_{\mathrm{loc}}^{\mathrm{s}}(\Lambda_f)$ adjacent to $\mathbb{G}_0$ toward $\mathbb{H}_{\varepsilon_0}$ 
with the same ratio 
along the lines in $\mathbb{B}$ parallel to the $x$-axis.
If $\boldsymbol{x}\in \mathbb{B}\setminus (W_{\mathrm{loc}}^{\mathrm{s}}(\Lambda_f)\cup \mathbb{G}_0)$, 
then there exists a positive integer $k$ such that 
$f^j(\boldsymbol{x})=\boldsymbol{x}_{j}$, $Df^j(\boldsymbol{x})\boldsymbol{u}=\boldsymbol{u}_{j}$ $(j=1,\dots,k)$ with $\boldsymbol{x}_k\in \mathbb{G}_0$.
By Lemma \ref{l_F_curvature}, $|\kappa_{\boldsymbol{u}_{j-1}}(\boldsymbol{x}_{j-1})|<\dfrac12 |\kappa_{\boldsymbol{u}_{j}}(\boldsymbol{x}_{j})|+O(\varepsilon)$, 
where $\boldsymbol{x}_0=\boldsymbol{x}$ and $\boldsymbol{u}_0=\boldsymbol{u}$.
Since $|\kappa_{\boldsymbol{u}_k}(\boldsymbol{x}_k)|=O(\varepsilon)$, 
we have 
$|\kappa_{\boldsymbol{u}}(\boldsymbol{x})|<\biggl(\sum\limits_{j=0}^{k-1}\dfrac1{2^j}\biggr)O(\varepsilon)<2O(\varepsilon).$
Thus one can complete the proof by regarding $2(\varepsilon)$ as $O(\varepsilon)$.
\end{proof}

Suppose that $(\widehat{\boldsymbol{x}}_k)_{k\geq 1}$ with $\widehat{\boldsymbol{x}}_k\in S_{\widehat{\underline{w}}_k}^{\mathrm{cs}}$ is the sequence given in Lemma \ref{l_xkinS} and 
$g=f\circ \psi_n$ is the diffeomorphism of \eqref{eqn_gfpsi}, 
which satisfies the conclusion of Theorem \ref{thmWD} if $n$ is sufficiently large.
Note that $f^{\widehat n_k}(J_k)$ is the leaf of $\mathcal{L}_{(0,\infty)}$ containing $f^{\widehat n_k}(\widehat{\boldsymbol{x}}_k)$.
Since $\psi_n$ is $C^r$-close to the identity by Lemma \ref{l_psi_n}, 
Proposition \ref{p_curvature_curve} implies that the curvature of $\psi_n\circ f^{\widehat n_k}(J_k)$ 
is an $O(\varepsilon)$-function.
Let $\widehat{\boldsymbol{y}}(t)=(t,y(t),z(t))$ $(-\alpha<t<\beta)$ be a parametrization of $\psi_n\circ f^{\widehat n_k}(J_k)$ with 
$\widehat{\boldsymbol{y}}(0)=\widehat{\boldsymbol{y}}_{k+1}=\psi_n\circ f^{\widehat n_k}(\widehat{\boldsymbol{x}}_k)$ for some $\alpha,\beta>0$.
Since $f$ is sufficiently $C^r$-close to $f_0$, 
it follows from the form \eqref{eqn_tang} of $f_0^2$ on $\mathbb{H}_{\varepsilon_0}$ that 
$g^{\widehat n_k+2}(J_k)=f^2\circ \psi_n\circ f^{\widehat n_k}(J_k)$ has a parametrization such as 
$$\widehat{\boldsymbol{x}}(t)=f^2(\widehat{\boldsymbol{y}}(t))=\bigl(-a_1t^2+a_2z(t),\ a_2y(t),\ a_4t\bigr)+\widehat{\boldsymbol{x}}_{k+1}+\boldsymbol{O}(\varepsilon),$$
where the $i$-th entry $O_i(\boldsymbol{x})$ of $\boldsymbol{O}(\varepsilon)$ is an $O(\varepsilon)$-function of $\boldsymbol{x}=(x_1,x_2,x_3)\in \mathbb{H}_{\varepsilon_0}$ with 
$\partial O_i(\boldsymbol{x})/\partial x_j=O(\varepsilon)$, $\partial^2 O_i(\boldsymbol{x})/\partial x_j\partial x_k=O(\varepsilon)$ for any $i,j,k\in \{1,2,3\}$.
By \eqref{eqn_y'} and Proposition \ref{p_curvature_curve}, $y'(t),z'(t),y''(t),z''(t)=O(\varepsilon)$ 
if necessary supposing that $\widehat n_1$ is greater than the integer $k_0$ given in 
Proposition \ref{p_curvature_curve}.
This shows that 
\begin{equation}\label{eqn_whx'}
\widehat{\boldsymbol{x}}'(t)=(-2a_1t, 0, a_4)+\boldsymbol{O}(\varepsilon),\quad \widehat{\boldsymbol{x}}''(t)=(-2a_1, 0, 0)+\boldsymbol{O}(\varepsilon).
\end{equation}

Then we have the following proposition.

\begin{prop}\label{p_kappa_fn}
For any $-\alpha<t<\beta$, 
$$
\kappa_{g^{\widehat n_k+2}(J_k)}(\widehat{\boldsymbol{x}}(t))=\frac{2a_1|a_4|}{(4a_1t^2+a_4^2)^{3/2}}+O(\varepsilon).$$
Moreover the unit normal vector $\boldsymbol{N}(\widehat{\boldsymbol{x}}_{k+1})$ of $g^{\widehat n_k+2}(J_k)$ at $\widehat{ \boldsymbol{x}}_{k+1}=\widehat{\boldsymbol{x}}(0)$ is 
$(-1,0,0)+\boldsymbol{O}(\varepsilon)$.
\end{prop}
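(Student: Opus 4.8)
The plan is to treat $g^{\widehat n_k+2}(J_k)$ as a parametrized space curve and apply the standard curvature formula $\kappa_\gamma(t)=\|\gamma'(t)\times\gamma''(t)\|/\|\gamma'(t)\|^3$ to the parametrization $\widehat{\boldsymbol{x}}(t)$, using the expressions for $\widehat{\boldsymbol{x}}'(t)$ and $\widehat{\boldsymbol{x}}''(t)$ recorded in \eqref{eqn_whx'} just above the statement. First I would note that, since $a_2a_3a_4<0$ by \eqref{eqn_a_1_4}, the constant $a_4$ is non-zero, so $\|\widehat{\boldsymbol{x}}'(t)\|\ge |a_4|/2>0$ on the whole (bounded) parameter interval $(-\alpha,\beta)$ once $\varepsilon$ is small. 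In particular $\widehat{\boldsymbol{x}}(t)$ is a regular curve, and this uniform lower bound on $\|\widehat{\boldsymbol{x}}'(t)\|$ is exactly what allows the $\boldsymbol{O}(\varepsilon)$-remainders to be pushed through the reciprocals and fractional powers that appear below without any blow-up.

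Next I would compute the two ingredients of the formula. The cross product of $\widehat{\boldsymbol{x}}'(t)=(-2a_1t,0,a_4)+\boldsymbol{O}(\varepsilon)$ with $\widehat{\boldsymbol{x}}''(t)=(-2a_1,0,0)+\boldsymbol{O}(\varepsilon)$ equals $(0,-2a_1a_4,0)+\boldsymbol{O}(\varepsilon)$: the leading vectors lie in the $xz$-plane, their cross product points along the $y$-axis with length $2a_1|a_4|$ (using $a_1>0$), and because $t$ stays bounded and $a_1,a_4$ are fixed, crossing a leading term with an $\boldsymbol{O}(\varepsilon)$-term, or two $\boldsymbol{O}(\varepsilon)$-terms together, contributes only $\boldsymbol{O}(\varepsilon)$. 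Hence $\|\widehat{\boldsymbol{x}}'(t)\times\widehat{\boldsymbol{x}}''(t)\|=2a_1|a_4|+O(\varepsilon)$. Likewise $\|\widehat{\boldsymbol{x}}'(t)\|^2=4a_1^2t^2+a_4^2+O(\varepsilon)$, and since this is bounded below by $a_4^2/2$, applying the mean value theorem to $s\mapsto s^{3/2}$ gives $\|\widehat{\boldsymbol{x}}'(t)\|^3=(4a_1^2t^2+a_4^2)^{3/2}+O(\varepsilon)$. Dividing, and once more using the lower bound on the denominator to absorb the error term, yields $\kappa_{g^{\widehat n_k+2}(J_k)}(\widehat{\boldsymbol{x}}(t))=\dfrac{2a_1|a_4|}{(4a_1^2t^2+a_4^2)^{3/2}}+O(\varepsilon)$, which is the first assertion.

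For the normal vector at $\widehat{\boldsymbol{x}}_{k+1}=\widehat{\boldsymbol{x}}(0)$, I would simply specialize $t=0$: then $\widehat{\boldsymbol{x}}'(0)=(0,0,a_4)+\boldsymbol{O}(\varepsilon)$ and $\widehat{\boldsymbol{x}}''(0)=(-2a_1,0,0)+\boldsymbol{O}(\varepsilon)$, so up to $\boldsymbol{O}(\varepsilon)$ the velocity and acceleration are already orthogonal. Consequently the principal normal direction, obtained by removing from $\widehat{\boldsymbol{x}}''(0)$ its component along $\widehat{\boldsymbol{x}}'(0)$ and normalizing, equals $(-2a_1,0,0)/\|(-2a_1,0,0)\|+\boldsymbol{O}(\varepsilon)=(-1,0,0)+\boldsymbol{O}(\varepsilon)$, using $a_1>0$; the $\boldsymbol{O}(\varepsilon)$ propagates through the normalization map, which is Lipschitz on the set of vectors of norm bounded away from $0$. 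This gives $\boldsymbol{N}(\widehat{\boldsymbol{x}}_{k+1})=(-1,0,0)+\boldsymbol{O}(\varepsilon)$.

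The only point requiring care — and it is a mild one — is to check that every remainder produced along the way is genuinely $O(\varepsilon)$ and not, say, of the form $x''(t)\,O(\varepsilon)$ with $x''(t)$ possibly large. That potential obstacle has, however, already been defused in the paragraph preceding the statement, where $y'(t),z'(t),y''(t),z''(t)=O(\varepsilon)$ is established from \eqref{eqn_y'} and Proposition \ref{p_curvature_curve}; thus \eqref{eqn_whx'} already carries honest $\boldsymbol{O}(\varepsilon)$-remainders, and the argument above reduces to the elementary cross-product and normalization computations just described.
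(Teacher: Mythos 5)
Your proof is correct, and the approach is essentially the same as the paper's: the curvature is read off from the standard formula $\|\widehat{\boldsymbol{x}}'\times\widehat{\boldsymbol{x}}''\|/\|\widehat{\boldsymbol{x}}'\|^3$ applied to \eqref{eqn_whx'}, with the uniform lower bound $\|\widehat{\boldsymbol{x}}'(t)\|\geq|a_4|/2$ (valid because $a_2a_3a_4<0$ forces $a_4\neq 0$) used to propagate the $O(\varepsilon)$-remainders; the paper just labels this part ``immediate''. For the normal vector the paper re-parametrizes by arc length and reads $\boldsymbol{N}$ off from $d^2\widehat{\boldsymbol{x}}/ds^2(0)$, whereas you project $\widehat{\boldsymbol{x}}''(0)$ orthogonally to $\widehat{\boldsymbol{x}}'(0)$ and normalize; these are the same computation in different clothing and both give $(-1,0,0)+\boldsymbol{O}(\varepsilon)$ using $a_1>0$.

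One point you should not have glossed over: your derivation produces
\[
\kappa_{g^{\widehat n_k+2}(J_k)}(\widehat{\boldsymbol{x}}(t))=\frac{2a_1|a_4|}{\bigl(4a_1^2t^2+a_4^2\bigr)^{3/2}}+O(\varepsilon),
\]
with $a_1^2$ in the denominator, while the statement (and the paper's own arc-length integrand $\sqrt{4a_1u^2+a_4^2+O(\varepsilon)}$) writes $4a_1t^2$ without the square. Since $\|\widehat{\boldsymbol{x}}'(t)\|^2=(-2a_1t)^2+a_4^2+O(\varepsilon)=4a_1^2t^2+a_4^2+O(\varepsilon)$, your version is the correct one; the displayed formula in the proposition (and in the paper's proof) drops the exponent on $a_1$, evidently a typo. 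You write ``which is the first assertion'' when in fact what you obtained differs from the assertion as printed; you should have noted the discrepancy explicitly rather than silently identifying the two expressions.
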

\begin{proof}
The form of $\kappa_{g^{\widehat n_k+2}(J_k)}(\widehat{\boldsymbol{x}}_{k+1})$ as above is obtained immediately from \eqref{eqn_whx'}.
The arc length of $\widehat{\boldsymbol{x}}(t)$ is given as 
$$s=\int_0^t\sqrt{4a_1u^2+a_4^2 +O(\varepsilon)}\,du.$$
Then an elementary calculation shows 
that $\dfrac{d^2\widehat{\boldsymbol{x}}}{ds^2}(0)=\dfrac1{a_4^2}(-2a_1,0,0)+\boldsymbol{O}(\varepsilon)$.
Since we supposed that $a_1>0$, we have $\boldsymbol{N}(\widehat{\boldsymbol{x}}_{k+1})=(-1,0,0)+\boldsymbol{O}(\varepsilon)$ 
by unitizing $\dfrac{d^2\widehat{\boldsymbol{x}}}{ds^2}(0)$.
\end{proof}

\bibliographystyle{plain}

\bibliography{ref}

\printindex

\end{document}